\documentclass[letterpaper,10pt]{article}%
\usepackage{etex}
\usepackage[utf8]{inputenc}%
\usepackage[T1]{fontenc}%
\usepackage{ae,aecompl}%

\usepackage{XCharter}%

\usepackage{textcomp}%
\usepackage{mathrsfs}%
\usepackage[scaled=.92]{helvet}%
\usepackage{eulervm}%
\usepackage{euscript}%
\usepackage[margin=20pt,labelfont=bf,labelsep=period,format=plain]{caption}%
\usepackage{cmll}%
\usepackage{xparse}%
\usepackage{marvosym}%
\usepackage{calculator}%
\usepackage[UKenglish]{isodate}%

\DeclareFontFamily{T1}{qzc}{}
\DeclareFontShape{T1}{qzc}{m}{it}{<-> s * [1.2] ec-qzcmi}{}
\DeclareMathAlphabet{\mcal}{\encodingdefault}{qzc}{m}{it}

\usepackage{mathtools}%
\usepackage{amsmath}%
\usepackage{amssymb}%
\usepackage{wasysym}%

\DeclareFontFamily{U}{mathx}{\hyphenchar\font45}
\DeclareFontShape{U}{mathx}{m}{n}{
      <5> <6> <7> <8> <9> <10>
      <10.95> <12> <14.4> <17.28> <20.74> <24.88>
      mathx10
      }{}
\DeclareSymbolFont{mathx}{U}{mathx}{m}{n}
\DeclareFontSubstitution{U}{mathx}{m}{n}
\DeclareMathAccent{\widecheck}{0}{mathx}{"71}
\DeclareMathAccent{\wideparen}{0}{mathx}{"75}

\usepackage{amsthm}

\makeatletter
\def\@endtheorem{\endtrivlist}
\makeatother

\newtheoremstyle{mythmstyle}
  {\topsep}%
  {\topsep}%
  {\slshape}%
  {0pt}%
  {\scshape}%
  {.}%
  { }%
  {\thmname{#1}\thmnumber{ #2}\thmnote{ (\normalfont #3)}}%
\newtheoremstyle{mythmlikestyle}
  {\topsep}%
  {\topsep}%
  {\normalfont}%
  {0pt}%
  {\scshape}%
  {.}%
  { }%
  {\thmname{#1}\thmnumber{ #2}\thmnote{ (\normalfont #3)}}%
\theoremstyle{mythmstyle}
\newtheorem{theorem}{Theorem}[section]%
\theoremstyle{mythmlikestyle}

\newtheorem{definition}[theorem]{Definition}
\newtheorem{lemma}[theorem]{Lemma}

\newenvironment{proofof}[1]{\begin{trivlist}\item{}\normalfont\textit{Proof of #1.}}{\hfill$\square$\end{trivlist}}

\usepackage[scale={0.69,0.85},centering,nohead]{geometry}
\usepackage{enumitem}%
\usepackage{booktabs}%

\usepackage[usenames,dvipsnames]{xcolor}%
\colorlet{dblue}{black!15!blue}
\colorlet{dred}{black!15!red}
\colorlet{dgreen}{black!45!green}
\colorlet{dorange}{black!60!orange}
\colorlet{dpurple}{blue!45!red}
\colorlet{dbrown}{black!35!brown}
\colorlet{bindingcolour}{dgreen}

\usepackage{xspace}%
\RequirePackage[pdfencoding=auto,unicode=true,linktocpage]{hyperref}

\usepackage{rotating}%
\usepackage{floatpag}%
\rotfloatpagestyle{empty}%
\floatpagestyle{empty}%

\usepackage[usestackEOL]{stackengine}%
\usepackage{scalerel}%

\usepackage[svgnames]{pstricks}

\usepackage{pst-node,pst-tree,
pstricks-add}%
\selectcolormodel{natural}%

\psset{nodesep=3pt,linewidth=.5pt,
	arrowsize=4pt 1,
	arrowlength=.9,
	arrowinset=.4
}

\usepackage[page]{appendix}%
\let\appendixpagenameorig\appendixpagename
\renewcommand{\appendixpagename}{\Large\appendixpagenameorig}

\newcommand\comsymbol{\MVComma}
\newcommand\com{\mbox{\kern-1.5pt\comsymbol\kern-1.1pt}}
\newcommand\comm{\mkern1mu\com\mkern5mu}

\usepackage{prftree}%
\prflinethickness=.2pt%
\prfinterspace=1.5em%
\prflinepadbefore=0.3ex%
\prflinepadafter=0.1ex%
\prflineextra=0pt%

\newcommand{\heightdepth}[3]{\raisebox{0ex}[#1ex][#2ex]{$#3$}}
\newcommand{\prfseq}[1]{\heightdepth{2}{1.05}{#1}}

\newcommand\unaryruleright[3]{\prftree[r]{\,$#1$}{#2}{\prfseq{#3}}}

\newcommand\binaryruleright[4]{\prftree[r]{\,$#1$}{#2}{#3}{\prfseq{#4}}}

\newcommand\rulename[1]{\textsf{#1}}
\newcommand\rulelabel[1]{\raisebox{.27ex}{\small{\rulename{#1}}}}

\newcommand\axiomrulelabel{}%
\newcommand\axiomruleright[1]{\unaryruleright{\axiomrulelabel}{\prfseq{}}{#1}}

\newcommand\verboseaxiomruleright[1]{\unaryruleright{\axiomrulelabel}{\prfseq{}}{#1}}

\newcommand\forallrulelabel{\rulelabel{for all}}
\newcommand\forallruleright[2]{\unaryruleright{\forallrulelabel}{#1}{#2}}

\newcommand\existsrulelabel{\rulelabel{there exists}}
\newcommand\existsruleright[2]{\unaryruleright{\existsrulelabel}{#1}{#2}}

\newcommand\impliesrulelabel{\rulelabel{implies}}
\newcommand\impliesruleright[2]{\unaryruleright{\impliesrulelabel}{#1}{#2}}

\newcommand\contractionrulelabel{\rulelabel{contract}}
\newcommand\contractionruleright[2]{\unaryruleright{\contractionrulelabel}{#1}{#2}}

\newcommand\weakeningrulelabel{\rulelabel{weaken}}
\newcommand\weakeningruleright[2]{\unaryruleright{\weakeningrulelabel}{#1}{#2}}

\newcommand\xname{\rulename{X}}
\newcommand\cname{\rulename{C}}
\newcommand\wname{\rulename{W}}

\newcommand\foneonerule{\axiomruleright{\mkern2mu1\mkern2mu}}

\newcommand\Rxrulehyp{\sequent\com\formula\com\formulaa\com\sequenta}
\newcommand\Rxruleconc{\sequent\com\formulaa\com\formula\com\sequenta}
\newcommand\Rcrulehyp{\sequentcom\,\formula\com\formulap}
\newcommand\Rcruleconc{\sequentcom\formula}
\newcommand\Rwrulehyp{\sequent}
\newcommand\Rwruleconc{\sequentcom\,\formula}
\newcommand\Rorrulehyp{\prfseq{\sequentcom \, \formula\com\,\formulaa}}
\newcommand\Rorruleconc{\sequentcom \, \formula\tightvee \formulaa}
\newcommand\Rexistsrulehyp{\prfseq{\sequentcom \, \formula\assignment{\assign\variable\term}}}
\newcommand\Rexistsruleconc{\sequentcom \, \ex x \formula}
\newcommand\Rforallrulehyp{\prfseq{\sequentcom \, \formula}}
\newcommand\Rforallruleconc{\sequentcom \, \all x \formula}
\newcommand\Randrulehypone{\prfseq{\sequentcom \formula}}
\newcommand\Randrulehyptwo{\prfseq{\sequentacom\formulaa}}
\newcommand\Randruleconc{\sequentcom \, \sequentacom \, \formula \tightwedge \formulaa}
\newcommand\Randrulehypindex[1]{\prfseq{\sequent_{#1}\com \formula_{#1}}}
\newcommand\Randruleconcindexed{\sequent_1\com \, \sequent_2\com \, \formula_1 \tightwedge \formula_2}

\newcommand\Raxiomrule[1]{\axiomruleright{#1\com\dual{#1}}}
\newcommand\Ronerule{\foneonerule}
\newcommand\Rxrule[2]{\unaryruleright{\rulelabel{\xname}}{#1}{#2}}
\newcommand\Rcrule[2]{\unaryruleright{\rulelabel{\cname}}{#1}{#2}}
\newcommand\Rwrule[2]{\unaryruleright{\rulelabel{\wname}}{#1}{#2}}
\newcommand\Randrule[3]{\binaryruleright{\wedge}{#1}{#2}{#3}}
\newcommand\Rorrule[2]{\unaryruleright{\vee}{#1}{#2}}
\newcommand\Rforallrule[2]{\unaryruleright{\forall}{#1}{#2}}
\newcommand\Rexistsrule[2]{\unaryruleright{\exists}{#1}{#2}}

\newcommand\latinstyle[1]{\emph{#1}}%
\makeatletter
\DeclareRobustCommand\onedot{\futurelet\@let@token\@onedot}
\def\@onedot{\ifx\@let@token.\else.\null\fi\xspace}
\def\eg{\latinstyle{e.g}\onedot} 
\def\ie{\latinstyle{i.e}\onedot} 
\def\cf{\latinstyle{c.f}\onedot}

\makeatother

\newcommand\WLOG{Without loss of generality\xspace}
\newcommand\recombulas{rectified fographs\xspace}%

\newcommand\defn[1]{{\textit{\textbf{#1}}}}

\thickmuskip=5mu plus 3mu minus 1mu
\medmuskip=4mu plus 2mu minus 3mu

\DeclareSymbolFont{oldsymbols}{OMS}{cmsy}{m}{n}
\DeclareMathSymbol\wedge\mathbin{oldsymbols}{"5E}%
\DeclareMathSymbol\vee\mathbin{oldsymbols}{"5F}
\DeclareMathSymbol\neg\mathord{oldsymbols}{"3A}

\def\myoverline#1{{\ThisStyle{{%
  \setbox0=\hbox{{$\SavedStyle#1$}}%
  \stackengine{{1\LMpt}}{{$\SavedStyle#1$}}{{\rule{\wd0}{.4\LMpt}}}{O}{c}{F}{F}{S}%
}}}}
\newcommand{\dual}[1]{{\mkern1mu\myoverline{\mkern-1mu#1\mkern-1.5mu}\mkern1.5mu}}%

\newcommand\todo[1]{}%
\newcommand\note[1]{}%

\renewcommand\v[1]{\vspace*{#1ex}}

\newcommand\hh[1]{\hspace*{-#1ex}}

\newcommand\hyp{\mathcal{H}}
\newcommand\conc{\mathcal{C}}

\newcommand\graph{G}
\newcommand\fograph{G}
\newcommand\fographp{G\mkern-1mu'}
\newcommand\fographa{H}
\newcommand\fographap{H\primed}
\newcommand\fographaa{K}
\newcommand\fographaaa{L}
\newcommand\net{N}
\newcommand\netp{\net\primed}
\newcommand\fographnet{\net}
\newcommand\graphp{{\graph\mkern-2.3mu'}}
\newcommand\grapha{H}
\newcommand\graphaa{K}
\newcommand\vertices{V}
\newcommand\verticesp{{\vertices\mkern-1mu'}}
\newcommand\vertex{v}
\newcommand\vertexp{{\vertex\mkern-1.3mu'}}
\newcommand\vertexa{w}
\newcommand\vertexaa{u}
\newcommand\vertexap{w\primed}
\newcommand\edges{E}
\newcommand\edgesp{\edges\mkern-2.5mu'}

\newcommand\guniongp{\graph\graphunion\graphp}
\newcommand\gjoingp{\graph\graphjoin\graphp}

\newcommand\cograph{G}

\newcommand\p{p}\newcommand\pp{\dual p}\newcommand\ppp{\dual\pp}
\newcommand\q{q}\newcommand\qq{\dual q}
\newcommand\px{\p x}
\newcommand\ppx{\pp x}
\newcommand\ppy{\pp y}
\newcommand\ppa{\pp\mkern-1.5mu a}
\newcommand\pa{\p\mkern-1.4mu a}

\newcommand\qab{\q\mkern-1.4mu a\mkern-1.4mu b}
\newcommand\qba{\q\mkern-1.4mu b\mkern-1.4mu a}
\newcommand\qqab{\qq\mkern-1.4mu a\mkern-1.4mu b}
\newcommand\qqba{\qq\mkern-1.4mu b\mkern-1.4mu a}

\newcommand\qx{\q x}

\newcommand\py{\p y}
\newcommand\pfy{\p\mkern-1mu f y}
\newcommand\pffy{\p\mkern-1mu  f\mkern-1mu \fy}
\newcommand\ppffy{\pp\mkern-1mu  f\mkern-1mu \fy}
\newcommand\qy{\q y}
\newcommand\qqy{\qq y}
\newcommand\pxy{\p x y}

\newcommand\qxy{\q\mkern-.5mu x y}

\newcommand\pz{\p z}
\newcommand\ppz{\pp z}

\newcommand\fy{fy}
\newcommand\fz{fz}
\newcommand\rx{rx}
\newcommand\rz{rz}

\newcommand\qfz{\q \mkern-1mu fz}

\newcommand\pfx{\p \mkern-1mu fx}
\newcommand\qqfz{\qq \mkern-1mu fz}

\newcommand\all[1]{\forall #1\mkern2mu}
\newcommand\ex[1]{\exists #1\mkern2mu}
\newcommand\allx{\forall x}
\newcommand\ally{\forall y}
\newcommand\existsx{\exists x}

\newcommand\axpx{\forall x\mkern2.2mu\px}

\newcommand\formula{\varphi}%
\newcommand\monadicformula{\formula}
\newcommand\monadicformulap{\formulap}

\newcommand\modalformula{\mu}
\newcommand\modalformulap{\mu\primed}

\newcommand\formulaa{\theta}%
\newcommand\formulaaa{\psi}%
\newcommand\formulap{\formula\mkern-.5mu\primed}

\newcommand\gformula{\graphof{\mkern-1mu\formula\mkern-1mu}}
\newcommand\gformulaa{\graphof{\mkern-1mu\formulaa\mkern-1mu}}

\newcommand\atomone{\alpha_1}%
\newcommand\atomtwo{\alpha_2}%
\newcommand\variable{x}

\newcommand\zone{z_{\mkern-1.5mu 1}}
\newcommand\ztwo{z_{\mkern-.5mu 2}}

\renewcommand\tag{\lambda}

\newcommand\binder{b}
\newcommand\bindera{c}

\newcommand\binderp{\binder\primed}
\newcommand\atom{\alpha}
\newcommand\dualatom{\dual\atom}

\newcommand\singletonx{\singleton x}
\newcommand\singletony{\singleton y}
\newcommand\singletonz{\singleton z}

\newcommand\singletonq{\singleton \q}
\newcommand\singletonqq{\singleton \qq}
\newcommand\singletonpx{\singleton \px}
\newcommand\singletonpy{\singleton \py}
\newcommand\singletonppy{\singleton \ppy}
\newcommand\singletonqx{\singleton \qx}

\newcommand\singletonppx{\singleton \ppx}

\newcommand\cfa{G}

\newcommand\homom{h}

\newcommand\homomof[1]{\homom(\mkern-1mu#1\mkern-1mu)}

\newcommand\map{f}
\newcommand\mapp{f\primed}

\newcommand\graphpairof[2]{(#1,#2)}
\newcommand\digraphpairof[2]{\graphpairof{#1}{#2}}
\newcommand\graphpair{\graphpairof\vertices\edges}
\newcommand\graphpairp{\graphpairof\verticesp\edgesp}
\newcommand\digraphpair{\graphpairof\vertices\edges}
\newcommand\digraphpairp{\graphpairof\verticesp\edgesp}
\newcommand\diedge[2]{\langle #1,#2\rangle}
\newcommand\graphpairofgraph[1]{\graphpairof{\verticesof{#1}}{\edgesof{#1}}}

\newcommand\drinkergraph{D}

\newcommand\xp{x\primed}

\newcommand\fib{f}
\newcommand\bifib{f}
\newcommand\bifibp{f\primed}
\newcommand\bifiba{g}

\newcommand\cover{K}
\newcommand\coverp{K\mkern-1.5mu\primed}
\newcommand\base{G}
\newcommand\cp{\skewfib:\cover\to\gformula}

\renewcommand{\implies}{\Rightarrow}

\newcommand\diredgestyle[1]{\psset{arrows=#1,nodesep=2pt,arrowsize=1.8pt 2,arrowinset=.2,arrowlength=1.2,linewidth=.6pt}}
\newcommand\diredgesymb[1]{\raisebox{0pt}{\!\kern-.5pt\begin{psmatrix}[colsep=2.7ex]\rnode{l}{\rule{0pt}{1ex}}&\rnode{r}{\rule{0pt}{1ex}}%
\diredgestyle{#1}\ncline{l}{r}\end{psmatrix}\!\kern-.5pt}}
\newcommand\widediredgesymb[2]{\raisebox{0pt}{\!\kern-.5pt\begin{psmatrix}[colsep=#2]\rnode{l}{\rule{0pt}{1ex}}&\rnode{r}{\rule{0pt}{1ex}}%
\diredgestyle{#1}\ncline{l}{r}\end{psmatrix}\!\kern-.5pt}}
\def\bindrel{\mathrel{\scalerel*{\diredgesymb{->}}{\rule{0ex}{1ex}}}}%

\newcommand\widebindrel[1]{\mathrel{\scalerel*{\widediredgesymb{->}{#1}}{\rule{0ex}{1ex}}}}

\newcommand\graphofsymbol{\mcal{G}}

\newcommand\formulaofsymbol{\Phi}
\newcommand\cotreeofsymbol{\textsf{T}}%

\newcommand\graphof[1]{\graphofsymbol(#1)}

\newcommand\formulaof[1]{\formulaofsymbol(#1)}

\newcommand\cotreeof[1]{\cotreeofsymbol(#1)}

\newcommand\verticesofsymbol{V}
\newcommand\verticesof[1]{\verticesofsymbol_{#1}}
\newcommand\edgesof[1]{E_{#1}}

\newcommand\graphunion{\tightplus}
\newcommand\graphjoin{\tighttimes}
\newcommand\tightgraphunion{\mkern-2mu\graphunion\mkern-2mu}
\newcommand\tightgraphjoin{\mkern-2mu\graphjoin\mkern-2mu}

\newcommand\inlinegreenvx[1]{\mkern5mu\greenvx{0,0.08}{#1}\mkern5mu}%
\newcommand\inlineredvx[1]{\mkern5mu\redvx{0,0.08}{#1}\mkern5mu}%
\newcommand\singletonblue[1]{\mkern5mu\bluevx{0,.08}{v}\mkern6mu#1}
\newcommand\singletonred[1]{\mkern5mu\redvx{0,.08}{v}\mkern6mu#1}
\newcommand\singletonredleft[1]{#1\mkern6mu\redvx{0,.08}{v}\mkern5mu}

\newcommand\singletonleft[1]{{#1\mkern.8mu\bullet}}
\newcommand\singletonright[1]{{\bullet\mkern.8mu#1}}
\newcommand\singleton[1]{\singletonright{#1}}
\newcommand\singletonbuffer{\hspace*{1pt}\hspace*{\vertexrad}}
\newcommand\namedvx[1]{\singletonbuffer\cnode*(0,.5ex){\vertexrad}{#1}\singletonbuffer}
\newcommand\namedsingletonleft[2]{{#2}\namedvx{#1}}
\newcommand\namedsingletonright[2]{\namedvx{#1}{#2}}

\newcommand\skewfib{f}

\newcommand\skewfibnolabels{f\primed}
\newcommand\netnolabels{\net\primed}
\newcommand\basenolabels{\base\primed}

\newcommand\graphall[2]{\singleton{#1}\graphunion\graphof{#2}}
\newcommand\graphex [2]{\singleton{#1}\graphjoin\graphof{#2}}

\newcommand\allypy{\forall y \mkern3mu \py}
\newcommand\predrinkerformula{\py\mkern-1mu\implies\mkern-1mu \allypy}
\newcommand\drinkerformula{\exists x\mkern1mu(\mkern1mu\px\mkern-1mu\implies\mkern-1mu \allypy)}
\newcommand\drinkerformulanamed[2]{\exists #1\mkern1mu(\mkern1mu\p #1\mkern-1mu\implies\mkern-1mu \forall #2\mkern3mu p #2)}
\newcommand\drinkerformulamodallike{\drinkerformulanamed x x}
\newcommand\veedrinkerformula{\exists x\mkern1mu(\mkern1mu\ppx\mkern-1mu\vee\mkern-1mu \allypy)}
\newcommand\variantveedrinkerformula{\exists x\mkern2mu\forall y\mkern2mu(\py\mkern-1mu\vee\mkern-1mu\ppx)}

\newcommand\fibrewidth{1pt}
\newcommand\fibredotsep{3.2pt}
\newcommand\fibrenodesep{2pt}
\newcommand\fibrestyle{\psset{linestyle=dotted,linewidth=\fibrewidth,dotsep=\fibredotsep,nodesep=\fibrenodesep}}
\newcommand\semicombinatorialfibrestyle{\psset{linestyle=solid,linewidth=.5pt,nodesep=3pt}}
\newcommand\dualitystyle{\psset{linecolor=black!30!orange,linestyle=dashed,linewidth=.7pt,dash=4pt 2pt,nodesep=1pt,,dash=3pt 1.5pt}}
\newcommand\bindingstyle{\psset{linecolor=bindingcolour,arrows=->,dash=3pt 1.5pt,nodesep=2pt}}

\newenvironment{pic}[2]{\begin{center}\begin{pspicture}(0,#1)(0,#2)\(}{\)\end{pspicture}\end{center}}

\newcommand\drinkerFormulaDisplayed{\begin{pic}{-1}{.4} \rput(0,0){\drinkerbase} \end{pic}}
\newcommand\drinkerDisplayed{\begin{pic}{-.9}{2.9} \rput(0,0){\drinkerfibcoloured} \end{pic}}
\newcommand\drinkerSemicombinatorialTarget{
  \exists
  \Rnode x x
  \mkern2mu
  (
  \mkern2mu
  \Rnode{pp}{\pp}
  x
  \mkern-2mu
  \vee
  \forall
  \Rnode y y
  \mkern2mu
  \Rnode p p
  y
  \mkern2mu
  )
}
\newcommand\drinkerSemicombinatorialSourceVerbose{
  (
  \mkern2mu
  \exists
  \Rnode{x1} x
  \mkern2mu
  \Rnode{pp1}{\pp}
  x
  \mkern2mu
  )
  \vee
  (
  \exists
  \Rnode{xx1}x
  \mkern2mu
  \forall
  \Rnode{y1}y
  \mkern2mu
  \Rnode{p1}{p}
  y
  \mkern2mu
  )
}
\newcommand\drinkerSemicombinatorialSource{
  \exists
  \Rnode{x1} x
  \mkern2mu
  \Rnode{pp1}{\pp}
  x
  \hspace{3ex}
  \exists
  \Rnode{xx1}x
  \mkern2mu
  \forall
  \Rnode{y1}y
  \mkern2mu
  \Rnode{p1}{p}
  y
  \mkern2mu
}
\newcommand\drinkerSemicombinatorialSourceVerboseLinked{
\drinkerSemicombinatorialSourceVerbose\ncbar[angleA=90,angleB=90,nodesep=2.5pt,linecolor=dblue,arm=7pt]{pp1}{p1}
}
\newcommand\drinkerSemicombinatorialSourceVerboseInline{
(\exists x\mkern2mu\ppx)\vee(\exists x\mkern2mu\forall y\mkern2mu\px)
}
\newcommand\drinkerSemicombinatorialSourceLinked{
\drinkerSemicombinatorialSource\ncbar[angleA=90,angleB=90,nodesep=2.5pt,linecolor=dblue,arm=7pt]{pp1}{p1}
}
\newcommand\drinkerSemicombinatorialSourceInline{
\exists x\mkern2mu\ppx\com\,\exists x\mkern2mu\forall y\mkern2mu\px
}
\newcommand\semicombinatorialFibeheightBoost{.25}
\newcommand\drinkerSemicombinatorialVerbose{
\rput(0,\fibheight){\rput(0,\semicombinatorialFibeheightBoost)\drinkerSemicombinatorialSourceVerboseLinked}
\rput(0,0){\drinkerSemicombinatorialTarget}
\fibrestyle
\psset{nodesep=3pt}
\ncline{pp1}{pp}
\ncline{p1}p
\ncline{x1}x
\ncline{xx1}x
\ncline{y1}y
}
\newcommand\drinkerSemicombinatorial{
\rput(0,\fibheight){\rput(0,\semicombinatorialFibeheightBoost){\drinkerSemicombinatorialSourceLinked}}
\rput(0,0){\drinkerSemicombinatorialTarget}
\semicombinatorialfibrestyle
\ncline{pp1}{pp}
\ncline{p1}p
}

\newcommand\figdrinkernolabels{\begin{figure*}
\begin{pic}{-.9}{3.3}
\rput(-3.2,0){\drinkerfibcoloured}
\rput(3.2,0){\drinkerfibnolabels}
\end{pic}
\caption{\label{fig:drinker-no-labels}A combinatorial proof $\skewfib:\net\to\base$ of the monadic formula $\drinkerformula$ (left), copied from the Introduction, and its homogeneous combinatorial proof
$\skewfibnolabels:\netnolabels\to\basenolabels$ (right). The
directed edges of $\netnolabels$ and $\basenolabels$ are those of the binding graphs $\protect\bindinggraphof\net$ and $\protect\bindinggraphof\base$,
the dashed edge of $\netnolabels$ captures the colour of $\net$,
and the dashed edge of $\basenolabels$ captures the duality between the two predicate symbols $p$ and $\protect\pp$ in $\base$.%
}\figrule\end{figure*}}

\newlength\itemlen
\newcommand\fibitem[2]{%
  \settowidth\itemlen{\ensuremath #1}
  \rput(0.5\itemlen,.4){#2}#1
}

\newcommand\peirceformula{\left(\rule{0ex}{1.7ex}\mkern-1mu(\pp\mkern-1mu\tightvee\mkern-3mu q)\mkern-2mu\tightwedge\mkern-2mu\pp\right)\mkern-1mu\tightvee\mkern-0mu p}
\newcommand\peirceimpliesformula{\left(\rule{0ex}{1.7ex}\mkern-1mu(p\mkern-1mu\tightimplies\mkern-2mu q)\mkern-1mu\tightimplies p\right)\mkern-2mu\tightimplies\mkern-.3mu p}

\newcommand\drinkerInlineDisplayedPic{\begin{pic}{-.1}{.85}\drinkerInlineDisplayed\end{pic}}
\newcommand\drinkerInlineDisplayed{%
  \def\yu{.19}
  \fibitem{\existsx}{\vx{0,0}{xl}\vx{0,\yu}{xu}}
  \mkern2mu
  (
  \mkern2mu
  \fibitem{\px}{\bluevx{0,0}{px}}
  \mkern-2mu
  \implies
  \mkern-2mu
  \fibitem{\ally}{\vx{0,0}{y}}
  \mkern2mu
  \fibitem{\py}{\bluevx{0,0}{py}}
  \mkern2mu
  )
  \e {xl} {px}
  \psset{nodesep=0}
  \nccurve[angleA=0,angleB=160]{xu}{y}
  \nccurve[angleA=8,angleB=155]{xu}{py}
}

\newcommand\modaldrinkerInlineDisplayed{%
  \def\yu{.19}
  \fibitem{\pos}{\vx{0,0}{xl}\vx{0,\yu}{xu}}
  \mkern2mu
  (
  \mkern2mu
  \fibitem{p}{\bluevx{0,0}{px}}
  \mkern-2mu
  \implies
  \mkern-2mu
  \fibitem{\nec}{\vx{0,0}{y}}
  \mkern2mu
  \fibitem{p}{\bluevx{0,0}{py}}
  \mkern2mu
  )
  \e {xl} {px}
  \psset{nodesep=0}
  \nccurve[angleA=0,angleB=160]{xu}{y}
  \nccurve[angleA=12,angleB=154]{xu}{py}
}

\newcommand\drinkerx{-1.1}
\newcommand\drinkerxx{-.4}%
\newcommand\drinkerxxx{.4}%
\newcommand\drinkerxxxx{1.15}

\newcommand\drinkerdown{.235}
\newcommand\drinkerdowndown{.37}
\newcommand\drinkercovergap{.445}
\newcommand\drinkercoverfirstdown{.3}
\newcommand\drinkercoverseconddown{.21}
\newcommand\coverppxlabelangle{0}
\newcommand\coverppxlabeldist{2pt}
\newcommand\coverylabelangle{-35}
\newcommand\drinkerbasevertices{
  \lvxd{\drinkerx,0}{x}{x}
  \lvxd{\drinkerxx,-\drinkerdowndown}{ppx}{\ppx}%
  \lvxd{\drinkerxxx,-\drinkerdown}{y}{y}%
  \lvxd{\drinkerxxxx,0}{py}{\py}
}
\newcommand\drinkerbaseverticesnolabels{
  \lvxd{\drinkerx,0}{x}{}
  \lvxd{\drinkerxx,-\drinkerdowndown}{ppx}{}
  \lvxd{\drinkerxxx,-\drinkerdown}{y}{}
  \lvxd{\drinkerxxxx,0}{py}{}
}
\newcommand\drinkerbaseedges{
  \e x y
  \e x {py}
  \e x {ppx}
}
\newcommand\drinkerbase{
  \drinkerbasevertices
  \drinkerbaseedges
}
\newcommand\drinkercoververtices{%
  \rput(0,\drinkercovergap){
    \vx{\drinkerx,0}{xa}
    \vx{\drinkerxxx,-\drinkercoverseconddown}{ya}
    \vx{\drinkerxxxx,0}{pya}
  }
  \vx{\drinkerx,0}{xb}
  \vx{\drinkerxx,-\drinkercoverfirstdown}{ppxb}
}
\newcommand\drinkercoververticescoloured{%
  \rput(0,\drinkercovergap){
    \vx{\drinkerx,0}{xa}
    \vx{\drinkerxxx,-\drinkercoverseconddown}{ya}
    \bluevx{\drinkerxxxx,0}{pya}
  }
  \vx{\drinkerx,0}{xb}
  \bluevx{\drinkerxx,-\drinkercoverfirstdown}{ppxb}
}
\newcommand\drinkercoveredges{
  \e{xb}{ppxb}
  \e{xa}{pya}
  \e{xa}{ya}
}

\newcommand\cpfibheight{1.8}

\newcommand\onionxone{.3}
\newcommand\onionxtwo{.85}
\newcommand\onionxthree{1.4}
\newcommand\onionxfour{2.1}
\newcommand\onionxfive{2.8}
\newcommand\onionxsix{3.5}
\newcommand\onioncoverradius{.3}
\newcommand\onionnudge{-.22}%
\newcommand\onioncp{
  \rput(0,\cpfibheight){
    \rput(0,-\onioncoverradius){
      \vx{\onionxone,0}{xa}
      \redvx{\onionxtwo,\onionnudge}{pfxa}
      \greenvx{\onionxthree,0}{ppxa}
    }
    \rput(0,\onioncoverradius){
      \vx{\onionxone,0}{xb}
      \greenvx{\onionxtwo,\onionnudge}{pfxb}
      \bluevx{\onionxthree,0}{ppxb}
    }
    \vx{\onionxfour,0}{yc}
    \redvx{\onionxfive,0}{ppffyc}
    \bluevx{\onionxsix,0}{pyc}
  }
  \lvxd{\onionxone,0} x {\likepbar x\hspace{1ex}}
  \lvxd{\onionxtwo,\onionnudge}{pfx}{\pfx}
  \lvxd{\onionxthree,0}{ppx}{\hspace{1.5ex}\ppx}
  \lvxd{\onionxfour,0} y {\likepbar y}
  \lvxd{\onionxfive,0}{ppffy}{\likepbar\ppffy\hspace{.2ex}}
  \lvxd{\onionxsix,0}{py}{\hspace{.5ex}\likepbar\py}
  \e {xa} {pfxa}
  \e {xa} {ppxa}
  \e {pfxa} {ppxa}
  \e {xb} {pfxb}
  \e {xb} {ppxb}
  \e {pfxb} {ppxb}
  \e {x} {pfx}
  \e {x} {ppx}
  \e {pfx} {ppx}
  \fibrestyle
  \ncline {xb}{xa}
  \ncline {xa}{x}
  \ncline {pfxb}{pfxa}
  \ncline {pfxa}{pfx}
  \ncline {ppxb}{ppxa}
  \ncline {ppxa}{ppx}
  \ncline {yc}{y}
  \ncline {ppffyc}{ppffy}
  \ncline {pyc}{py}
}
\newcommand\onioncpinline{
  \def\yu{.3}
  \left(\strut
  \fibitem{\allx}{\vx{0,0}{xl}\vx{0,\yu}{xu}}
  (
  \fibitem{\pfx}{\redvx{0,0}{pfxl}\greenvx{0,\yu}{pfxu}}
  \mkern-2mu
  \implies
  \mkern-2mu
  \fibitem{\px}{\greenvx{0,0}{pxl}\bluevx{0,\yu}{pxu}}
  )
  \mkern-1mu
  \right)
  \mkern3mu
  \implies
  \mkern3mu
  \fibitem{\ally}{\vx{0,0}{y}}
  \mkern2mu
  (
  \mkern1mu
  \fibitem{\pffy}{\redvx{0,0}{pffy}}
  \mkern-1mu
  \implies
  \mkern-1mu
  \fibitem{\py}{\bluevx{0,0}{py}}
  \mkern1mu
  )
  \e {xl} {pfxl}
  \e {pfxl} {pxl}
  \e {xu} {pfxu}
  \e {pfxu} {pxu}
  \psset{nodesep=0}
  \nccurve[angleA=20,angleB=165]{xl}{pxl}
  \nccurve[angleA=20,angleB=165]{xu}{pxu}
}

\newcommand\pabxone{-1.2}
\newcommand\pabxtwo{-.5}
\newcommand\pabxthree{.2}
\newcommand\pabxfour{.7}
\newcommand\pabxfive{1.2}
\newcommand\pabcoverradius{.3}
\newcommand\pabnudge{-.22}%
\newcommand\pabcp{
  \rput(0,\cpfibheight){
    \bluevx{\pabxone,0}{pabm}
    \redvx{\pabxtwo,0}{pcdm}
    \rput(0,\pabcoverradius){
      \vx{\pabxthree,0}{xu}
      \vx{\pabxfour,\pabnudge}{yu}
      \bluevx{\pabxfive,0}{pxyu}
    }
    \rput(0,-\pabcoverradius){
      \vx{\pabxthree,0}{xl}
      \vx{\pabxfour,\pabnudge}{yl}
      \redvx{\pabxfive,0}{pxyl}
    }
  }
  \lvxd{\pabxone,0}{pab}{\hspace{-1ex}\likepbar\qqab}
  \lvxd{\pabxtwo,0}{pcd}{\likepbar\qqba\hspace{-.5ex}}
  \lvxd{\pabxthree,0}{x}{\likepbar x}
  \lvxd{\pabxfour,\pabnudge}{y}{y}
  \lvxd{\pabxfive,0}{pxy}{\likepbar\qxy\hspace{-1ex}}
  \e {pabm} {pcdm}
  \e {xu} {yu}
  \e {xu} {pxyu}
  \e {yu} {pxyu}
  \e {xl} {yl}
  \e {xl} {pxyl}
  \e {yl} {pxyl}
  \e {pab} {pcd}
  \e {x} {y}
  \e {x} {pxy}
  \e {y} {pxy}
  \fibrestyle
  \ncline{pabm}{pab}
  \ncline{pcdm}{pcd}
  \ncline{xu}{xl}
  \ncline{xl}{x}
  \ncline{yu}{yl}
  \ncline{yl}{y}
  \ncline{pxyu}{pxyl}
  \ncline{pxyl}{pxy}
}
\newcommand\pabcpinline{
  \def\yu{.3}
  \fibitem{\qab}{\bluevx{0,0}{pab}}
  \vee
  \fibitem{\qba}{\redvx{0,0}{pcd}}
  \mkern5mu
  \implies
  \mkern4mu
  \fibitem{\exists x}{\vx{0,0}{xl}\vx{0,\yu}{xu}}
  \mkern4mu
  \fibitem{\exists y}{\vx{0,0}{yl}\vx{0,\yu}{yu}}
  \mkern3mu
  \fibitem{\qxy}{\redvx{0,0}{pxyl}\bluevx{0,\yu}{pxyu}}
  \e {pab} {pcd}
  \e {xl} {yl}
  \e {yl} {pxyl}
  \e {xu} {yu}
  \e {yu} {pxyu}
  \psset{nodesep=0}
  \nccurve[angleA=25,angleB=160]{xl}{pxyl}
  \nccurve[angleA=25,angleB=160]{xu}{pxyu}
}

\newcommand\pfyxone{-1.2}
\newcommand\pfyxtwo{-.6}
\newcommand\pfyxthree{0}
\newcommand\pfyxfour{.6}
\newcommand\pfyxfive{1.2}
\newcommand\pfycoverradius{.23}
\newcommand\pfycp{
  \rput(0,\cpfibheight){
    \rput(0,-\pfycoverradius){
      \vx{\pfyxone,0}{xa}
      \bluevx{\pfyxtwo,0}{ppxa}
    }
    \rput(0,\pfycoverradius){
      \vx{\pfyxone,0}{xb}
      \redvx{\pfyxtwo,0}{ppxb}
    }
    \vx{\pfyxthree,0}{yc}
    \bluevx{\pfyxfour,0}{pyc}
    \redvx{\pfyxfive,0}{pfyc}
  }
  \lvxd{\pfyxone,0} x {\likepbar x}
  \lvxd{\pfyxtwo,0}{ppx}{\likepbar\ppx}
  \lvxd{\pfyxthree,0}{y}{\likepbar y}
  \lvxd{\pfyxfour,0}{py}{\likepbar\py}
  \lvxd{\pfyxfive,0}{pfy}{\mkern14mu\likepbar\pfy}
  \e {xa} {ppxa}
  \e {xb} {ppxb}
  \e {pyc} {pfyc}
  \e {x} {ppx}
  \e {py} {pfy}
  \fibrestyle
  \ncline {xb}{xa}
  \ncline {xa}{x}
  \ncline {ppxb}{ppxa}
  \ncline {ppxa}{ppx}
  \ncline {yc}{y}
  \ncline {pfyc}{pfy}
  \ncline {pyc}{py}
}
\newcommand\pfycpinline{
  \def\yu{.2}
  (
  \fibitem{\allx}{\vx{0,0}{xl}\vx{0,\yu}{xu}}
  \mkern2mu
  \fibitem{\px}{\bluevx{0,0}{pxl}\redvx{0,\yu}{pxu}}
  \mkern2mu
  )
  \mkern4mu
  \implies
  \mkern4mu
  \fibitem{\ally}{\vx{0,0}{y}}
  \mkern2mu
  (
  \mkern1mu
  \fibitem{\py}{\bluevx{0,0}{py}}
  \mkern-2mu
  \wedge
  \mkern-2mu
  \fibitem{\pfy}{\redvx{0,0}{pfy}}
  \mkern1mu
  )
  \e {xu} {pxu}
  \e {xl} {pxl}
}

\newcommand\eabxone{-.8}
\newcommand\eabxtwo{-.3}
\newcommand\eabxthree{.3}
\newcommand\eabxfour{.825}
\newcommand\eabcoverradius{.44}%
\newcommand\eaby{-.2}
\newcommand\eabyy{-.35}
\newcommand\eabfibheightboost{.2}
\newcommand\eabcp{
  \rput(0,\cpfibheight){
    \rput(0,\eabfibheightboost){
    \rput(0,-\eabcoverradius){
      \vx{\eabxone,0}{xl}
      \bluevx{\eabxtwo,\eabyy}{ppal}
      \redvx{\eabxthree,\eaby}{ppbl}
    }
    \rput(0,0){
      \vx{\eabxone,0}{xm}
      \redvx{\eabxfour,0}{pxm}
    }
    \rput(0,\eabcoverradius){
      \vx{\eabxone,0}{xu}
      \bluevx{\eabxfour,0}{pxu}
    }
    }
  }
  \lvxd{\eabxone,0} x {x\hspace{1.3ex}}
  \lvxd{\eabxtwo,\eabyy}{ppa}{\likepbar\ppa\hspace{.2ex}}
  \lvxd{\eabxthree,\eaby}{ppb}{\hspace{.9ex}\likepbar\ppy}
  \lvxd{\eabxfour,0}{px}{\px\hspace{-1.5ex}}
  \e {xl} {ppal}
  \e {xl} {ppbl}
  \e {ppal} {ppbl}
  \e {xm} {pxm}
  \e {xu} {pxu}
  \e {x} {ppa}
  \e {x} {ppb}
  \e {ppa} {ppb}
  \e {x} {px}
  \fibrestyle
  \ncline {xu}{xm}
  \ncline {xm}{xl}
  \ncline {xl}{x}
  \ncline {ppal}{ppa}
  \ncline {ppbl}{ppb}
  \ncline {pxu}{pxm}
  \ncline {pxm}{px}
}
\newcommand\eabcpinline{
  \def\yu{.5}
  \def\ym{.25}
  \fibitem{\existsx}{\vx{0,0}{x}\vx{0,\ym}{xm}\vx{0,\yu}{xu}}
  \mkern2mu
  (
  \mkern2mu
  \fibitem{\pa}{\bluevx{0,0}{pa}}
  \tightvee
  \mkern1.2mu
  \fibitem{\py}{\redvx{0,0}{pb}}
  \mkern-1mu
  \implies
  \mkern-1mu
  \fibitem{\px}{\redvx{0,0}{px}\bluevx{0,\ym}{pxm}}
  \mkern2mu
  )
  \e {x}{pa}
  \e {pa}{pb}
  \psset{nodesep=0}
  \nccurve[angleA=20,angleB=160]{x}{pb}
  \nccurve[angleA=20,angleB=163]{xm}{px}
  \nccurve[angleA=20,angleB=163]{xu}{pxm}
}

\newcommand\addalldrinkercoverlabels[3]{%
  \labsep{xa}{#1}{170}{2pt}
  \labsep{xb}{#2}{-170}{2pt}
  \labsep{ya}{\likex #3}{\coverylabelangle}{2pt}
  \labsep{ppxb}{\likex{\mkern-1mu\pp\mkern-.6mu #2}}{\coverppxlabelangle}{\coverppxlabeldist}
  \labsep{pya}{\likex{p #3}}{0}{2pt}
}
\newcommand\adddrinkercoverlabels[2]{\addalldrinkercoverlabels{#1}{#2}{y}}

\newcommand\drinkerfibres{
 {\fibrestyle
  \ncline{xa}{xb}
  \ncline{xb}{x}
  \ncline{ppxb}{ppx}
  \ncline{pya}{py}
  \ncline{ya}{y}
 }
}

\newcommand\fibheight{2}

\newcommand\drinkerfibvertices{
  \rput(0,0){\drinkerbasevertices}
  \rput(0,\fibheight){\drinkercoververtices}
  \drinkerfibres
}

\newcommand\drinkerfibverticesnolabels{
  \rput(0,0){\drinkerbaseverticesnolabels}
  \rput(0,\fibheight){\drinkercoververtices}
  \drinkerfibres
}

\newcommand\drinkerfibcolouredvertices{
  \rput(0,0){\drinkerbasevertices}
  \rput(0,\fibheight){\drinkercoververticescoloured}
  \drinkerfibres
}

\newcommand\drinkerfib{
  \drinkerfibvertices
  \drinkerfibres
  \drinkercoveredges
  \drinkerbaseedges
}

\newcommand\drinkerfibcoloured{
  \drinkerfibcolouredvertices
  \drinkerfibres
  \drinkercoveredges
  \drinkerbaseedges
}

\newcommand\drinkerbasemographedges{
  \drinkerbaseedges
  {\psset{linecolor=bindingcolour}
    \nccurve[arrows=->,angleA=5,angleB=-155,dash=3pt 1.5pt,nodesep=2pt] {y} {py}
    \nccurve[arrows=->,angleA=-60,angleB=-175,dash=3pt 1.5pt,nodesep=2pt] {x} {ppx}
  }
  {\dualitystyle
    \nccurve[angleA=-20,angleB=-115,dash=3pt 1.5pt] {ppx} {py}
  }
}

\newcommand\drinkercovermographedges{
  \drinkercoveredges
  {\psset{linecolor=bindingcolour}
    \nccurve[arrows=->,angleA=5,angleB=-155,dash=3pt 1.5pt,nodesep=2pt] {ya} {pya}
    \nccurve[arrows=->,angleA=-60,angleB=-175,dash=3pt 1.5pt,nodesep=2pt] {xb} {ppxb}
  }
  {\dualitystyle
    \nccurve[angleA=-15,angleB=-110,dash=3pt 1.5pt] {ppxb} {pya}
  }
}

\newcommand\drinkerhomfibedges{
  \drinkercovermographedges
  \drinkerbasemographedges
}

\newcommand\drinkerfibnolabels{
  \drinkerfibverticesnolabels
  \drinkerfibres
  \drinkerhomfibedges
}

\newcommand\drinkerbasemograph{
  \drinkerbaseverticesnolabels
  \drinkerbasemographedges
}

\newcommand\collapsedegformula{\forall x\mkern2mu \exists y (py\tightvee\ppy)}

\newcommand\collapsedegx{2ex}
\newcommand\collapsedegxx{6ex}
\newcommand\collapsedegnudge{.23}
\newcommand\collapsedsourceduality{{\dualitystyle\nccurve[angleA=30,angleB=150] {pya} {ppya}}}
\newcommand\collapsedsourceedges{\e{pya}{ya}\e{ya}{ppya}}
\newcommand\collapsedsourcebindings{{\bindingstyle
  \psset{nodesepB=1pt}
  \nccurve[angleA=-150,angleB=-30] {ya} {pya}
  \nccurve[angleA=-30,angleB=-150] {ya} {ppya}
}}
\newcommand\collapsedtargetduality{{\dualitystyle\nccurve[angleA=-30,angleB=-150] {py} {ppy}}}
\newcommand\collapsedtargetedges{\e{py}{y}\e{y}{ppy}}
\newcommand\collapsedtargetbindings{{\bindingstyle
  \psset{nodesepB=1pt}
  \nccurve[angleA=150,angleB=30] {y} {py}
  \nccurve[angleA=30,angleB=150] {y} {ppy}
}}

\newcommand\collapsedsource{
  \vx{-\collapsedegxx,0}{xa}
  \vx{-\collapsedegx,0}{pya}
  \vx{\collapsedegx,0}{ya}
  \vx{\collapsedegxx,0}{ppya}
  \collapsedsourceedges
  \collapsedsourcebindings
  \collapsedsourceduality
}

\newcommand\uncollapsedsource{
  \vx{-\collapsedegxx,\collapsedegnudge}{x2}
  \vx{-\collapsedegxx,-\collapsedegnudge}{x1}
  \vx{-\collapsedegx,0}{pya}
  \vx{\collapsedegx,0}{ya}
  \vx{\collapsedegxx,0}{ppya}
  \collapsedsourceedges
  \collapsedsourcebindings
  \collapsedsourceduality
}

\newcommand\collapsedsourcecoloured{
  \vx{-\collapsedegxx,0}{xa}
  \bluevx{-\collapsedegx,0}{pya}
  \vx{\collapsedegx,0}{ya}
  \bluevx{\collapsedegxx,0}{ppya}
  \collapsedsourceedges
}

\newcommand\collapsedtargetvertices[4]{%
  \lvxd{-\collapsedegxx,0}{x}{#1}
  \lvxd{-\collapsedegx,0}{py}{#2}
  \lvxd{\collapsedegx,0}{y}{#3}
  \lvxd{\collapsedegxx,0}{ppy}{#4}
}

\newcommand\collapsedtarget{
  \collapsedtargetvertices{}{}{}{}
  \collapsedtargetedges
  \collapsedtargetbindings
  \collapsedtargetduality
}

\newcommand\collapsedtargetlabelled{
  \collapsedtargetvertices{x}{\py}{y}{\ppy}
  \collapsedtargetedges
}

\newcommand\uncollapsedfibres{
  \fibrestyle
  \ncline{x2}{x1}
  \ncline{x1}{x}
  \ncline{ya}{y}
  \ncline{pya}{py}
  \ncline{ppya}{ppy}
}

\newcommand\collapsedegfibheight{1.9}
\newcommand\uncollapsedeg{
  \rput(0,\collapsedegfibheight){\uncollapsedsource}
  \collapsedtarget
  \uncollapsedfibres
}

\newcommand\collapsedfibres{
  \fibrestyle
  \ncline{xa}{x}
  \ncline{ya}{y}
  \ncline{pya}{py}
  \ncline{ppya}{ppy}
}

\newcommand\collapsedeg{
  \rput(0,\collapsedegfibheight){\collapsedsource}
  \collapsedtarget
  \collapsedfibres
}

\newcommand\labelledcollapsedeg{
  \rput(0,\collapsedegfibheight){\collapsedsourcecoloured}
  \collapsedtargetlabelled
  \collapsedfibres
}

\newcommand\drinkerfiblabelledpair[2]{%
  \drinkerfib
  \adddrinkercoverlabels{#1}{#2}
}

\newcommand\drinkerbindingfiblabelled[2]{%
  \drinkerfibvertices
  \adddrinkercoverlabels{#1}{#2}
  \de{ya}{pya}
  \de{xb}{ppxb}
  \de{y}{py}
  \de{x}{ppx}
}

\newcommand\likex[1]{\raisebox{0ex}[1ex][0ex]{$#1$}}
\newlength\fheight\settoheight\fheight{$f$}
\newlength\pbarheight\settoheight\pbarheight{$\pp$}
\newcommand\likef[1]{\raisebox{0ex}[\the\fheight][0ex]{$#1$}}
\newcommand\likepbar[1]{\raisebox{0ex}[\the\pbarheight][0ex]{$#1$}}

\newcommand\vertexrad{2pt}

\newcommand\vx[2]{%
\Cnodeput*[fillcolor=black,linecolor=black,radius=\vertexrad,framesep=0pt](#1){#2}{}}

\newcommand\bluevx[2]{%
\Cnodeput[linewidth=.8pt,linecolor=black!35!blue,fillstyle=solid,fillcolor=white!92!blue,radius=\vertexrad,framesep=0pt](#1){#2}{}}

\newcommand\redvx[2]{%
\fnode[linewidth=.8pt,linecolor=black!55!red,fillstyle=solid,fillcolor=white!55!red,framesize=4.5pt,framesep=0pt](#1){#2}{}}

\newcommand\greenvx[2]{%
\rput(#1){\dianode[linewidth=.8pt,linecolor=black!65!green,fillstyle=solid,fillcolor=white!70!green,framesep=0pt]{#2}{\rule{2pt}{0pt}\rule{0pt}{2pt}}}}

\newcommand\labsep[4]{%
\nput*[labelsep=#4,framesep=0pt]{#3}{#1}{\ensuremath{#2}}}

\newcommand\lvxsep[5]{%
\vx{#1}{#2}\nput*[labelsep=#5,framesep=0pt]{#4}{#2}{\ensuremath{#3}}}

\newcommand\bluelvxsep[5]{%
\bluevx{#1}{#2}\nput*[labelsep=#5,framesep=0pt]{#4}{#2}{\ensuremath{#3}}}

\newcommand\redlvxsep[5]{%
\redvx{#1}{#2}\nput*[labelsep=#5,framesep=0pt]{#4}{#2}{\ensuremath{#3}}}

\newcommand\redlvx[4]{%
\redlvxsep{#1}{#2}{#3}{#4}{2pt}}
\newcommand\lvx[4]{%
\lvxsep{#1}{#2}{#3}{#4}{2pt}}
\newcommand\lvxl[3]{%
\lvx{#1}{#2}{\likex{#3}}{180}}
\newcommand\lvxr[3]{%
\lvx{#1}{#2}{\likex{#3}}{0}}
\newcommand\lvxu[3]{%
\lvxsep{#1}{#2}{#3}{90}{3pt}}
\newcommand\lvxd[3]{%
\lvxsep{#1}{#2}{#3}{-90}{3pt}}

\newcommand\e[2]{\ncline[nodesep=0pt]{#1}{#2}}%
\newcommand\de[2]{\ncline[nodesep=2pt,arrows=->,arrowinset=.3,arrowlength=.7]{#1}{#2}}
\newcommand\dee[2]{\de[2]}

\def\edgelen{.8}%
\SQUAREROOT{.75}{\equitriangleheightmultiplier}
\MULTIPLY{\edgelen}{\equitriangleheightmultiplier}{\height}%
\DIVIDE{\edgelen}{2}{\halfedgelen}
\DIVIDE{\halfedgelen}{2}{\quarteredgelen}
\DIVIDE{\quarteredgelen}{2}{\eighthedgelen}
\DIVIDE{\height}{2}{\halfheight}
\MULTIPLY{\edgelen}{2}{\twoedgelen}
\MULTIPLY{\edgelen}{3}{\threeedgelen}

\newcommand\lf{\TC*}
\newcommand\leaf[1]{\TC*~{#1}}
\newcommand\plusnode{\TCircle[radius=5.5pt,linestyle=none]{\raisebox{-1pt}[5.5pt]{\(\graphunion\)}}}
\newcommand\timesnode{\TCircle[radius=5.5pt,linestyle=none]{\raisebox{2pt}{\(\graphjoin\)}}}

\newcommand\cotreesepsize[4]{%
  \psTree[radius=2pt,nodesepB=0pt,nodesepA=0pt,levelsep=\height cm,
  tnheight=1ex,labelsep=3pt,treesep=#2,treenodesize=#3,tnheight=1ex]
  {#1}
  {#4}
  \endpsTree
}
\newcommand\cotreesep[3]{\cotreesepsize{#1}{#2}{0}{#3}}%

\newcommand\cotree[2]{\cotreesep{#1}{\edgelen}{#2}}%

\newcommand \plustree[1]{\cotree {\plusnode}{#1}}%
\newcommand\timestree[1]{\cotree{\timesnode}{#1}}%
\newcommand\plustreesep[2]{\cotreesep{\plusnode}{#1}{#2}}%
\newcommand\timestreesep[2]{\cotreesep{\timesnode}{#1}{#2}}%

\newcommand\leafy{\leaf{y\mkern-1mu}}%

\renewcommand\put[1]{\rput(0,0){#1}}
\newcommand\column[1]{\begin{array}{@{}c@{}}#1\end{array}}
\newcommand\rputcolumn[1]{\rput(0,0){\column{#1}}}

\newcounter{egno}
\newcommand\example[1]{\refstepcounter{egno}\label{#1}}
\newcommand\showeg[1]{(E\ref*{#1})\xspace}
\newcommand\puteg[1]{\mbox{\strut\example{#1}}\\[6ex]\put{\showeg{#1}}}
\newcommand\refeg[1]{E\ref{#1}\xspace}
\newcommand\refegs[2]{\refeg{#1}--\ref{#2}}

\newcommand\xypxytriangle{
\lvx{-\halfedgelen,-\halfheight}{x}{x\:}{-120}
\lvxu{0,\halfheight}{y}{y}
\lvx{\halfedgelen,-\halfheight}{pxy}{\;\;\pxy}{-60}}

\newcommand\xypxybindinggraph{
\xypxytriangle
\de{x}{pxy}
\de{y}{pxy}
}

\newcommand\xyonetriangle{
\lvx{-\halfedgelen,-\halfheight}{x}{x\:}{-120}
\lvxu{0,\halfheight}{y}{y}
\lvx{\halfedgelen,-\halfheight}{one}{\mkern1mu1}{-40}}

\newcommand\xyonebindinggraph{\xyonetriangle}

\newcommand\pfyformula{(\axpx) \mkern1mu \implies \mkern1mu \forall y\mkern3mu (\mkern1mu\py\tightwedge \pfy\mkern1mu)}
\newcommand\eabformula{\exists x\mkern2mu (\mkern1mu\pa\tightvee\mkern1.2mu\py\mkern-1mu\implies\mkern-1mu px\mkern1mu)}
\newcommand\onionformula{\left(\strut\forall x\mkern1mu  (\mkern1mu \pfx\tightimplies \px)\right)\mkern2mu \implies\mkern2mu  \forall y\mkern2mu  (\mkern1mu \pffy\tightimplies \py\mkern1mu )}
\newcommand\pabformula{\qab\vee \qba\mkern4mu \implies \mkern4mu \exists x\mkern4mu \exists y\mkern3mu \qxy}

\newcommand\cponeformula{\pfyformula}
\newcommand\cptwoformula{\pabformula}
\newcommand\cpthreeformula{\onionformula}
\newcommand\cpfourformula{\eabformula}

\newcommand\cpone{\pfycp}
\newcommand\cptwo{\pabcp}
\newcommand\cpthree{\onioncp}
\newcommand\cpfour{\eabcp}

\newcommand\cponeinline{\pfycpinline}
\newcommand\cptwoinline{\pabcpinline}
\newcommand\cpthreeinline{\onioncpinline}
\newcommand\cpfourinline{\eabcpinline}

\newcommand\cponformula[3]{%
  \rput(#2,1.7){#1}
  \rput[B](0,0){#3}%
}

\newcommand\dcponformula[3]{%
  \rput(#2,1){#1}
  \rput[B](0,0){#3}%
}

\newcommand\figcps{\begin{figure*}\begin{center}%
\newcommand\radius{3.8}
\begin{pic}{-1.5}{11}
  \rput(-\radius,6.8){\cponformula{\cpone}{-.13}{\cponeformula}}
  \rput(\radius,6.8){\cponformula{\cptwo}{0}{\cptwoformula}}
  \rput(-\radius,0){\cponformula{\cpthree}{-2}{\cpthreeformula}}
  \rput(\radius,0){\cponformula{\cpfour}{-.15}{\cpfourformula}}
\end{pic}
\end{center}%
\caption{\label{fig:cps}%
  Four combinatorial proofs, each shown above the formula proved.
  Here $x$ and $y$ are variables, $f$ is a unary function symbol, $a$ and $b$ are constants (nullary function symbols), $p$ is a unary predicate symbol, and $q$ is a binary predicate symbol.%
}\end{figure*}}

\newcommand\figcpscondensed{\begin{figure*}\begin{center}%
\newcommand\radius{3.6}%
\begin{pic}{-.5}{3}
  \rput(-\radius,2.1){\cponeinline}
  \rput(\radius,2.1){\cptwoinline}
  \rput(-\radius,0){\cpthreeinline}
  \rput(\radius,0){\cpfourinline}
\end{pic}%
\end{center}%
\caption{\label{fig:cps-condensed}Condensed forms of the four combinatorial proofs in Fig.\,\ref{fig:cps}.}\figrule\end{figure*}}

\newcommand\figGraphExamples{\begin{figure*}\begin{center}\begin{math}\mkern-50mu
\newcommand\xpxhoriz{\lvxl{-\halfedgelen,0}{x}{x}\lvxr{\halfedgelen,0}{px}{\px}}
\newcommand\xpxcotree[1]{\cotree{##1}{\leaf{x}\leaf{px}}}%
\begin{array}{@{\hspace{-6ex}}c@{\hspace{16ex}}c@{\hspace{19ex}}c@{\hspace{20ex}}c@{\hspace{20ex}}c@{\hspace{0ex}}}
\\[-12ex]
\puteg{axpx}
&
\put{\forall x\, \px }
&
\xpxhoriz
&
\rput(0,-.1ex){\xpxcotree{\plusnode}}
&
\xpxhoriz
\de x {px}
\\[9ex]
\puteg{expx}
&
\put{\exists x\, \px}
&
\xpxhoriz
\e {px} {x}
&
\rput(0,-.1ex){\xpxcotree{\timesnode}}
&
\xpxhoriz
\de x {px}
\\[11ex]
\puteg{exaypxy}
&
\put{\exists x\,\forall y\,\pxy}
&
\xypxytriangle
\e{y}{x}
\e{pxy}{x}
&
\rput(0,.8ex){\timestree{\leaf{x}\plustree{\leafy\leaf{pxy}}}}
&
\xypxybindinggraph
\\[13ex]
\puteg{axeyone}
&
\put{\forall x\,\exists y\,1}
&
\xyonetriangle
\e y {one}
&
\rput(0,.8ex){\plustree{\leaf{x}\timestree{\leafy\leaf{\raisebox{2pt}{\(1\)}}}}}
&
\xyonebindinggraph
\\[12ex]
\puteg{axaypxy}
&
\rputcolumn{\forall x\,\forall y\,\pxy \\[2ex] \forall y\,\forall x\,\pxy}
&
\xypxytriangle
&
\rput(0,.1ex){\plustree{\leaf{x}\leafy\leaf{pxy}}}
&
\xypxybindinggraph
\\[13ex]
\puteg{exeypxy}
&
\rputcolumn{\exists x\,\exists y\,\pxy \\[2ex] \exists y\,\exists x\,\pxy}
&
\xypxytriangle
\e{y}{x}
\e{pxy}{x}
\e{pxy}{y}
&
\rput(0,.1ex){\timestree{\leaf{x}\leafy\leaf{pxy}}}
&
\xypxybindinggraph
\\[12ex]
\end{array}\mkern-50mu\end{math}\end{center}\caption{\label{fig:graph-examples}Examples \protect\refegs{axpx}{exeypxy}.
Each syntactic formula $\formula$ is followed by its graph $\protect\cfa=\protect\gformula$,
cotree $\protect\cotreeof\cfa$, and binding graph $\protect\bindinggraphof\cfa$.
Examples
\protect\refeg{axaypxy} and \protect\refeg{exeypxy}
show two syntactic formulas with the same combinatorial formulas.}\end{figure*}}

\newcommand\xpxqxtriangle{
\lvx{-\halfedgelen,-\halfheight}{x}{x\:}{-120}
\lvxu{0,\halfheight}{px}{px}
\lvx{\halfedgelen,-\halfheight}{qx}{\;\mkern2mu\qx}{-60}}

\newcommand\xpxqtriangle{
\lvx{-\halfedgelen,-\halfheight}{x}{x\:}{-120}
\lvxu{0,\halfheight}{px}{px}
\lvx{\halfedgelen,-\halfheight}{q}{\mkern1mu q}{-37}
}

\newcommand\xpxyqysquare{
\lvx{-\halfedgelen,\halfedgelen}{x}{x\:}{120}
\lvx{-\halfedgelen,-\halfedgelen}{y}{y\:}{-120}
\lvx{\halfedgelen,\halfedgelen}{px}{\mkern0mu\px}{41}
\lvx{\halfedgelen,-\halfedgelen}{qy}{\;\mkern.5mu\qy}{-60}
}

\newcommand\axiomeg{\mkern5mu{}\singletonred{\mkern2mu p\mkern-1mu  x\mkern-1mu fy}\mkern10mu\singletonred{\mkern2mu \pp\mkern-1mu  x\mkern-1mu f y}\mkern3mu}

\newcommand\twolinkp{\{\mkern2mu\singletonred\ppx,\singletonred\pz\mkern2mu\}}
\newcommand\twolinkq{\{\mkern2mu\singletonblue\qqy,\singletonblue{\q fz}\mkern2mu\}}
\newcommand\twolinkfographvertices{
  \rput(-\halfedgelen,0){
    \lvx{-\edgelen,\halfedgelen}{x}{x}{180}
    \redlvx{0,\halfedgelen}{ppx}{\ppx}{90}
    \lvx{-\edgelen,-\halfedgelen}{y}{y}{180}
    \bluelvxsep{0,-\halfedgelen}{qqy}{\likex\qqy}{-90}{4pt}
  }
  \rput(\halfedgelen,0){
    \redlvx{0,\halfedgelen}{pz}{\pz}{90}
    \bluelvxsep{0,-\halfedgelen}{qfz}{\likex{\qfz}}{-90}{4pt}
  }
  \rput(\edgelen,0){\lvx{\halfedgelen,0}zz0}
}
\newcommand\twolinkfographedges{
  \e x {ppx}
  \e y {qqy}
  \e {pz} {qfz}
}
\newcommand\twolinkfograph{\twolinkfographvertices\twolinkfographedges}
\newcommand\monadicfographeg{
  \rput(-\edgelen,0){\lvx{-\halfedgelen,0}{x}{x}{-180}}
  \rput(-\halfedgelen,0){
    \lvx{0,\halfedgelen}{pxu}{\px}{90}
    \lvxsep{0,-\halfedgelen}{pxd}{\likex{\px}}{-90}{4pt}
  }
  \rput(\halfedgelen,0){
    \lvx{\edgelen,0}{y}{y}{0}
    \lvx{0,0}{ppy}{\ppy}{90}
  }
  \e{pxu}{pxd}
  \e{y}{ppy}
}
\newcommand\mographeg{
  \rput(-\edgelen,0){\vx{-\halfedgelen,0}{x}}
  \rput(-\halfedgelen,0){
    \vx{0,\halfedgelen}{pxu}
    \vx{0,-\halfedgelen}{pxd}
  }
  \rput(\halfedgelen,0){
    \vx{\edgelen,0}{y}
    \vx{0,0}{ppy}
  }
  \e{pxu}{pxd}
  \e{y}{ppy}
   {\bindingstyle
    \nccurve[angleA=50,angleB=180] {x} {pxu}
    \nccurve[angleA=-50,angleB=-180] {x} {pxd}
    \nccurve[angleA=150,angleB=30] {y} {ppy}
   }
  \dualitystyle
    \nccurve[angleA=0,angleB=130] {pxu} {ppy}
    \nccurve[angleA=0,angleB=-130] {pxd} {ppy}
}

\newcommand\twolinkassignment{\assignopen\assign x z,\assign y{fz}\assignclose}
\newcommand\twolinkleapgraphedges{{\psset{linestyle=dashed,linewidth=.8pt,nodesep=0pt}
  \nccurve[angleA=-30,angleB=-150]{ppx}{pz}
  \nccurve[angleA=30,angleB=150]{qqy}{qfz}
  \psset{ncurv=1.2}
  \nccurve[angleA=55,angleB=105] x z
  \nccurve[angleA=-55,angleB=-105] y z}}
\newcommand\twolinkleapgraph{
  \twolinkfographvertices
  \twolinkleapgraphedges
}

\newcommand\monetegvertices{
  \rput(-\edgelen,0){\vx{-\halfedgelen,0}{x}}
  \rput(-\halfedgelen,0){
    \vx{0,\halfedgelen}{pxu}
    \vx{0,-\halfedgelen}{pxd}
  }
  \rput(\halfedgelen,0){
    \vx{0,\halfedgelen}{ppyu}
    \vx{0,-\halfedgelen}{ppyd}
    \vx{\edgelen,0}{y}
  }
}

\newcommand\moneteg{
  \monetegvertices
  \e{pxu}{pxd}
  \e{y}{ppyu}
  \e{y}{ppyd}
   {\bindingstyle
    \nccurve[angleA=45,angleB=-170] {x} {pxu}
    \nccurve[angleA=-45,angleB=170] {x} {pxd}
    \nccurve[angleA=120,angleB=10] {y} {ppyu}
    \nccurve[angleA=-120,angleB=-10] {y} {ppyd}
   }
  \monetegdualities
}

\newcommand\monetegdualities{
  \dualitystyle
    \nccurve[angleA=20,angleB=160] {pxu} {ppyu}
    \nccurve[angleA=-20,angleB=-160] {pxd} {ppyd}
}

\newcommand\monetegleapgraphedges{
  \monetegdualities
  \dualitystyle
  \nccurve[angleA=10,angleB=170] {x} {y}
}

\newcommand\monetegleapgraph{
  \monetegvertices
  \monetegleapgraphedges
}

\newcommand\drinkersquare{
\lvx{-\halfedgelen,\halfedgelen}{x}{x}{135}
\lvx{\halfedgelen,\halfedgelen}{px}{\mkern.5mu\likex\ppx}{45}
\lvx{-\halfedgelen,-\halfedgelen}{y}{\mkern.5mu\likex y}{-135}
\lvx{\halfedgelen,-\halfedgelen}{py}{\py}{-45}
}
\newcommand\drinkersquareunlabelled{
\lvx{-\halfedgelen,\halfedgelen}{x}{}{135}
\lvx{\halfedgelen,\halfedgelen}{px}{}{45}
\lvx{-\halfedgelen,-\halfedgelen}{y}{}{-135}
\lvx{\halfedgelen,-\halfedgelen}{py}{}{-45}
}

\newcommand\figMoreGraphExamples{\begin{figure*}\begin{center}\begin{math}
\begin{array}{@{\hspace{-25ex}}c@{\hspace{18ex}}c@{\hspace{21ex}}c@{\hspace{22ex}}c@{\hspace{22ex}}c@{\hspace{-20ex}}}
\\[-10.95ex]
\puteg{axpxqx}
&
\rputcolumn{\forall x\: (\px\wedge\qx) \\[2ex] \forall x\: (\qx\wedge\px)}
&
\xpxqxtriangle
\e{px}{qx}
&
\rput(0,.8ex){\plustree{\leaf{x}\timestree{\leaf{\px}\leaf{\qx}}}}
&
\xpxqxtriangle
\de x {px}
\de x {qx}
\\[11ex]
\puteg{expxqx}
&
\rputcolumn{\exists x\: (\px\vee\qx) \\[2ex] \exists x\: (\qx\vee\px)}
&
\xpxqxtriangle
\e{x}{px}
\e{x}{qx}
&
\rput(0,.8ex){\timestree{\leaf{x}\plustree{\leaf{px}\leaf{qx}}}}
&
\xpxqxtriangle
\de x {px}
\de x {qx}
\\[12ex]
\puteg{expxq}
&
\rputcolumn{
  \exists x\: (\,\px\wedge q\,)
  \\[1.6ex]
  \exists x\: (\,q \wedge\px\,)
  \\[1.6ex]
  (\,\exists x\, \px\,) \wedge q
  \\[1.6ex]
  q \wedge (\,\exists x\, \px\,)
}
&
\xpxqtriangle
\e{x}{px}
\e{x}{q}
\e{px}{q}
&
\rput(0,.8ex)
  {\timestree{\leaf{\rule{0ex}{1.3ex}x}\leaf{\rule{0ex}{1.3ex}px}\leaf{\mkern2mu q}}}
&
\xpxqtriangle
\de x {px}
\\[12ex]
\puteg{expxeyqy}
&
\rputcolumn{
  (\exists x\: \px) \vee (\exists y\: \qy)
  \\[2ex]
  (\exists y\: \qy) \vee (\exists x\: \px)
}
&
\xpxyqysquare
\e x {px}
\e y {qy}
&
\rput(0,.5ex){\plustree{\timestree{\leaf x \leaf {\px}}\timestree{\leafy \leaf {\qy}}}}
&
\xpxyqysquare
\de x {px}
\de y {qy}
\\[12ex]
\puteg{axpxayqy}
&
\rputcolumn{
  (\forall x\: \px) \wedge (\forall y\: \qy)
  \\[2ex]
  (\forall y\: \qy) \wedge (\forall x\: \px)
}
&
\xpxyqysquare
\e x y
\e x {qy}
\e y {px}
\e {px} {qy}
&
\rput(0,.5ex){\timestree{\plustree{\leaf x \leaf {\px}}\plustree{\leafy \leaf {\qy}}}}
&
\xpxyqysquare
\de x {px}
\de y {qy}
\\[12ex]
\puteg{expxayqy}
&
\rputcolumn{
  \exists x \left(\rule{0ex}{1.7ex}\,\px \vee (\forall y\, \qy) \,\right)
  \\[2ex]
  \exists x \left(\rule{0ex}{1.7ex}\,(\forall y\, \qy) \vee \px \,\right)
  \\[2ex]
  \exists x \left(\rule{0ex}{1.7ex}\,\forall y\, (\px \vee \qy) \,\right)
  \\[2ex]
  \exists x \left(\rule{0ex}{1.7ex}\,\forall y\, (\qy \vee \px) \,\right)
}
&
\xpxyqysquare
\e x y
\e x {px}
\e x {qy}
&
\rput(0,.5ex){\timestree{\leaf x \tspace{3ex} \plustree{\leafy\leaf{\px}\leaf{\qy}}}}
&
\xpxyqysquare
\de x {px}
\de y {qy}
\\[12ex]
\end{array}\end{math}\end{center}\caption{\label{fig:more-graph-examples}Examples \refegs{axpxqx}{expxayqy}.
Each syntactic formula $\formula$ is followed by its graph $\cfa\tighteq\gformula$,
cotree $\protect\cotreeof\cfa$, and binding graph $\protect\bindinggraphof\cfa$.
Each example shows multiple syntactic formulas with the same combinatorial formula.}\end{figure*}}

\newcommand\cpropy{-1.8}%
\newcommand\dpropmid{-.2}
\newcommand\dpropgap{-.4}%
\newcommand\dpropy{-3.6}%
\newcommand\onethreecurve{\nccurve[angleA=20,angleB=165] 1 3}
\newcommand\twofourcurve{\nccurve[angleA=-15,angleB=-160] 2 4}
\newcommand\onetwocurve{\nccurve[angleA=-65,angleB=165] 1 2}
\newcommand\threefourcurve{\nccurve[angleA=-25,angleB=125] 3 4}
\newcommand\peirceovercombprop{
  \left(\rule{0ex}{1.7ex}
  \mkern-1mu
  (
  \fibitem{\pp}{\rput(0,\cpropy){\lvxu{0,0}{c1}{\pp}}\rput(0,\dpropy){\vx{0,0}{1}}}
  \mkern1mu
  \tightvee
  \mkern-1mu
  \fibitem{q}{\rput(0,\cpropy){\lvxd{0,\dpropgap}{c2}{q}}\rput(0,\dpropy){\vx{0,\dpropgap}{2}}}
  )
  \mkern-.3mu
  \tightwedge
  \mkern1mu
  \fibitem{\pp}{\rput(0,\cpropy){\lvxu{0,\dpropmid}{c3}{\pp}}\rput(0,\dpropy){\vx{0,\dpropmid}{3}}}
  \right)
  \mkern-1.5mu
  \tightvee
  \fibitem{p}{\rput(0,\cpropy){\lvxd{0,\dpropmid}{c4}{p}}\rput(0,\dpropy){\vx{0,\dpropmid}{4}}}
  \e 1 3
  \e 2 3
  \e{c1}{c3}
  \e{c2}{c3}
  \dualitystyle
  \nccurve[angleA=15,angleB=150] 1 4
  \nccurve[angleA=-30,angleB=-150] 3 4
}

\newcommand\combproptwo{
  (
  \fibitem{p}{\rput(0,\cpropy){\lvxu{0,0}{c1}{p}}\rput(0,\dpropy){\vx{0,0}{1}}}
  \mkern-1mu
  \vee
  \mkern-1mu
  \fibitem{\pp}{\rput(0,\cpropy){\lvxd{0,\dpropgap}{c2}{\pp}}\rput(0,\dpropy){\vx{0,\dpropgap}{2}}}
  )
  \mkern-1mu
  \wedge
  \mkern-1mu
  (
  \fibitem{q}{\rput(0,\cpropy){\lvxu{0,0}{c3}{q}}\rput(0,\dpropy){\vx{0,0}{3}}}
  \mkern-1mu
  \vee
  \mkern-1mu
  \fibitem{\qq}{\rput(0,\cpropy){\lvxd{0,\dpropgap}{c4}{\qq}}\rput(0,\dpropy){\vx{0,\dpropgap}{4}}}
  )
  \e 1 3
  \e 2 3
  \e 1 4
  \e 2 4
  \e{c1}{c3}
  \e{c2}{c3}
  \e{c1}{c4}
  \e{c2}{c4}
  \dualitystyle
  \onetwocurve
  \threefourcurve
}

\newcommand\combpropone{
  (
  \fibitem{p}{\rput(0,\cpropy){\lvxu{0,0}{c1}{p}}\rput(0,\dpropy){\vx{0,0}{1}}}
  \mkern-1mu
  \wedge
  \mkern-1mu
  \fibitem{p}{\rput(0,\cpropy){\lvxd{0,\dpropgap}{c2}{p}}\rput(0,\dpropy){\vx{0,\dpropgap}{2}}}
  )
  \mkern-1mu
  \vee
  \mkern-1mu
  (
  \fibitem{\pp}{\rput(0,\cpropy){\lvxu{0,0}{c3}{\pp}}\rput(0,\dpropy){\vx{0,0}{3}}}
  \mkern-1mu
  \wedge
  \mkern-1mu
  \fibitem{\pp}{\rput(0,\cpropy){\lvxd{0,\dpropgap}{c4}{\pp}}\rput(0,\dpropy){\vx{0,\dpropgap}{4}}}
  )
  \e 1 2
  \e 3 4
  \e{c1}{c2}
  \e{c3}{c4}
  \dualitystyle
  \onethreecurve
  \nccurve[angleA=10,angleB=-170] 2 3
  \nccurve[angleA=0,angleB=180] 1 4
  \twofourcurve
}

\newcommand\combpropthree{
  (
  \fibitem{p}{\rput(0,\cpropy){\lvxu{0,0}{c1}{p}}\rput(0,\dpropy){\vx{0,0}{1}}}
  \mkern-1mu
  \vee
  \mkern-1mu
  \fibitem{\pp}{\rput(0,\cpropy){\lvxd{0,\dpropgap}{c2}{\pp}}\rput(0,\dpropy){\vx{0,\dpropgap}{2}}}
  )
  \mkern-1mu
  \wedge
  \mkern-1mu
  (
  \fibitem{\pp}{\rput(0,\cpropy){\lvxu{0,0}{c3}{\pp}}\rput(0,\dpropy){\vx{0,0}{3}}}
  \mkern-1mu
  \vee
  \mkern-1mu
  \fibitem{p}{\rput(0,\cpropy){\lvxd{0,\dpropgap}{c4}{p}}\rput(0,\dpropy){\vx{0,\dpropgap}{4}}}
  )
  \e 1 3
  \e 2 3
  \e 1 4
  \e 2 4
  \e{c1}{c3}
  \e{c2}{c3}
  \e{c1}{c4}
  \e{c2}{c4}
  \dualitystyle
  \onethreecurve
  \onetwocurve
  \threefourcurve
  \twofourcurve
}

\newcommand\bindinggraphof[1]{\overset{\scalebox{.7}{\raisebox{-3.2pt}[0ex][0pt]{\(\bindrel\mkern-3.5mu\)}}}{#1}}
\newcommand\widebindinggraphof[2]{\overset{\scalebox{.7}{\raisebox{-3.2pt}[0ex][0pt]{\(\widebindrel{#2}\mkern-3.5mu\)}}}{#1}}

\newcommand\bindingrelof[1]{\mathbin{\mkern-2mu\bindinggraphof{#1}\mkern-2mu}}

\newcommand\leapedges{L}
\newcommand\leapsof[1]{\leapedges_{#1}}
\newcommand\leapgraphof[1]{\mathcal{\leapedges}_{#1}}

\newcommand\tightplus{\mathbin+}
\newcommand\tighttimes{\mathbin\times}
\newcommand\tightwedgeveeshrink{\mkern-2mu}
\newcommand\tightvee{\mathbin{\tightwedgeveeshrink\vee\tightwedgeveeshrink}}
\newcommand\tightwedge{\mathbin{\tightwedgeveeshrink\wedge\tightwedgeveeshrink}}

\newcommand\skewlifting[1]{\widetilde{#1}}

\newcommand\colourequiv{\sim}

\newcommand\setof[1]{\{\,{#1}\,\}}
\newcommand\setst[2]{\{\,{#1}:{#2}\,\}}
\newcommand\setsuchthat[2]{\setst{#1}{#2}}
\newcommand\tightin{\mathbin{\mkern-5mu{}\in{}\mkern-5mu}}
\newcommand\tightnotin{\mathbin{\mkern-5mu{}\notin{}\mkern-3mu}}
\newcommand\induced[2]{#1[#2]}

\newcommand\tightsubseteq{\mathbin{\mkern-1mu\subseteq\mkern-1mu}}
\newcommand\tightsupseteq{\mathbin{\mkern-1mu\supseteq\mkern-1mu}}
\newcommand\tightsetminus{\mathbin{\mkern-2mu\setminus\mkern-2mu}}

\newcommand\tightle{\mathbin{\mkern-2mu\le\mkern-2mu}}
\newcommand\tightge{\mathbin{\mkern-2mu\ge\mkern-2mu}}
\newcommand\tightlt{\mathbin{\mkern-2mu<\mkern-2mu}}

\newcommand\module{M}
\newcommand\modulep{\module\mkern-2mu'}

\newcommand\cleaningegone{
\namedsingletonleft{x1}{x}\hspace*{1.2ex}\namedsingletonright{px}{px}\e{x1}{px}
\hspace*{1.2ex}
\namedsingletonleft{x2}{x}\hspace*{1.2ex}\namedsingletonright{qx}{qx}\e{x2}{qx}
}

\newcommand\cleaningegtwo{
\namedsingletonleft{x1}{x}\hspace*{1.2ex}\namedsingletonright{px}{px}\e{x1}{px}
\hspace*{1ex}
\namedsingletonright{qx}{qx}
}

\newcommand\bigcleaningeg[2]{
\namedsingletonright{px}{px}
\hspace*{1.2ex}
\namedsingletonleft{x1}{#1}\hspace*{1.2ex}\namedsingletonright{qx}{q#1}\e{x1}{qx}
\hspace*{1.2ex}
\namedsingletonleft{x2}{#2}\hspace*{1.2ex}\namedsingletonright{rxx}{r#2#2}\e{x2}{rxx}
}
\newcommand\primed{\mkern-1.2mu'}

\newcommand\diredge[2]{\langle #1,#2\rangle}

\newcommand\shortimplies{\mathop{\mkern-0mu\implies\mkern-1mu}}
\newcommand\tightimplies{\shortimplies}
\newcommand\tighteq{\mathop{\mkern-1mu=\mkern-1mu}}
\newcommand\tightneq{\mathop{\mkern-1mu\neq\mkern-1mu}}
\newcommand\calculusfont[1]{\textnormal{\textbf{\textsf{#1}}}}
\newcommand\LK{\calculusfont{LK}\xspace}

\newcommand\figurelkdrinkerproof{\begin{figure*}\begin{center}\vspace{-2ex}\psscalebox{.92}{\begin{math}
\contractionruleright{
  \existsruleright{
    \impliesruleright{
      \weakeningruleright{%
        \forallruleright{
          \existsruleright{
            \impliesruleright{
              \weakeningruleright{
                \verboseaxiomruleright{
                  \py \vdash \py
                }
              }
              {
                \py \vdash \py \comm \allypy
              }
            }
            {
              \vdash \py \comm \predrinkerformula
            }
          }
          {
            \vdash \py \comm \drinkerformula
          }
        }
        {
          \vdash \allypy \comm \drinkerformula
        }
      }
      {
        \py \vdash \allypy \comm \drinkerformula
      }
    }
    {
      \vdash \predrinkerformula \comm \drinkerformula
    }
  }
  {
    \vdash \drinkerformula \comm \drinkerformula
  }
}
{
  \vdash \drinkerformula
}
\end{math}}\end{center}\vspace{-2ex}\caption{A syntactic proof of $\drinkerformula$, in Gentzen's sequent calculus \LK \cite{Gen35}.\label{fig:lk-drinker-proof}}\end{figure*}}

\newcommand\fateq{\mkern2mu=\mkern2mu}

\newcommand{\shortmapsto}{\psscalebox{.8 1}{\mathbin{\ensuremath\mapsto}}}
\newcommand\assign[2]{#1\mkern.5mu\shortmapsto\mkern.5mu #2}
\newcommand\assignopen{\{\mkern.5mu}
\newcommand\assignclose{\mkern.5mu\}}

\newcommand\assignment[1]{\assignopen #1 \assignclose}

\newcommand\literal{l}

\newcommand\literala{k}

\newcommand\unirel{\approx}
\newcommand\unirelof[1]{\unirel_{#1}}

\newcommand\termoratom{e}

\newcommand\parag[1]{\paragraph*{#1.}\hspace{-1ex}}
\newcommand\nec{\Box}
\newcommand\pos{\raisebox{-1pt}{\(\Diamond\)}}

\newcommand\openmodaldrinkerformula{p \tightimplies \nec p}
\newcommand\modaldrinkerformula{\pos(\openmodaldrinkerformula)}
\newcommand\restrsym{\mathop{\mkern-3mu\restriction\mkern-1mu}}
\newcommand\restr[2]{#1\restrsym{_{\mkern-2mu#2}}}

\newcommand\dualitiessym{\bot}
\newcommand\dualitiesof[1]{\dualitiessym_{#1}}
\newcommand\dualityedgesof[1]{\dualitiesof{#1}}
\newcommand\dualizinggraphpairof[1]{\graphpairof{\verticesof{#1}}{\dualitiesof{#1}}}%
\newcommand\dualities{\dualitiessym}

\newcommand\figrule{\vspace{1.3ex}\hrule}%

\newcommand\portion{P}
\newcommand\portionp{\portion\primed}
\newcommand\dualportion{\portion^*}
\newcommand\dualportionp{\portionp^*}
\newcommand\fusion{F}

\newcommand\tightf{\mkern-1mu f\mkern-1mu}

\newcommand\combulavar{G}

\newcommand\sequent{\Gamma}
\newcommand\sequenta{\Delta}

\newcommand\gsone{\calculusfont{GS1}\xspace}

\newcommand\sequentcom{\sequent\mkern-3mu\com}
\newcommand\sequentacom{\sequenta\mkern-1mu\com}

\newcommand\neighbourhoodof[1]{N(#1)}

\newcommand\rectifiedformulaeg[2]  {(\px\tightvee\exists y\mkern2mu\qy) #1 \exists z\mkern2mu\rz#2}
\newcommand\unrectifiedformulaeg[2]{(\px\tightvee\exists x\mkern2mu\qx) #1 \exists x\mkern2mu\rx#2}

\newcommand\term{t}

\newcommand\dualizer{\delta}
\newcommand\dualizerp{\delta\primed}
\newcommand\dualizera{\gamma}
\newcommand\occs{\omega}

\newcommand\subst{\sigma}

\newcommand\stem{\zone}%
\newcommand\stema{\ztwo}%

\newcommand\stemeg{
\namedsingletonleft{x}{x}\hspace*{1.2ex}\singletonred{\px}\e{x}v
\hspace*{1.2ex}
\namedsingletonleft{y}{y}\hspace*{1.2ex}\singletonred{\ppy}\e{y}v
\hspace*{1.2ex}
\singletonz
}

\newcommand\stemegmin{\assignment{\assign x \stem,\assign y \stem}}
\newcommand\stemeguniversal{\assignment{\assign x z,\assign y z}}

\newcommand\genegvia{\assignment{\assign\stem{fza}}}

\newcommand\dep{\{\singletonx,\singletony\}}

\newcommand\isvalid{\models}

\newcommand\occsubst[3]{\assignopen #1\mapsto_{#2}#3\assignclose}

\newcommand\megagraph{\Omega}

\newcommand\xgraphofsymbol{\mathbb{G}}
\newcommand\xgraphof[1]{\xgraphofsymbol(#1)}

\newcommand\xgraphall[2]{\singleton{#1}\graphunion\xgraphof{#2}}
\newcommand\xgraphex [2]{\singleton{#1}\graphjoin\xgraphof{#2}}

\newcommand\weakeningsymbol{\mathsf{W}}
\newcommand\slackeningsymbol{\mathsf{S}}
\newcommand\contractionsymbol{\mathsf{C}}
\newcommand\contractionof[1]{\contractionsymbol_{#1}}
\newcommand\weakeningof[2]{\weakeningsymbol_{#1}^{#2}}
\newcommand\slackeningof[2]{\slackeningsymbol_{#1}^{#2}}

\newcommand\im[1]{\textsf{Im}\mkern2mu#1}

\newcommand\childsym{\prec}%
\newcommand\parentsym{\succ}%
\newcommand\child{\mkern-2mu\childsym\mkern-3mu}
\newcommand\childp{\mkern-2mu\childsym\mkern-5.3mu\primed}
\newcommand\parent{\mkern-2mu\parentsym\mkern-3mu}
\newcommand\childin[1]{\mkern-2mu\childsym_{\mkern-1mu #1}\mkern-3mu}
\newcommand\parentin[1]{\mkern-2mu\parentsym_{\mkern-1mu #1}\mkern-2mu}
\newcommand\belowin[1]{\mkern-2mu<_{\mkern-1mu #1}\mkern-2mu}
\newcommand\atorbelowin[1]{\mkern-2mu\le_{\mkern-1mu #1}\mkern-2mu}
\newcommand\atorabovein[1]{\mkern-2mu\ge_{\mkern-1mu #1}\mkern-2mu}
\newcommand\abovein[1]{\mkern-2mu>_{\mkern-1mu #1}\mkern-2mu}

\newcommand\meet{\mathbin{\mkern-1mu\odot\mkern-1mu}}
\newcommand\jmeet{\mathbin{\mkern-1mu\otimes\mkern-1mu}}
\newcommand\umeet{\mathbin{\mkern-1mu\oplus\mkern-1mu}}

\newcommand\reflem[1]{Lemma\,\ref{#1}\xspace}

\newcommand\scopeofin[2]{\textsf{S}_{#2}(#1)}

\newcommand\node{n}
\newcommand\nodea{m}
\newcommand\nodeaa{o}

\newcommand\plustimes{$\graphunion\mkern-2mu\graphjoin\mkern-1mu$}
\newcommand\plustimestreestem{\plustimes{} tree}
\newcommand\plustimestree{\plustimestreestem\xspace}
\newcommand\plustimestrees{\plustimestreestem s\xspace}

\newcommand\plustimesnodestem{\plustimes{} node}
\newcommand\plustimesnode{\plustimesnodestem\xspace}
\newcommand\plustimesnodes{\plustimesnodestem s\xspace}

\newcommand\nodeset{N}
\newcommand\nodesetp{N\primed}

\newcommand\defaultplustimestree{\digraphpairof{\nodeset}{\child\,}}
\newcommand\defaultplustimestreep{\digraphpairof{\nodesetp}{\childp\,}}

\newcommand\cographofsymbol{\textsf{G}}
\newcommand\cographof[1]{\cographofsymbol(#1)}
\newcommand\tree{T}
\newcommand\treep{T\primed}

\newcommand\nodesof[1]{\nodeset_{#1}}
\newcommand\nodesetof[1]{\nodesof{#1}}
\newcommand\absorption[2]{#1\mkern1mu\raisebox{.3ex}{\ensuremath\uparrow}\mkern1mu#2}
\newcommand\absorptionsub[2]{#1\mkern1mu\raisebox{.45ex}{\ensuremath{{}_\uparrow}}\mkern1mu#2}

\newcommand\absorbed[1]{|#1|}

\newcommand\parentof[1]{\widehat{#1}}%
\newcommand\parentabovein[1]{\mathbin{\mkern1mu\rhd_{#1}}}
\newcommand\wsmap{\mbox{$\weakeningsymbol\slackeningsymbol$}-map\xspace}

\newcommand\defaultcotreeeg{\plustreesep{.9}{
  \timestreesep{.75}{\lf\lf}
    \lf
    \timestreesep{.6}{
      \lf
      \lf
      \tspace{-.235}
      \plustreesep{.6}{\lf\lf}}
  }}

\newcommand\markingof[1]{{#1}^*}

\newcommand\inlineaxiom[1]{\overline{#1\com\dual{#1}}}%

\newcommand\sequentsequence{\formula_1\com\ldots\com\formula_n}
\newcommand\formulaofsequent{\formula_1\tightvee(\formula_2\tightvee\ldots(\formula_{n-1}\tightvee\formula_n)\ldots)}

\newcommand\dcpcptwo{
  \newcommand\xxwidth{.9}
  \newcommand\xwidth{.3}
  \rput(0,\cpfibheight){
    \redvx{-\xxwidth,0}{A}
    \bluevx{\xwidth,0}{B}
    \bluevx{\xxwidth,-\pfycoverradius}{C1}
    \redvx{\xxwidth,\pfycoverradius}{C2}
  }
  \lvxd{-\xxwidth,0}{a}{\pp}
  \lvxd{-\xwidth,-.14}{q}{q}
  \lvxd{\xwidth,0}{b}{\pp}
  \lvxd{\xxwidth,0}{c}{p}
  \e A B
  \e a b
  \e q b
  \fibrestyle
  \ncline {A}{a}
  \ncline {B}{b}
  \ncline {C2}{C1}
  \ncline {C1}{c}
}
\newcommand\dcptwo{
  \newcommand\xxwidth{.9}
  \newcommand\xwidth{.3}
  \rput(0,\cpfibheight){
    \vx{-\xxwidth,0}{A}
    \vx{\xwidth,0}{B}
    \vx{\xxwidth,-\pfycoverradius}{C1}
    \vx{\xxwidth,\pfycoverradius}{C2}
  }
  \vx{-\xxwidth,0}{a}
  \vx{-\xwidth,-.14}{q}
  \vx{\xwidth,0}{b}
  \vx{\xxwidth,0}{c}
  \e A B
  \e a b
  \e q b
  \fibrestyle
  \ncline {A}{a}
  \ncline {B}{b}
  \ncline {C2}{C1}
  \ncline {C1}{c}
  \dualitystyle
  \nccurve[angleA=10,angleB=140,dash=3pt 1.5pt] B {C1}
  \nccurve[angleA=25,angleB=160] A {C2}
  \nccurve[angleA=25,angleB=155,dash=3pt 1.5pt] b c
  \nccurve[angleA=-50,angleB=-150] a c
}

\newcommand\tightminus{\mkern-3mu-\mkern-3mu}

\newcommand\vrel[1]{\simeq_{#1}}

\newcommand\monadiclink{\{\mkern2mu\singletonblue\px,\singletonblue\qy\mkern2mu\}}

\newcommand\dualizinggraph{D}
\newcommand\dualizinggrapha{C}
\newcommand\dualizinggraphofsymbol{\mathbb{D}}
\newcommand\dualizinggraphof[1]{\dualizinggraphofsymbol(#1)}
\newcommand\dualizinggraphofpropsymbol{\mathcal{D}}
\newcommand\dualizinggraphofprop[1]{\dualizinggraphofpropsymbol(#1)}

\newcommand\dualizinggraphofgraph[1]{\graphpairof{\verticesof{#1}}{\dualitiesof{#1}}}

\newcommand\dualizingcover{\dualizinggrapha}
\newcommand\dualizingbase{\dualizinggraph}

\newcommand\bindingsymbol{B}
\newcommand\bindingsof[1]{\bindingsymbol_{#1}}
\newcommand\bindingedgesof[1]{\bindingsof{#1}}

\newcommand\bindinggraphofgraph[1]{\graphpairof{\verticesof{#1}}{\bindingsof{#1}}}

\newcommand\mograph{M}
\newcommand\mographa{N}
\newcommand\mographp{\mograph\primed}
\newcommand\mographofsymbol{\mathbb{M}}
\newcommand\mographof[1]{\mographofsymbol(#1)}
\newcommand\mographofformulasymbol{\mathcal{M}}
\newcommand\mographofformula[1]{\mographofformulasymbol(#1)}
\newcommand\modalmographofsymbol{\mathscr{M}}
\newcommand\modalmographof[1]{\modalmographofsymbol(#1)}
\newcommand\linkedmographofsymbol{\Lambda}
\newcommand\linkedmographof[1]{\linkedmographofsymbol(#1)}

\newcommand\monadicformulaeg{\forall x
 \left(\rule{0ex}{1.7ex}
  (\px\tightwedge\px)
  \tightvee
   \exists y\mkern2mu\ppy
 \right)}

\newcommand\prop{\formula}%

\newcommand\cbg{K}

\newcommand\brel[1]{\vrel{#1}}%

\newcommand\wedgeorvee{\ast}
\newcommand\wedgeorveeinset{\wedgeorvee\tightin\setof{\wedge\mkern1mu,\mkern-3mu\vee}}

\newcommand\pfour{P_{\mkern-3mu 4}}
\newcommand\pfourvertexlist{\vertex_1,\mkern-2mu\vertex_2,\mkern-2mu\vertex_3,\mkern-2mu\vertex_4}
\newcommand\pfourvertices{\setof{\pfourvertexlist}}
\newcommand\pfouredges{\setof{\vertex_1\vertex_2,\mkern-1mu\vertex_2\vertex_3,\mkern-1mu\vertex_3\vertex_4}}
\newcommand\pfourgraph{{\newcommand\gap{\mkern4mu}\namedvx a \gap \namedvx b\e a b\gap \namedvx c\e b c\gap \namedvx d\e c d}}

\newcommand\cthree{C_3}
\newcommand\cthreevertexlist{\vertex_1,\mkern-2mu\vertex_2,\mkern-2mu\vertex_3}
\newcommand\cthreevertices{\setof{\cthreevertexlist}}
\newcommand\cthreeedges{\setof{\vertex_1\vertex_2,\mkern-1mu\vertex_2\vertex_3,\mkern-1mu\vertex_3\vertex_1}}
\newcommand\cthreegraph{\raisebox{-2pt}{\(\newcommand \gap{\mkern2mu} \namedvx a \gap \rput(0,.2){\namedvx b} \gap \namedvx c\e a c \e a b \e b c\)}}

\newcommand\vertexrenaming{\hat}%

\newcommand\forallorexists{\nabla}
\newcommand\forallorexistsinset{\forallorexists\in\setof{\forall\mkern-1mu,\mkern-1mu\exists}}

\newcommand\monet{\net}

\newcommand\equivclassof[2]{[#1]_{#2}}
\newcommand\quotientsymbol{\mathop{\mkern-3mu/\mkern-4mu}}
\newcommand\quotientof[2]{{#1\quotientsymbol#2}}
\newcommand\quotientgraphof[2]{\quotientof{#1}{#2}}

\newcommand\indist{\asymp}
\newcommand\collapseof[1]{#1_{\indist}}%

\newcommand\literaltype{\ast}
\newcommand\universaltype{\forall}
\newcommand\existentialtype{\exists}
\newcommand\vertextypes{\setof{\literaltype,\universaltype,\existentialtype}}
\newcommand\vertextypein[1]{\mathsf{typ}_{#1}}
\newcommand\typeof[2]{\vertextypein{#2}(#1)}

\newcommand\R{\calculusfont{LKR}\xspace}

\newcommand\skel[1]{{#1}_{\textsf{s}}}

\newcommand\tightcirc{\mathbin{\mkern-3mu\circ\mkern-3mu}}

\title{\vspace*{-4.5ex}\LARGE First-order proofs without syntax}
\author{\\[-2.5ex]
\large Dominic J.\,D.\,Hughes\\[1ex]
\small Stanford University \& U.C.\ Berkeley\thanks{%
  I conducted this research as a Visiting Scholar at Stanford then
  Berkeley. Many thanks to my hosts, Vaughan Pratt (Stanford Computer
  Science), Sol Feferman (Mathematics) and Wes Holliday
  (Berkeley Logic Group).
  I am extremely grateful for very helpful feedback from
  Willem Heijltjes, Lutz Stra\ss burger, Grisha Mints, Sam Buss, Martin Hyland, Marc Bagnol and Nil Demir\c{c}ubuk.
  In memoriam Sol and Grisha.}%
\date{\small\cleanlookdateon\today}}

\begin{document}

\maketitle\vspace{-4ex}

\begin{center}\begin{minipage}{4.75in}
\hspace{3ex}Proofs are traditionally syntactic, inductively generated objects.
This paper reformulates first-order logic (predicate calculus) with
proofs which are
graph-theoretic
rather than syntactic.
It defines a \textsl{combinatorial proof} of a formula $\formula$ as a lax fibration over a graph associated with $\formula$.
The main theorem is soundness and completeness: a formula is a valid if and only if it has a combinatorial proof.
\end{minipage}\end{center}

\section{Introduction}\label{sec:intro}

Proofs are traditionally syntactic, inductively generated objects.
For example,
Fig.\,\ref{fig:lk-drinker-proof} shows a syntactic proof of\note{the formula} $\drinkerformula$.
This paper reformulates first-order logic (predicate calculus) \cite{Fre} with proofs which are graph-theoretic rather than syntactic.
It defines a \textsl{combinatorial proof} of a formula $\formula$ as a lax graph fibration $\cp$ over a graph $\gformula$ associated with $\formula$, where $\cover$ is a partially coloured graph.
For example, if $\formula = \drinkerformula$
then $\gformula$ is
\drinkerFormulaDisplayed
and a combinatorial proof $\cp$ of $\formula$ is
\begin{center}\drinkerDisplayed\end{center}
The upper
graph is $\cover$
(two coloured vertices
$\singletonblue{}\singletonblue{}$
and three uncoloured vertices),
the lower graph is $\gformula$,
and the dotted lines define $\skewfib$.
Additional combinatorial proofs are depicted in Fig.\,\ref{fig:cps}.
The
combinatorial proof $\cp$ above can be condensed
by leaving $\gformula$ implicit and drawing
$\cover$
over the formula $\formula$:
\begin{center}\drinkerInlineDisplayedPic\end{center}
The reader may contrast this with the
syntactic proof of the same formula in Fig.\,\ref{fig:lk-drinker-proof}.
The four combinatorial proofs of Fig.\,\ref{fig:cps} are rendered in condensed form in Fig.\,\ref{fig:cps-condensed}.

The main theorem of this paper is soundness and completeness: a formula is valid if and only if it has a combinatorial proof (Theorem\,\ref{thm:soundness-completeness}).\figurelkdrinkerproof{}
The propositional fragment was presented in \cite{Hug06}.

\figcps{}\figcpscondensed{}

\section{Notation and terminology}\label{sec:notation}

\parag{First-order logic}
We mostly follow the notation and terminology of \cite{Joh87} for first-order logic without equality \cite{Fre}.\todo{comment 0 1}
Terms and atoms (atomic formulas) are generated
 inductively from variables $x$, $y$, $z,\ldots$ by: if $\gamma$ is
an $n$-ary
function (resp.\ predicate) symbol
and $t_1,\ldots,t_n$ are terms then $\gamma t_1\ldots t_n$ is a term (resp.\ atom).
We extend the set of atoms with the logical constants $1$ (true) and $0$ (false).
For technical convenience we assume every predicate symbol $p$ is assigned a \defn{dual} predicate symbol $\pp$ with $\pp\tightneq p$ and $\ppp\tighteq p$, and extend duality to atoms with $\dual{p\rule{0ex}{1.1ex}\likex{t_1\ldots t_n}}=\pp t_1\ldots t_n$,
$\dual0=1$ and $\dual1=0$.
\defn{Formulas} are generated from atoms by binary
$\wedge$
and $\vee$
and
quantifiers $\forall x$
and $\exists x$
per variable $x$.
Define $\neg$
and $\implies$
as abbreviations:
$\neg(\alpha)=\dual\alpha$ on atoms $\alpha$,
$\neg(\formula\tightwedge\formulaa)=(\neg\formula)\vee(\neg\formula)$,
$\neg(\formula\tightvee\formulaa)=(\neg\formula)\wedge(\neg\formulaa)$,
$\neg\mkern2mu\forall x\mkern2mu \formula=\exists x\mkern2mu\neg\mkern1mu \formula$,
$\neg\mkern2mu\exists x\mkern2mu \formula=\forall x\mkern2mu\neg\mkern1mu \formula$,
and
$\formula\implies\formulaa=(\neg \formula) \vee \formulaa$.
A formula is \defn{rectified} if all bound variables are distinct from one another and from all free variables, \eg
\mbox{$\rectifiedformulaeg\tightwedge{}$}
but not
\mbox{$\unrectifiedformulaeg\tightwedge{}$}.
Throughout this paper we assume all formulas are rectified
(losing no generality since every unrectified formula has a logically equivalent rectified form).

\vspace{-2ex}\parag{Graphs}
An \defn{edge} on a set $\vertices$ is a two-element subset of $\vertices$.
A \defn{graph} $\graphpair$ is a finite
set $\vertices$ of \defn{vertices} and a set $\edges$ of edges on $\vertices$.
Write $\verticesof\graph$ and $\edgesof\graph$ for the vertex and edge sets of a graph $\graph$, and
$\vertex\vertexa$ for $\{\mkern1mu\vertex,\mkern-2mu\vertexa \mkern1mu\}$.
The \defn{complement}  of $\graphpair$ is the graph $\graphpairof{\vertices}{\edges^{\mkern-1mu\mathsf c}}$ with $vw\tightin\edges^{\mkern-1mu\mathsf c}$ if and only if $vw\tightnotin\mkern-2mu\edges$.
A graph $\graph$ is (partially) \defn{coloured} if it carries a partial equivalence relation $\colourequiv$ on $\verticesof\graph$ such that $\vertex\colourequiv\vertexa$ only if $\vertex\vertexa\tightnotin\edgesof\graph$; each equivalence class is a \defn{colour}.
A graph is \defn{labelled} in a set $L$ if each vertex has an element of $L$ associated with it, its \defn{label}.
A \defn{vertex renaming} of $\graphpair$ along a bijection
 $(\vertexrenaming{\hspace{1ex}}):\vertices\to\verticesp$ is the graph $\graphpairof\verticesp{\setst{\vertexrenaming\vertex\mkern2mu\vertexrenaming\vertexa}{\vertex\mkern1mu\vertexa\tightin\edges}}$, with colouring and/or labelling inherited (\ie, $\vertexrenaming\vertex\colourequiv\vertexrenaming\vertexa$ if $\vertex\colourequiv\vertexa$, and the label of $\vertexrenaming\vertex$ that of $\vertex$).
Following standard graph theory, we identify graphs modulo vertex renaming.
Let $\graph\tighteq\mkern3mu\graphpair$ and $\graphp\tighteq\mkern3mu\graphpairp$ be graphs.
A \defn{homomorphism} $\homom:\graph\to\graphp$ is a function \mbox{$\homom:\vertices\mkern-2mu\to\mkern-2mu\verticesp$} such that if
$\vertex\vertexa\mkern-2mu\in\mkern-2mu\edges$ then $\homomof\vertex\mkern1mu\homomof\vertexa\mkern-2mu\in\mkern-2mu\edgesp$.
Without loss of generality, assume
\mbox{$\vertices\cap\verticesp=\emptyset$} (by renaming vertices if needed).
The \defn{union} $\guniongp$ is \mbox{$\graphpairof{\mkern2mu\vertices\cup\verticesp\mkern-2mu}{\edges\cup\edgesp\mkern2mu}$}
and
\defn{join} \mbox{$\gjoingp$} is \mbox{$\graphpairof{\vertices\cup\verticesp\mkern-2mu}{\edges\cup\edgesp\cup\setst{\vertex\vertexp}{\vertex\tightin\vertices,\vertexp\tightin\verticesp}}$};
any colourings or labellings are inherited.
$\graph$ is \defn{disconnected} if $\graph=\graph_1\graphunion\graph_2$ for graphs $\graph_i$, else \defn{connected},
and \defn{coconnected} if
its complement
is connected.
The subgraph of
$\graphpair$ \defn{induced} by $W\tightsubseteq\vertices$ is $\graphpairof W {\restr \edges W}$ for $\restr \edges W$ the restriction of $\edges$ to edges on $W$.
A graph is \defn{$\graph$-free} if $\graph$ is not an induced subgraph.
A \defn{cograph} is a $\pfour$-free graph, where
$\pfour=\pfourgraph=\graphpairof{\pfourvertices}{\pfouredges}$.
In
$\graphpair$ the \defn{neighbourhood} $\neighbourhoodof\vertex$ of $\vertex\tightin\vertices$ is $\setsuchthat{\vertexa}{\vertex\vertexa\tightin\edges}$,
a \defn{module}
is a set $\module\tightsubseteq\vertices$
such that $\neighbourhoodof\vertex\tightsetminus\module = \neighbourhoodof\vertexa\tightsetminus\module$ for all $\vertex,\mkern-2mu\vertexa\tightin\module$,
and $\module$ is \defn{strong} if
every module $\modulep$ satisfies $\modulep\mkern-3mu\cap\mkern-1mu\module\mkern-2mu=\mkern-2mu\emptyset$, $\modulep\mkern-1mu\tightsubseteq\module$ or $\modulep\tightsupseteq\module$.
A \defn{directed graph} $\digraphpairof\vertices\edges$ is a set
$\vertices$ of vertices
and a set
$\edges\tightsubseteq\vertices\mkern-4mu\tighttimes\mkern-4mu\vertices$ of \defn{directed edges}.
A directed graph \defn{homomorphism} $\homom:\digraphpairof\vertices\edges\mkern-2mu\to\mkern-2mu\digraphpairof\verticesp\edgesp$ is a function $\homom:\vertices\mkern-3mu\to\mkern-3mu\verticesp$ such that $\diedge\vertex{\mkern-2mu\vertexa}\tightin\edges$ implies
$\diedge{\homom(\vertex)}{\homom(\vertexa)}\tightin\edgesp$.

\section{Fographs (first-order cographs)}\label{sec:fographs}

A cograph is \defn{logical} if every vertex is labelled by a variable or atom, and it has at least one atom-labelled vertex.
Write $\singleton\tag$ for a $\tag$-labelled vertex.
\begin{definition}\label{def:graph}\label{def:graph-of-formula}
The \defn{graph} $\gformula$ of a formula
$\formula$ is the logical cograph defined
inductively by:\footnote{$\graphofsymbol$ is a first-order extension of the propositional translation \textsl{G} of \cite[\S3]{Hug06}.
The latter is well-known in graph theory, as the function from a (prime-free) modular decomposition tree \cite{Gal67} or cotree \cite{Ler71,CLS81} to a cograph, and is employed in logic and category theory, \eg\ \cite{Gir87,Hu99,Ret03}. See \S\ref{sec:related} for details.}

\begin{center}\vspace{0ex}\begin{math}
\begin{array}{c}
\graphof{\atom}
\;=\;
\singleton\atom \hspace{1ex} \text{ for every atom\/ }\atom
\\[2ex]
\def\eqgap{\hspace{3ex}}
\begin{array}{r@{\eqgap=\eqgap}l}
\graphof{\,\formula\tightvee\formulaa\,} & \gformula\graphunion\gformulaa
\\[1.5ex]
\graphof{\,\formula\tightwedge\formulaa\,} & \gformula\graphjoin\gformulaa
\\[1.5ex]
\end{array}
\hspace{16ex}
\begin{array}{r@{\eqgap=\eqgap}l}
\graphof{\,\forall x\, \formula\,} & \graphall {x\,} {\formula}
\\[1.5ex]
\graphof{\,\exists x\, \formula\,} & \graphex  {x\,} {\formula}
\\[1.5ex]
\end{array}\\[3ex]\end{array}\end{math}
\end{center}
\end{definition}
For example, $\veedrinkerformula$ and $\variantveedrinkerformula$
have the same graph $\drinkergraph$:
{\newcommand\gap{\hspace{3.5ex}}\begin{align*}
\\[-1ex]
\drinkergraph \gap&=\gap \graphofsymbol\mkern4mu\left(\rule{0ex}{1.7ex}\mkern6mu\veedrinkerformula\mkern6mu\right)        \\[2ex]
              &=\gap \graphofsymbol\mkern4mu\left(\rule{0ex}{1.7ex}\mkern6mu\variantveedrinkerformula\mkern6mu\right) \gap =
\rput(1.3,.1){
\drinkersquare
\e x y
\e x {px}
\e x {py}
}
\hspace{16ex}\\[2ex]
\end{align*}}%
Vertices of $\graphof\formula$ correspond to occurrences of atoms and quantifiers in $\formula$:
each occurence of an atom $\atom$ in $\formula$ becomes an $\alpha$-labelled vertex, and each occurrence of a quantifier $\forall x$ or $\exists x$ becomes an $x$-labelled vertex.
A \defn{literal} is an atom-labelled vertex and a \defn{binder} is a variable-labelled vertex.
Thus $\drinkergraph$ has two literals,
$\singletonppx$ and $\singletonpy$,
and two binders,
$\singletonx$ and $\singletony$ (obtained from $\exists x$ and $\forall y$).

A module is \defn{proper}\label{sec:proper} if it has two or more vertices.
The \defn{scope} of a binder $\binder$
is the smallest proper strong module containing $\binder$.\footnote{Since, by definition, every logical cograph has a literal, the requisite strong module in the scope definition exists.}\textsuperscript{,}\footnote{To discern scope it is helpful to draw the modular decomposition tree \cite{Gal67}, \ie, cotree \cite{CLS81}. See Lemma\,\ref{lem:parent-scope}.}
For example, in $\drinkergraph$,
the scope of $\singletony$ is $\setof{\singletony,\singletonppx,\singletonpy}$,
and
the scope of $\singletonx$ is $\setof{\singletonx,\singletony,\singletonppx,\singletonpy}$, illustrated below by shading.
\begin{pic}{-1}{1}
\newcommand\greyradius{.75ex}
\newcommand\greydiameter{1.5ex}
\newcommand\greycolour{black!20!white}
\newcommand\greycircle[2]{\Cnodeput*[fillcolor=\greycolour,linecolor=\greycolour,radius=\greyradius,framesep=0pt](#1){#2}{}}
\rput(-2.5,0){%
\rput(-3,0){\begin{array}{c}\text{scope}\\\text{of }\singletony\end{array}\hspace{5ex}=}
\drinkersquare
{
  \psset{linewidth=\greydiameter}
  \psset{linecolor=\greycolour,linecap=2}
  \greycircle{\halfedgelen,\halfedgelen}{px}
  \greycircle{-\halfedgelen,-\halfedgelen}{y}
  \greycircle{\halfedgelen,-\halfedgelen}{py}
  \Cnodeput*[fillcolor=\greycolour,linecolor=\greycolour,radius=\greydiameter,framesep=0pt](\quarteredgelen,-\quarteredgelen){centre}{}
  \nccurve[angleA=-100,angleB=100,nodesep=0,linecap=2] {px}{py}
  \nccurve[angleA=10,angleB=170,nodesep=0,linecap=2] y {py}
  \nccurve[angleA=20,angleB=-110,nodesep=0,linecap=2] y {px}
}
\drinkersquareunlabelled
\e x y
\e x {px}
\e x {py}
}
\rput(5.7,0){%
\rput(-3,0){\begin{array}{c}\text{scope}\\\text{of }\singletonx\end{array}\hspace{5ex}=}
\drinkersquare
{
  \psset{linewidth=\greydiameter}
  \psset{linecolor=\greycolour,linecap=2}
  \greycircle{-\halfedgelen,\halfedgelen}{x}
  \greycircle{\halfedgelen,\halfedgelen}{px}
  \greycircle{-\halfedgelen,-\halfedgelen}{y}
  \greycircle{\halfedgelen,-\halfedgelen}{py}
  \Cnodeput*[fillcolor=\greycolour,linecolor=\greycolour,radius=\greydiameter,framesep=0pt](\quarteredgelen,-\quarteredgelen){centre}{}
  \e x {py}
  \e y {px}
  \nccurve[angleA=-100,angleB=100,nodesep=0,linecap=2] {px}{py}
  \nccurve[angleA=10,angleB=170,nodesep=0,linecap=2] y {py}
  \nccurve[angleA=20,angleB=-110,nodesep=0,linecap=2] y {px}
  \nccurve[angleA=70,angleB=-160,nodesep=0,linecap=2] y {px}
  \nccurve[angleA=-80,angleB=80,nodesep=0,linecap=2] x y
  \nccurve[angleA=-10,angleB=-170,nodesep=0,linecap=2] x {px}
}
\drinkersquareunlabelled
\e x y
\e x {px}
\e x {py}
}
\end{pic}
A binder $\binder$ is \defn{existential} (resp.\ \defn{universal}) in a logical cograph $\fograph$ if, for every other vertex $\vertex$ in the scope of $\binder$, we have $\binder\vertex\tightin\edgesof\fograph$ (resp.\ $\binder\vertex\tightnotin\edgesof\fograph$).\footnote{Since the scope of a binder is a proper strong module, every binder is either universal or existential (and not both).}
In $\drinkergraph$, for example,
$\singletonx$ is existential
and
$\singletony$ is universal
(corresponding to
$\exists x$
and
$\forall y$
in the formula(s) generating $\drinkergraph$).
An \defn{$\variable$-binder} is a binder with variable $\variable$, which is \defn{legal} if its scope contains at least one literal and no other $\variable$-binder.
\begin{definition}\label{def:fograph}
  A \defn{fograph}\/ or \defn{first-order cograph}\/ is a logical cograph whose binders are legal.
\end{definition}
For example, $\drinkergraph$ above is a fograph, but
\(\;
\namedsingletonleft x x
\hspace*{.8ex}
\namedsingletonright y y
\hspace*{.8ex}
\singletonright p
\e x y
\;\)
is not (since neither binder scope contains a literal),
nor is
\(\;
\namedsingletonright a x
\hspace*{.8ex}
\namedsingletonright b x
\hspace*{.8ex}
\singletonright {px}
\)\;
(since each $x$-binder is in the other's scope).
\begin{lemma}\label{lem:translation}
  The graph $\graphof\formula$ of every formula $\formula$ is a fograph.
\end{lemma}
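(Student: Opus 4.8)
The plan is to verify, by induction on the structure of $\formula$, the three defining properties of a fograph (Definition~\ref{def:fograph}): that $\gformula$ is a cograph, that it is logical, and that each of its binders is legal. The first two properties are routine. By Definition~\ref{def:graph-of-formula}, $\gformula$ is assembled from single vertices using only $\graphunion$ and $\graphjoin$, and since the cographs are exactly the closure of the one-vertex graph under these two operations (the classical Corneil--Lerchs--Stewart characterisation of $\pfour$-free graphs), $\gformula$ is a cograph. Logicality follows by a direct induction: every vertex is introduced either as an atom-labelled literal (the atomic base case) or as a variable-labelled binder (the quantifier cases), and $\graphunion$ and $\graphjoin$ add no vertices, so all vertices are labelled; moreover the base case supplies a literal and no construction step removes one, so $\gformula$ has at least one literal.

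It remains to treat legality, which is the heart of the lemma. As observed after Definition~\ref{def:graph-of-formula}, the binders of $\gformula$ are in bijection with the quantifier occurrences of $\formula$, so fix a binder $\binder$ with variable $\variable$, arising from an occurrence of $\forall\variable$ or $\exists\variable$ whose matrix is a subformula $\formulaa$. Legality asks two things of the scope of $\binder$ (the smallest proper strong module containing $\binder$, which exists because $\gformula$ has at least two vertices whenever it has a binder): that it contain no other $\variable$-binder, and that it contain a literal. The first is immediate from the standing rectification assumption: all bound variables of $\formula$ are distinct, so $\variable$ is bound exactly once and $\binder$ is the \emph{only} $\variable$-binder of $\gformula$; a fortiori no other $\variable$-binder lies in its scope.

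The remaining obligation---that the scope of $\binder$ contains a literal---is the main obstacle, precisely because the scope is a global feature of $\gformula$: unioning or joining further material on top of a subformula can enlarge the scopes of binders already present, so a naive structural induction on this clause does not go through. I would instead argue directly. The subexpression $\graphof{\forall\variable\,\formulaa}=\singleton\variable\graphunion\gformulaa$ (resp.\ $\singleton\variable\graphjoin\gformulaa$) that introduces $\binder$ has vertex set $\{\binder\}\tightcup\verticesof{\gformulaa}$, and this set is a module of $\gformula$: every later $\graphunion$ or $\graphjoin$ step treats the whole subexpression uniformly, linking it entirely or not at all to each newly added vertex. Passing to the cotree $\cotreeof{\gformula}$, whose internal nodes correspond exactly to the proper strong modules (cf.\ the parent--scope correspondence, Lemma~\ref{lem:parent-scope}), the leaf $\binder$ is a child of the node introduced by its quantifier---possibly absorbed into an identically-typed ancestor by canonical merging, but never pushed below it---so the leaf-set of its parent, which is the scope of $\binder$, contains all of $\verticesof{\gformulaa}$. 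Since $\gformulaa$ is logical it has a literal, which therefore lies in the scope of $\binder$, and legality follows.
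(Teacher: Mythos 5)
Your proof is correct and follows the same route as the paper's: structural induction on $\formula$, with the standing rectification assumption doing all the work for the ``no other $\variable$-binder in scope'' clause. The paper's induction step is a one-liner (``all four operations preserve the property of being a fograph, since all formulas are rectified''), and your direct module/cotree argument---that the vertex set introduced by a quantifier remains a module of the final graph, so the binder's scope in $\gformula$ always contains the literal-bearing vertex set of its matrix---is exactly the monotonicity-of-scope detail that the one-liner leaves implicit; it is a sound and welcome way to fill it in.
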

\begin{proof}
  By structural induction on $\formula$. The base case with $\formula$ an atom is immediate.\todo{ensure def ref correct}
  For the induction step, note that all four operations defined in Def.\,\ref{def:graph-of-formula} preserve the property of being a fograph, since all formulas are rectified.\footnote{Naively applying $\graphofsymbol$ to an unrectified formula such as
$(\forall\mkern-1mu x p\mkern-2mu x\mkern-1mu)\tightvee (\forall\mkern-1mu x (\mkern-2mu q\mkern-2mu x\mkern-2.5mu\tightvee\mkern-2mu r\mkern-1mu x\mkern-1mu)\mkern-2mu)$
yields
$\:\newcommand\gap{\hspace{.8ex}}
\singleton{\mkern-1mu x}
\gap
\singleton{\mkern-1mu p\mkern-2mu x}
\gap
\singleton{\mkern-1mu x}
\gap
\singleton{\mkern-1mu q\mkern-2mu x}
\gap
\singleton{\mkern-1mu r\mkern-1.5mu x}
\:$
with all three literals bound ambiguously by both binders. Whence our assumption that every formula be in rectified form.}\todo{``place a formula'' in footnote}
\end{proof}
An \defn{$\variable$-literal} is one whose atom contains the variable $\variable$.
An $\variable$-binder \defn{binds} every $\variable$-literal in its scope.
In $\drinkergraph$ above, for example, $\singletonx$ binds $\singletonppx$ and $\singletony$ binds $\singletonpy$.
An $\variable$-binder is \defn{rectified} if it is the only $\variable$-binder and its scope contains every $\variable$-literal.
A fograph is \defn{rectified} if its binders are rectified.\footnote{In \S\ref{sec:soundness} we will observe that $\graphofsymbol$ (Def.\,\ref{def:graph-of-formula}) is a surjection onto rectified fographs (\reflem{lem:graph-surj}).}
For example,
$\drinkergraph$ above is rectified
but $\:\cleaningegone\:$ is not (since it has two $x$-binders), nor is $\:\cleaningegtwo\:$
(since
$\singletonx$ does not bind
$\singletonqx$).
To \defn{rectify} an unrectified \mbox{$\variable$-binder} $\binder$ in a fograph $\fograph$ is to
change its label to a variable $\xp$ which is fresh (\ie, not in any label of $\fograph$)
and
substitute $\xp$ for $x$ in the
label of every literal
bound by $\binder$.
A \defn{rectified form} is any result of rectifying binders until reaching a rectified fograph.
For example, \;$\bigcleaningeg{x}{x}$\: has the rectified form \;$\bigcleaningeg{y}{z}$\:.
This is analogous to the unrectified formula
\mbox{$\unrectifiedformulaeg\tightvee x$} having the rectified form
\mbox{$\rectifiedformulaeg\tightvee z$}.

The \defn{binding graph} $\bindinggraphof\graph$ of a fograph $\graph$ is the
directed graph
\mbox{$\digraphpairof{\verticesof\graph}{\,\setof{\diedge{\binder}{\mkern-2mu\literal}:\binder\text{ binds }\literal}\,}$}.
For example, the binding graph of $\drinkergraph$ above is
\begin{pic}{-1}{.7}
\rput(-1.5,0){\bindingrelof\drinkergraph\hspace{6ex}=}
\rput(1.3,0){\drinkersquare\de x {px}\de y {py}}
\end{pic}

\section{Skew bifibrations}

A directed graph homomorphism
$\fib:\digraphpair\to\digraphpairp$ is a \defn{fibration} \cite{Gro60,Gra66} if
for all
$\vertex\tightin\vertices$ and $\diedge{\vertexa}{\mkern-1mu\fib(\vertex)}\tightin\edgesp$ there exists a unique $\skewlifting\vertexa\tightin\vertices$ with $\diedge{\skewlifting\vertexa}{\mkern-2mu\vertex}\tightin\edges$ and $\fib(\skewlifting\vertexa) = \vertexa$.
This definition is illustrated below-left.
\begin{center}
 \begin{pspicture}[nodesep=2pt,labelsep=2pt](0,-.35)(0,2)
  \begin{math}
    \newcommand\rad{.8}
    \newcommand\shortrad{.4}
    \rput(-5,0){
      \rput(-\rad,1.55){\Rnode{liftw}{\hspace{-2ex}\likex{\exists\mkern1mu!\mkern1mu\skewlifting\vertexa}}}
      \rput(-\rad,0){\Rnode w \vertexa}
      \rput(\rad,1.55){\Rnode v v}
      \rput(\rad,0){\Rnode{fv}{\fib(v)}}
      \ncline[arrows=->]{w}{fv}%
      \ncline[arrows=->]{liftw}{v}%
      \fibrestyle
      \ncline{liftw}{w}
      \ncline{v}{fv}
    }
    \rput(0,0){
      \rput(-\rad,1.55){\Rnode{liftw}{\hspace{-2ex}\likex{\exists\mkern1mu!\mkern1mu\skewlifting\vertexa}}}
      \rput(-\rad,0){\Rnode w \vertexa}
      \rput(\rad,1.55){\Rnode v v}
      \rput(\rad,0){\Rnode{fv}{\fib(v)}}
      \ncline{w}{fv}%
      \ncline{liftw}{v}%
      \fibrestyle
      \ncline{liftw}{w}
      \ncline{v}{fv}
    }
    \rput(5,0){
      \rput(\rad,1.55){\Rnode{v}v}%
      \rput(\rad,0){\Rnode{fv}{f(v)}}
      \rput(-\rad,.3){\Rnode{fwhat}{f(\skewlifting w)}}
      \rput(-\rad,1.8){\Rnode{what}{\hspace{-1.4ex}\likex{\exists\mkern2mu\skewlifting\vertexa}}}
      \rput(-\shortrad,-.35){\Rnode{w}{w}}
      \ncline{v}{what}%
      \ncline{fv}{fwhat}
      \ncline{fv}{w}%
      \fibrestyle
      \ncline{v}{fv}
      \ncline{what}{fwhat}
    }
  \end{math}
 \end{pspicture}
\end{center}
Similarly, an undirected graph homomorphism $\fib:\graphpair\to\graphpairp$ is a \defn{fibration}
if for all
$\vertex\tightin\vertices$ and
${\vertexa}\mkern3mu{\fib(\vertex)}\in\edgesp$ there exists a unique $\skewlifting\vertexa\tightin\vertices$ with ${\skewlifting\vertexa}{\mkern2mu\vertex}\tightin\edges$ and $\fib(\skewlifting\vertexa) = \vertexa$.
This definition is illustrated above-centre.\footnote{An undirected graph fibration is a special case of a topological fibration \cite{Whi78}, by viewing every edge as a copy of the unit interval.}
An undirected graph homomorphism $\fib:\graphpair\to\graphpairp$ is a \defn{skew fibration} \cite{Hug06}
if for all
$\vertex\tightin\vertices$ and
${\vertexa}\mkern3mu{\fib\mkern-.3mu(\vertex)}\tightin\edgesp$ there exists
$\skewlifting\vertexa\tightin\vertices$ with
${\skewlifting\vertexa}\mkern2.5mu{\vertex}\in\edges$ and
$f(\skewlifting w)\mkern2mu w\notin\edgesp$.
This definition is illustrated above-right.
Since $\fib(\skewlifting\vertexa) \tighteq \vertexa$ implies
$f(\skewlifting w)\mkern2mu w\notin\edgesp$,
skew fibrations generalize fibrations.

A graph homomorphism $\fib:\cover\to\base$ between fographs \defn{preserves labels} if
for every vertex $\vertex\tightin\verticesof\cover$ the
label of $\vertex$ in $\cover$ equals the label of $\fib(\vertex)$ in $\base$,
and \defn{preserves existentials} if
for every existential binder $\binder$ in $\cover$ the vertex $\fib(\binder)$ is an existential binder in $\base$.
\begin{definition}
A \defn{skew bifibration} $\bifib\mkern-2mu:\mkern-2mu\cover\mkern-2mu\to\mkern-2mu\base$ between fographs is a label- and existential-preserving graph \mbox{homomorphism} such that
\begin{itemize}
  \item $\bifib:\cover\to\base$ is a skew fibration
  \item $\bifib:\bindinggraphof\cover\to\bindinggraphof\base$ is a fibration.
\end{itemize}
\end{definition}\begin{figure*}
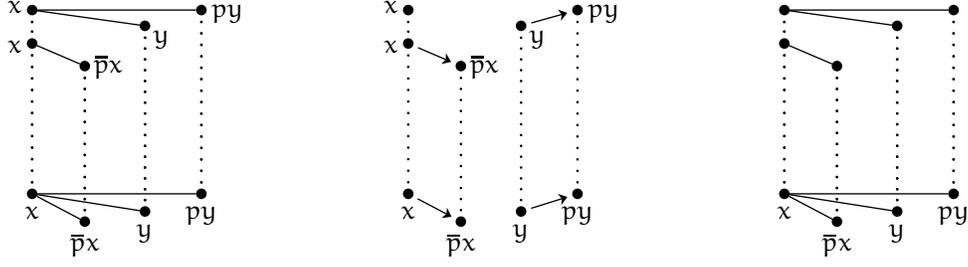
%
\begin{pic}{-1.1}{2.8}
  \rput(-5,0){\drinkerfiblabelledpair{x}{x}}
  \rput(0,0){\drinkerbindingfiblabelled{x}{x}}
  \rput(5,0){\drinkerfib}
\end{pic}%
\caption{\label{fig:drinkerbifib}A skew bifibration (left), its binding fibration (centre), and its skeleton (right).}\figrule\end{figure*}
We refer to $\bifib:\bindinggraphof\cover\to\bindinggraphof\base$ as the \defn{binding fibration}.
For example, a skew bifibration is shown in Fig.\,\ref{fig:drinkerbifib}, with its binding fibration.
The \defn{skeleton} of a skew bifibration is the result of dropping labels from its source.
Fig.\,\ref{fig:drinkerbifib} shows an example.
We identify a skew bifibration with its skeleton.
No information is lost since the source labels can be lifted from the target (because skew bifibrations preserve labels, by definition).\footnote{We need the explicit preservation of existentials in the definition of skew bifibration since that property does not follow from the other conditions.
For example, the unique label-preserving function from
$\graphof{\exists x\mkern2mu p}
=
\namedsingletonleft x x\hspace{1ex}\namedsingletonright p p\e x p$
to
$\graphof{(\forall x\mkern2mu q)\tightwedge p}
=
\namedsingletonleft x x\hspace{.8ex}\namedsingletonleft q q\hspace{1ex}\namedsingletonright p p\e q p\nccurve[nodesep=0pt,angleA=35,angleB=150]x p$
satisfies all the conditions of being a skew bifibration except existential preservation (since it maps an existential binder to a universal binder).}

\section{Fonets (first-order nets)}\label{sec:fonets}

Two atoms are \defn{pre-dual} if they have dual predicate symbols (\eg\ $\pxy$ and $\pp y fa$) and two literals are pre-dual if their atoms are pre-dual.
\begin{definition}\label{def:linked}
A \defn{linked fograph} is a coloured fograph such that
\begin{itemize}
\item every colour, called a \defn{link}, comprises two pre-dual literals, and
\item every literal is either 1-labelled or in a link.
\end{itemize}
\end{definition}
Fig.\,\ref{fig:leap}\todo{$p\shortmapsto q$ so validates cp?}
shows a linked fograph $\net$
with two links,
$\twolinkp$
and
$\twolinkq$.\begin{figure*}\begin{center}\vspace{9ex}
  \twolinkfograph
  \hspace{35ex}
  \twolinkleapgraph
  \vspace{7ex}\end{center}\caption{\label{fig:leap}A fonet $\net$ (left) with
    unique dualizer $\protect\twolinkassignment$
    and its
    leap graph $\protect\leapgraphof\net$ (right).}\figrule\end{figure*}
\begin{definition}\label{def:dualizer}
Let $\cover$ be a linked fograph.
\WLOG,
assume
$\cover$ is rectified (by rectifying binders as needed).
A \defn{dualizer} for $\cover$ is
a function $\dualizer$
assigning to each existential binder variable $x$ a term
$\dualizer(x)$
such that
for every link $\{\mkern2mu\singleton\atomone,\singleton\atomtwo\mkern2mu\}$, the atoms
$\atomone\dualizer$ and $\atomtwo\dualizer$ are dual, where $\atom\dualizer$ denotes the result of substituting $\dualizer(x)$ for $x$ throughout $\atom$
(simultaneously for each $x$).
\end{definition}
For example,
$\twolinkassignment$
is a dualizer\footnote{In the context of a function we write $\assign a b$ for the ordered pair $\langle a,b\rangle$.} for
$\net$ (Fig.\,\ref{fig:leap}) since
$\ppx\twolinkassignment=\ppz$ is dual to $\pz$, and $\qqy\twolinkassignment=\qqfz$ is dual to $\qfz$; this is the unique dualizer for $\net$.

A \defn{dependency}\todo{bond cord tether linkage link tie}
 $\dep$
of $\cover$
is an existential binder $\singletonx$ and a universal binder $\singletony$ such that every dualizer
for $\cover$
assigns to $x$ a term containing $y$.\footnote{In \S\ref{sec:ptime} we show that all dependencies can be constructed in polynomial time, despite quantification over \emph{every dualizer}.}\todo{link to p-time}
For example, $\{\mkern2mu\singletony,\singletonz\mkern2mu\}$ is a dependency of $\net$ (Fig.\,\ref{fig:leap})
since the unique dualizer $\twolinkassignment$ assigns $\fz$ to $y$.
A \defn{leap} is a dependency or link.
The \defn{leap graph} $\leapgraphof\cover$ is the graph $(\verticesof\cover,\leapsof\cover)$ where $\leapsof\cover$ comprises all leaps of $\cover$.
See Fig.\,\ref{fig:leap} for an example.

A graph $\graphpair$ is a \defn{matching} if $\vertices$ is non-empty and
for all $\vertex\mkern-3mu\in\mkern-2mu\vertices$ there is a unique $\vertexp\mkern-4mu\in\mkern-2mu\vertices$ with $\vertex\vertexp\mkern-3mu\in\mkern-2mu\edges$.
A set $W$ \defn{induces a bimatching} in a linked fograph $\cover$ if $W$ induces
a matching in $\cover$ and induces a matching in $\leapgraphof\cover$.
\begin{definition}
  A \defn{fonet} or \defn{first-order net} is a
 linked fograph which has a dualizer but no induced bimatching.
\end{definition}
See Fig.\,\ref{fig:leap} for an example of a fonet.
The minimal fonet is $\singleton 1$ (an uncoloured 1-labelled vertex).\footnote{A fonet can be viewed as a graph-theoretic abstraction and generalization of a unification net \cite{Hug18}. Upon forgetting vertex labels,
propositional fonets correspond to nicely coloured cographs \cite{Hug06},
which are in bijection with certain R\&B cographs \cite{Ret03}.
See \S\ref{sec:related} for details.}

\section{Combinatorial proofs}\label{sec:cps}

\begin{definition}\label{def:cp}
  A \defn{combinatorial proof} of a fograph\/ $\fograph$ is a skew bifibration\/ \mbox{$\bifib:\fographnet\to\fograph$}
  from a fonet\/ $\fographnet$.
  A combinatorial proof of a formula $\formula$ is a combinatorial proof of its graph $\gformula$.
\end{definition}
For examples, see \S\ref{sec:intro}.
\begin{theorem}[Soundness]
  A formula is valid if it has a combinatorial proof.
\label{thm:soundness}\end{theorem}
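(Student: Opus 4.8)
The plan is to split a combinatorial proof $\bifib:\net\to\gformula$ into its two constituent pieces — the fonet $\net$ sitting at the source and the skew bifibration descending to $\gformula$ — and to establish two facts that combine to give soundness: (A) the formula associated with any fonet is valid, and (B) a skew bifibration transports validity from its source fograph down to its target fograph. Granting (A) and (B), if $\formula$ has a combinatorial proof then its source fonet is valid by (A), and $\gformula$, hence $\formula$, inherits validity by (B), so $\models\formula$. I would state (A) and (B) as separate lemmas and prove soundness as their immediate corollary.

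For (B) the plan is to show that a skew bifibration encodes exactly the classically sound structural manipulations. Following the propositional characterization, a skew fibration between cographs is a composite of weakenings and contractions; I would lift this to fographs by checking that the fibres of $\bifib$ realize contraction (several source vertices lying over one target vertex), the skew condition realizes weakening (a vertex mapping into a strictly larger disjunctive context), and — the genuinely first-order point — the fibration condition on the binding graph $\bindinggraphof\net\to\bindinggraphof\base$ together with existential preservation forces binders and their scopes to be respected, so that the quantifier steps being simulated are the sound ones (universal generalisation and existential witnessing in the correct direction). Since each such step preserves validity when read from the premise $\net$ to the conclusion $\gformula$, validity descends along $\bifib$.

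For (A) the plan is to exploit the dualizer $\dualizer$ together with the absence of an induced bimatching. The dualizer assigns to each existential binder variable $x$ a witness term $\dualizer(x)$ under which every link becomes a pair of genuinely dual literals. Reading $\net$ as a formula, I would then argue validity by a Herbrand-style game argument: instantiate the existentials by $\dualizer$, whereupon each link is a complementary literal pair, and the correctness criterion — no induced bimatching in the leap graph $\leapgraphof\net$ — guarantees that these complementary pairs assemble into a genuine proof rather than an incomplete or inconsistent family of links. This is the first-order analogue of the Danos--Regnier / Retor\'e correctness condition ensuring sequentialisability. The dependency leaps record which universal variables must occur in which existential witnesses, and excluding induced bimatchings is precisely what prevents a circular witnessing requirement, so a coherent Herbrand expansion exists and the fonet's formula is valid.

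The main obstacle will be (A): showing that the purely combinatorial criterion — no induced bimatching — really does force semantic validity of the fonet. In the propositional fragment this is the known equivalence between the bimatching-free condition and proof-net correctness; in the first-order setting one must additionally verify that the first-order data (links, the dualizer, and the dependency leaps) cohere into a legitimate Herbrand proof, i.e.\ that the forced dependencies never induce a circular order of witnessing. I expect this to reduce to showing that an induced bimatching is exactly the obstruction encountered when attempting to sequentialise, so that its absence yields a valid formula; establishing this correspondence carefully — and in particular handling the interaction of the multiplicative cograph structure with the binding and dependency structure — is where the real work lies.
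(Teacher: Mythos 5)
Your proposal is correct and follows essentially the same route as the paper: soundness is split into (A) validity of every fonet, proved by sequentialising the fonet (the paper decomposes it into axioms via fusion and quantification steps, using a bridge in the unique 1-factor exactly where you locate the role of the no-induced-bimatching condition), and (B) preservation of validity along a skew bifibration, proved by decomposing the bifibration into contractions, weakenings and quantifier-compatible structural maps. The only detail worth noting is that the paper's version of (B) needs a mild side condition (the target must be \emph{fair}, which holds for $\gformula$ since it is rectified) to ensure the image of the bifibration is itself a well-defined fograph.
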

\begin{proof}
  Section\,\ref{sec:soundness}.
\end{proof}
\begin{theorem}[Completeness]
  Every valid formula has a combinatorial proof.
\label{thm:completeness}\end{theorem}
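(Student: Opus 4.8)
The plan is to derive completeness from Gödel's completeness theorem for a standard sequent calculus, and then translate syntactic derivations into combinatorial proofs by recursion on their structure. Concretely, I would fix a complete cut-free one-sided calculus (for instance the Gentzen--Schütte system $\gsone$, with the axiom $\vdash\atom,\dual\atom$, weakening, contraction, the binary $\wedge$, the disjunction rule, and the quantifier rules $\forall$ and $\exists$). Reading a sequent $\vdash\sequent$ with $\sequent=\formula_1,\ldots,\formula_n$ as the formula $\formula_1\tightvee\cdots\tightvee\formula_n$, its graph is the disjoint union $\graphof{\formula_1}\graphunion\cdots\graphunion\graphof{\formula_n}$. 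Since every valid formula is provable, it then suffices to assign, by recursion on a derivation $\Pi$ of $\vdash\sequent$, a combinatorial proof $\bifib\colon\net\to\graphof{\formula_1\tightvee\cdots\tightvee\formula_n}$.

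The translation treats each rule as an operation on combinatorial proofs. An axiom $\vdash\atom,\dual\atom$ yields the two-literal fonet with the single link $\{\singleton\atom,\singleton{\dual\atom}\}$, mapped by the identity. Because $\graphof{\formula\tightvee\formulaa}=\graphof\formula\graphunion\graphof\formulaa$, disjunction is invisible at the level of graphs, so the disjunction rule carries the combinatorial proof over unchanged. For the $\wedge$ rule I would glue the two inductively given fonets by taking the graph join $\graphjoin$ of the vertices lying over the two conjuncts while leaving the side-contexts in union, combine the two dualizers (which act on disjoint variable sets after rectification), and check that the evident map is again a skew bifibration. The $\exists$ rule adds a single existential binder over the new vertex and extends the dualizer by recording the witness term; the $\forall$ rule adds a universal binder, its eigenvariable condition guaranteeing legality of the binder and the fibration condition on $\bindinggraphof\net$. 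Finally, weakening is realised by post-composing with the inclusion $\graphof\sequent\hookrightarrow\graphof\sequent\graphunion\graphof\formula$, and contraction by post-composing with the canonical fold that identifies the two copies of $\graphof\formula$; both leave the source fonet untouched and modify only the base.

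The verifications split into two invariants. First, the source must remain a fonet: the accumulated substitution is a genuine dualizer (each axiom contributes dual atoms, and composing the recorded witnesses preserves duality, exactly the unification-style bookkeeping underlying Definition\,\ref{def:dualizer}), and no induced bimatching may appear. Note that weakening and contraction do not touch the source, so the fonet property is automatic there; it needs checking only at the axiom, $\wedge$, and quantifier steps. Second, the constructed map must be a skew bifibration — a skew fibration on the underlying cographs together with a fibration on binding graphs, preserving labels and existentials — for which the essential facts are that skew bifibrations compose and that the inclusion and fold maps above are themselves skew bifibrations (the fold satisfies the skew condition even though it is not a plain fibration).

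I expect the main obstacle to be preservation of the ``no induced bimatching'' criterion under the $\wedge$ step. The join introduces a complete bipartite set of cograph edges between the two conjunct-regions, yet leaps never cross the join: links stay within a single subderivation, and the disjoint dualizers create no cross-variable dependencies. Hence the leap-matching of any putative induced bimatching is confined side by side, so the vertex set splits as $W=W_1\uplus W_2$ over $\net_1$ and $\net_2$; a short case analysis of how the bipartite join-edges interact with the within-side leaps then collapses the configuration into a single $\net_i$, where the induction hypothesis forbids it. As a cross-check and an alternative route, one can instead invoke Herbrand's theorem to reduce a valid formula to a propositional tautology, apply propositional completeness \cite{Hug06}, and lift the resulting propositional combinatorial proof to first order using the Herbrand witnesses as dualizer, in the spirit of unification nets \cite{Hug18}.
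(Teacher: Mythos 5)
Your overall strategy---obtain a syntactic proof from the completeness of a one-sided sequent calculus and then interpret each rule as an operation on combinatorial proofs---is exactly the paper's (it uses a right-sided variant $\R$ of \LK and proves the interpretation well defined in Lemma~\ref{lem:interp-well-defined}). However, two of your rule interpretations would fail as stated. First, contraction does \emph{not} leave the source untouched. In the rule $\sequent\com\formula\com\formulap\,/\,\sequent\com\formula$ with $\formulap\cong\formula$, the net may have vertices over both copies; after post-composing with the fold, the source labels (lifted from the target) give two binders with the same variable for each outermost universal quantifier of $\formula$. When such a binder is in no edge, the two copies are isolated vertices of the source, each lying in the smallest proper strong module containing the other, so the source violates the legality condition and is no longer a fograph. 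The paper repairs this by collapsing ``duplicators'': for every outer universal binder of the conclusion whose preimage has two vertices, one of the two is deleted from the source (see the footnote example contracting $\forall x\mkern2mu 1\com\forall y\mkern2mu 1$ to $\forall x\mkern2mu 1$). Without this deletion the induction does not go through.

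Second, your uniform fusion for the $\wedge$ rule breaks when exactly one premise is ``weak'', i.e.\ has empty preimage over its conjunct. Suppose nothing in $\net_1$ maps into $\graphof{\formula_1}$ while some $v$ in $\net_2$ maps into $\graphof{\formula_2}$. Your construction then adds no edges to the source (the join over an empty vertex set is vacuous), yet the target join creates an edge between $\bifib(v)$ and every vertex $w$ of $\graphof{\formula_1}$; the skew-lifting for that edge requires a neighbour of $v$ whose image is not adjacent to $w$, and the only candidates map into $\graphof{\sequent_2}$, which $v$ need not have as neighbours. The paper's case split avoids this: when one premise is weak and the other strong, it discards the strong premise entirely and uses only the weak one (in effect a weakening by $\formula_1\tightwedge\formula_2$). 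Your treatment of the axiom, $\vee$, weakening, and the genuinely binary fusion case, including the no-bimatching argument via leaps not crossing the join, does match the paper's Lemmas~\ref{lem:pres-fusion} and \ref{lem:interp-well-defined}; the Herbrand-based alternative you sketch at the end is not pursued in the paper and would itself need an argument for producing the binding fibration from a propositional proof.
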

\begin{proof}
  Section\,\ref{sec:completeness}.
\end{proof}
Combining the two theorems above, we obtain the main theorem of this paper:
\begin{theorem}[Soundness \& Completeness]\label{thm:soundness-completeness}
  A formula
  of first-order logic
  is valid if and only if it has a combinatorial proof.
\end{theorem}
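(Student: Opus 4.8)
The plan is to read the biconditional as the conjunction of the two implications already isolated as Theorem\,\ref{thm:soundness} and Theorem\,\ref{thm:completeness}. For the \emph{if} direction, suppose $\formula$ has a combinatorial proof, that is, a skew bifibration $\bifib:\net\to\gformula$ from a fonet $\net$ (Def.\,\ref{def:cp}); then $\formula$ is valid by Soundness (Theorem\,\ref{thm:soundness}). For the \emph{only if} direction, suppose $\formula$ is valid; then it has a combinatorial proof by Completeness (Theorem\,\ref{thm:completeness}). Conjoining the two implications yields the stated equivalence, so at this level the argument is a one-line assembly once the two component theorems are in hand.

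Since the real content lives in those two theorems rather than in the corollary, I would organize my effort around them. For Soundness I would attempt to factor an arbitrary skew bifibration $\bifib:\net\to\gformula$ into elementary structural moves—weakenings and contractions witnessed by the skew-fibration and binding-fibration conditions—and check that each move preserves validity, reducing the claim to the semantic correctness of the source fonet (its links pair pre-dual atoms that become genuinely dual under a dualizer). The anticipated subtlety here is that the \emph{skew} (rather than strict) fibration condition guarantees only a one-sided lifting, so I must verify that validity survives the lax steps and not merely the strict ones.

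The harder half, and the place I expect the main obstacle, is Completeness. Here I would start from a complete syntactic calculus—for instance the sequent calculus \LK of Fig.\,\ref{fig:lk-drinker-proof}—and translate each syntactic proof of $\formula$ into a skew bifibration onto $\gformula$. The delicate points are, first, producing the source as a genuine \emph{fonet}, which means simultaneously exhibiting a dualizer and verifying the combinatorial correctness criterion that it admits no induced bimatching; and second, checking that the constructed map is in fact a skew bifibration, i.e.\ label- and existential-preserving, a skew fibration on the underlying graphs, and a fibration on the binding graphs $\bindinggraphof\net$ and $\bindinggraphof{\gformula}$. Establishing the no-induced-bimatching condition directly from a derivation—rather than assuming it—is the crux, and I would expect to lean on the earlier analysis of links, dependencies, leaps, scopes, and dualizers to carry it through. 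Once both component theorems are secured, Theorem\,\ref{thm:soundness-completeness} follows immediately.
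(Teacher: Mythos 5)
Your proof is correct and matches the paper exactly: Theorem\,\ref{thm:soundness-completeness} is obtained simply by conjoining Theorem\,\ref{thm:soundness} and Theorem\,\ref{thm:completeness}, whose proofs are deferred to \S\ref{sec:soundness} and \S\ref{sec:completeness}. Your sketches of those two components (decomposing the skew bifibration into validity-preserving structural maps plus soundness of fonets, and translating sequent-calculus derivations into combinatorial proofs) also track the paper's actual strategy.
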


\section{Propositional combinatorial proofs without labels}\label{sec:prop-cps}
A \defn{proposition} is a formula with no quantifiers or terms, \eg\ $\peirceformula$, and a proposition is \defn{simple} if it has no logical constant ($1$ or $0$).
This section provides an alternative representation of fographs and combinatorial proofs in the simple propositional case, without labels (variables and atoms).
An illustrative example is shown in%
\begin{figure*}\begin{center}%
\newcommand\radius{2.7}%
\begin{pic}{.5}{3.3}
  \rput(-\radius,0){\dcponformula{\dcpcptwo}{-.08}{}}%
  \rput(\radius,0){\dcponformula{\dcptwo}{.1}{}}%
\end{pic}%
\end{center}\caption{\label{fig:homogeneous-peirce}A standard combinatorial proof (left)
and a homogeneous combinatorial proof (right) of Peirce's law
$\protect\peirceimpliesformula\,=\,\protect\peirceformula$.}\vspace{0ex}\figrule\end{figure*}
Fig.\,\ref{fig:homogeneous-peirce}.
The left side
shows a standard combinatorial proof (Def.\,\ref{def:cp}) of Peirce's law $\peirceimpliesformula=\peirceformula$.
The right side shows the label-free form, called a \emph{homogeneous combinatorial proof}, defined below.
The source colouring and target labels ($\pp$, $p$ and $q$) have disappeared,
replaced by \emph{duality} edges, shown dashed and curved.
The adjective \emph{homogeneous} reflects the common type of the source and target (both cographs with additional duality edges), in contrast to a standard combinatorial proof skeleton which is \emph{heterogeneous} (the source is coloured, while the target is labelled).

\subsection{Dualizing graphs}\label{sec:dualizing-graphs}

A graph is \defn{triangle-free} if it is $\cthree$-free, where $\cthree=\cthreegraph=\graphpairof\cthreevertices\cthreeedges$.
\begin{definition}\label{def:dualizing-graph}
A \defn{dualizing graph} is a non-empty cograph $\dualizinggraph$ equipped with a second set $\dualitiesof\dualizinggraph$ of undirected edges on $\verticesof\dualizinggraph$, called \defn{dualities}, such that $\graphpairof{\verticesof\dualizinggraph}{\dualitiesof\dualizinggraph}$ is a triangle-free cograph.
\end{definition}
Four examples of dualizing graphs are shown in the bottom row of%
\begin{figure*}\begin{pic}{-4}{.5}
\rput(-6,0){\peirceovercombprop}
\rput(-2,0){\combpropone}
\rput(2,0){\combproptwo}
\rput(6,0){\combpropthree}
\end{pic}\caption{\label{fig:dualizing-graphs}Four simple propositions $\prop$ (top row), their fographs $\graphof\prop$ (middle row), and their dualizing graphs $\dualizinggraphofprop\prop$.
Each vertex in $\graphof\prop$ and $\dualizinggraphofprop\prop$ is aligned vertically with the corresponding atom occurence in $\prop$. Dualities are shown dashed and curved.}%
\figrule\end{figure*}
Fig.\,\ref{fig:dualizing-graphs}.
Dualizing graphs generalize R\&B-cographs \cite{Ret03}.\footnote{An R\&B-cograph is a dualizing graph such that every vertex is in a unique duality.}
\begin{samepage}\begin{definition}\label{def:dualizing-graph-of-prop}
The dualizing graph $\dualizinggraphofprop\prop$ of a simple proposition $\prop$ is the dualizing graph $\dualizinggraph$ with
\begin{itemize}
\item $\verticesof\dualizinggraph=\setof{\text{occurrences of predicate symbols in }\prop}$,
\item $\vertex\vertexa\tightin\edgesof\dualizinggraph$ if and only if
the smallest subformula of $\prop$
containing both $\vertex$ and $\vertexa$ is
a conjunction (\ie, of the form $\formulaa\tightwedge\formulaaa$)
\item $\vertex\vertexa\tightin\dualitiesof\dualizinggraph$ if and only if $\vertex$ and $\vertexa$ have dual predicate symbols (\eg, $p$ and $\pp$).
\end{itemize}
\end{definition}\end{samepage}
For example, for each simple proposition $\prop$ in the top row of Fig.\,\ref{fig:dualizing-graphs},
the bottom row shows the corresponding dualizing graph $\dualizinggraphofprop\prop$.
For comparison, the fograph $\graphof\prop$ is in the middle row.
\begin{lemma}\label{lem:dualizing-graph-of-prop-well-defined}
$\dualizinggraphofprop\prop$ is a well-defined dualizing graph for every simple proposition $\prop$.\footnote{We will observe in \S\ref{sec:surjections} that $\dualizinggraphofpropsymbol$ is a surjection from simple propositions onto dualizing graphs (\reflem{lem:prop-surj}).}
\end{lemma}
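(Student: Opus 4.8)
The plan is to check the two defining conditions of a dualizing graph (Def.\,\ref{def:dualizing-graph}) in turn for $\dualizinggraph\tighteq\dualizinggraphofprop\prop$: that $\graphpairof{\verticesof\dualizinggraph}{\edgesof\dualizinggraph}$ is a non-empty cograph, and that $\graphpairof{\verticesof\dualizinggraph}{\dualitiesof\dualizinggraph}$ is a triangle-free cograph.

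For the edges, I would reduce to the already-established fact that $\graphof\prop$ is a fograph. Since $\prop$ has no terms, every atom of $\prop$ is a nullary predicate symbol, so the occurrences of predicate symbols in $\prop$ are exactly the atom occurrences, which are exactly the vertices of $\graphof\prop$ (there are no binders, as $\prop$ has no quantifiers). Comparing the edge clause of Def.\,\ref{def:dualizing-graph-of-prop} with the inductive clauses for $\graphofsymbol$ in Def.\,\ref{def:graph-of-formula}: two occurrences $\vertex,\vertexa$ are adjacent exactly when the smallest subformula containing both has the form $\formulaa\tightwedge\formulaaa$, which is precisely the level at which $\graphofsymbol$ takes a join $\graphjoin$ (inserting the edge $\vertex\vertexa$) rather than a union $\graphunion$ (inserting no edge). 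Hence $\graphpairof{\verticesof\dualizinggraph}{\edgesof\dualizinggraph}$ is the underlying graph of $\graphof\prop$, which is a cograph by \reflem{lem:translation}. Non-emptiness holds because every proposition contains at least one atom. (A direct structural induction on $\prop$ is an equally viable alternative.)

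For the dualities, I would argue directly from the structure of the duality relation. Duality pairs each predicate symbol $p$ with a single distinct symbol $\pp\tightneq p$, so after grouping the vertices of $\dualizinggraph$ by their predicate symbol, a duality edge joins two occurrences exactly when one carries $p$ and the other $\pp$, for some dual pair $\{\p,\pp\}$, and never joins two occurrences of the same symbol. Thus $\graphpairof{\verticesof\dualizinggraph}{\dualitiesof\dualizinggraph}$ is a disjoint union of complete bipartite graphs $K_{m,n}$ (one for each dual pair both of whose members occur) together with isolated vertices (for symbols whose dual is absent). Each such $K_{m,n}$ is bipartite, hence $\cthree$-free, and is a cograph, being a join $\graphjoin$ of two edgeless graphs; both $\cthree$-freeness and the cograph property survive disjoint union. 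So the duality graph is a triangle-free cograph, completing the verification.

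Both parts are routine. The only step needing a little care is the identification in the second paragraph of the ``smallest subformula is a conjunction'' rule with the join/union alternation of $\graphofsymbol$, which lets me invoke \reflem{lem:translation} instead of re-deriving $\pfour$-freeness; beyond that bookkeeping I anticipate no real obstacle.
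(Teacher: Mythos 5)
Your proof is correct, but it takes a genuinely different route from the paper's on both halves. For the edge set, the paper argues directly and self-containedly: it supposes an induced $\pfour$ in $\graphpairof{\verticesof\dualizinggraph}{\edgesof\dualizinggraph}$ and derives a contradiction from the subformula structure of $\prop$ (if $\vertex_1\vertex_2$ arises from a conjunction $\prop_1\tightwedge\prop_2$, then $\vertex_3$ must lie in $\prop_1$ and $\vertex_4$ in $\prop_2$, forcing the forbidden edge $\vertex_1\vertex_4$). You instead identify $\graphpairof{\verticesof\dualizinggraph}{\edgesof\dualizinggraph}$ with the underlying graph of $\graphof\prop$ and invoke \reflem{lem:translation}; this is essentially the factorization $\dualizinggraphofprop\prop=\dualizinggraphof{\graphof\prop}$ that the paper only establishes later (\reflem{lem:prop-factorization}), so you are front-loading a fact the paper defers, which is fine but makes your proof depend on an identification the paper treats as a separate (routine-induction) lemma. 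For the dualities, the paper again runs two separate contradiction arguments (an induced $\pfour$ forces $p_3\tighteq p_1$ and hence the edge $\vertex_1\vertex_4$; an induced $\cthree$ forces $p_3\tighteq p_1$ contradicting $\pp_3\tighteq p_1$), whereas you observe once and for all that the duality graph is a disjoint union of complete bipartite graphs and isolated vertices, from which both $\pfour$-freeness and $\cthree$-freeness are immediate. Your structural observation is arguably cleaner and is exactly the characterization the paper itself uses in reverse in \reflem{lem:surj-prop-fographs-to-dualizing-graphs}; the paper's elementwise arguments have the advantage that they are reused verbatim (``by reasoning as in the proof of\ldots'') in the well-definedness proofs for mographs, where the same local symbol-chasing applies. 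One small point in your favour: you explicitly check non-emptiness of the cograph, which the paper's proof leaves tacit.
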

\begin{proof}
Let $\dualizinggraph=\dualizinggraphofprop\prop$.
We must show
$\graphpairof{\verticesof{\dualizinggraph}}{\edgesof{\dualizinggraph}}$ and
$\graphpairof{\verticesof{\dualizinggraph}}{\dualitiesof{\dualizinggraph}}$ are $P_4$-free and
$\graphpairof{\verticesof{\dualizinggraph}}{\dualitiesof{\dualizinggraph}}$ is $\cthree$-free.

Suppose
$\graphpairof
{\setof{\vertex_1,\mkern-2mu\vertex_2,\mkern-2mu\vertex_3,\mkern-2mu\vertex_4}}
{\setof{\vertex_1\vertex_2,\mkern-1mu\vertex_2\vertex_3,\mkern-1mu\vertex_3\vertex_4}}$
is an induced subgraph of
$\graphpairof{\verticesof\fograph}{\edgesof\dualizinggraph}$.
Since $\vertex_1\vertex_2\tightin\edgesof\dualizinggraph$ there exist subformulas
$\prop_1$ and $\prop_2$ of $\prop$ containing $\vertex_1$ and $\vertex_2$, respectively, with
$\prop_1\tightwedge\prop_2$ a subformula of $\prop$.
Necessarily
$\vertex_3$ is in $\prop_1$, otherwise (since $\prop$ is a syntactic tree) $\vertex_1\vertex_3\tightin\edgesof\dualizinggraph$ (a contradiction), and similarly
$\vertex_4$ is in $\prop_2$, otherwise $\vertex_2\vertex_4\tightin\edgesof\dualizinggraph$ (a contradiction).
But then $\vertex_1\vertex_4\tightin\edgesof\dualizinggraph$, a contradiction.

Suppose
$\graphpairof
{\setof{\vertex_1,\mkern-2mu\vertex_2,\mkern-2mu\vertex_3,\mkern-2mu\vertex_4}}
{\setof{\vertex_1\vertex_2,\mkern-1mu\vertex_2\vertex_3,\mkern-1mu\vertex_3\vertex_4}}$
is an induced subgraph of
$\graphpairof{\verticesof\fograph}{\dualitiesof\dualizinggraph}$,
where $v_i$ is an occurrence of the nullary predicate symbol $p_i$.
By definition of $\dualitiesof\dualizinggraph$, we have $\pp_1=p_2$, $\pp_2=p_3$ and $\pp_3=p_4$.
Thus $p_3=p_1$, hence $p_4=\pp_1$, so $\vertex_1\vertex_4\in\dualitiesof\dualizinggraph$, a contradiction.

Suppose
$\graphpairof
{\setof{\vertex_1,\mkern-2mu\vertex_2,\mkern-2mu\vertex_3}}
{\setof{\vertex_1\vertex_2,\mkern-1mu\vertex_2\vertex_3,\mkern-1mu\vertex_3\vertex_1}}$
is an induced subgraph of $\graphpairof{\verticesof\fograph}{\dualitiesof\dualizinggraph}$,
where $v_i$ is an occurrence of the nullary predicate symbol $p_i$.
By definition of $\dualitiesof\dualizinggraph$, we have $\pp_1=p_2$, $\pp_2=p_3$ and $\pp_3=p_1$.
Thus $p_3=\pp_2=\dual\pp_1=p_1$, contradicting $\pp_3=p_1$.
\end{proof}

\subsection{Dualizing nets}\label{sec:dualizing-nets}

A set $W\subseteq\verticesof\dualizinggraph$ \defn{induces a bimatching} in
a dualizing graph
$\dualizinggraph$ if $W$ induces a matching
in $\graphpairof{\verticesof\dualizinggraph}{\edgesof\dualizinggraph}$
and induces a matching in
$\graphpairof{\verticesof\dualizinggraph}{\dualitiesof\dualizinggraph}$.
\begin{definition}
A \defn{dualizing net} $\net$ is a dualizing graph with no induced bimatching,
such that $\graphpairof{\verticesof\net}{\dualitiesof\net}$ is a matching.
\end{definition}
For example,
\,$\newcommand\gap{\mkern7mu}\namedvx a\gap\namedvx b \gap \namedvx c \gap \namedvx d \e a b \dualitystyle \nccurve[angleA=30,angleB=150] a c \nccurve[angleA=-30,angleB=-150] b d$\,
is a dualizing net, while
\,$\newcommand\gap{\mkern7mu}\namedvx a\gap\namedvx b \gap \namedvx c \gap \namedvx d \e a b \e c d  \dualitystyle \nccurve[angleA=30,angleB=150] a c \nccurve[angleA=-30,angleB=-150] b d $\,
and
\,$\newcommand\gap{\mkern7mu}\namedvx a\gap\namedvx b \gap \namedvx c \gap \namedvx d \e a b \dualitystyle \nccurve[angleA=25,angleB=155] a d \nccurve[angleA=-30,angleB=-150] b d $\,
are not.
The third dualizing graph in the bottom row of Fig\,\ref{fig:dualizing-graphs} is a dualizing net, while the other three
are not.
Dualizing nets
are in bijection with
even-length alternating elementary acyclic R\&B cographs \cite{Ret03}.

\subsection{Propositional homogeneous combinatorial proofs}

A \defn{skew fibration}
$\skewfib\mkern-0mu:\mkern-1mu\dualizingcover\mkern-1mu\to\mkern-1mu\dualizingbase$
of dualizing graphs is a skew fibration
$\skewfib:\graphpairof{\verticesof\dualizingcover}{\edgesof\dualizingcover}\to\graphpairof{\verticesof\dualizingbase}{\edgesof\dualizingbase}$
such that
$\skewfib:\dualizinggraphpairof\dualizingcover\to\dualizinggraphpairof\dualizingbase$\/ is a homomorphism.
\begin{definition}
  A \defn{homogeneous combinatorial proof} of a dualizing graph\/ $\dualizingbase$ is a skew fibration\/ \mbox{$\skewfib:\net\to\dualizingbase$}
  from a dualizing net\/ $\net$.
  A \defn{homogeneous combinatorial proof} of a simple proposition $\prop$ is a homogeneous combinatorial proof of its dualizing graph $\dualizinggraphofprop\prop$.
\end{definition}
For example,
a homogeneous combinatorial proof of Peirce's law $\peirceimpliesformula=\peirceformula$ is shown on the right of Fig\,\ref{fig:homogeneous-peirce}.

\subsection{Propositional homogeneous soundness and completeness}

\begin{theorem}[Propositional homogeneous soundness and completeness]\label{thm:prop-soundness-completeness}
A simple proposition is valid if and only if it has a homogeneous combinatorial proof.
\end{theorem}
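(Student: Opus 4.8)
The plan is to reduce Theorem~\ref{thm:prop-soundness-completeness} to the main soundness-and-completeness result (Theorem~\ref{thm:soundness-completeness}) by exhibiting, for every simple proposition $\prop$, a correspondence between its standard combinatorial proofs (skew bifibrations into $\gformula$ from a fonet, Def.~\ref{def:cp}) and its homogeneous combinatorial proofs (skew fibrations into $\dualizinggraphofprop\prop$ from a dualizing net). Since Theorem~\ref{thm:soundness-completeness} already gives that $\prop$ is valid iff it has a standard combinatorial proof, such a correspondence yields the homogeneous version at once. Everything rests on the fact that, propositionally, the two notions of net collapse onto the same data. As $\prop$ has no quantifiers, $\gformula$ and $\dualizinggraphofprop\prop$ share the same underlying cograph and have no binders; a fonet over $\gformula$ therefore carries the empty dualizer, has no dependencies, and hence a leap graph $\leapgraphof\net$ that coincides exactly with its graph of links. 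Since $\prop$ is simple there are no $1$-labelled literals, so in any such fonet every literal lies in exactly one link, and the links form a matching of pre-dual (here, dual) literals.

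First I would treat the direction standard $\Rightarrow$ homogeneous. Given a standard combinatorial proof $\bifib:\net\to\gformula$, discard all labels and retain the links of $\net$ as dualities, writing $\net^{*}$ for the resulting dualizing graph. Its duality graph is the link matching (a matching, hence a triangle-free cograph), and because leaps are links in the propositional case, the fonet condition ``no induced bimatching'' is literally the dualizing-net condition, so $\net^{*}$ is a dualizing net. The same vertex map $\bifib$ is still a skew fibration on the cograph parts, and since $\bifib$ preserves labels it carries each link of $\net$ (a dual pair) to a dual pair of $\gformula$, hence to a duality of $\dualizinggraphofprop\prop$; thus $\bifib$ is a homomorphism of duality graphs, i.e.\ a homogeneous combinatorial proof.

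Conversely, for homogeneous $\Rightarrow$ standard I would reinstate labels. Labelling each vertex of $\dualizinggraphofprop\prop$ by the predicate symbol it is an occurrence of recovers exactly $\gformula$, since the edge condition ``smallest enclosing subformula is a conjunction'' is precisely adjacency in $\gformula$. Pulling these labels back along a homogeneous combinatorial proof $\skewfib:\net\to\dualizinggraphofprop\prop$ makes $\skewfib$ label-preserving. Each duality of the dualizing net $\net$ is sent by the duality-homomorphism $\skewfib$ to a duality of $\dualizinggraphofprop\prop$, so its endpoints receive dual labels and may be declared a link; as the dualities of a dualizing net form a matching, every literal lands in a unique link, making $\net$ a linked fograph. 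It carries the empty dualizer, and its leap graph is again its duality matching, so ``no induced bimatching'' transfers back and $\net$ is a fonet. Finally $\skewfib$ is a skew fibration on cographs and, there being no binders, its binding-graph map is vacuously a fibration and existential preservation is vacuous, so $\skewfib$ is a skew bifibration, i.e.\ a standard combinatorial proof of $\prop$.

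I expect the only delicate point to be the bookkeeping that makes the two correctness criteria the same object: one must verify that, for a propositional linked fograph, dependencies are absent and therefore $\leapgraphof\net$ equals the graph of links, so that a vertex set induces a bimatching in the fonet sense iff it does in the dualizing-graph sense. Granting this and \reflem{lem:dualizing-graph-of-prop-well-defined} (which guarantees $\dualizinggraphofprop\prop$ is a legitimate dualizing graph), the two constructions above are mutually inverse, and composing the resulting equivalence of proof-existence with Theorem~\ref{thm:soundness-completeness} completes the argument.
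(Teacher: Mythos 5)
Your proposal is correct and follows essentially the same route as the paper: the paper likewise proves the two directions by converting links to dualities (discarding labels) for completeness, and dualities to links (pulling labels back along the map via the factorization $\dualizinggraphofprop\prop=\dualizinggraphof{\graphof\prop}$) for soundness, in each case observing that the empty dualizer, the coincidence of leap graph with link matching, and the vacuity of the binding-fibration and existential-preservation conditions make the two correctness criteria agree. The only cosmetic difference is that you package the two constructions as a mutually inverse correspondence, whereas the paper states them as two separate lemmas feeding into Theorems~\ref{thm:soundness} and~\ref{thm:completeness}.
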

\begin{proof}
A corollary of Theorem\,\ref{thm:soundness-completeness}, detailed in \S\,\ref{sec:proof-of-propositional-homogeneous-soundness-completeness}.
\end{proof}

\section{Monadic combinatorial proofs without labels}\label{sec:monadic-homogeneous}

A formula is \defn{monadic} if its predicate symbols are unary and it has no function symbols or logical constants, \eg, $\drinkerformula$.
This section extends homogeneous combinatorial proofs to the monadic case.
Fig.\,\ref{fig:drinker-no-labels}\figdrinkernolabels{} shows an illustrative example:
on the left is the combinatorial proof of \mbox{$\drinkerformula$} presented in the Introduction,
and on the right is the corresponding homogeneous combinatorial proof, to be defined below.

For technical convenience throughout this section we assume every monadic formula is closed, \ie, has no free variables. This loses no generality because a formula $\formula$ with free variables $x_1,\ldots,x_n$ is valid if and only if its closure $\forall x_1\ldots\forall x_n\mkern2mu \formula$ is valid.

Given a directed edge $e=\diredge\vertex\vertexa$, $\vertex$ is the \defn{source} of $e$, $\vertexa$ is the \defn{target} of $e$, and $\vertex$ and $\vertexa$ are \defn{in} $e$.
\begin{definition}\label{def:pre-mograph}
A \defn{pre-monadic graph} or \defn{pre-mograph} is a dualizing graph
$\mograph$ equipped with a non-empty set
$\bindingedgesof\mograph$ of directed edges on
$\verticesof\mograph$,
called \defn{bindings}, such that
if a vertex $\vertex$ is the target of a binding then
$\vertex$ is in no other binding.\footnote{In other words, if $\diredge\vertexa\vertex\tightin\bindingedgesof\mograph$, then (1)
$\diredge\vertex\vertexaa\tightnotin\bindingedgesof\mograph$ for all vertices $\vertexaa$, and (2)
$\diredge\vertexap\vertex\tightin\bindingedgesof\mograph$
implies $\vertexap=\vertexa$.}
\end{definition}
\begin{figure*}\begin{center}\vspace{5ex}\hspace{-8ex}\begin{math}
\monadicformulaeg
\end{math}
\hspace{20ex}
\monadicfographeg
\hspace{28ex}
\mographeg
\vspace{6ex}\end{center}\caption{\label{fig:mograph}%
A monadic formula $\monadicformula$,
its fograph $\graphof\monadicformula$,
and its mograph $\mographofformula\monadicformula$, respectively.%
}\figrule\end{figure*}%
An example of a pre-mograph is shown on the right of Fig.\,\ref{fig:mograph}, with two dualities (dashed and curved) and three bindings (directed and curved).
A vertex in a pre-mograph $\mograph$ is a \defn{literal} if it is the target of a binding, otherwise a \defn{binder}.
If $\diredge\binder\literal\tightin\bindingedgesof\mograph$ we say that
$\binder$ \defn{binds} $\literal$.\footnote{Note that, by the condition in the definition of pre-mograph,
$\binder$ must be a binder.}
The \defn{scope} of a binder $\binder$ in $\mograph$
is the smallest proper strong module of $\graphpairof{\verticesof\mograph}{\edgesof\mograph}$ containing $\binder$.
\begin{definition}\label{def:mograph}
A \defn{mograph} $\mograph$ is a pre-mograph such that
no binder is in a duality,
every binder has non-empty scope,
and
$\diredge\binder\literal\tightin\bindingedgesof\mograph$ only if
$\literal$ is in the scope of $\binder$.
\end{definition}
For example, the pre-mograph on the right of Fig.\,\ref{fig:mograph} is a mograph.
\begin{definition}\label{def:mograph-of-formula}
The \defn{mograph} $\mographofformula\monadicformula$ of a closed monadic formula $\monadicformula$ is the mograph defined by:
\begin{itemize}
\item $\verticesof\mograph=\setof{\text{occurrences of atoms and quantifiers in }\monadicformula}$,
\item $\vertex\vertexa\tightin\edgesof\mograph$ if and only if either
  \begin{itemize}
  \item the smallest subformula containing both $\vertex$ and $\vertexa$ is
a conjunction (\ie, of the form $\formula\tightwedge\formulaa$)
  \item $\vertex$ is an existential quantifier, $\vertexa$ is in its scope, and $\vertexa\tightneq\vertex$.
  \end{itemize}
\item $\vertex\vertexa\tightin\dualitiesof\mograph$ if and only if $\vertex$ and $\vertexa$ are atoms with dual predicate symbols (\eg, $\px$ and $\ppy$), and
\item $\diredge\vertex\vertexa\tightin\bindingsof\mograph$ if and only if $\vertex$ is a quantifier, $\vertexa$ is an atom, and $\vertex$ binds $\vertexa$.
\end{itemize}
\end{definition}
For example, in Figure\,\ref{fig:mograph}, the closed monadic formula $\monadicformula=\monadicformulaeg$ on the left has the mograph $\mographofformula\monadicformula$ on the right.
\begin{lemma}\label{lem:mograph-of-formula-well-defined}
$\mographofformula\monadicformula$ is a well-defined mograph for every closed monadic formula $\monadicformula$.\footnote{We will observe in \S\ref{sec:surjections} that $\mographofformulasymbol$ is a surjection from closed monadic formulas onto mographs (\reflem{lem:surj-closed-monadic-formulas-to-mographs}).}
\end{lemma}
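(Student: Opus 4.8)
The plan is to observe that $\mographofformula\monadicformula$ is assembled from the fograph $\graphof\monadicformula$ by three operations---forgetting labels, reading dualities off predicate symbols, and recording the binding relation as directed edges---so that almost every clause of Definitions~\ref{def:dualizing-graph}, \ref{def:pre-mograph} and~\ref{def:mograph} reduces to a fact already established for fographs (\reflem{lem:translation}) or to an argument already carried out in the proof of \reflem{lem:dualizing-graph-of-prop-well-defined}. Write $\graph\tighteq\graphof\monadicformula$ and $\mograph\tighteq\mographofformula\monadicformula$.

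First I would check that the undirected edge graph $\graphpairof{\verticesof\mograph}{\edgesof\mograph}$ coincides with the underlying cograph of $\graph$. Comparing the edge clauses of Def.\,\ref{def:mograph-of-formula} with the inductive clauses of Def.\,\ref{def:graph-of-formula}: a least common subformula $\formulaa\tightwedge\formulaaa$ corresponds to a $\graphjoin$, hence to an edge; a $\forall$ or $\vee$ corresponds to a $\graphunion$, hence to a non-edge; and an existential quantifier, since $\graphof{\exists x\mkern2mu\formulaa}\tighteq\singleton x\graphjoin\graphof\formulaa$, has its vertex joined to exactly the vertices in its (syntactic) scope. These are precisely the two edge clauses, so $\graphpairof{\verticesof\mograph}{\edgesof\mograph}$ is the underlying graph of $\graph$, which is a non-empty cograph by \reflem{lem:translation}. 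Likewise the binding edges of $\mograph$ are exactly the edges of the binding graph $\bindinggraphof\graph$, once one checks that the formula-level binding relation of Def.\,\ref{def:mograph-of-formula} matches the graph-level one on the rectified fograph $\graph$.

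Next I would dispatch the duality structure. A duality joins two atom-vertices whose predicate symbols are dual, so the triangle-free and $\pfour$-free claims for $\graphpairof{\verticesof\mograph}{\dualitiesof\mograph}$ use only that $\dual{(-)}$ is a fixpoint-free involution on predicate symbols; the two arguments are verbatim those in the proof of \reflem{lem:dualizing-graph-of-prop-well-defined} (an induced $\cthree$ forces $\dual q\tighteq q$, and an induced $\pfour$ forces a fourth duality). This shows $\mograph$ is a dualizing graph. The pre-mograph conditions then follow from the identification of bindings with $\bindinggraphof\graph$: the binding set is non-empty because $\monadicformula$, being a formula, has at least one atom, and (being closed) that atom's variable is bound, so some quantifier binds it; and since $\graph$ is a rectified fograph, each atom-vertex is the target of a unique binding and is never a source, giving the target-uniqueness condition of Def.\,\ref{def:pre-mograph}.

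Finally I would verify the three conditions of Def.\,\ref{def:mograph}. Because $\monadicformula$ is closed, every atom is bound, so the targets of bindings (the literals) are exactly the atom-vertices and the binders are exactly the quantifier-vertices; since dualities join only atoms, no binder lies in a duality. Each binder is a fograph binder of $\graph$, whose scope---the smallest proper strong module containing it---exists and is non-empty because $\graph$ is a logical cograph (it contains a literal). Lastly, a binding $\diredge\binder\literal$ records that the $x$-binder $\binder$ binds the $x$-literal $\literal$, and since $\graph$ is a rectified fograph every literal bound by $\binder$ lies in the scope of $\binder$, which is the scope condition. The hard part is the bookkeeping of the first step---pinning down the identifications $\graphpairof{\verticesof\mograph}{\edgesof\mograph}\tighteq\graph$ and $\bindingsof\mograph\tighteq\edgesof{\bindinggraphof\graph}$, and in particular confirming that the syntactic ``scope'' in the existential edge-clause of Def.\,\ref{def:mograph-of-formula} agrees with the strong-module scope of Def.\,\ref{def:mograph}; once this reduction to the fograph machinery is in place, the remaining obligations are immediate.
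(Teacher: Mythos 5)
Your proposal is correct, but it takes a genuinely different route from the paper. The paper proves the duality conditions ($\pfour$- and $\cthree$-freeness) and the pre-mograph binding condition exactly as you do, but for the remaining obligations --- that $\graphpairof{\verticesof\mograph}{\edgesof\mograph}$ is a cograph, that every binder has non-empty scope, and that bound literals lie in their binder's scope --- it runs a direct structural induction on $\monadicformula$ (base case a single atom; cases for $\wedge/\vee$ and for $\forall/\exists$), showing at each step that the underlying graph is built by $\graphunion$ and $\graphjoin$ and that scopes only grow. You instead reduce everything to the fograph $\graphof\monadicformula$ via the identifications $\graphpairof{\verticesof\mograph}{\edgesof\mograph}\tighteq\graphpairofgraph{\graphof\monadicformula}$ and $\bindingsof\mograph\tighteq\edgesof{\bindinggraphof{\graphof\monadicformula}}$, then invoke \reflem{lem:translation}. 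This is essentially the factorization $\mographofformula\monadicformula=\mographof{\graphof\monadicformula}$, which the paper does establish --- but only later and separately (\reflem{lem:monadic-factorization}, itself proved by ``a routine induction'', together with \reflem{lem:mograph-of-fograph-well-defined}); it does not use it here. What your route buys is that the well-definedness becomes a corollary of already-proved fograph facts and avoids re-verifying the cograph and scope conditions from scratch; what it costs is that the burden is relocated rather than removed, since the identification of the two edge sets (in particular, matching the syntactic ``scope'' in the existential edge-clause of Def.\,\ref{def:mograph-of-formula} against the $\graphjoin$ clause of Def.\,\ref{def:graph-of-formula}, and the formula-level against the graph-level binding relation on a rectified formula) is itself the same structural induction in disguise --- a point you candidly flag as ``the hard part.'' One small credit to your version: you explicitly verify that $\bindingsof\mograph$ is non-empty (required by Def.\,\ref{def:pre-mograph}), using closedness and the existence of an atom, a point the paper's proof passes over in silence.
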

\begin{proof}
Let $\mograph=\mographofformula\monadicformula$.
Since every atom-occurrence in $\monadicformula$ has a single variable, each literal is the target of at most one binding in $\mograph$, and since no atom-occurrence binds another atom-occurrence, $\mograph$ satisfies the condition on bindings in the definition of pre-mograph (Def.\,\ref{def:pre-mograph}).
By reasoning as in the proof of \reflem{lem:dualizing-graph-of-prop-well-defined}, $\dualizinggraphpairof{\mograph}$ is $\pfour$-free and $\cthree$-free.
By definition of $\mographofformulasymbol$, no binder is in a duality.
It remains to show that $\graphpairofgraph{\mograph}$ is a cograph, every binder has non-empty scope, and
$\diredge\binder\literal\tightin\bindingedgesof\mograph$ only if $\literal$ is in the scope of $\binder$.
We proceed by induction on the structure of $\monadicformula$.

Base case: $\monadicformula=\px$ for some $p$ and $x$, so $\mograph$ is a single vertex, hence a mograph.

Induction case: $\monadicformula=\monadicformula_1\wedgeorvee\monadicformula_2$ for $\wedgeorveeinset$.
By induction hypothesis $\mograph_i=\mographofformula{\monadicformula_i}$ is a mograph ($i=1,2$).
By definition of $\edgesof\mograph$, we have
$\graphpairofgraph{\mograph}=\graphpairofgraph{\mograph_1}\graphjoin\graphpairofgraph{\mograph_2}$
or
$\graphpairofgraph{\mograph}=\graphpairofgraph{\mograph_1}\graphunion\graphpairofgraph{\mograph_2}$,
thus $\graphpairofgraph\mograph$ is a cograph since each $\graphpairofgraph{\mograph_i}$ is a cograph.
The scope of a binder $\binder$ in $\mograph$ is at least the scope of $\binder$ in the $\mograph_i$ containing $\binder$,
thus the scope of $\binder$ in $\mograph$ is non-empty and contains every literal bound by $\binder$, since $\mograph_i$ is a mograph.

Induction case: $\monadicformula=\forallorexists\mkern-1mu x\mkern2mu\monadicformulap$ for $\forallorexistsinset$.
By induction hypothesis, $\mographp=\mographofformula\monadicformulap$ is a mograph.
By definition of $\edgesof\mograph$ we have
$\graphpairofgraph\mograph=\binder\graphunion\mographp$ or
$\graphpairofgraph\mograph=\binder\graphjoin\mographp$ for a vertex
$\binder$
(the initial occurrence of $\forallorexists\mkern-1mu x$ in $\formula$),
thus $\graphpairofgraph\mograph$ is a cograph since $\graphpairofgraph{\mographp}$ is a cograph.
The scope of $\binder$ in $\mograph$ comprises every literal, and is therefore non-empty and contains every literal bound by $\binder$.
The scope of any other binder $\binderp$ in $\mograph$ is equal the scope of $\binderp$ in $\mographp$,
so is non-empty and contains every literal bound by $\binderp$, since $\mographp$ is a mograph.
\end{proof}

\subsection{Monets}\label{sec:monets}

A mograph is \defn{linked} if every literal is in a unique duality.
An example of a linked mograph is shown in Fig.\,\ref{fig:monet} (left).
\begin{definition}
Let $\mograph$ be a linked mograph.
Its \defn{binder equivalence}\/
$\brel\mograph$ is the equivalence relation on binders generated by
$\binder_1\brel\mograph\binder_2$ if there exist literals $\literal_1$ and $\literal_2$ with
$\diredge{\binder_1}{\literal_1},\diredge{\binder_2}{\literal_2}\tightin\bindingsof\mograph$
and $\literal_1\literal_2\tightin\dualitiesof\mograph$.
\end{definition}
Thus
$\binder_1\brel\mograph\binder_2$
if and only if there exists a binding/duality pattern of the form
\begin{center}\begin{math}
\newcommand\bindright[2]{\nccurve[angleA=60,angleB=-150]{#1}{#2}}
\newcommand\bindleft[2]{\nccurve[angleA=120,angleB=-30]{#1}{#2}}
\newcommand\dualright[2]{\nccurve[angleA=20,angleB=160]{#1}{#2}}
\newcommand\pivot[1]{\hspace{.4ex}\namedvx{#1}\hspace{.4ex}}
\begin{array}{cccccccccccc}
  & \namedvx 1\quad & \namedvx 2 &            & \namedvx 3\quad & \namedvx 4 &            & \;\raisebox{-1.2ex}{\ldots} &            & \namedvx 5\quad & \namedvx 6 \\[1.3ex]
\Rnode{b}{\binder_1} &            &            & \pivot u &            &            & \pivot v &                         & \pivot w &            &            & \Rnode{c}{\binder_2}
\end{array}
{\bindingstyle
\bindleft{u}{2}
\bindright{u}{3}
\bindleft{v}{4}
\bindright{w}{5}
\bindleft{c}{6}
\psset{nodesepA=0pt}
\bindright{b}{1}
}
{\dualitystyle
\dualright 1 2
\dualright 3 4
\dualright 5 6
}
\end{math}\end{center}
Let $\mograph$ be a linked mograph.
A binder $\binder$ in $\mograph$ is \defn{existential} (resp.\ \defn{universal}) if, for every other vertex $\vertex$ in the scope of $\binder$, we have
$\binder\vertex\tightin\edgesof\mograph$ (resp.\ $\binder\vertex\tightnotin\edgesof\mograph$).\footnote{Since the scope of a binder is a proper strong module, every binder is either universal or existential (and not both).}
A \defn{conflict} in $\mograph$ is a pair $\setof{b,c}$ of distinct universal binders $b$ and $c$ such that $b\brel\cover c$.
\begin{definition}\label{def:mograph-consistent}
A mograph is \defn{consistent} if it has no conflict.
\end{definition}
A \defn{dependency} of $\mograph$ is a pair $\setof{b,c}$ of binders with $b\brel\cover c$, $b$ existential, and $c$ universal.
A \defn{leap} is a dependency or duality.
\begin{definition}
The \defn{leap graph} $\leapgraphof\mograph$ of a linked mograph $\mograph$ is
$\graphpairof{\verticesof\mograph}{\leapsof\mograph}$ for $\leapsof\mograph$ the set of leaps of $\mograph$.
\end{definition}
An example of a leap graph is shown in Fig.\,\ref{fig:monet} (right).
A set of vertices $W\subseteq\verticesof\mograph$ \defn{induces a bimatching} in a linked mograph
$\mograph$ if $W$ induces a matching in
$\graphpairof{\verticesof\mograph}{\edgesof\mograph}$
and induces a matching in
$\leapgraphof\mograph$.
\begin{definition}
A \defn{monet} (\emph{monadic net}) is a consistent linked mograph with no induced bimatching.
\end{definition}
An example of a monet is shown in\begin{figure*}\begin{center}\vspace{5ex}\begin{math}
\moneteg
\hspace{35ex}
\monetegleapgraph
\end{math}\vspace{6ex}\end{center}\caption{\label{fig:monet}A monet $\monet$ (left) and its leap graph $\leapgraphof\monet$ (right).}\figrule\end{figure*}
Fig.\,\ref{fig:monet}.

\subsection{Monadic homogeneous combinatorial proofs}
Let $\mographa$ and $\mograph$ be mographs.
A function $\fib:\verticesof\mographa\to\verticesof\mograph$ \defn{preserves existentials}
if
for every existential binder $\binder$ in $\mographa$
the vertex $\fib(\binder)$ is an existential binder in $\mograph$.
\begin{definition}\label{def:monadic-skew-bifib}
A \defn{skew bifibration} $\bifib:\mographa\to\mograph$ between mographs is
an existential-preserving skew fibration
$\bifib:\graphpairof{\verticesof\mographa}{\edgesof\mographa}\to\graphpairof{\verticesof\mograph}{\edgesof\mograph}$
such that
\begin{itemize}
\item
$\bifib:\graphpairof{\verticesof\mographa}{\dualityedgesof\mographa}\to\graphpairof{\verticesof\mograph}{\dualityedgesof\mograph}$
is a homomorphism and
\item
$\bifib:\graphpairof{\verticesof\mographa}{\bindingedgesof\mographa}\to\graphpairof{\verticesof\mograph}{\bindingedgesof\mograph}$
is a fibration.\vspace{3pt}
\end{itemize}
\end{definition}
An example of a skew bifibration between mographs is shown on the right of Fig.\,\ref{fig:drinker-no-labels}.
\begin{definition}
A \defn{homogeneous combinatorial proof} of a mograph $\mograph$ is a skew bifibration \mbox{$\bifib:\net\to\mograph$} from a monet $\net$.
A \defn{homogeneous combinatorial proof} of a closed monadic formula $\monadicformula$ is a homogeneous combinatorial proof of its mograph $\mographofformula\monadicformula$.\footnote{Although, for technical convenience, throughout this paper we have assumed (without loss of generality) that every formula is rectified, this definition of \emph{homogeneous combinatorial proof} also works directly for non-rectified closed monadic formulas, without the need to first transform to rectified form.
This is because $\mographofformulasymbol$ is agnostic to the choice of bound variables, since mographs do not contain any variables.
For example,
$\mographofformula{
(\forall x\mkern1mu px)
\vee
(\forall x\mkern1mu qx)
}
=
\mographofformula{
(\forall x\mkern1mu px)
\vee
(\forall y\mkern1mu qy)
}
=
\:
\namedsingletonleft x {}
\hspace{2ex}
\namedsingletonright{px}{}
\hspace{2ex}
\namedsingletonleft{xx}{}
\hspace{2ex}
\namedsingletonright{qx}{}
{\bindingstyle
\psset{nodesepA=1pt,nodesepB=.5pt}
\nccurve[angleA=30,angleB=155]{x}{px}
\nccurve[angleA=30,angleB=155]{xx}{qx}
}
\:
$
and
$\mographofformula{\forall x\mkern2mu\exists x\mkern2mu px}
=
\mographofformula{\forall x\mkern2mu\exists y\mkern2mu py}
=
\,
\namedsingletonleft x {}
\hspace{1.6ex}
\namedsingletonleft{xx} {}
\hspace{2ex}
\namedsingletonright {px} {}
\e{xx}{px}
{\bindingstyle
\psset{nodesepA=1.5pt,nodesepB=1pt}
\nccurve[angleA=35,angleB=150]{xx}{px}
}
\,
$.}
\end{definition}
A homogenous combinatorial proof of $\drinkerformula$ is shown in Fig\,\ref{fig:drinker-no-labels} (right).

\subsection{Monadic homogeneous soundness and completeness}\label{sec:monadic-homogeneous-soundness-completeness}

\begin{theorem}[Monadic homogeneous soundness and completeness]\label{thm:monadic-soundness-completeness}
A closed monadic formula is valid if and only if it has a homogeneous combinatorial proof.
\end{theorem}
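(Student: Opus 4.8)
The plan is to obtain Theorem~\ref{thm:monadic-soundness-completeness} as a corollary of full soundness and completeness (Theorem~\ref{thm:soundness-completeness}), just as the propositional homogeneous case (Theorem~\ref{thm:prop-soundness-completeness}) is. By Theorem~\ref{thm:soundness-completeness} a closed monadic formula $\monadicformula$ is valid if and only if it carries a standard combinatorial proof $\bifib:\net\to\graphof\monadicformula$ (Def.~\ref{def:cp}), so it suffices to produce, for each fixed $\monadicformula$, a bijection between standard combinatorial proofs of $\graphof\monadicformula$ and homogeneous combinatorial proofs of $\mographofformula\monadicformula$. The crucial observation is that $\graphof\monadicformula$ and $\mographofformula\monadicformula$ are built on the \emph{same} vertex set --- the occurrences of atoms and quantifiers in $\monadicformula$ --- and, comparing Def.~\ref{def:graph-of-formula} with Def.~\ref{def:mograph-of-formula}, on the \emph{same} underlying cograph: both place an edge exactly when the smallest subformula containing the two vertices is a conjunction, or when the first vertex is an existential quantifier and the second lies in its scope. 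Thus passing from $\graphof\monadicformula$ to $\mographofformula\monadicformula$ merely re-encodes the atom/variable labels as duality edges (pre-dual pairs) and binding edges, the latter reproducing the binding graph $\bindinggraphof{\graphof\monadicformula}$. The same label-forgetting re-encoding applies to any monadic (rectified) fograph, and by relabelling-invariance together with the surjectivity of $\mographofformulasymbol$ onto mographs (\reflem{lem:surj-closed-monadic-formulas-to-mographs}) it is a bijection, up to variable relabelling, between monadic rectified fographs and mographs.

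First I would lift this re-encoding to source structures, proving that it restricts to a bijection between monadic fonets and monets. Because monadic formulas have no function symbols, a dualizer $\dualizer$ of a monadic linked fograph assigns a \emph{variable} to each existential binder, and the requirement that each link become a dual pair collapses to the unification constraint $\dualizer(z_1)=\dualizer(z_2)$ on the variables $z_1,z_2$ of each linked pair, universal variables being held fixed. The transitive closure of these constraints partitions the binders into exactly the classes of the binder equivalence $\brel\cover$ of the associated linked mograph. Hence a dualizer exists precisely when each class contains at most one universal binder, i.e.\ precisely when no two distinct universal binders are binder-equivalent --- which is exactly \emph{consistency} (Def.~\ref{def:mograph-consistent}). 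Within this consistent regime an existential binder's variable is forced by every dualizer to equal a universal binder's variable exactly when the two are binder-equivalent, so the fonet notion of dependency and the monet notion of dependency coincide; the two leap graphs therefore agree, the ``no induced bimatching'' clauses match, and the existential/universal classification --- defined identically via scope and adjacency and untouched by the re-encoding --- transports verbatim. This yields the fonet/monet bijection.

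Next I would match the two notions of skew bifibration. Since the re-encoding leaves the cograph and hence all scopes and strong modules fixed, a homomorphism is a skew fibration of cographs on the fograph side if and only if it is one on the mograph side, and the binding fibration condition $\bifib:\bindinggraphof\cover\to\bindinggraphof\base$ is literally the binding-edge fibration of Def.~\ref{def:monadic-skew-bifib}; existential preservation appears verbatim in both definitions. The remaining point is that, on the fograph side, label preservation forces a linked pair (predicates $p$ and $\pp$) to map to a target pair with the same predicates, which --- since $\graphof\monadicformula$ and $\mographofformula\monadicformula$ share their vertices and every dual-predicate pair of the target is a target duality --- is precisely a duality homomorphism into $\mographofformula\monadicformula$; conversely, pulling the labels of $\monadicformula$ back along a duality homomorphism (over the shared vertex set) reconstructs a label-preserving map. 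Combining the fonet/monet bijection with this bijection of skew bifibrations gives, for fixed $\monadicformula$, a bijection between standard and homogeneous combinatorial proofs, and the theorem then follows from Theorem~\ref{thm:soundness-completeness}.

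I expect the main obstacle to be the dualizer/consistency equivalence of the second paragraph: proving carefully that, in the function-free monadic setting, the variable identifications forced by the links are captured \emph{exactly} by binder equivalence, so that a conflict (two binder-equivalent universal binders) is the sole obstruction to a dualizer and the two dependency notions agree. The accompanying bookkeeping --- tracking how a fonet's colouring corresponds to a linked mograph's unique-duality condition, and checking that the re-encoding is genuinely onto monets rather than merely into linked mographs --- is the part I would want to verify most meticulously.
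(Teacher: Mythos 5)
Your overall strategy (deduce the theorem from Theorem~\ref{thm:soundness-completeness} via a correspondence between standard and homogeneous proofs), your observation that $\graphof\monadicformula$ and $\mographofformula\monadicformula$ share an underlying cograph, and your analysis of dualizers versus consistency in the function-free setting all match the paper (which packages them as Lemmas~\ref{lem:monadic-factorization}, \ref{lem:consistent-dualizable}, \ref{lem:monadic-deps} and \ref{lem:fograph-mograph-net}). The completeness direction of your argument is essentially the paper's. But there is a genuine gap in the soundness direction: the bijection you posit between standard combinatorial proofs of $\graphof\monadicformula$ and homogeneous combinatorial proofs of $\mographofformula\monadicformula$ does not exist. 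A homogeneous combinatorial proof $\bifib:\net\to\mographofformula\monadicformula$ may contain two (or more) distinct vacuous universal binders in $\net$ mapping to the same universal binder of the target. Nothing in Definition~\ref{def:monadic-skew-bifib} forbids this: vacuous binders occur in no binding, so the binding fibration imposes no constraint on them, and the skew fibration condition tolerates duplicated isolated vertices. When you then ``pull the labels back along the map,'' both source binders receive the same variable $x$, and since each lies in the other's scope (the smallest proper strong module containing an isolated vertex of a union contains the whole component structure), the resulting labelled graph has an $x$-binder in the scope of another $x$-binder and is \emph{not} a well-defined fograph. So some homogeneous proofs have no standard counterpart under your re-encoding, and your argument that validity of $\monadicformula$ follows breaks at exactly this point.

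The paper explicitly flags this as the subtle part of the monadic case and resolves it by first \emph{collapsing} the source: it defines an equivalence relation identifying indistinguishable vacuous universal binders (same image, same neighbourhood), proves that the quotient $\collapseof\bifib:\quotientgraphof\net\indist\to\mograph$ is again a homogeneous combinatorial proof (Lemma~\ref{lem:collapse}), and only then pulls back labels --- together with a nontrivial argument, using the skew fibration property, that after collapse no two $x$-binders of the reconstructed source can lie in each other's scope. The paper illustrates the failure mode with the formula $\forall x\mkern2mu \exists y\,(py\vee\pp y)$, whose uncollapsed homogeneous proof has two source vertices over the single $x$-binder. To repair your proposal you would need to weaken ``bijection'' to a correspondence that exists after quotienting, and supply both the collapse lemma and the scope argument; neither is routine bookkeeping of the kind you anticipated as the main obstacle.
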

\begin{proof}
A corollary of Theorem\,\ref{thm:soundness-completeness}, detailed in \S\,\ref{sec:proof-of-monadic-soundness-completeness}.\todo{ref}
\end{proof}

\section{Modal combinatorial proofs}\label{sec:modal-cps}

A \defn{modal} formula is
generated from
the \defn{modal operators}
$\nec$ (necessity) and $\pos$ (possibility) instead of quantifiers and has all predicate symbols nullary,
\eg $\modaldrinkerformula$.
Every modal formula abbreviates a standard first-order one \cite[\S3.3]{Min92}:
replace every $\nec$ by $\forall x$,
$\pos$ by $\exists x$, and
predicate symbol
$p$ by $px$. For example, $\modaldrinkerformula$ abbreviates $\drinkerformulamodallike$,
or
$\drinkerformula$ in rectified form.

\begin{definition}
A \defn{modal combinatorial proof} of a modal formula
$\modalformula$ is a standard combinatorial proof (Definition\,\ref{def:cp}) of the first-order formula abbreviated by $\modalformula$.
\end{definition}
For example, a modal combinatorial proof of $\modaldrinkerformula$ is shown below-left, in condensed form.
\begin{center}\begin{pic}{-.3}{.85}\rput(-3,0)\modaldrinkerInlineDisplayed\rput(3,0)\drinkerInlineDisplayed\end{pic}\end{center}
It abbreviates the first-order combinatorial proof
above-right (copied from the Introduction).
\begin{theorem}[S5 Modal Soundness \& Completeness]\label{thm:modal-soundness-completeness}
  A modal formula is valid in S5 modal logic if and only if it has a modal combinatorial proof.
\end{theorem}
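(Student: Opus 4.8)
The plan is to reduce the statement to a purely logical correspondence and then discharge that correspondence by classical modal model theory, using Theorem\,\ref{thm:soundness-completeness} as a black box for all of the combinatorics. Write $\modalformula^*$ for the first-order formula abbreviated by the modal formula $\modalformula$ (so every $\nec$ becomes a $\forall$, every $\pos$ an $\exists$, and every atom $p$ becomes $px$ for the variable $x$ of the innermost enclosing modality, rectified so that bound variables are distinct). By the definition of modal combinatorial proof, $\modalformula$ has a modal combinatorial proof exactly when $\modalformula^*$ has a standard combinatorial proof in the sense of Definition\,\ref{def:cp}, and by Theorem\,\ref{thm:soundness-completeness} the latter holds exactly when $\modalformula^*$ is valid in classical first-order logic. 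Thus it remains to prove the single equivalence: $\modalformula$ is valid in S5 if and only if $\modalformula^*$ is first-order valid.

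The key observation is that $\modalformula^*$ is precisely the Kripke standard translation of $\modalformula$ specialised to the \emph{universal} accessibility relation \cite{Min92}. Over a frame whose accessibility relation is the full relation $W\tighttimes W$, the guard $Rxy$ in the usual clause for $\nec$ is vacuously true, so the standard translation of $\nec\formulaa$ at a world collapses to $\forall y$ applied to the translation of $\formulaa$, with a fresh world-variable $y$; dually for $\pos$; and the truth value of an atom is read off at the world named by the nearest enclosing modal quantifier. This is exactly the abbreviation rule of the preceding definition (top-level atoms, outside the scope of any modality, become atoms on a free ``current-world'' variable). So I would first set up a truth-preserving dictionary: from a first-order structure $A$ with domain $D$ (each nullary $p$ interpreted by a unary predicate $P^A\tightsubseteq D$) build the single-cluster S5 model $M$ with worlds $W=D$, universal accessibility, and $w\Vdash p$ iff $w\tightin P^A$; and conversely. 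Then prove by induction on $\modalformula$ that $M,w\Vdash\modalformula$ iff $A\models\modalformula^*$ under the assignment sending the current-world variable to $w$, the atom and Boolean cases being immediate and the modal cases using universality of $R$ to turn $\nec,\pos$ into genuine quantification over all of $D$.

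The main obstacle is the restriction to the universal frame, which is exactly the S5-specific input: S5 is complete with respect to frames whose accessibility relation is an equivalence relation, and every point-generated subframe of such a frame is a single cluster on which $R$ is the full relation $W\tighttimes W$. Since forcing at $w$ depends only on the generated submodel, S5-validity of $\modalformula$ is equivalent to truth of $\modalformula$ at every world of every single-cluster model, and these are precisely the models furnished by the dictionary above. Chaining this with the induction gives ``$\modalformula$ is S5-valid $\;\Leftrightarrow\;$ $\modalformula^*$ is true in every first-order structure under every assignment $\;\Leftrightarrow\;$ $\modalformula^*$ is first-order valid'', which completes the reduction. The fiddly points to handle carefully are the bookkeeping of the ``current-world'' variable for top-level atoms (these produce the free variables of $\modalformula^*$, so first-order validity must be taken over all assignments, matching evaluation at every world), and checking that rectification in forming $\modalformula^*$ does not disturb the correspondence, which it does not since it only renames bound world-variables. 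The running example $\modaldrinkerformula$, whose abbreviation is $\drinkerformula$, is the sanity check: $\pos(\openmodaldrinkerformula)$ is S5-valid and $\drinkerformula$ is first-order valid, consistently on both sides of the equivalence.
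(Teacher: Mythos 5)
Your proposal is correct and follows the paper's decomposition exactly at the top level: unfold the definition of modal combinatorial proof, apply Theorem\,\ref{thm:soundness-completeness} to the abbreviating first-order formula, and reduce everything to the single logical equivalence ``$\modalformula$ is S5-valid iff its first-order abbreviation is valid.'' The only difference is how that equivalence is discharged: the paper simply cites Theorem\,3.2 of \cite{Min92}, whereas you reprove it from scratch via the standard translation over universal frames, the observation that point-generated subframes of equivalence-relation frames are single clusters, and a dictionary between first-order structures with unary predicates and single-cluster S5 models. Your argument for that equivalence is sound (including the bookkeeping for free ``current-world'' variables of top-level atoms and the harmlessness of rectification), so what you gain is self-containedness at the cost of two paragraphs of modal model theory that the paper treats as a citation; nothing in the combinatorial content differs.
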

\begin{proof}
  By Theorem\,3.2 of \cite[p.\,42]{Min92}, a modal formula is valid in S5 if and only if the first-order formula it abbreviates is valid in first-order logic.
Thus the result follows from Theorem\,\ref{thm:soundness-completeness}.
\end{proof}

\subsection{Modal combinatorial proofs without labels}

A modal formula is \defn{closed} if
every predicate symbol occurrence is bound by a modal operator (\eg $\modaldrinkerformula$ but not $\openmodaldrinkerformula$)
and \defn{simple} if it has no logical constant ($1$ or $0$).
\begin{definition}
The mograph $\modalmographof\modalformula$ of a simple closed modal formula $\modalformula$ is the mograph $\mograph$ defined by
\begin{itemize}
\item $\verticesof\mograph=\setof{\text{occurrences of predicate symbols and modal operators in }\modalformula}$,
\item $\vertex\vertexa\tightin\edgesof\mograph$ if and only if either
  \begin{itemize}
  \item the smallest subformula containing both $\vertex$ and $\vertexa$ is
a conjunction (\ie, of the form $\formula\tightwedge\formulaa$)
  \item $\vertex$ is a $\pos$ with $\vertexa$ is in its scope and $v\tightneq w$.
  \end{itemize}
\item $\vertex\vertexa\tightin\dualitiesof\mograph$ if and only if $\vertex$ and $\vertexa$ are dual predicate symbols, and
\item $\diredge\vertex\vertexa\tightin\bindingsof\mograph$ if and only if $\vertex$ is a modal operator, $\vertexa$ is a predicate symbol, and $\vertex$ binds $\vertexa$.
\end{itemize}
\end{definition}
For example,
\begin{center}\vspace{-1.5ex}\(
\modalmographof{\modaldrinkerformula}
\hspace{6ex}
=
\hspace{12ex}
\rput(0,1.3ex){\drinkerbasemograph}
\)\vspace{3ex}\end{center}
\begin{definition}
A \defn{homogeneous combinatorial proof} of a closed modal formula $\modalformula$
is a homogeneous combinatorial proof of its mograph $\modalmographof\modalformula$.
\end{definition}
For example, a homogeneous combinatorial proof of $\modaldrinkerformula$ is shown on the right of Fig.\,\ref{fig:drinker-no-labels}.
\begin{theorem}[Modal homogeneous soundness and completeness]\label{thm:modal-homogeneous-soundness-completeness}
A closed modal formula is valid in S5 modal logic if and only if it has a homogeneous combinatorial proof.
\end{theorem}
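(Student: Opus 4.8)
The plan is to reduce the statement to the monadic homogeneous theorem (Theorem~\ref{thm:monadic-soundness-completeness}) by recognising the modal mograph as an ordinary monadic mograph. Since a homogeneous combinatorial proof of $\modalformula$ is by definition one of $\modalmographof{\modalformula}$, I take $\modalformula$ to be simple (so that $\modalmographof{\modalformula}$ is defined). Write $\monadicformula$ for the first-order formula abbreviated by $\modalformula$: the abbreviation replaces each $\pos$ by an $\exists$, each $\nec$ by a $\forall$, and each nullary predicate occurrence $p$ by an atom $px$ whose variable is that of the binding modal operator. Because $\modalformula$ is simple and closed, $\monadicformula$ has only unary predicates, no function symbols, no logical constants, and no free variables, so it is a closed monadic formula.

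The key step is the identity
\[ \modalmographof{\modalformula} \;=\; \mographofformula{\monadicformula}. \]
First I would observe that the abbreviation is a bijection between the occurrences of predicate symbols and modal operators in $\modalformula$ (the vertices of $\modalmographof{\modalformula}$) and the occurrences of atoms and quantifiers in $\monadicformula$ (the vertices of $\mographofformula{\monadicformula}$), under which $\pos$-occurrences correspond to $\exists$-occurrences and $\nec$-occurrences to $\forall$-occurrences. This bijection preserves the syntactic subformula structure, hence the ``smallest common subformula is a conjunction'' relation that forms the first $\edgesof{\cdot}$-clause shared by both definitions; it matches bindings (a modal operator binds a predicate occurrence exactly when the corresponding quantifier binds the corresponding atom) and dualities (two predicate occurrences are dual exactly when the corresponding atoms have dual predicate symbols). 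Since an occurrence is a $\pos$ precisely when its image is an $\exists$, and scope is computed purely from $\graphpairof{\verticesof{\cdot}}{\edgesof{\cdot}}$, the second $\edgesof{\cdot}$-clause agrees once the first is known to. Comparing Definition~\ref{def:mograph-of-formula} with the definition of $\modalmographof{\cdot}$ clause by clause then gives equality of vertex sets, edges, dualities, and bindings.

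With the identity in hand the equivalences chain together. By Theorem~3.2 of \cite[p.\,42]{Min92}, $\modalformula$ is valid in S5 if and only if $\monadicformula$ is valid in first-order logic. As $\monadicformula$ is a closed monadic formula, Theorem~\ref{thm:monadic-soundness-completeness} makes this equivalent to $\mographofformula{\monadicformula}$ having a homogeneous combinatorial proof; by the identity this is the same as $\modalmographof{\modalformula}$ having one, which is by definition a homogeneous combinatorial proof of $\modalformula$.

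The main obstacle is the clause-by-clause verification of the mograph identity, and in particular the apparent circularity of the scope clause, where scope is read off the edge graph while the second edge clause refers to scope. I would dispatch this exactly as in the proof of Lemma~\ref{lem:mograph-of-formula-well-defined}, by an induction on formula structure that simultaneously shows the graph-theoretic scope of each binder coincides with its syntactic scope; this induction transfers verbatim across the abbreviation, since the translation commutes with the constructors $\tightwedge$ and $\tightvee$ and pairs $\pos$ with $\exists$ and $\nec$ with $\forall$. A minor point to check is independence from the bound variables the abbreviation introduces (e.g.\ the two occurrences of $x$ in the translation of $\modaldrinkerformula$): this is immediate because mographs contain no variables, as noted after the definition of homogeneous combinatorial proof, so no rectification is required.
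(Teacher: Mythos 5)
Your proposal is correct and follows essentially the same route as the paper: the paper's proof also rests on the identity $\modalmographof\modalformula=\mographofformula{\modalformulap}$ for $\modalformulap$ the first-order formula abbreviated by $\modalformula$, and then reduces to the previously established soundness-and-completeness results (the paper cites Theorem\,\ref{thm:modal-soundness-completeness}, but bridging to \emph{homogeneous} proofs implicitly requires Theorem\,\ref{thm:monadic-soundness-completeness} exactly as you invoke it). Your explicit clause-by-clause verification of the mograph identity and the remark about independence from the introduced bound variables simply spell out what the paper leaves as an observation.
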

\begin{proof}
Since $\modalmographof\modalformula=\mographofformula\modalformulap$ for $\modalformulap$ the first-order formula encoded by $\modalformula$, the result is a corollary of Theorem\,\ref{thm:modal-soundness-completeness}.
\end{proof}

\section{Proof of the Soundness Theorem}\label{sec:soundness}\label{sec:proof-of-soundness}

In this section we prove the Soundness Theorem, Theorem~\ref{thm:soundness}.
\begin{lemma}\label{lem:graph-surj}
The function $\graphofsymbol$ (Def.\,\ref{def:graph}) is a surjection from formulas onto \recombulas.\footnote{Dropping the assumption that every formula is rectified leads to a surjection onto all fographs: see \reflem{lem:xgraph-surj}.}
Two formulas have the same graph if and only if they are equal modulo\footnote{Recall that, without loss of generality, we assume all formulas are rectified. Thus these equations do not include cases such as
$
px\wedge\exists x\mkern2mu qx
=
\exists x(px\wedge\mkern2mu qx)
$, an equality between formulas which are not logically equivalent.}
  \begin{align*}
    \formula\tightwedge\formulaa &\fateq \formulaa\tightwedge\formula\;\;\;\;
    &
    \formula\wedge(\formulaa\tightwedge\formulaaa) &\fateq (\formula\tightwedge\formulaa)\wedge\formulaaa\;\;\;\;
    &
    \exists x\mkern2mu\exists y\mkern2mu \formula &\fateq \exists y\mkern2mu\exists x\mkern2mu\formula\;\;\;\;
    &
    \formula\wedge\exists x\mkern2mu\formulaa &\fateq \exists x\mkern2mu(\formula\tightwedge\formulaa)
    \\[1ex]
    \formula\tightvee\formulaa &\fateq \formulaa\tightvee\formula
    &
    \formula\vee(\formulaa\tightvee\formulaaa) &\fateq (\formula\tightvee\formulaa)\vee\formulaaa
    &
    \forall x\mkern2mu\forall y\mkern2mu \formula &\fateq \forall y\mkern2mu\forall x\mkern2mu\formula
    &
    \formula\vee\forall x\mkern2mu\formulaa &\fateq \forall x\mkern2mu(\formula\tightvee\formulaa)
\\[-2.5ex]
\end{align*}
\end{lemma}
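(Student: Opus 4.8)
The plan is to prove the three constituent claims separately: (i) $\graphofsymbol$ lands in \recombulas, (ii) it is surjective onto them, and (iii) two formulas share a graph exactly when they are congruent modulo the eight displayed equations. Write $\approx$ for the congruence on formulas generated by those equations. For (i), augment the induction of \reflem{lem:translation}: since a rectified $\formula$ has a unique quantifier per bound variable, $\graphof\formula$ has a unique binder per variable, and, reading scope off the cotree (\reflem{lem:parent-scope}), that binder's scope contains exactly the literals arising from atoms under its quantifier, hence all its $x$-literals; so $\graphof\formula$ is rectified. For (ii) I would induct on $\sizeof{\verticesof\graph}$ for a \recombula $\graph$. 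A single-vertex \combula must be a literal $\singleton\atom$ (a lone binder has no literal in scope, violating legality), so $\graph=\graphof\atom$. If $\sizeof{\verticesof\graph}\tightge 2$, then $\graph$ being a cograph its cotree root is a $\graphunion$- or a $\graphjoin$-node; take the former (the latter is dual). If some binder $\binder$, with variable $x$, is a child of the root, then by \reflem{lem:parent-scope} its scope is all of $\graph$ and $\binder$ is universal; deleting $\binder$ gives a \recombula $\graph\tightminus\binder$ with one fewer vertex (legality and rectification of the remaining binders are inherited, and a literal survives in the old scope of $\binder$), and we set $\formula=\forall x\,\psi$ with $\graphof\psi=\graph\tightminus\binder$ by induction. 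Otherwise no child of the root is a binder, so $\graph=\graph_1\graphunion\cdots\graphunion\graph_k$ with each $\graph_i$ a child subtree; each $\graph_i$ is a \recombula (every binder's scope lies within its own $\graph_i$, so legality and rectification localise), giving $\formula=\formula_1\vee\cdots\vee\formula_k$.

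For (iii,$\Leftarrow$) it suffices to check that $\graphofsymbol$ identifies the two sides of each equation, since $\graphofsymbol$ is defined compositionally and hence respects the congruence in every context. Commutativity and associativity of $\wedge$ and $\vee$ reduce to the same properties of $\graphjoin$ and $\graphunion$ (immediate from their set-theoretic definitions in \S\ref{sec:notation}). The like-quantifier swaps and the two scope equations $\formula\tightwedge\exists x\,\formulaa\fateq\exists x(\formula\tightwedge\formulaa)$ and $\formula\tightvee\forall x\,\formulaa\fateq\forall x(\formula\tightvee\formulaa)$ reduce, after unfolding $\graphofsymbol$, to identities such as $\singleton x\graphjoin(\singleton y\graphjoin\graphof\formula)=\singleton y\graphjoin(\singleton x\graphjoin\graphof\formula)$ and $\graphof\formula\graphjoin(\singleton x\graphjoin\graphof\formulaa)=\singleton x\graphjoin(\graphof\formula\graphjoin\graphof\formulaa)$, again instances of commutativity and associativity of $\graphjoin$ (dually $\graphunion$). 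Rectification makes the customary side conditions (e.g.\ $x$ not free in $\formula$) automatic.

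For the converse (iii,$\Rightarrow$) I would show $\ker\graphofsymbol\tightsubseteq{\approx}$ via a canonical reconstruction $R$. For a \recombula $\graph$, let $R(\graph)$ be the formula produced by the surjectivity procedure: peel every binder at the cotree root (as a block of like quantifiers), then recurse into the non-binder children and combine them by $\vee$ or $\wedge$ according to the root's type. Two tasks remain. First, $R(\graph)$ is well defined modulo $\approx$: the multiset of children at each cotree node is fixed by $\graph$ (uniqueness of modular decomposition), the order of the peeled binders and of the children is immaterial by the like-quantifier swaps and by associativity/commutativity of the matching connective, and writing the peeled quantifiers at the front rather than interleaved is licensed by exactly the two scope equations. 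Second, $\formula\approx R(\graphof\formula)$ for every $\formula$, by structural induction. Combining the two yields $\graphof\formula=\graphof\formulaa\implies\formula\approx R(\graphof\formula)=R(\graphof\formulaa)\approx\formulaa$.

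The main obstacle is the induction step $\formula\approx R(\graphof\formula)$ in the quantifier and binary cases, where the head connective of $\formula$ need not survive as the cotree root of $\graphof\formula$: if $\formula=\exists x\,\psi$ and the head of $\psi$ is itself a $\wedge$ or an $\exists$, then $\singleton x$ merges into a single $\graphjoin$-run at the root, so $R$ may peel $x$ in a different position or order than $\formula$ presents it. One must verify that every repositioning of a join-type binder within a maximal $\graphjoin$-run (dually, a union-type binder within a $\graphunion$-run) is generated precisely by the scope equations together with the like-quantifier swaps---no more and no fewer. This is exactly why the two \emph{one-sided} scope equations appear, and why $\formula\tightwedge\forall x\,\formulaa=\forall x(\formula\tightwedge\formulaa)$ does \emph{not} (it would alter the adjacencies of $\singleton x$). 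Making this precise, presumably through an auxiliary lemma stating that $R$ intertwines the four formula constructors with $\graphunion$, $\graphjoin$, and the two singleton-adjunctions up to $\approx$, is the technical heart of the argument.
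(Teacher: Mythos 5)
Your proposal is correct and is, in substance, the induction that the paper itself compresses into the single line ``a routine induction'': surjectivity by peeling the cotree root of a rectified fograph, the easy direction by checking each equation against the associativity/commutativity of $\graphunion$ and $\graphjoin$, and the converse by a canonical reconstruction that is invariant under the congruence. The one step you leave open---that repositioning a binder within a maximal $\graphjoin$- or $\graphunion$-run is generated exactly by the scope equations and like-quantifier swaps---is indeed the only non-trivial point, and it is precisely the part the paper also leaves to the reader; your identification of why $\formula\tightwedge\forall x\,\formulaa\fateq\forall x(\formula\tightwedge\formulaa)$ must be excluded (it changes the adjacencies of $\singleton x$) shows you have the right picture.
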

\begin{proof}
  A routine induction.\todo{elaborate in appendix}
\end{proof}
Let $\fograph$ be a rectified fograph. Using the above Lemma,
choose a formula $\formula$ such that $\graphof\formula\tighteq\fograph$.
Define $\fograph$ as \defn{valid} if $\formula$ is valid.
This is well-defined with respect to choice of $\formula$ since
every equality in Lemma\,\ref{lem:graph-surj} is a logical equivalence.
Define a coloured fograph as valid if its underlying uncoloured fograph is valid.

Write $\isvalid\chi$ to assert that a formula or fograph $\chi$ is valid,
and $\formula\assignment{\assign\variable\term}$ for the result of substituting a term $\term$ for all occurrences of the variable $\variable$ in a formula $\formula$,
where, without loss of generality (by renaming bound variables in $\formula$ as needed), no variable in $\term$ is a bound variable of $\formula$ \cite[\S1.1.2]{TS96}.
\begin{lemma}\label{lem:formula-inferences}
  Let $\formula$, $\formulaa$ and $\formulaaa$ be formulas.
  \begin{enumerate}
  \item\label{itm:and-inference} $\isvalid\formula\tightwedge\formulaa$ if and only if ($\isvalid\formula$ and $\isvalid\formulaa$).
  \item\label{itm:or-inference} $\isvalid\formula\tightvee\formulaa$ if ($\isvalid\formula$ or $\isvalid\formulaa$).
  \item\label{itm:distrib-inference} $\isvalid(\formula\tightvee\formulaa)\tightwedge\formulaaa$ implies $\isvalid(\formula\tightwedge\formulaaa)\vee(\formulaa\tightwedge\formulaaa)$.
  \item\label{itm:universal-inference} $\isvalid\forall x\mkern2mu\formula$ if and only if $\isvalid\formula$.
  \item\label{itm:solo-existential-inference} $\isvalid\formula\assignment{\assign x t}$ implies $\isvalid\exists x\mkern2mu\formula$.
  \item\label{itm:existential-inference} $\isvalid\formula\vee\formulaa\assignment{\assign x t}$ implies $\isvalid\formula\vee\exists x\mkern2mu\formulaa$.
  \item\label{itm:lindist-inference} $\isvalid(\formula\tightvee\formulaa)\tightwedge\formulaaa$ implies $\isvalid\formula\tightvee(\formulaa\tightwedge\formulaaa)$.
  \end{enumerate}
\end{lemma}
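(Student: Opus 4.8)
The plan is to argue semantically, directly from the Tarskian satisfaction relation. Write $\mathcal{M},\rho\models\formula$ for satisfaction of $\formula$ in a structure $\mathcal{M}$ under an assignment $\rho$ of its free variables, so that $\isvalid\formula$ abbreviates the statement that $\mathcal{M},\rho\models\formula$ for \emph{every} $\mathcal{M}$ and every $\rho$. Each clause then reduces to checking an entailment \emph{pointwise}, at a fixed pair $(\mathcal{M},\rho)$, and afterwards quantifying over all such pairs.

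The four purely truth-functional clauses --- the conjunction clause, the disjunction clause, and the two distributivities --- follow from the Boolean satisfaction clauses alone. At a fixed $(\mathcal{M},\rho)$ we have $\mathcal{M},\rho\models\formula\tightwedge\formulaa$ exactly when both conjuncts hold and $\mathcal{M},\rho\models\formula\tightvee\formulaa$ exactly when some disjunct holds. From this the conjunction clause is immediate in both directions; the disjunction clause follows because either disjunct already forces the disjunction (and only the forward implication is asserted, as a disjunction can be valid with neither disjunct valid); and the two distributivity clauses follow from the pointwise tautological entailments $(\formula\tightvee\formulaa)\tightwedge\formulaaa\isvalid(\formula\tightwedge\formulaaa)\tightvee(\formulaa\tightwedge\formulaaa)$ and $(\formula\tightvee\formulaa)\tightwedge\formulaaa\isvalid\formula\tightvee(\formulaa\tightwedge\formulaaa)$, the latter being weak (linear) distributivity. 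Quantifying over all $(\mathcal{M},\rho)$ transfers validity as claimed.

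For the universal clause I would exploit that validity already quantifies over every assignment. Unfolding the clause for $\forall x$, we have $\mathcal{M},\rho\models\forall x\,\formula$ iff $\mathcal{M},\rho[x\mapsto a]\models\formula$ for every element $a$ of the domain of $\mathcal{M}$. As $\mathcal{M}$ and $\rho$ range over everything, the modified assignments $\rho[x\mapsto a]$ sweep out precisely the family of all assignments, so $\isvalid\forall x\,\formula$ and $\isvalid\formula$ hold under exactly the same conditions, giving the biconditional.

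The two existential clauses are the only ones needing a genuine lemma rather than a truth table: the semantic substitution lemma, that $\mathcal{M},\rho\models\formula\assignment{\assign x t}$ iff $\mathcal{M},\rho[x\mapsto t^{\mathcal{M},\rho}]\models\formula$, where $t^{\mathcal{M},\rho}$ is the value of $t$ in $\mathcal{M}$ under $\rho$. (This is where the paper's standing assumption that no variable of $t$ is bound in $\formula$ is needed, so substitution commutes correctly with the quantifier clauses.) Granting it, if $\isvalid\formula\assignment{\assign x t}$ then at each $(\mathcal{M},\rho)$ the element $a=t^{\mathcal{M},\rho}$ witnesses the existential, giving $\mathcal{M},\rho\models\exists x\,\formula$; the disjunctive variant is identical, splitting at $(\mathcal{M},\rho)$ on whether $\formula$ holds or $\formulaa\assignment{\assign x t}$ holds and invoking the witness only in the second case. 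The main obstacle, such as it is, is confined to the free-variable bookkeeping in the substitution lemma; every other clause is a routine pointwise Boolean or quantifier check.
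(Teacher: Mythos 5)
Your proposal is correct, and it is essentially the route the paper intends: the paper's own proof simply declares clauses 1--6 to be standard properties of first-order validity (citing Troelstra--Schwichtenberg and Johnstone) and obtains clause 7 as a consequence of clauses 1 and 3, whereas you supply the underlying Tarskian verification explicitly. The only substantive difference is at clause 7, where you prove the weak (linear) distributivity entailment directly as a pointwise tautology instead of routing through full distributivity; both are fine, and your version avoids the implicit monotonicity step (validity of $(\formula\tightwedge\formulaaa)\vee(\formulaa\tightwedge\formulaaa)$ yielding validity of $\formula\tightvee(\formulaa\tightwedge\formulaaa)$) that the paper's derivation quietly uses. You also correctly isolate the one genuinely non-Boolean ingredient, the semantic substitution lemma, and note where the paper's no-capture convention on $\formula\assignment{\assign x t}$ is needed.
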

\begin{proof}
  \ref{itm:and-inference}--\ref{itm:existential-inference} are standard inferences and properties of validity in first-order classical logic. See \cite{TS96} and \cite{Joh87}, for example. Property \ref{itm:lindist-inference} follows from \ref{itm:and-inference} and \ref{itm:distrib-inference}.
\end{proof}
\subsection{Soundness of fonets}\label{sec:fonet-soundness}

In this section we prove that fonets are sound, \ie, every fonet is valid (Lemma~\ref{lem:fonet-soundness} below).

Let $\fograph$ be a fograph.
A set $\portion\tightsubseteq\verticesof\fograph$ is \defn{well-founded} if $\portion$ contains a binder only if $\portion$ contains a literal.
\begin{definition}
A \defn{portion} of a rectified fograph $\combulavar$ is a set $\portion\tightsubseteq\verticesof\fograph$ such that $\portion$ and  $\verticesof\fograph\tightsetminus\portion$ are well-founded, and $\portion$ is closed under adjacency and binding:
if $\vertex\vertexa\tightin\edgesof\fograph$ or $\diedge\vertex{\mkern-3mu\vertexa}\tightin\edgesof{\bindinggraphof\combulavar}$,
then $\vertex\tightin \portion$ if and only if $\vertexa\tightin \portion$.
\end{definition}
A variable $x$ in a fograph $\fograph$ is
\defn{bound} if $\fograph$ contains an $x$-binder,
and \defn{free} if $\fograph$ contains an $x$-literal but no $x$-binder.
Two fographs are
\defn{independent}
if
any variable in both is free in both.

\subsubsection{Fusion}\label{sec:fusion}

\begin{definition}\label{def:fusion}
Let $\fograph$ and $\fographp$ be independent rectified fographs with respective portions $\portion$ and $\portionp\mkern-2mu$.
The \defn{fusion} of $\fograph$ and $\fographp\mkern-1mu$ at $\portion$ and $\portionp\mkern-2mu$ is the union $\fograph\mkern-2mu\graphunion\mkern-2mu\fographp$ together with edges between every vertex in $\portion$ and every vertex in $\portionp\mkern-2mu$.
\end{definition}
For example, if $\fograph=\mkern2mu\namedsingletonleft x x \mkern12mu\namedsingletonright{ppx}{\ppx}\e x {ppx} \mkern7mu\singletonleft\py\mkern2mu$,
$\fographp=\mkern4mu\singletonq\mkern6mu\singletonqq\mkern6mu\singletonz\mkern2mu$,
$\portion=\setof{\singletonleft\py}$
and
$\portionp=\setof{\singletonq\mkern1mu,\mkern4mu\singletonqq}$,
then
the fusion of $\fograph$ and $\fographp$ at $\portion$ and $\portionp$ is
$\namedsingletonleft x x \mkern12mu\namedsingletonright{ppx}{\ppx}\e x {ppx} \mkern7mu \namedsingletonleft{py}{\py}
\mkern12mu
\namedsingletonright q q\e{py}{q}\mkern6mu\namedsingletonright{qq}{\qq}\nccurve[nodesep=0ex,angleA=25,angleB=145]{py}{qq} \mkern6mu\singletonz\mkern2mu$.
Colourings are inherited during fusion, since they are inherited during graph union $\graphunion$.
For example, if $\cover=\mkern2mu\namedsingletonleft x x \mkern12mu\singletonred{\ppx}\e x v \mkern7mu\singletonredleft{\py}\mkern2mu$,
$\coverp=\mkern4mu\inlinegreenvx{q}\mkern2mu q\mkern6mu\inlinegreenvx{qq} \mkern2mu \qq\mkern6mu\singletonz\mkern2mu$,
$\portion=\setof{\singletonredleft\py}$
and
$\portionp=\setof{\inlinegreenvx{q}\mkern2mu q\mkern1mu,\mkern4mu\inlinegreenvx{qq}\mkern2mu\qq}$,
then
the fusion of $\cover$ and $\coverp$ at $\portion$ and $\portionp$ is
$\namedsingletonleft x x \mkern12mu\singletonred{\ppx}\e x v \mkern7mu \py \inlineredvx{py}
\mkern12mu
\inlinegreenvx{q}\e{py}{q}\mkern.5mu q\mkern6mu\inlinegreenvx{qq}\nccurve[nodesep=0ex,angleA=25,angleB=145]{py}{qq} \mkern1mu \qq\mkern6mu\singletonz\mkern2mu$.

Write $\induced\graph W$ for the subgraph of $\graph$ induced by $W$.
\begin{lemma}\label{lem:fusion-sound}
  Every fusion of valid rectified fographs is valid.
\end{lemma}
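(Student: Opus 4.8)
The plan is to reduce the claim to a single implication about validity of first-order formulas, by producing a formula whose graph is the fusion. Write $\fusion$ for the fusion of $\fograph$ and $\fographp$ at $\portion$ and $\portionp$ (Def.\,\ref{def:fusion}). First I would dispose of the degenerate cases: if $\portion=\emptyset$ or $\portionp=\emptyset$ the fusion adds no edges, so $\fusion=\fograph\graphunion\fographp=\graphof{\formula\tightvee\formulaa}$ for formulas $\formula,\formulaa$ representing $\fograph,\fographp$, and since $\isvalid\formula$ we get $\isvalid\formula\tightvee\formulaa$ by \reflem{lem:formula-inferences}(\ref{itm:or-inference}). So assume both portions nonempty.

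The key structural observation is that a portion is a union of connected components. Closure under adjacency forces membership in $\portion$ to be constant along every edge, so $\portion$ and $\verticesof\fograph\tightsetminus\portion$ are unions of connected components of $\fograph$; hence no edge joins $\portion$ to its complement and $\fograph=\induced\fograph\portion\graphunion\induced\fograph{\verticesof\fograph\tightsetminus\portion}$. Closure under binding keeps every binding edge inside $\portion$ or inside its complement. Combining these facts with the only edges fusion introduces — the complete bipartite edges between $\portion$ and $\portionp$ — I obtain the decomposition $\fusion=\bigl(\induced\fograph\portion\graphjoin\induced{\fographp}\portionp\bigr)\graphunion\induced\fograph{\verticesof\fograph\tightsetminus\portion}\graphunion\induced{\fographp}{\verticesof{\fographp}\tightsetminus\portionp}$.

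Next I would check that each of the four induced subgraphs is a rectified fograph, so that \reflem{lem:graph-surj} supplies representing formulas $\formula_1,\formula_2$ for $\induced\fograph\portion,\induced\fograph{\verticesof\fograph\tightsetminus\portion}$ and $\formulaa_1,\formulaa_2$ for the two parts of $\fographp$. Heredity of cographs is immediate, well-foundedness of both parts guarantees each nonempty part contains a literal, and uniqueness of $x$-binders is inherited; the delicate point is that every binder stays \emph{rectified} after restriction, i.e.\ its scope in the induced subgraph still contains all its bound literals. Here I would use the cotree characterization of scope (\reflem{lem:parent-scope}): restricting to a union of connected components alters the cotree only at its root, so a binder's scope is either unchanged or grows to the whole induced graph, and in both cases it still captures every same-variable literal (which lies on the same side by binding-closure). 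Since $\fograph$ and $\fographp$ are independent, every variable bound in one is absent from the other, so the combinations below have no capture and are rectified; the translation of Def.\,\ref{def:graph-of-formula} then gives $\graphof{\formula_1\tightvee\formula_2}=\fograph$, $\graphof{\formulaa_1\tightvee\formulaa_2}=\fographp$, and $\graphof{(\formula_1\tightwedge\formulaa_1)\tightvee\formula_2\tightvee\formulaa_2}=\fusion$.

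It then remains to prove the logical implication: from $\isvalid\formula_1\tightvee\formula_2$ and $\isvalid\formulaa_1\tightvee\formulaa_2$ (which hold since $\fograph,\fographp$ are valid and validity of a fograph is well-defined by \reflem{lem:graph-surj}) conclude $\isvalid(\formula_1\tightwedge\formulaa_1)\tightvee\formula_2\tightvee\formulaa_2$. By \reflem{lem:formula-inferences}(\ref{itm:and-inference}) the hypotheses give $\isvalid(\formula_1\tightvee\formula_2)\tightwedge(\formulaa_1\tightvee\formulaa_2)$, whence the conclusion follows by linear distributivity \reflem{lem:formula-inferences}(\ref{itm:lindist-inference}) with commutativity of $\tightvee$ and $\tightwedge$; equivalently one checks it directly by fixing a model $M$ with an assignment to the shared (hence free, by independence) variables and splitting into the cases $M\models\formula_2$, $M\models\formulaa_2$, or $M\models\formula_1$ and $M\models\formulaa_1$. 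I expect the main obstacle to be the rectified-fograph verification of the induced subgraphs — specifically controlling binder scopes under restriction to a union of connected components — rather than the final, essentially propositional, validity implication.
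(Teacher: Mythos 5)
Your proposal is correct and follows essentially the same route as the paper: decompose the fusion into a union of induced subgraphs together with the join $\induced\fograph\portion\graphjoin\induced{\fographp}{\portionp}$, represent each piece by a formula via \reflem{lem:graph-surj}, and conclude with the validity inferences of \reflem{lem:formula-inferences}. The differences are cosmetic: the paper splits into four cases according to whether each portion is empty, proper, or all of its fograph (thereby never invoking \reflem{lem:graph-surj} on an empty complement $\dualportion$ or $\dualportionp$ --- a degenerate sub-case you should also dispose of explicitly, since the empty graph is not a fograph), and it silently assumes the point you rightly flag and argue for, namely that the induced subgraphs are again rectified fographs.
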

\begin{proof}
  Let $\fusion$ be the fusion of valid rectified fographs $\fograph$ and $\fographp$ at portions $\portion$ and $\portionp$.
  We consider four cases.
  \begin{enumerate}
  \item $\portion$ or $\portionp$ is empty. Without loss generality, we may assume both are empty, since with one portion empty the fusion operation no longer depends on the other.
Thus $\fusion=\fograph\graphunion\fographp$ for rectified fographs $\fograph$ and $\fographp$, so by Lemma\,\ref{lem:graph-surj} there exist formulas $\formula$ and $\formulap$ with $\graphof\formula\tighteq\fograph$ and $\graphof\formulap\tighteq\fographp$.
Since $\fograph$ and $\fographp$ are independent, $\graphof{\formula\tightvee\formulap}=\fusion$. Since $\isvalid\fograph$ and $\isvalid\fographp$ we have $\isvalid\formula$ and $\isvalid\formulap$, hence $\isvalid\formula\tightvee\formulap$ by Lemma\,\ref{lem:formula-inferences}.\ref{itm:or-inference}. Thus $\isvalid\fusion$.
  \item $\portion\tighteq\verticesof\fograph$ and $\portionp=\verticesof\fographp$.  Thus $\fusion=\fograph\graphjoin\fographp$.
As in the previous case we have valid formulas $\formula$ and $\formulap$ with $\graphof\formula\tighteq\fograph$ and $\graphof\formulap\tighteq\fographp$. Thus $\isvalid\fusion$ since $\isvalid\formula\tightwedge\formulap$ by Lemma\,\ref{lem:formula-inferences}.\ref{itm:and-inference} and $\fusion=\graphof{\formula\tightwedge\formulap}$.
\item $\portion\tighteq\verticesof\fograph$ or $\portionp\tighteq\verticesof\fographp$, and the previous two cases do not hold. Without loss of generality assume $\portionp\tighteq\verticesof\fographp$, so $\emptyset\tightneq\portion\tightneq\verticesof\fograph$.
Let $\dualportion\tighteq\verticesof\fograph\mkern-1mu\tightsetminus\portion\neq\emptyset$.
Thus $\fusion=\induced{\fograph}{\dualportion}\graphunion(\induced{\fograph}{\portion}\graphjoin\fographp)$.
By Lemma\,\ref{lem:graph-surj} there exist formulas $\formula^*$, $\formula$ and $\formula'$ with $\graphof{\formula^*}\tighteq\induced{\fograph}{\dualportion}$,
$\graphof{\formula}\tighteq\induced{\fograph}{\portion}$ and $\graphof{\formulap}\tighteq\fographp$.
Since $\isvalid\fographp$ we have $\isvalid\formulap$, and since $\isvalid\fograph$ and $\graphof{\formula^*\tightvee\formula}=\fograph$, we have $\isvalid\formula^*\tightvee\formula$.
Thus $\isvalid(\formula^*\tightvee\formula)\tightwedge\formulap$ by Lemma\,\ref{lem:formula-inferences}.\ref{itm:and-inference}, so
$\isvalid\formula^*\tightvee(\formula\tightwedge\formulap)$ by Lemma\,\ref{lem:formula-inferences}.\ref{itm:lindist-inference}, hence $\isvalid\fusion$ since $\fusion=\graphof{\formula^*\tightvee(\formula\tightwedge\formulap)}$.
\item Otherwise $\emptyset\neq\portion\tightneq\verticesof\fograph$ and
$\emptyset\neq\portionp\mkern-1mu\neq\verticesof\fographp$.
Let $\dualportion=\verticesof\fograph\mkern-1mu\tightsetminus\portion\neq\emptyset$ and
$\dualportionp=\verticesof\fographp\mkern-1mu\tightsetminus\portionp\neq\emptyset$.
Thus the rectified fograph $\fusion$ is $\induced{\fograph}{\dualportion}\graphunion\induced{\fographp}{\dualportionp}\graphunion(\induced{\fograph}{\portion}\graphjoin\induced{\fographp}{\portionp})$.
By Lemma\,\ref{lem:graph-surj} there exist formulas $\formula^*$, $\formulap^*$, $\formula$ and $\formulap$ with $\graphof{\formula^*}=\induced{\fograph}{\dualportion}$,
$\graphof{\formulap^*}=\induced{\fographp}{\dualportionp}$,
$\graphof{\formula}=\induced{\fograph}{\portion}$, and
$\graphof{\formulap}=\induced{\fographp}{\portionp}$.
Since $\isvalid\fograph$ and $\graphof{\formula^*\tightvee\formula}=\fograph$ we have $\isvalid\formula^*\tightvee\formula$, and
since $\isvalid\fographp$ and $\graphof{\formulap^*\tightvee\formulap}=\fographp$ we have $\isvalid\formulap^*\tightvee\formulap$.
{\newcommand\bigformula{(\formula^*\tightvee\formulap^*)\tightvee(\formula\tightwedge\formulap)}%
Thus $\isvalid\bigformula$, so $\isvalid\fusion$ since $\fusion=\graphof\bigformula$.}
\end{enumerate}
\vskip-4ex
\end{proof}
\begin{lemma}\label{lem:pres-fusion}
  Every fusion of two rectified fonets is a rectified fonet.
\end{lemma}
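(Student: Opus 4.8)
My plan is to verify the three defining clauses of a rectified fonet for the fusion $\fusion$ of the (necessarily independent) rectified fonets $\fograph$ and $\fographp$ at portions $\portion$ and $\portionp$: that $\fusion$ is a rectified linked fograph, that it admits a dualizer, and that it has no induced bimatching. The structural backbone is the observation already used in \reflem{lem:fusion-sound}: since a portion is closed under adjacency, every edge of $\fograph$ lies inside $\portion$ or inside $\dualportion \tighteq \verticesof\fograph \tightsetminus \portion$, so $\portion$ and $\dualportion$ are unions of connected components, and likewise for $\portionp,\dualportionp$. Writing $A \tighteq \induced\fograph\portion$, $A^* \tighteq \induced\fograph\dualportion$, $B \tighteq \induced\fographp\portionp$ and $B^* \tighteq \induced\fographp\dualportionp$, the fusion is the cograph $\fusion \tighteq A^* \graphunion B^* \graphunion (A \graphjoin B)$, and well-foundedness of $\portion$ and $\portionp$ makes each nonempty piece contain a literal, so $\fusion$ is a logical cograph.

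To see that $\fusion$ is a rectified fograph I would use independence: a bound variable of one side occurs nowhere on the other, so each binder of $\fusion$ keeps its variable from a single side and stays the unique binder for that variable. The colouring, and hence the links, are inherited from $\fograph \graphunion \fographp$, because fusion only adds edges and the new edges join $\portion$ to $\portionp$, which sit in different fonets and so never connect two vertices of one colour class. For legality and rectification the delicate point is the behaviour of scopes, which I would control through the cotree: the \emph{type} of a binder's parent node ($\graphjoin$ or $\graphunion$) determines whether it is existential or universal and whether its scope is a join or a union module (cf.\ Lemma~\ref{lem:parent-scope}). Since the join $A \graphjoin B$ merges only $\graphjoin$-roots while the outer $\graphunion$ of $\fusion$ merges only $\graphunion$-roots, the parent-type of every binder is the same in $\cotreeof\fograph$ (resp.\ $\cotreeof\fographp$) as in $\cotreeof\fusion$; hence existential/universal status is preserved, and each scope either grows, is unchanged, or contracts exactly to $\portion$ or $\dualportion$. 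Because portions are closed under binding, a binder travels together with every literal it binds, so even a contracted scope retains all of them, which gives rectification.

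For the dualizer I would combine the given ones. By the status preservation just noted, the existential binders of $\fusion$ are exactly those of $\fograph$ and of $\fographp$, with disjoint variable sets by independence; and since each link lies on one side and its atoms use only that side's variables, a substitution is a dualizer of $\fusion$ iff its two restrictions are dualizers of $\fograph$ and $\fographp$. Thus the union of the dualizers $\dualizer$ of $\fograph$ and $\dualizerp$ of $\fographp$ is a dualizer of $\fusion$. The same product description pins down the leaps: a pair $\setof{\singletonx,\singletony}$ is a dependency of $\fusion$ precisely when it is one on a single side, since a cross-pair is refuted by a dualizer of the $x$-side that assigns $x$ a term not involving the other side's variable $y$. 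Together with the link analysis this yields $\leapsof\fusion \tighteq \leapsof\fograph \cup \leapsof\fographp$, with every leap confined to one side.

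Finally, suppose $W$ induced a bimatching in $\fusion$. As all leaps stay within a side, the leap-matching restricts to perfect matchings on $W \cap \verticesof\fograph$ and on $W \cap \verticesof\fographp$, so each of these sets is even. The only $\fusion$-edges crossing the $\fograph$/$\fographp$ boundary run between $\portion$ and $\portionp$, forming a complete bipartite graph; so in the $\fusion$-matching either $W$ misses $\portion$ or misses $\portionp$ — in which case $W$ restricts to a bimatching of $\fograph$ or of $\fographp$, contradicting that each is a fonet — or else $W \cap \portion \tighteq \setof{\vertex}$ and $W \cap \portionp \tighteq \setof{\vertexp}$ are singletons matched to each other. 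In that remaining case every vertex of $W \cap \dualportion$ has its only $W$-neighbours inside $W \cap \dualportion$ (no edges leave $\dualportion$), so $W \cap \dualportion$ is matched within itself and is even; but then $W \cap \verticesof\fograph \tighteq \setof{\vertex} \cup (W \cap \dualportion)$ has odd size, contradicting that it is even. Hence $\fusion$ has no induced bimatching and is a rectified fonet. I expect the scope bookkeeping behind status preservation (second paragraph) to be the main obstacle, as it is what simultaneously secures rectification, the product form of the dualizers, and the side-confinement of leaps on which the clean parity argument rests.
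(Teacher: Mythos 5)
Your proof is correct and follows the same route as the paper: decompose the fusion as $\fusion=\induced\fograph{\dualportion}\graphunion\induced{\fographp}{\dualportionp}\graphunion(\induced\fograph\portion\graphjoin\induced{\fographp}{\portionp})$, use independence and the absence of cross-links to show dualizers combine as unions and leaps stay confined to one side, and conclude that an induced bimatching in $\fusion$ would yield one in a factor. The one place where you go genuinely beyond the paper is the bimatching step: the paper asserts in one line that any $W$ inducing a bimatching in $\fusion$ induces one in $\cover$ or $\coverp$, which silently skips the case where the edge-matching pairs a vertex of $\portion$ with one of $\portionp$ across a fusion edge. Your observation that the cross-edges form a complete bipartite graph (forcing $W\cap\portion$ and $W\cap\portionp$ to be singletons in that case) followed by the parity contradiction (the leap-matching makes $W\cap\verticesof\cover$ even while the edge-matching makes it odd) is exactly what is needed to close that case, and is a worthwhile addition. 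Your scope/cotree bookkeeping in the second paragraph is at the level of detail of the paper's appeal to ``inheritance'' and is adequate; the claim that a contracted scope equals exactly $\portion$ or $\dualportion$ is not quite precise, but nothing downstream depends on it since binding-closure already guarantees a binder and its bound literals land on the same side.
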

\begin{proof}
  Let $\fusion$ be a fusion of rectified fonets $\cover$ and $\coverp$.
  Since each portion is closed under adjacency, $\fusion$ is a union of cographs,
  hence is a cograph.
  Every binder scope contains a literal, by inheritance from $\cover$ and $\coverp$.
  Since $\cover$ and $\coverp$ are rectified and (by the constraint on the definition of fusion) independent,
  and no links traverse between the two in $\fusion$,
  every union of dualizers for $\cover$ and $\coverp$ is a dualizer for $\fusion$, and vice versa.
  Thus the set of dependencies of $\fusion$ is the union of those of $\cover$ and $\coverp$, so any $W\tightsubseteq\verticesof\fusion$ inducing a bimatching in $\fusion$ would induce a bimatching in $\cover$ or $\coverp$.
Because $\cover$ and $\coverp$ are independent, $\fusion$ is rectified.
\end{proof}

\subsubsection{Universal quantification}\label{sec:universal}

\begin{definition}\label{def:universal}
Let $\fograph$ be a rectified fograph with no $\variable$-binder.
The \defn{universal quantification} of $\fograph$ by $x$
is $\singletonx\mkern-1mu\graphunion\mkern-1mu\fograph$.
\end{definition}
\begin{lemma}\label{lem:universal-sound}
  Every universal quantification of a valid rectified fograph is valid.
\end{lemma}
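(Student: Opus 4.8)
The plan is to descend to the level of formulas, apply the standard first-order inference relating the validity of $\forall x\,\formula$ to that of $\formula$, and then lift back to fographs. Let $\fograph$ be a valid rectified fograph with no $x$-binder, so that its universal quantification is $\singletonx\graphunion\fograph$. First I would invoke \reflem{lem:graph-surj} to choose a rectified formula $\formula$ with $\graphof\formula\tighteq\fograph$; by the definition of validity for rectified fographs, together with the fact (established just before this subsection) that this notion is well-defined on the fibres of $\graphofsymbol$, the hypothesis $\isvalid\fograph$ yields $\isvalid\formula$.

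Next I would form $\forall x\,\formula$ and read off from Def.\,\ref{def:graph-of-formula} that $\graphof{\forall x\,\formula}=\graphall{x}{\formula}=\singletonx\graphunion\gformula=\singletonx\graphunion\fograph$. Thus $\forall x\,\formula$ is a formula whose graph is exactly the universal quantification in question. By \reflem{lem:formula-inferences}.\ref{itm:universal-inference} we have $\isvalid\forall x\,\formula$ if and only if $\isvalid\formula$, so $\forall x\,\formula$ is valid. Since $\graphof{\forall x\,\formula}\tighteq\singletonx\graphunion\fograph$, the definition of validity for fographs then gives $\isvalid(\singletonx\graphunion\fograph)$, as required.

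The only point needing care, and hence the sole obstacle, is checking that $\forall x\,\formula$ is itself rectified, so that both \reflem{lem:graph-surj} and the well-definedness of fograph-validity legitimately apply to it. Since $\fograph=\graphof\formula$ has no $x$-binder, $x$ is not a bound variable of $\formula$; prepending $\forall x$ therefore introduces a bound variable distinct from all bound variables of $\formula$ and from all free variables of $\forall x\,\formula$ (whose free variables are those of $\formula$ with $x$ removed), so rectification is preserved. Note that this also settles, for free, that $\singletonx\graphunion\fograph$ is a genuine rectified fograph: by \reflem{lem:translation} the graph of the rectified formula $\forall x\,\formula$ is a fograph, so no separate verification of the scope or legality of the new binder is required. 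With this bookkeeping done the argument closes immediately, delegating all the real content to the surjectivity lemma and the quantifier inference; in particular, no induction on the structure of $\fograph$ is needed.
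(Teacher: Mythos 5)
Your proposal is correct and takes essentially the same route as the paper: choose a formula $\formula$ with $\graphof\formula\tighteq\fograph$ via Lemma\,\ref{lem:graph-surj}, observe $\graphof{\forall x\mkern2mu\formula}=\singletonx\graphunion\fograph$, and conclude by Lemma\,\ref{lem:formula-inferences}.\ref{itm:universal-inference}. The only difference is that you make explicit the check that $\forall x\mkern2mu\formula$ is rectified (using the no-$x$-binder hypothesis), a point the paper leaves implicit.
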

\begin{proof}
  Let $\fograph=\singletonx\graphunion\fographa$ be the universal quantification of a valid rectified fograph $\fographa$ by $x$. By Lemma\,\ref{lem:graph-surj} there exists a formula $\formula$ such that $\graphof\formula\tighteq\fographa$, and $\isvalid\formula$ since $\isvalid\fograph$. Thus $\graphof{\forall x\mkern2mu\formula}=\fograph$, hence $\isvalid\fograph$ since $\isvalid\forall x\mkern2mu\formula$ if and only if $\isvalid\formula$, by Lemma\,\ref{lem:formula-inferences}.\ref{itm:universal-inference}.
\end{proof}
If $\cover$ is a coloured rectified fograph, in the universal quantification $\singletonx\graphunion\cover$ we assume that the colouring of $\cover$ is inherited, while $\singletonx$ remains uncoloured.
\begin{lemma}\label{lem:pres-universal}
  Every universal quantification of a rectified fonet is a rectified fonet.
\end{lemma}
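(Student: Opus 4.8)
The plan is to take a rectified fonet $\cover$ with no $\variable$-binder — as required by Def.\,\ref{def:universal} — and verify that $\fograph\tighteq\singletonx\graphunion\cover$ satisfies each clause in the definition of rectified fonet: it is a rectified linked fograph, it has a dualizer, and it has no induced bimatching. Throughout I would exploit that $\singletonx$ is an \emph{isolated, uncoloured} binder, so it contributes no edges, no colours, and no literals.

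First I would establish the fograph structure. Since $\fograph$ is a disjoint union of cographs it is a cograph, and it is logical because $\cover$ already carries a literal. The added vertex $\singletonx$ is its own connected component, so the smallest proper strong module containing it is all of $\fograph$; hence its scope is the whole graph, it is universal (it is adjacent to nothing), and it is legal and rectified because $\cover$ has no other $\variable$-binder and its scope trivially contains every $\variable$-literal. For the binders already in $\cover$, I would argue via the modular decomposition that attaching the isolated $\singletonx$ leaves their scopes either unchanged or enlarged only by $\singletonx$: an existential binder sits below a series node, whose parent never merges with the new top-level parallel root, so its scope is untouched; a universal binder's scope can only grow by $\singletonx$. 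In either case legality and rectification persist, since $\singletonx$ is not a literal and is a $\variable$-binder with $\variable$ distinct from every bound variable of $\cover$.

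Next, linkedness is immediate: the colours, links, and literals of $\fograph$ coincide with those of $\cover$ (as $\singletonx$ is uncoloured and not a literal), so every link is still a pair of pre-dual literals and every literal lies in a link or is $1$-labelled. For the dualizer, the key observation is that $\fograph$ and $\cover$ have exactly the same existential binder variables (since $\singletonx$ is universal) and exactly the same links, so the duality condition defining a dualizer is literally the same predicate; thus any dualizer $\dualizer$ of $\cover$ — one exists because $\cover$ is a fonet — is a dualizer of $\fograph$.

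Finally, I would rule out an induced bimatching $W\tightsubseteq\verticesof\fograph$ by cases. If $\singletonx\tightin W$, then because $\singletonx$ is isolated it has no neighbour inside $W$, so $\fograph$ restricted to $W$ cannot be a matching, a contradiction. If $\singletonx\tightnotin W$, then $W\tightsubseteq\verticesof\cover$, and I would show that the induced matchings in the cograph and in the leap graph transfer back to $\cover$: the cograph edges among $\cover$'s vertices are unchanged, and — since the dualizers and the existential/universal classification of $\cover$'s binders are preserved — every leap with both endpoints in $\cover$ is a leap of $\cover$ and conversely, any new dependency necessarily touching the new universal binder $\singletonx$. Hence $W$ would induce a bimatching in $\cover$, contradicting that $\cover$ is a fonet. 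I expect the main obstacle to be precisely this bookkeeping of leaps: one must confirm that enlarging the universal-binder set by $\singletonx$ creates no new dependency between two vertices of $\cover$ (so the leap graph is unchanged away from $\singletonx$), while observing that the new dependencies into $\singletonx$ are harmlessly absent whenever $\singletonx\tightnotin W$.
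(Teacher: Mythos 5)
Your proposal is correct and follows essentially the same route as the paper's proof: dualizers for $\cover$ and for $\singletonx\graphunion\cover$ coincide (the existential binder variables and links are unchanged), the leap graph only gains dependencies incident to the new isolated universal binder $\singletonx$, and since $\singletonx$ lies in no edge any induced bimatching would restrict to one in $\cover$. The paper leaves the fograph/rectification bookkeeping implicit, whereas you spell it out via the modular decomposition, but the substance of the argument is the same.
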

\begin{proof}
  Let $\coverp$ be the universal quantification $\singletonx\mkern-1mu\graphunion\mkern-1mu\cover$.
  Dualizers for $\cover$ are dualizers for $\coverp$, and vice versa, since if $x$ occurs in $\cover$, it has merely transitioned from free to bound.
  The leap graph of $\coverp$ is that of $\cover$ together with additional dependencies involving $\singletonx$.
  Since $\singletonx$ is in no edge, any $W\tightsubseteq\verticesof\coverp$ inducing a bimatching in $\coverp$ would induce a bimatching in $\cover$.
\end{proof}

\subsubsection{Existential quantification}\label{sec:existential}

\begin{definition}\label{def:existential}
Let $\fograph$ be a rectified fograph without the variable $x$, let $\portion$ be a non-empty portion of $\fograph$, and let $\occs$ be a set of occurrences of a term $t$ in labels of literals in $\portion$, such that $t$ contains no bound variable of $\fograph$.
The \defn{existential quantification} of $\fograph$ by $x$ at $\occs$ in $\portion$ is $\singletonx\mkern-1mu\graphunion\mkern-1mu\fograph\occsubst t \omega x$ together with an edge between $\singletonx$ and each vertex in $\portion$, where $\fograph\occsubst t \omega x$ is the result of substituting $x$ for every occurrence of $t$ in $\occs$.
\end{definition}
For example, if $\fograph=\mkern2mu\singleton{p \tightf g y} \mkern7mu\singleton{\pp \tightf gy}\mkern2mu$,
$\portion=\setof{\singleton{p \tightf g y}}$
and
$\occs$ is the occurrence of the term $gy$ in $\singleton{p \tightf g y}$,
the existential quantification of $\fograph$ by $x$ at $\occs$ is
$\mkern2mu\namedsingletonleft x x \mkern12mu\namedsingletonright v {p \tightf x}\e x v \mkern7mu\singleton{\pp\tightf g y}\mkern2mu$,
while if $\occs$ is empty
the existential quantification becomes
$\mkern2mu\namedsingletonleft x x \mkern12mu\namedsingletonright v {p \tightf g y}\e x v \mkern7mu\singleton{\pp\tightf g y}\mkern2mu$.
If $\portion=\setof{\singleton{p \tightf g y},\singleton{\pp \tightf gy}}$ and $\occs$ comprises both occurrences of the term $\tightf gy$ in $\portion$, then the existential quantification is
$\mkern2mu\namedsingletonleft x x \mkern12mu\namedsingletonright v{\px}\e x v \mkern7mu\namedsingletonright w {\ppx}\nccurve[nodesep=0ex,angleA=25,angleB=145] x w\mkern2mu$
\begin{lemma}\label{lem:existential-sound}
  Every existential quantification of a valid rectified fograph is valid.
\end{lemma}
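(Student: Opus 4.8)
The plan is to mirror the proof of Lemma~\ref{lem:fusion-sound}, reducing validity of the constructed graph to the syntactic existential inferences of Lemma~\ref{lem:formula-inferences}. First I would record the shape of the construction. Writing $\dualportion\tighteq\verticesof\fograph\tightsetminus\portion$, the fact that the portion $\portion$ is closed under adjacency means no edge of $\fograph$ crosses between $\portion$ and $\dualportion$, so $\fograph\tighteq\induced\fograph\portion\graphunion\induced\fograph\dualportion$. Because the substitution $\occsubst t\occs x$ rewrites only labels of literals lying in $\portion$, and because the new vertex $\singletonx$ is joined to exactly the vertices of $\portion$, the existential quantification is
\[
  \induced\fograph\dualportion\graphunion\big(\singletonx\graphjoin\induced\fograph\portion\occsubst t\occs x\big).
\]
The crucial observation is that $\singletonx\graphjoin\induced\fograph\portion\occsubst t\occs x$ is, by the $\exists$-clause of Def.~\ref{def:graph-of-formula}, the graph of $\exists x\mkern2mu\formulaa$ for a formula $\formulaa$ with $\graphof\formulaa\tighteq\induced\fograph\portion\occsubst t\occs x$; this is what lets me apply Lemma~\ref{lem:formula-inferences}.

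I would then split on whether $\dualportion$ is empty. If $\portion\tighteq\verticesof\fograph$, the construction is just $\singletonx\graphjoin\fograph\occsubst t\occs x$. Using Lemma~\ref{lem:graph-surj}, pick a formula $\formulaa$ with $\graphof\formulaa\tighteq\fograph\occsubst t\occs x$. Since $x$ is fresh for $\fograph$ and $t$ contains no bound variable of $\fograph$, the global substitution $\assignment{\assign x t}$ is capture-free and exactly reverses $\occsubst t\occs x$, so $\graphof{\formulaa\assignment{\assign x t}}\tighteq\fograph$; validity of $\fograph$ then gives $\isvalid\formulaa\assignment{\assign x t}$. Lemma~\ref{lem:formula-inferences}.\ref{itm:solo-existential-inference} yields $\isvalid\exists x\mkern2mu\formulaa$, and since $\graphof{\exists x\mkern2mu\formulaa}$ is the existential quantification, the latter is valid.

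If instead $\dualportion\tightneq\emptyset$, I would choose (again via Lemma~\ref{lem:graph-surj}) formulas $\formula^*$ and $\formulaa$ with $\graphof{\formula^*}\tighteq\induced\fograph\dualportion$ and $\graphof\formulaa\tighteq\induced\fograph\portion\occsubst t\occs x$. As before $\graphof{\formulaa\assignment{\assign x t}}\tighteq\induced\fograph\portion$, whence $\graphof{\formula^*\tightvee\formulaa\assignment{\assign x t}}\tighteq\fograph$ and so $\isvalid\formula^*\tightvee\formulaa\assignment{\assign x t}$. Lemma~\ref{lem:formula-inferences}.\ref{itm:existential-inference} then gives $\isvalid\formula^*\tightvee\exists x\mkern2mu\formulaa$, whose graph is exactly the decomposition displayed above; hence the existential quantification is valid.

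The hard part is the substitution bookkeeping rather than the logic: I must verify that performing $\occsubst t\occs x$ (replacing $t$ by $x$ at the chosen occurrences) and then $\assignment{\assign x t}$ (replacing every $x$ by $t$) returns precisely the original labels on the relevant induced subgraph, with no variable capture. This is where the two hypotheses of Def.~\ref{def:existential} are essential: $x$ being absent from $\fograph$ guarantees the only $x$'s to substitute back are those just introduced, and $t$ containing no bound variable of $\fograph$ guarantees capture-freeness (in particular $x$ does not occur in $\formula^*$, so $\exists x$ genuinely scopes over $\formulaa$ alone). The remaining disjoint-union/join manipulation is the same as in Lemma~\ref{lem:fusion-sound}.
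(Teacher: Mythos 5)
Your proposal is correct and follows essentially the same route as the paper's proof: the same two-case split on whether $\portion\tighteq\verticesof\fograph$, the same appeal to Lemma\,\ref{lem:graph-surj} to realize the induced pieces as graphs of formulas, and the same use of Lemma\,\ref{lem:formula-inferences}.\ref{itm:solo-existential-inference} and \ref{lem:formula-inferences}.\ref{itm:existential-inference} respectively. The substitution bookkeeping you flag as the delicate point is handled in the paper exactly as you describe, via the hypotheses that $x$ is absent from $\fograph$ and that $\occs$ does not meet $\dualportion$.
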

\begin{proof}
  Let $\fographa$ be the existential quantification of a valid rectified fograph $\fograph$ by $x$ at a set $\occs$ of occurrences of the term $t$ in the non-empty portion $\portion$.
Thus $\fographa=\singletonx\graphunion\fograph\occsubst t \omega x$ plus edges from $\singletonx$ to every vertex in $\portion$.
We consider two cases.
\begin{enumerate}
\item
Suppose $\portion\tighteq\verticesof\fograph$.
Thus $\fographa=\singletonx\graphjoin\fograph\occsubst t \omega x$.
By Lemma\,\ref{lem:graph-surj} there exists a formula $\formula$ such that $\graphof\formula=\fograph\occsubst t \omega x$.
Therefore
$\fographa=\singletonx\graphjoin\fograph\occsubst t \omega x=\singletonx\graphjoin\graphof{\formula}=\graphof{\exists x\mkern2mu\formula}$.
Since $x$ does not occur in $\fograph$ we have $\graphof{\formula\assignment{\assign x t}}=\fograph$, and $\isvalid\formula\assignment{\assign x t}$ since $\isvalid\fograph$.
By Lemma\,\ref{lem:formula-inferences}.\ref{itm:solo-existential-inference} we have $\isvalid\exists x\mkern2mu\formula$ since
$\isvalid\formula\assignment{\assign x t}$, thus $\isvalid\fographa$.
\item
  Otherwise $\emptyset\neq\portion\neq\verticesof\fograph$.
  Let $\dualportion=\verticesof\fograph\mkern-1mu\tightsetminus\portion\neq\emptyset$.
  Since $\portion$ is a portion, it is well-founded and closed under adjacency and binding,
  $\fograph\occsubst t \omega x=\induced{\fograph\occsubst t \omega x}{\dualportion}\graphunion\induced{\fograph\occsubst t \omega x}{\portion}$ with \mbox{$\induced{\fograph\occsubst t \omega x}{\dualportion}$} and $\induced{\fograph\occsubst t \omega x}{\portion}$ both rectified fographs, and $\induced{\fograph\occsubst t \omega x}{\dualportion}=\induced\fograph\dualportion$ since $\omega$ does not intersect $\dualportion$. Thus $\fograph\occsubst t \omega x=\induced\fograph\dualportion\graphunion\induced{\fograph\occsubst t \omega x}{\portion}$.
  By Lemma\,\ref{lem:graph-surj} there exist formulas $\formulaa^*$ and $\formulaa$ with $\graphof{\formulaa^*}=\induced\fograph\dualportion$ and $\graphof\formulaa=\induced{\fograph\occsubst t \omega x}{\portion}$.
  Thus
  $$\fographa
  \hspace{1ex} = \hspace{1ex}
  \induced\fograph\dualportion
  \graphunion
  \singletonx\graphjoin\induced{\fograph\occsubst t \omega x}{\portion}
  \hspace{1ex} = \hspace{1ex}
  \graphof{\formulaa^*}
  \graphunion
  \graphof{\exists x\mkern2mu\formulaa}
  \hspace{1ex} = \hspace{1ex}
  \graphof{\formulaa^*\tightvee\exists x\mkern2mu\formulaa}$$
  Since $\graphof\formulaa=\induced{\fograph\occsubst t \omega x}{\portion}$ and $x$ does not occur in $\fograph$ we have $\graphof{\formulaa\assignment{\assign x t}}=\induced{\fograph}{\portion}$. Thus
  $$\fograph
  \hspace{1ex} = \hspace{1ex}
  \induced\fograph\dualportion
  \graphunion
  \induced{\fograph}{\portion}
  \hspace{1ex} = \hspace{1ex}
  \graphof{\formulaa^*}
  \graphunion
  \graphof{\formulaa\assignment{\assign x t}}
  \hspace{1ex} = \hspace{1ex}
  \graphof{\formulaa^*\tightvee\formulaa\assignment{\assign x t}}$$
  Since $\isvalid\fograph$ we have $\isvalid\formulaa^*\tightvee\formula\assignment{\assign x t}$,
  so by  Lemma\,\ref{lem:formula-inferences}.\ref{itm:existential-inference}
  we have $\isvalid\formulaa^*\tightvee\exists x\mkern2mu\formulaa$,
  hence $\isvalid\fographa$.
\end{enumerate}
\vskip-4ex\end{proof}
When quantifying a coloured rectified fograph existentially, the colouring is inherited, while the added binder remains uncoloured.
For example, if $\cover=\mkern2mu\singletonred{p \tightf g y} \mkern7mu\singletonred{\p\tightf gy}\mkern2mu$,
$\portion=\setof{\singletonred{p \tightf g y}}$
and
$\occs$ is the occurrence of
$y$ in $\singletonred{p \tightf g y}$,
the existential quantification of $\cover$ by $x$ at $\occs$ in $\portion$ is
$\mkern2mu\namedsingletonleft x x \mkern12mu\singletonred{p \tightf g x}\e x v \mkern7mu\singletonred{\pp\tightf g y}\mkern2mu$.
In the remainder of this section (\S\ref{sec:existential}) we prove that every existential quantification of a rectified fonet is a rectified fonet (Lemma\,\ref{lem:pres-existential}).

Let $\cover$ be a linked rectified fograph.
An \defn{existential} (resp.\ \defn{universal}) variable of $\cover$ is one labelling an existential (resp.\ universal) binder in $\cover$.
An \defn{output} of a function is any element of its image.
A \defn{stem} of a dualizer $\dualizer$ for $\cover$ is a variable in an output of $\dualizer$ but not in $\cover$.
For example, if $\cover\;=\;\stemeg\;$ and $\stem$ and $\stema$ are variables,
the dualizer $\stemegmin$ has one stem $\stem$,
$\assignment{\assign x {f\stem\stema},\assign y {f\stem\stema}}$ has two stems $\stem$ and $\stema$,
$\assignment{\assign x {f\stem z},\assign y {f\stem z}}$ has one stem $\stem$,
and $\stemeguniversal$ has no stem.
A dualizer $\dualizer$ \defn{generalizes} a dualizer $\dualizerp$ if $\dualizer$ yields $\dualizerp$ by substituting terms for stems, \ie, there exists a function $\subst$ from the stems of $\dualizer$ to terms such that $\dualizerp(x)=\dualizer(x)\subst$ for every existential variable $x$ of $\cover$, where $\termoratom\subst$ denotes the result of substituting $\subst(\stem)$ for $\stem$ in $\termoratom$, simultaneously for each stem $\stem$ of $\dualizer$.
For example, if $\cover\;=\;\stemeg\;$ and $\stem$ is a variable, the dualizer $\dualizer\mkern-1mu=\stemegmin$ generalizes $\dualizerp\mkern-2mu=\assignment{\assign x {f z a},\assign y {f z a}}$ via $\genegvia$ since $\dualizerp(x)=\dualizer(y)=\stem\genegvia=fza$.
A dualizer $\dualizer$ is \defn{most general} if it generalizes every other dualizer.
For example,
$\stemegmin$ is a most general dualizer for $\;\stemeg\;$ but
$\assignment{\assign x {f \zone},\assign y {f \zone}}$ and
$\stemeguniversal$ are not.
A linked rectified cograph is \defn{dualizable} if it has a dualizer.
\begin{lemma}\label{lem:mgd}
  Every dualizable linked rectified fograph has a most general dualizer.
\end{lemma}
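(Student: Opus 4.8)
The plan is to identify dualizers with unifiers of a finite system of term equations and then invoke the classical most-general-unifier theorem of first-order unification, the same machinery underlying unification nets.

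First I would reduce the duality condition to equations. Because $\cover$ is linked, every link $\{\singleton{\atomone},\singleton{\atomtwo}\}$ consists of two pre-dual literals, so $\atomone$ and $\atomtwo$ have dual predicate symbols and hence equal arity; writing $\atomone=p\,t_1\ldots t_n$ and $\atomtwo=\pp\,s_1\ldots s_n$, the substituted atoms $\atomone\dualizer$ and $\atomtwo\dualizer$ are dual precisely when $t_i\dualizer=s_i\dualizer$ for every $i$ (the predicate symbols already match by pre-duality, using that duality is involutive). Collecting these equations over all links yields a finite system $E$ in which the existential binder variables of $\cover$ are the unknowns, while all other symbols --- function symbols, constants, and, since rectification keeps them disjoint from the existential variables, the universal and free variables of $\cover$ --- are frozen as rigid constants. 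Note that $1$-labelled literals lie in no link, so $E$ mentions only genuine atoms. By construction a function $\dualizer$ on existential variables is a dualizer for $\cover$ if and only if the substitution it induces unifies $E$; in particular $\cover$ is dualizable exactly when $E$ is unifiable.

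Next, since $\cover$ is dualizable, $E$ is unifiable, so by the fundamental theorem of first-order unification it has a most general unifier $\mgu$, which I take in idempotent form, so that each existential variable is sent to a term over the rigid symbols and the residual (uneliminated) existential variables. To produce a dualizer that is most general in the stem sense defined above, I rename: let $\rho$ replace each residual existential variable by a distinct fresh variable, and set $\dualizerp=\mgu\rho$. Renaming preserves unification, so $\dualizerp$ is again a dualizer, and its stems are exactly the fresh variables introduced by $\rho$. Finally I would check that $\dualizerp$ generalizes every dualizer $\dualizer$: such a $\dualizer$ is a unifier of $E$, so maximality of $\mgu$ supplies a substitution $\theta$ with $x\dualizer=(x\mgu)\theta$ for every existential $x$; defining $\subst$ on the stems of $\dualizerp$ by $\subst(z)=\theta(\rho^{-1}(z))$, one verifies $\dualizerp(x)\subst=(x\mgu)\rho\subst=(x\mgu)\theta=\dualizer(x)$, using that $x\mgu$ involves only residual variables and that $\rho\subst$ agrees with $\theta$ on them. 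As $\dualizer$ was arbitrary, $\dualizerp$ is a most general dualizer.

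The main obstacle is not the unification itself, which is classical, but the bookkeeping that matches the paper's bespoke vocabulary to it: confirming that atom-duality collapses exactly to the equations $t_i\dualizer=s_i\dualizer$ (this needs pre-duality and equal arity), that universal and free variables may legitimately be frozen as constants (this needs rectification, so the unknowns stay disjoint from them, and dualizability then guarantees no forced clash such as $u\doteq u'$ between distinct rigid variables), and above all that the stem-based notion of \emph{generalizes} forces one to present $\mgu$ with its residual variables renamed to fresh stems rather than reusing variables of $\cover$. Without that renaming a literal mgu would fail to be most general in the present sense, since the witnessing $\subst$ is permitted to act only on stems and would be powerless on variables already occurring in $\cover$.
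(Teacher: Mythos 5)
Your proposal is correct and follows essentially the same route as the paper: identify dualizers with unifiers of the term-equation system induced by the links (solved for the existential variables), take a most general unifier, rename so that its outputs contain no existential variables of $\cover$ (your $\rho$ is the paper's ``renaming variables as needed''), and restrict to the existential variables. Your write-up merely fills in details the paper leaves implicit, in particular the check that the stem-based notion of \emph{generalizes} is witnessed by the substitution supplied by the mgu property.
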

\begin{proof}
  Let $\cover$ be the dualizable linked rectified fograph.
  Every dualizer for $\cover$ is, by definition,
  a unifier for the unification problem $\unirelof\cover$
  (binary relation on terms) \cite[\S7.2]{TS96}
  defined by $t_i\unirelof\cover t'_i$ for each link
  $\setof{\singleton{p t_1\ldots t_n},\singleton{\pp t'_1\ldots t'_n}}$ and $1\le i\le n$, solved for the existential variables.
  Let $\dualizer$ be a most general unifier of $\unirelof\cover$ \cite[\S7.2]{TS96}.
  By renaming variables as needed, we may assume that no output of $\dualizer$ contains an existential variable.
  Define $\dualizerp$ as the restriction of $\dualizer$ to existential variables.
  Since $\dualizer$ is a most general unifier, $\dualizerp$ is a most general dualizer.
\end{proof}
Let $\cover$ be a linked rectified fograph with dualizer $\dualizer$.
A pair $\dep$ is a \defn{dependency} of $\dualizer$ if
$\singletonx$ is existential, $\singletony$ is universal, and $\dualizer(x)$ contains $y$.

\begin{lemma}\label{lem:mgd-deps}
  Let $\cover$ be a linked rectified fograph with a most general dualizer $\dualizer$. A pair $\dep$ is a dependency of $\cover$ if and only if $\dep$ is a dependency of $\dualizer$.
\end{lemma}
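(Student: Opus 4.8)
The plan is to establish the two implications separately, observing that the forward direction is essentially immediate while all of the content lies in the converse. Throughout, $\dualizer$ denotes the given most general dualizer for $\cover$.

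For the forward direction, suppose $\dep$ is a dependency of $\cover$. By definition $\singletonx$ is existential, $\singletony$ is universal, and \emph{every} dualizer for $\cover$ assigns to $x$ a term containing $y$. Since a most general dualizer is in particular a dualizer, $\dualizer(x)$ contains $y$, and hence $\dep$ is a dependency of $\dualizer$. No further work is needed here.

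For the converse, suppose $\dep$ is a dependency of $\dualizer$, so that $x$ is existential, $y$ is universal, and $\dualizer(x)$ contains $y$. I must show that \emph{every} dualizer $\dualizerp$ assigns to $x$ a term containing $y$. First I would invoke that $\dualizer$ is most general: it generalizes $\dualizerp$, so there is a function $\subst$ from the stems of $\dualizer$ to terms with $\dualizerp(x) = \dualizer(x)\subst$. The task then reduces to showing that applying $\subst$ to $\dualizer(x)$ cannot destroy the occurrence of $y$.

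The crux — and the only substantive step — is the observation that $y$ is not a stem of $\dualizer$. Indeed, $y$ is a universal variable of $\cover$, so it labels a universal binder and therefore occurs in $\cover$; but a stem is by definition a variable occurring in an output of $\dualizer$ yet \emph{not} in $\cover$. Since $(\cdot)\subst$ replaces only stems, the specific occurrence of $y$ in the term tree $\dualizer(x)$ is left untouched (the substitution may introduce further occurrences of $y$ through the images of stems, but it cannot delete an occurrence of a non-stem variable). Hence $y$ occurs in $\dualizer(x)\subst = \dualizerp(x)$. As $\dualizerp$ was an arbitrary dualizer, and $x$, $y$ already have the required existential/universal types, $\dep$ is a dependency of $\cover$. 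The main obstacle is thus purely this bookkeeping point about substitution, which is secured entirely by the definitions of \emph{stem} and of \emph{universal variable}.
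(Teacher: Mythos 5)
Your proof is correct and follows essentially the same route as the paper: the forward direction is immediate because $\dualizer$ is itself a dualizer, and the converse uses that $\dualizer$ generalizes every dualizer $\dualizerp$, so the occurrence of $y$ in $\dualizer(x)$ survives into $\dualizerp(x)=\dualizer(x)\subst$. The paper states this last step in a single sentence; you usefully make explicit the key point it relies on, namely that $y$, being a universal (hence bound) variable of $\cover$, cannot be a stem of $\dualizer$ and so is untouched by $\subst$.
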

\begin{proof}
  Since $\dualizer$ is most general, for any dualizer $\dualizerp$ every dependency of $\dualizer$ is a dependency of $\dualizerp$.
  By definition, $\dep$ is a dependency of $\cover$ if and only if it is a dependency of every dualizer for $\cover$.
  Thus $\dep$ is a dependency of $\cover$ if and only if it is a dependency of $\dualizer$.
\end{proof}
\begin{lemma}\label{lem:pres-existential}
  Every existential quantification of a rectified fonet is a rectified fonet.
\end{lemma}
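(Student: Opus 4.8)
The plan is to verify, one at a time, the defining conditions of a rectified fonet for the existential quantification $\coverp$ of a rectified fonet $\cover$: that $\coverp$ is a rectified linked fograph, that it is dualizable, and that it has no induced bimatching. Throughout I reuse the decomposition recorded in the proof of \reflem{lem:existential-sound}: writing $\dualportion=\verticesof\cover\tightsetminus\portion$, one has $\coverp=\induced\cover\dualportion\graphunion(\singletonx\graphjoin\induced{\cover\occsubst t \occs x}{\portion})$. Since this is built from cographs by $\graphunion$ and $\graphjoin$, it is a cograph. The substitution $\occsubst t \occs x$ alters only labels, never predicate symbols, so the colour classes of $\coverp$ are exactly the links of $\cover$, each still a pair of pre-dual literals, and every literal is still $1$-labelled or linked; hence $\coverp$ is a linked fograph. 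The sole new binder $\singletonx$ is joined to all of $\portion$ and its scope lies inside the strong module $\setof{\singletonx}\cup\portion$, so $\singletonx$ is existential; its scope contains a literal (a non-empty portion is well-founded, hence contains one) and, as $x$ is fresh, no competing $x$-binder, so $\singletonx$ is legal and rectified, every $x$-literal created by $\occsubst t \occs x$ lying in $\portion$. Old binders retain their scopes and legality, so $\coverp$ is a rectified linked fograph.

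For the dualizer and dependencies, by \reflem{lem:mgd} I would choose a most general dualizer $\dualizer$ of $\cover$, with no output containing an existential variable. Because $t$ contains no bound variable of $\cover$, the assignment $\dualizera=\dualizer\cup\setof{\assign x t}$ restores $t$ at the abstracted occurrences before $\dualizer$ acts, so it sends each link of $\coverp$ to a dual pair; thus $\dualizera$ is a dualizer and the first fonet condition holds. As $\dualizera(x)=t$ contains no universal variable, no pair $\dep$ is a dependency of $\dualizera$, and since a dependency must be witnessed by \emph{every} dualizer, $\singletonx$ lies in no dependency of $\coverp$. For dependencies between old binders, let $\dualizerp$ be a most general dualizer of $\coverp$; then $\dualizera=\dualizerp\compose\lambda$ for a substitution $\lambda$ of the stems of $\dualizerp$, and since $\lambda$ fixes every universal variable, any universal variable occurring in a value of $\dualizerp$ also occurs in the corresponding value of $\dualizera=\dualizer\cup\setof{\assign x t}$; with \reflem{lem:mgd-deps} this shows every old-binder dependency of $\coverp$ is one of $\cover$. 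The reverse inclusion is the delicate point, discussed below.

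To rule out induced bimatchings, suppose $W$ induced a bimatching in $\coverp$. In $\leapgraphof\coverp$ the binder $\singletonx$ is isolated---it is in no link (links join literals) and, by the previous paragraph, in no dependency---so it cannot appear in a set inducing a matching of $\leapgraphof\coverp$; hence $\singletonx\tightnotin W$. On $W\tightsubseteq\verticesof\cover$ the adjacencies of $\coverp$ coincide with those of $\cover$ (the new edges all meet $\singletonx$), and, granting that the old-binder dependencies of $\cover$ and $\coverp$ agree, the leaps coincide too; thus $W$ would induce a bimatching in $\cover$, contradicting that $\cover$ is a fonet. Therefore $\coverp$ has no induced bimatching and is a rectified fonet.

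The hard part is the dependency comparison of the second paragraph, and in particular the reverse inclusion: that abstracting $t$ to $x$ destroys no dependency between old binders. Intuitively, a constraint forcing an existential value to contain a universal variable $y$ is carried by occurrences of $y$ itself, whereas $t$---being free of bound variables---can never match such an occurrence; so replacing $t$ by the fresh $x$ cannot relax that constraint, and the universal variable survives in the most general dualizer of $\coverp$. Making this precise via the unification problem $\unirelof\coverp$, tracking which positions of the most general unifier are fixed by which link equations, is the one genuinely new step; the remaining bookkeeping parallels \reflem{lem:pres-universal} and \reflem{lem:pres-fusion}.
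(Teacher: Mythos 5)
Your structural verification, your treatment of the new binder $\singletonx$, and your argument that every dependency of $\coverp$ is a dependency of $\cover$ all match the paper's proof in substance. But you have left a genuine gap, and you say so yourself: the reverse inclusion, that every dependency of $\cover$ survives as a dependency of $\coverp$, is never proved. This is not a minor bookkeeping step --- it is where the paper spends most of its effort, and it is genuinely needed for your bimatching argument. If abstracting $t$ to $x$ destroyed a dependency, then $\leapgraphof\cover$ restricted to a set $W\tightsubseteq\verticesof\cover$ could have strictly more edges than $\leapgraphof\coverp$ restricted to $W$, so a $W$ inducing a matching in $\leapgraphof\coverp$ need not induce one in $\leapgraphof\cover$ (the uniqueness clause of a matching can fail when edges are added), and the contradiction with $\cover$ being a fonet would not go through. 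Your intuitive sketch about $t$ ``never matching an occurrence of $y$'' points in the right direction but is not an argument, and the paper does not in fact formalize it by tracking positions in the unification problem.

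The paper closes the gap without descending into the unification equations, by working entirely in the generalization order on dualizers. Let $\dualizer$ be a most general dualizer for $\cover$ and $\dualizera$ a most general dualizer for $\coverp$. Since $\dualizer\cup\assignment{\assign x t}$ is a dualizer for $\coverp$, the most general $\dualizera$ generalizes it via some stem-substitution $\subst$ with $\dualizera(x)\subst=t$. Restricting $\subst$ to the stems occurring in $\dualizera(x)$ and applying it throughout $\dualizera$ yields a dualizer $\tilde\dualizera$ for $\coverp$ with $\tilde\dualizera(x)=t$ and with the \emph{same} dependencies as $\dualizera$: the substituted terms are subterms of $t$, which contains no bound variable of $\cover$, so no universal variable is introduced or removed. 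The restriction $\hat\dualizera$ of $\tilde\dualizera$ to the existential variables of $\cover$ is then checked to be a dualizer for $\cover$ (each link equation of $\cover$ is the corresponding equation of $\coverp$ with $x$ re-expanded to $t$, and $\tilde\dualizera(x)=t$). Hence any dependency of $\cover$ is a dependency of $\hat\dualizera$, hence of $\tilde\dualizera$, hence of $\dualizera$, hence of $\coverp$ by \reflem{lem:mgd-deps}. You should supply this construction, or an equivalent one; as it stands your proof establishes only one of the two inclusions it needs.
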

\begin{proof}
Let $\coverp$ be the existential quantification of $\cover$ by $x$ at $\occs$ in $\portion$, where $\occs$ is a set of occurrences of the term $t$ in labels of literals in $\portion$.
Since $\portion$ is closed under adjacency, $\coverp$ is a cograph and every binder scope in $\coverp$ contains a literal.

In the following two paragraphs we will show that the dependencies of $\cover$ and $\coverp$ coincide.

For any dualizer $\dualizer$ for $\cover$, the function
$\dualizerp=\dualizer\cup\assignment{\assign x t}$ is a dualizer for $\coverp$, since the links of $\coverp$ are those of $\cover$ but for some occurrences of $t$ becoming $x$.
The dependencies of $\dualizer$ in $\cover$ are the same as those of $\dualizerp$ in $\coverp$,
since $t$ contains no binder variable of $\cover$.
Every dependency of $\coverp$ is a dependency of $\cover$:
a dependency of $\coverp$ is (by definition) a dependency of every dualizer of $\coverp$,
hence a dependency of $\dualizerp$ for every dualizer $\dualizer$ for $\cover$,
thus a dependency of $\cover$.

Conversely, to show that every dependency of $\cover$ is a dependency of $\coverp$,
we take a most general dualizer $\dualizera$ for $\coverp$ and construct a dualizer
$\hat\dualizera$ for $\cover$ with the same dependencies as $\dualizera$;
since a dependency of $\cover$ is (by definition) a dependency of every dualizer of $\cover$,
it is a dependency of $\hat\dualizera$ in $\cover$,
hence a dependency of $\dualizera$ in $\coverp$,
and therefore a dependency of $\coverp$ by \reflem{lem:mgd-deps} (since $\dualizera$ is most general).
Let $\dualizer$ be a most general dualizer for $\cover$.
By the argument in the previous paragraph, $\dualizerp=\dualizer\cup\assignment{\assign x t}$ is a dualizer for $\coverp$.
Since $\dualizera$ is most general for $\coverp$,
there exists a function $\subst$ from the stems of $\dualizera$ to terms such that
$t=\dualizerp(x)=\dualizera(x)\subst$.
Let $\tilde\subst$ be the restriction of $\subst$ to stems appearing in $\dualizera(x)$.
Define $\tilde\dualizera$ by $\tilde\dualizera(y)=\dualizera(y)\tilde\subst$, for every existential variable $y$ of $\coverp$.
In particular, $\tilde\dualizera(x)=t$.
The function $\tilde\dualizera$ is a dualizer for $\coverp$ (since it is $\dualizera$ with terms substituted for stems), and has the same dependencies as $\dualizera$ because $\dualizera(x)\tilde\subst=t$ so $\tilde\subst(z)$ is a sub-term of $t$ for every stem $z$ of $\dualizera$ in $\dualizera(x)$, and $t$ contains no bound variable of $\cover$, hence no bound variable of $\coverp$.
Define $\hat\dualizera$ as the restriction of $\tilde\dualizera$ to the existential variables of $\cover$
(thus $\tilde\dualizera=\hat\dualizera\cup\assignment{\assign x t}$).
The function $\hat\dualizera$ is a dualizer for $\cover$ since for every link $\setof{\singleton{p t_1\ldots t_n},\singleton{\pp u_1\ldots u_n}}$ in $\cover$ we have $t_i\hat\dualizera\tighteq u_i\hat\dualizera$, because for the corresponding link $\setof{\singleton{p t'_1\ldots t'_n},\singleton{\pp u'_1\ldots u'_n}}$ in $\coverp$ we have $t'_i\tilde\dualizera=u'_i\tilde\dualizera$ with
$t_i=t'_i\assignment{\assign x t}$
and
$u_i=u'_i\assignment{\assign x t}$, and by construction $\tilde\dualizera(x)=t$.
The dualizer $\hat\dualizera$ is a restriction of $\tilde\dualizera$, which has the same dependencies as $\dualizera$, thus $\hat\dualizera$ has the same dependencies as $\dualizera$.
Thus, by the argument at the start of this paragraph, every dependency of $\cover$ is a dependency of $\coverp$.

Since the dependencies of $\cover$ and $\coverp$ coincide, the leap graphs $\leapgraphof\cover$ and $\leapgraphof\coverp$ are identical but for an extra vertex $\singletonx$ in the latter which is not in any leap. Thus induced bimatchings of $\cover$ and $\coverp$ coincide, so $\coverp$ is a fonet because $\cover$ is a fonet.
\end{proof}

\subsubsection{Soundness of fonets}

An \defn{axiom} is a coloured rectified fograph comprising two dual literals of the same colour (\eg\ $\axiomeg$) or a single (uncoloured) $1$-literal.

\begin{lemma}\label{lem:construct-implies-net}
  Every coloured rectified fograph constructed from axioms by fusion and quantification is a rectified fonet.
\end{lemma}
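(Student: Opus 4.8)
The plan is a direct structural induction on the construction tree, whose leaves are axioms and whose internal nodes are the three building operations. The three inductive cases require no new work, since they are precisely the content of the preservation lemmas already established: \reflem{lem:pres-fusion} handles a fusion node, \reflem{lem:pres-universal} handles a universal-quantification node, and \reflem{lem:pres-existential} handles an existential-quantification node. For the fusion case I would note that, by the definition of fusion (Def.\,\ref{def:fusion}), a fusion node is applied only to independent rectified fographs, so the hypothesis of \reflem{lem:pres-fusion} is automatically met whenever the construction is well-formed. Thus the entire inductive step reduces to citing these three lemmas, and the only genuine verification lies in the base case.

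For the base case I would check directly that each axiom is a rectified fonet. Take first an axiom consisting of two dual literals $\singleton\atomone$ and $\singleton\atomtwo$ of the same colour, such as $\axiomeg$. Since the two literals share a colour, the colouring constraint forces them to be non-adjacent, so they constitute a single link over an edgeless two-vertex graph; having no binders, it is vacuously rectified. The empty function is a dualizer, because $\atomone$ and $\atomtwo$ are already dual and there are no existential binder variables to which to assign terms. Finally, no set induces a bimatching: a matching must be non-empty and pair every one of its vertices, so a single vertex induces no matching and the full two-vertex set induces the edgeless graph (again no matching); hence no subset even induces a matching in the fograph, let alone a bimatching. The other axiom, the uncoloured $\singleton 1$, is handled identically and was already flagged in the text as the minimal fonet: every literal is $1$-labelled, there are no binders and no links, the empty function is trivially a dualizer, and the single-vertex graph admits no matching and hence no induced bimatching.

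With the base case verified and the three operations shown to preserve the property of being a rectified fonet, the induction closes immediately: every coloured rectified fograph obtained from axioms by fusion and quantification is a rectified fonet. I do not anticipate a real obstacle here, since all substantive content resides in the preservation lemmas \reflem{lem:pres-fusion}, \reflem{lem:pres-universal} and \reflem{lem:pres-existential}, and this statement merely assembles them. The one point I would state carefully is the no-induced-bimatching check in the base case, precisely because a matching is required to be non-empty and to pair every vertex, which is what rules out the degenerate one- and two-vertex configurations of an axiom.
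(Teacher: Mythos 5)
Your proposal is correct and follows exactly the route of the paper's own proof, which simply observes that every axiom is a rectified fonet and then cites Lemmas\,\ref{lem:pres-fusion}, \ref{lem:pres-universal} and \ref{lem:pres-existential} for the inductive step. The extra detail you supply in the base case (the non-adjacency forced by the shared colour, the empty dualizer, and the non-emptiness requirement on matchings ruling out any induced bimatching) is a sound elaboration of what the paper leaves implicit.
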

\begin{proof}
  Every axiom is a rectified fonet, and fusion and quantification preserve the property of being a rectified fonet, by Lemmas\,\ref{lem:pres-fusion}, \ref{lem:pres-universal}, and \ref{lem:pres-existential}.
\end{proof}
A fonet is \defn{universal} if it has a binder in no edge (necessarily a universal binder).
\begin{lemma}\label{lem:split-universal}
  Every universal rectified fonet is a universal quantification of a rectified fonet.
\end{lemma}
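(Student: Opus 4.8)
The plan is to exhibit the decomposition explicitly and then verify its pieces by inheritance. Let $\cover$ be a universal rectified fonet, and let $\binder=\singleton x$ be a binder of $\cover$ lying in no edge (one exists by hypothesis); as recorded after the definition, $\binder$ is necessarily universal, since its scope is a proper strong module and so contains some vertex $\vertex\neq\binder$ with $\binder\vertex\tightnotin\edgesof\cover$, ruling out existentiality. Because $\binder$ is isolated in $\graphpairof{\verticesof\cover}{\edgesof\cover}$, every edge of $\cover$ lies among $\verticesof\cover\setminus\{\binder\}$, so writing $\cover'$ for the induced subgraph on $\verticesof\cover\setminus\{\binder\}$ (colouring inherited), we have as (coloured) graphs $\cover=\singleton x\graphunion\cover'$. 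Since $\cover$ is rectified, $\binder$ is its unique $x$-binder, whence $\cover'$ has no $x$-binder; thus, once $\cover'$ is shown to be a rectified fonet, $\cover$ is literally the universal quantification of $\cover'$ by $x$ in the sense of Def.\,\ref{def:universal} (with $\binder=\singleton x$ uncoloured, as a binder carries no colour).

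The heart of the argument is that deleting the isolated vertex $\binder$ does not disturb the structure of the remaining binders. I would pass to the cotree: since $\binder$ is isolated it is a child of the root $\graphunion$-node of the cotree of $\cover$, and the cotree of $\cover'$ is obtained simply by deleting this leaf (contracting the root if it drops to a single child). Invoking Lemma\,\ref{lem:parent-scope} — scope $=$ leaves beneath the parent node — the parent of every binder $\bindera\neq\binder$ is unchanged, so its scope changes at most by losing $\binder$, which happens only when that parent is the root. In all cases the scope of $\bindera$ in $\cover'$ equals its scope in $\cover$ minus at most the single vertex $\binder$, and $\binder$ is a binder, not a literal. Hence each $\bindera$ keeps a literal in its scope and all of its bound literals, and its universal/existential status is preserved (adjacency within its scope is untouched). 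Consequently $\cover'$ is again a linked rectified fograph: it is an induced subgraph of the cograph $\cover$, hence a cograph; legality and rectification of every remaining binder are inherited; the set of links is untouched (we removed a binder, not a literal); and $\cover'$ still contains a literal.

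Next I would transfer the net conditions. The existential binders and the links of $\cover'$ coincide with those of $\cover$, so any dualizer of $\cover$ is verbatim a dualizer of $\cover'$; in particular $\cover'$ is dualizable. For the absence of an induced bimatching, note that the dependencies of $\cover'$ are exactly those of $\cover$ not involving the universal vertex $\binder$, so $\leapsof{\cover'}$ is $\leapsof\cover$ with $\binder$ and all incident leaps removed, while $\edgesof{\cover'}=\restr{\edgesof\cover}{\verticesof{\cover'}}$. Hence for any $W\subseteq\verticesof{\cover'}$ the subgraphs induced on $W$ by $\graphpairof{\verticesof\cover}{\edgesof\cover}$ and by $\leapgraphof\cover$ agree with those induced by $\cover'$ and $\leapgraphof{\cover'}$. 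So an induced bimatching in $\cover'$ would be an induced bimatching in $\cover$, which is impossible. Therefore $\cover'$ is a rectified fonet and $\cover=\singleton x\graphunion\cover'$ is its universal quantification by $x$.

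The main obstacle I anticipate is the bookkeeping around scopes in the second paragraph: one must be certain that removing $\binder$ neither empties any binder scope of literals nor flips a universal binder to existential nor breaks rectification. This is precisely where the cotree description of scopes via Lemma\,\ref{lem:parent-scope} does the work, reducing the matter to the trivial observation that deleting a leaf alters a node's descendant-set by at most that one leaf; everything else is inheritance.
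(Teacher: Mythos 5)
Your proof is correct and follows essentially the same route as the paper's: delete the isolated universal binder $\singleton x$, observe that the remainder inherits all dualizers and any induced bimatching, and conclude that $\net=\singleton x\graphunion\net^-$ is the universal quantification of the rectified fonet $\net^-$. Your cotree argument for why the remaining binders keep their scopes, legality, and universal/existential status is a more careful justification of the step the paper compresses into ``$\net^-$ is an induced subgraph of a rectified fograph, hence rectified,'' but it is the same decomposition and the same verification.
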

\begin{proof}
Let $\net$ be a universal rectified fonet, with (universal) binder $\singletonx$ in no edge.
The result $\net^-$ of deleting $\singletonx$ from $\net$ is a fonet,
since $\net^-$ inherits all dualizers from $\net$ (because $x$ goes from being universal to being free) and if $W$ induces a bimatching in $\net^-$ then $W$ induces a bimatching in $\net$.
Since $\net^-$ is an induced subgraph of a rectified fograph, $\net^-$ is rectified.
Since $\net=\singletonx\graphunion\net^-$, the rectified fonet $\net$ is the universal quantification of the rectified fonet $\net^-$ by $x$.
\end{proof}
\begin{lemma}\label{lem:axiom-union}
  Every fonet with no edge and no binder is a union $\lambda_1\tightgraphunion\ldots\tightgraphunion\lambda_n$ of axioms $\lambda_i$ ($n\tightge 1$).
\end{lemma}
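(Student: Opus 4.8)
The plan is to read off the decomposition directly from the structure forced by the two hypotheses. Since $\net$ has no binder, every vertex of $\net$ is a literal, and since $\net$ has no edge, $\net$ is the graph union of the induced subgraphs on any partition of $\verticesof\net$. So it suffices to partition $\verticesof\net$ into blocks, each of which is an axiom.

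The natural partition is given by the colouring: by Definition~\ref{def:linked} every literal of $\net$ is either $1$-labelled or lies in a link, and every link is a two-element colour class $\setof{\singleton\atomone,\singleton\atomtwo}$ of pre-dual literals. Thus the $1$-labelled literals together with the links partition $\verticesof\net$. Each $1$-labelled literal is, on its own, the axiom $\singleton 1$. For the links I would use the dualizer: because $\net$ has no binder it has no existential binder variable, so the only dualizer for $\net$ (which exists, since $\net$ is a fonet) is the empty substitution. Its defining condition (Definition~\ref{def:dualizer}) then reads $\atomone = \atomone\dualizer$ dual to $\atomtwo = \atomtwo\dualizer$, i.e. the two atoms of each link are genuinely dual. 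Hence each link $\setof{\singleton\atomone,\singleton\atomtwo}$ is a pair of dual literals of the same colour, which is exactly an axiom.

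Finally I would assemble the pieces: the colouring condition forces the two literals of a link to be non-adjacent, consistent with $\net$ having no edge, so the induced subgraph on each block carries no edges and is literally one of the two forms of axiom. As $\net$ is edgeless, $\net$ is the graph union of these blocks, giving $\net = \lambda_1 \tightgraphunion \cdots \tightgraphunion \lambda_n$ with each $\lambda_i$ an axiom; and $n \ge 1$ because a logical cograph (hence every fograph) contains at least one literal, so $\verticesof\net$ is non-empty.

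The argument is short, and the only point needing care --- the main (small) obstacle --- is the step from \emph{pre}-dual to genuinely dual: the linked-fograph definition alone guarantees only that the two literals of a link have dual predicate symbols, and it is the existence of the dualizer, trivialised here to the empty substitution by the absence of binders, that upgrades this to equality of the argument terms and hence to an axiom. The fonet conditions about induced bimatchings play no role in this lemma.
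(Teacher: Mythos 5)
Your proof is correct and follows essentially the same route as the paper's: the absence of (existential) binders forces the unique dualizer to be the empty substitution, which upgrades each pre-dual link to a genuinely dual pair, and the edgeless, binder-free graph then decomposes as a union of two-literal axioms and $\singleton 1$ axioms. The extra details you supply (the partition argument and the observation that $n\tightge 1$) are fine and consistent with the paper's more terse version.
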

\begin{proof}
Since $\net$ has no edges, it has no existential binders, hence the empty dualizer.
Thus every link in $\net$ has literals with dual atoms, and every literal that is not in a link is $1$-labelled.
Since $\net$ has no edges, it is the union of axioms.
\end{proof}
\begin{lemma}\label{lem:split-fusion}
Let $\net$ be a rectified fonet with underlying uncoloured fograph
$\fographaa_1
\mkern1mu\graphunion\mkern1mu(\fographa_1\mkern-2mu\graphjoin\mkern-2mu\fographa_2)
\mkern1mu\graphunion\mkern1mu\fographaa_2$
for each $\fographa_i$ a fograph and each $\fographaa_j$ empty or a fograph.
Suppose no leap of $\net$ is between $\verticesof{\fographaa_1}\cup\verticesof{\fographa_1}$ and $\verticesof{\fographa_2}\cup\verticesof{\fographaa_2}$.
Then $\net$ is a fusion of rectified fonets.
\end{lemma}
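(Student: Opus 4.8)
The plan is to realise $\net$ directly as the fusion of the two induced sub-fographs lying on either side of the join. Write $A=\verticesof{\fographaa_1}\cup\verticesof{\fographa_1}$ and $B=\verticesof{\fographa_2}\cup\verticesof{\fographaa_2}$, and set $\cover=\induced\net A$ and $\coverp=\induced\net B$, each carrying the colouring inherited from $\net$, with distinguished portions $\portion=\verticesof{\fographa_1}$ and $\portionp=\verticesof{\fographa_2}$. Because $\fographaa_1$ and $\fographaa_2$ sit in unions, the only edges of $\net$ running between $A$ and $B$ are those of the join $\fographa_1\graphjoin\fographa_2$, i.e.\ exactly the complete bipartite edges between $\portion$ and $\portionp$, and every remaining edge lies within $A$ or within $B$. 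Hence the fusion of $\cover$ and $\coverp$ at $\portion$ and $\portionp$ (Def.\,\ref{def:fusion}) has precisely the edge set of $\net$. The colourings match too: by hypothesis no link is a leap between $A$ and $B$, so each link lies wholly in $A$ or wholly in $B$ and is inherited by the corresponding piece, while $1$-labelled literals stay $1$-labelled. Since each $\fographa_i$ is a fograph, $\portion$ and $\portionp$ each contain a literal, so both pieces are nonempty logical cographs.

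Next I would verify that $\portion$ and $\portionp$ are genuine portions and that $\cover,\coverp$ are rectified fographs. Within $\cover$ there are no edges between $\portion=\verticesof{\fographa_1}$ and its complement $\verticesof{\fographaa_1}$ (they lie in a union), so $\portion$ is closed under adjacency automatically, and $\cover=\fographaa_1\graphunion\fographa_1$, $\coverp=\fographa_2\graphunion\fographaa_2$. Cograph-ness of each piece is immediate, as induced subgraphs of a cograph are cographs; binder legality and rectifiedness are inherited from $\net$, because passing to an induced subgraph on one side keeps every binder together with the literals it binds and cannot introduce a second binder for a variable. Well-foundedness and binding-closure of the portions then follow from the same scope/binding bookkeeping used in the next paragraph.

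The substantial step, and the one I expect to be the main obstacle, is to produce a dualizer for each piece and to check that the two pieces are independent — equivalently, to show that the split separates neither a variable from its binder nor a link from its dualizing substitution. Here I would invoke the no-crossing-leap hypothesis through a most general dualizer $\dualizer$ for $\net$ (Lemma\,\ref{lem:mgd}). By Lemma\,\ref{lem:mgd-deps} the dependencies of $\net$ are exactly the dependencies of $\dualizer$; since no dependency is a leap crossing $A$ and $B$, no $A$-existential variable is sent by $\dualizer$ to a term containing a $B$-universal variable, and vice versa. Combined with the fact that each link lies on a single side, this should let me split $\dualizer$ as a union $\dualizer_A\cup\dualizer_B$ of its restrictions to the existential variables of each side, with $\dualizer_A$ dualizing every link of $\cover$ and $\dualizer_B$ every link of $\coverp$, and the same accounting shows $\cover$ and $\coverp$ share no bound variable, i.e.\ they are independent. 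The delicate point — the crux of the whole argument — is to rule out a binder on one side binding or dualizing a literal on the other: one must show, using the union structure of the decomposition, that such cross-binding would force a link or a dependency across the cut, contradicting the hypothesis. This is precisely the converse of the dualizer-union step invoked in the proof of Lemma\,\ref{lem:pres-fusion}, and I would mirror that argument, being careful about free variables shared across the two sides.

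Finally, no induced bimatching can survive the split. If some $W\subseteq\verticesof\cover$ induced a matching in $\cover$ and in $\leapgraphof\cover$, then, since the edges and leaps of $\cover$ are all inherited from $\net$, the same $W$ would induce a bimatching in $\net$, contradicting that $\net$ is a fonet; the argument for $\coverp$ is identical. Hence both $\cover$ and $\coverp$ are rectified fonets and $\net$ is their fusion, as required.
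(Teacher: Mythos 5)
Your proof takes exactly the same route as the paper's (which is only two sentences long): split $\net$ into $\fographaa_1\graphunion\fographa_1$ and $\fographa_2\graphunion\fographaa_2$ with inherited colourings, observe that the no-crossing-leap hypothesis keeps links, dependencies and bimatchings on one side, and recover $\net$ as the fusion at portions $\verticesof{\fographa_1}$ and $\verticesof{\fographa_2}$. You supply considerably more detail than the paper does — in particular you correctly flag the independence/cross-binding check as the one delicate point, which the paper's proof passes over in silence — so this is a faithful (indeed more careful) rendering of the intended argument.
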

\begin{proof}
  Since $\net$ is a fograph and no leap goes between $\verticesof{\fographaa_1}\cup\verticesof{\fographa_1}$ and $\verticesof{\fographa_2}\cup\verticesof{\fographaa_2}$, the graphs
\mbox{$\fographaa_1\graphunion\fographa_1$} and
\mbox{$\fographa_2\graphunion\fographaa_2$}
are well-defined fonets upon inheriting colouring from
$\net$
by restriction. Thus
$\net$ is a fusion of rectified fonets
$\fographaa_1\graphunion\fographa_1$ and $\fographa_2\graphunion\fographaa_2$ at portions
$\verticesof{\fographa_1}$ and $\verticesof{\fographa_2}$.
\end{proof}
\begin{lemma}\label{lem:no-leap}
Let $\net$ be a rectified fonet with underlying uncoloured fograph
$\fographaa_1\mkern1mu\graphunion\mkern1mu(\singletonx\graphjoin\fographa)\mkern1mu\graphunion\mkern1mu\fographaa_2$ for $\fographa$ a fograph and each $\fographaa_i$ empty or a fograph.
Suppose no leap of $\net$ is between $\verticesof{\fographaa_1}\cup\setof{\singletonx}$ and $\verticesof\fographa\cup\verticesof{\fographaa_2}$.
Then the binder $\singletonx$ is in no leap of $\net$.
\end{lemma}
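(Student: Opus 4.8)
The plan is to rule out, by contradiction, every leap that could contain the binder $\singletonx$, using that $\singletonx$ is existential together with a decomposition of the dualizer's unification problem along the union.

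First I would pin down the quantificational character of $\singletonx$. Since $\net$ has the shape $\fographaa_1\graphunion(\singletonx\graphjoin\fographa)\graphunion\fographaa_2$, the set $\setof{\singletonx}\cup\verticesof\fographa$ is a proper strong module of $\net$ (a child of the root union node of the modular decomposition), so the scope of $\singletonx$ is contained in it. Every vertex of that scope other than $\singletonx$ lies in $\fographa$ and is therefore adjacent to $\singletonx$ by the join, so $\singletonx$ is an \emph{existential} binder. Consequently a leap containing $\singletonx$ cannot be a link (links join literals, and $\singletonx$ is a binder), so it must be a dependency $\setof{\singletonx,\singletony}$ in which $\singletony$ is universal. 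Writing $A=\verticesof{\fographaa_1}\cup\setof{\singletonx}$ and $B=\verticesof\fographa\cup\verticesof{\fographaa_2}$, the no-leap hypothesis says no leap runs between $A$ and $B$; since $\setof{\singletonx,\singletony}$ is itself a leap and $\singletonx\in A$, its partner must satisfy $\singletony\in A$, i.e. $\singletony\in\verticesof{\fographaa_1}$ (the alternative $\singletony\in B$ is precisely a forbidden leap across $A$ and $B$). It then remains to contradict $\singletony\in\fographaa_1$.

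The key step is to show that a most general dualizer $\dualizer$ for $\net$ — which exists by \reflem{lem:mgd}, as fonets are dualizable — never places the variable $y$ into $\dualizer(x)$; by \reflem{lem:mgd-deps} this contradicts $\setof{\singletonx,\singletony}$ being a dependency. Because links are leaps, the no-leap hypothesis forces every link of $\net$ to lie wholly within $A$ or wholly within $B$, so the unification problem $\unirelof\net$ underlying the dualizers partitions into an $A$-part and a $B$-part. Since $\net$ is rectified, every binder has all of its literals inside its scope, and each scope lies entirely on one side of the split: scopes of binders in $\fographaa_1$ stay within $\fographaa_1\tightsubseteq A$; the literals in the scope of $\singletonx$, or of any binder of the middle block $\singletonx\graphjoin\fographa$, lie in $\fographa\tightsubseteq B$; and binders of $\fographaa_2$ stay within $\fographaa_2\tightsubseteq B$. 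Hence no existential variable occurs in literals on both sides, so the two unification sub-problems share no solved variable, and a most general unifier of the whole is the union of most general unifiers of the parts. In particular $\dualizer(x)$ is the term assigned to $x$ by a most general unifier of the $B$-part alone, built only from symbols occurring in $B$-atoms. As $\singletony\in\fographaa_1$, every $y$-literal lies in $A$, so $y$ occurs in no $B$-atom and therefore $y\notin\dualizer(x)$, contradicting the dependency and establishing that $\singletonx$ lies in no leap.

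I expect the main obstacle to be the justification that $\unirelof\net$ genuinely decomposes along the union — specifically, verifying that each existential variable is confined through its literals to a single side of the $A/B$ split. This rests on two ingredients combined: rectification, which keeps each binder's literals inside its scope, together with the containment of each binder's scope within one union component; and the observation that links, being leaps, never cross the split, so that no link equation can bridge the two sub-problems.
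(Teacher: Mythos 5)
Your proof is correct and reaches the paper's conclusion by a genuinely different route. The paper argues locally: assuming $\{\singletonx,\singletony\}$ is a dependency with $\singletony$ forced into $\fographaa_1$, it asks how $y$ could enter $\dualizer(x)$, locates a link whose $x$-side literal must lie in $\fographa$ (by rectification), notes that the mate of that literal cannot itself contain $y$ (such a link would cross the forbidden boundary), and concludes that $y$ must enter via an intermediate existential variable $x'$ --- whereupon $\{\singleton{x'},\singletony\}$ is a \emph{second} dependency that crosses the boundary, so the leap supposition is contradicted a second time. You argue globally: since no link crosses and (by rectification plus scope confinement) no existential variable has literals on both sides, the unification problem factors into independent $A$- and $B$-parts, so $\dualizer(x)$ is computed from $B$-data alone and cannot contain $y$, whose literals live in $A$. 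Your decomposition handles arbitrarily long unification chains uniformly and avoids the paper's somewhat delicate one-step unwinding (``for $\dualizer(x)$ to contain $y$ there must be a link \ldots''), at the price of importing two standard unification facts that you should state explicitly in a full write-up: variable-disjoint problems have independent most general unifiers, and a most general unifier introduces no symbols foreign to the problem beyond fresh stems. One caveat, which applies equally to the paper's own proof: both arguments hinge at the same spot on the claim that the scope of $\singletony$ --- hence every $y$-literal --- lies inside $\fographaa_1$. Your blanket assertion that ``scopes of binders in $\fographaa_1$ stay within $\fographaa_1$'' is doing real work there; it holds when $\singletony$ lies in some edge of its connected component (an isolated universal binder has scope equal to all of $\verticesof\net$), which is guaranteed in the lemma's only application, where the fonet is non-universal. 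Flagging that hypothesis would make your version airtight where the paper is silent.
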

\begin{proof}
In this proof \emph{leap supposition} refers to the supposition on leaps in the Lemma statement.
  Suppose for a contradiction that $\setof{\singletonx,\singletony}$ is a leap, hence dependency, of $\net$. By the leap supposition, the universal binder $\singletony$ is in $\fographaa_1$.
  Let $\dualizer$ be a most general dualizer for $\net$, which exists by \reflem{lem:mgd}. Since $\setof{\singletonx,\singletony}$ is a dependency, the term $\dualizer(\singletonx)$ contains $y$, by Lemma\,\ref{lem:mgd-deps}.
  There must be a link $\setof{v,w}$ such that the atom label of the literal $v$ contains $x$, otherwise $\dualizer(\singletonx)=z$ for a stem variable $z$ not occurring in $\net$, so $\dualizer(\singleton)$ would not contain $y$.
  Since $\net$ is rectified, the literal $v$ must be in the scope of $\singletonx$, thus $v$ is in $\fographa$.
  The atom label of $w$ cannot contain $y$, since $w$ would then be in $\fographaa_1$ (because $\net$ is rectified so $w$ must be in the scope of $\singletony$, which is in $\fographaa_1$), and $\setof{v,w}$ would be a link (hence leap) between $\fographa$ and $\fographaa_1$, contradicting the leap supposition.
  Thus, for $\dualizer(x)$ to be a term containing $y$, there must be a link $\setof{v,w}$ with the label of $v$ containing $x$ and the label of $w$ containing an existential variable $\xp$ such that the term $\dualizer(\xp)$ contains $y$.\todo{Elaborate by delving into unification problem equations?}
Therefore $\net$ has a leap $\setof{\singleton\xp,\singletony}$.
Since $v$ is in $\fographa$ and $\setof{v,w}$ is a link, hence a leap, by the leap supposition $w$ must be in $\fographa$ or $\fographaa_2$.  Because $\net$ is rectified, the literal $w$ must be in the scope of the existential binder $\singleton\xp$, so $\singleton\xp$ is in $\fographa$ or $\fographaa_2$. Since $\singletony$ is in $\fographaa_1$, the leap $\setof{\singleton\xp,\singletony}$ is between $\fographaa_1$ and $\fographa$ or $\fographaa_2$, contradicting the leap supposition.
\end{proof}
\begin{lemma}\label{lem:split-existential}
Let $\net$ be a rectified fonet with underlying uncoloured fograph
$\fographaa_1\mkern1mu\graphunion\mkern1mu(\singletonx\graphjoin\fographa)\mkern1mu\graphunion\mkern1mu\fographaa_2$ for $\fographa$ a fograph and each $\fographaa_i$ empty or a fograph.
Suppose no leap of $\net$ is between $\verticesof{\fographaa_1}\cup\setof{\singletonx}$ and $\verticesof\fographa\cup\verticesof{\fographaa_2}$.
Then $\net$ is an existential quantification of a rectified fonet by $x$.
\end{lemma}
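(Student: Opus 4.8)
The plan is to reverse the existential-quantification construction of Definition~\ref{def:existential}: read off the witnessing term from a most general dualizer, substitute it back for $x$, and delete the binder $\singletonx$ to obtain a candidate fonet $\cover$ whose existential quantification is $\net$. Throughout, the structural hypothesis of the statement (no leap of $\net$ runs between $\verticesof{\fographaa_1}\cup\setof{\singletonx}$ and $\verticesof{\fographa}\cup\verticesof{\fographaa_2}$) is exactly the hypothesis of \reflem{lem:no-leap}, which I would invoke immediately to conclude that $\singletonx$ lies in no leap of $\net$.

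First I would fix a most general dualizer $\dualizer$ for $\net$ (it exists by \reflem{lem:mgd}) and set $t=\dualizer(x)$. The key preliminary fact is that \emph{$t$ contains no bound variable of $\net$}. By the construction of a most general dualizer in \reflem{lem:mgd}, no output of $\dualizer$ contains an existential variable, so $t$ contains no existential variable (in particular, not $x$). And if $t$ contained a universal variable $y$, then $\dep$ would be a dependency of $\dualizer$, hence a dependency of $\net$ by \reflem{lem:mgd-deps} (as $\dualizer$ is most general), hence a leap containing $\singletonx$, contradicting \reflem{lem:no-leap}. Since every binder is either existential or universal, $t$ contains no bound variable of $\net$.

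Next I would define $\cover$ by deleting $\singletonx$ from $\net$ and substituting $t$ for every occurrence of $x$ in the labels of the (former) $x$-literals, all of which lie in the scope $\setof{\singletonx}\cup\verticesof{\fographa}$ of $\singletonx$, hence in $\fographa$. Put $\portion=\verticesof{\fographa}$, and let $\occs$ be the set of occurrences of $x$ in $\net$, regarded after the substitution as occurrences of $t$ in labels of literals of $\portion$. I then must check that this data is admissible for Definition~\ref{def:existential}. Since the substitution changes only labels, the graph underlying $\cover$ is the subgraph of $\net$ induced by $\verticesof\net\tightsetminus\setof{\singletonx}=\verticesof{\fographaa_1}\cup\verticesof{\fographa}\cup\verticesof{\fographaa_2}$; thus $\cover$ is a cograph, $\portion=\verticesof{\fographa}$ is a union component (so closed under adjacency and, since every binder's scope stays inside its component, under binding), and both $\portion$ and its complement are well-founded because each nonempty piece carries a literal. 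Because $t$ has no bound variable of $\net$, and the binders of $\cover$ are exactly those of $\net$ other than $\singletonx$, substituting $t$ for $x$ introduces no variable capture, keeps every remaining binder legal and rectified, leaves $t$'s variables free in $\cover$, and makes $x$ disappear; hence $\cover$ is a rectified fograph without the variable $x$ and $t$ contains no bound variable of $\cover$. Finally, the equation ``existential quantification of $\cover$ by $x$ at $\occs$ in $\portion$ equals $\net$'' holds essentially by construction, since reassembling $\singletonx$ with $\cover$ and joining $\singletonx$ to $\portion$, after renaming $t$ back to $x$ at $\occs$, recovers $\singletonx\graphjoin\fographa$ unioned with $\fographaa_1\graphunion\fographaa_2$.

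It remains to show $\cover$ is a fonet. It is a linked rectified fograph (each link of $\net$ becomes a pre-dual link of $\cover$ under the substitution, and $1$-labels are preserved), and it is dualizable via the restriction $\hat\dualizer$ of $\dualizer$ to the existential variables of $\cover$. For the bimatching condition I would reuse the dependency-coincidence argument from the proof of \reflem{lem:pres-existential}, which for any dualizable rectified fograph shows that $\cover$ and its existential quantification $\net$ have the same dependencies; combined with $\singletonx$ lying in no leap, this gives that $\leapgraphof\net$ is $\leapgraphof\cover$ with a single isolated vertex $\singletonx$ adjoined. As edges among $\verticesof\cover$ agree in $\cover$ and $\net$, any $W$ inducing a bimatching in $\cover$ would induce one in $\net$, contradicting that $\net$ is a fonet; hence $\cover$ has none and is a rectified fonet. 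I expect the main obstacle to be the admissibility check in the third paragraph --- confirming that deleting $\singletonx$ and reverting the substitution genuinely lands in ``rectified fograph without $x$'' and that $\portion$ is a true portion --- since both rest on the preliminary fact that $t=\dualizer(x)$ has no bound variable; once that is secured, the scope bookkeeping is routine cotree manipulation and the fonet verification is a direct reuse of \reflem{lem:pres-existential}.
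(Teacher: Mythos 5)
Your proposal is correct and follows essentially the same route as the paper's (very terse) proof: invoke Lemma~\ref{lem:no-leap} to see that $\singletonx$ is in no leap, take a most general dualizer $\dualizer$ with $t=\dualizer(x)$, delete $\singletonx$ and substitute $t$ back for $x$, and exhibit $\net$ as the existential quantification at the portion $\verticesof\fographa$. You in fact supply details the paper leaves implicit --- notably that $t$ contains no bound variable of $\net$ (via \reflem{lem:mgd} and \reflem{lem:mgd-deps}) and the explicit verification that the resulting graph is a rectified \emph{fonet}, not merely a rectified fograph --- which strengthens rather than departs from the paper's argument.
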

\begin{proof}
  By Lemma\,\ref{lem:no-leap} the existential binder $\singletonx$ is in no leap of $\net$.
Let $\dualizer$ be a most general dualizer for $\net$ and let $t=\dualizer(x)$.
Define $\netp$ as the result of deleting $\singletonx$ from $\net$ and substituting $t$ for $x$ in the atom label of every literal.
Since $\net$ is a rectified fograph and $\singletonx$ is in no leap of $\net$, $\netp$ is a rectified fograph.\todo{elaborate}
Thus $\net$ is an existential quantification of $\netp$ by $x$ at $\occs$ in the portion $\verticesof{\fographa}$ for $\occs$ the set of occurrences of $t$ in $\netp$ which replaced occurrences of $x$ in $\net$ during the construction of $\netp$.
\end{proof}
The \defn{mate} of a literal in a link is the other literal in the link.
\begin{lemma}\label{lem:split-fusion-existential}
  Every non-universal rectified fonet with at least one edge is a fusion of rectified fonets or an existential quantification of a rectified fonet.
\end{lemma}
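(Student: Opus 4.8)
The plan is to reduce the statement to the two splitting lemmas already established, \reflem{lem:split-fusion} and \reflem{lem:split-existential}, by locating a suitable join in the cograph underlying $\net$. The starting observation is that, since links are colour classes, the two literals of a link are non-adjacent, whereas a join $\fographa_1\graphjoin\fographa_2$ makes every $\fographa_1$-vertex adjacent to every $\fographa_2$-vertex. Hence no link can straddle a join: across any join the only leaps that can cross are \emph{dependencies}, each a pair $\setof{\singletonx,\singletony}$ with $\singletonx$ existential and $\singletony$ universal. Consequently the correctness condition (``no induced bimatching'') constrains exactly the dependency-leaps that may sit across joins, and the whole problem becomes: find a partition realizing the shape $\fographaa_1\graphunion(\fographa_1\graphjoin\fographa_2)\graphunion\fographaa_2$ with no leap crossing between $\verticesof{\fographaa_1}\cup\verticesof{\fographa_1}$ and $\verticesof{\fographa_2}\cup\verticesof{\fographaa_2}$.

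Granting such a \emph{splitting join}, the conclusion follows by a clean dichotomy. Because $\net$ has at least one edge it has a join node in its cotree, and because $\net$ is non-universal every binder lies in some edge; a binder that is a \emph{lone} operand of a join is joined to all of its scope, so (using that its scope equals its parent module, and that the scope must contain a literal since the binder is legal) it is necessarily existential, never universal. Moreover both operands cannot simultaneously be lone binders, since then the join module would contain no literal, contradicting legality. Therefore: if one operand of the chosen splitting join is a single (existential) binder $\singletonx$, so that $\net=\fographaa_1\graphunion(\singletonx\graphjoin\fographa)\graphunion\fographaa_2$ with no crossing leap, then \reflem{lem:no-leap} shows $\singletonx$ lies in no leap and \reflem{lem:split-existential} exhibits $\net$ as an existential quantification of a rectified fonet; otherwise \reflem{lem:split-fusion} exhibits $\net$ as a fusion of rectified fonets. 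The disconnected case is absorbed here: when $\net$ is disconnected one works inside a connected component that carries an edge (its co-components supply the join $\fographa_1\graphjoin\fographa_2$) and distributes the remaining components, together with the chosen component's unused co-components, between $\fographaa_1$ and $\fographaa_2$.

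The substantive step, and the main obstacle, is proving that a splitting join actually exists. I would fix a most general dualizer via \reflem{lem:mgd}, so that (by \reflem{lem:mgd-deps}) the dependencies, and hence all candidate crossing leaps, are read off from this single $\dualizer$. I then intend an extremal argument: take a maximal join node of the cotree (one whose parent is a union node or the root) and attempt to two-colour its co-components together with the other connected components so that no leap crosses the induced cut, where components linked by a leap are forced onto the same side. If no join admits such a colouring, the forcing produces a closed chain alternating between join-edges and leaps (links or dependencies) that is chordless; this is precisely a set $W$ inducing a matching in $\net$ and a matching in $\leapgraphof\net$, i.e.\ an induced bimatching, contradicting that $\net$ is a fonet. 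Turning this informal ``alternating cycle'' into a genuine induced bimatching, while tracking that the union-siblings can always be routed to one side, is the delicate sequentialization-style bookkeeping I expect to be hardest; the remaining checks (that each extracted side inherits a dualizer by restriction, inherits freedom of bound variables, and is therefore a rectified fonet) are routine and already underwritten by \reflem{lem:pres-fusion} and \reflem{lem:pres-existential}.
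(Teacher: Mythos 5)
Your overall architecture is the right one and matches the paper's: decompose the top level of the underlying cograph as a union of joins plus loose literals, locate one join across which no leap passes, and hand off to \reflem{lem:split-fusion}, or, when one operand is a lone (necessarily existential) binder, to \reflem{lem:no-leap} and \reflem{lem:split-existential}. Your endgame case analysis is essentially the paper's. But the heart of the lemma is the \emph{existence} of the splitting join, and that is exactly the step you leave open: you propose to try each maximal join, propagate a two-colouring forced by leaps, and, if every attempt fails, extract a closed chain alternating between join-edges and leaps that you hope is chordless and hence an induced bimatching. As you yourself flag, turning that forcing argument into a genuine \emph{induced} bimatching (each vertex of $W$ must have exactly one $\edgesof\net$-neighbour and exactly one leap-neighbour in all of $W$, not merely along the chain) is the delicate part, and it is not done; nor is it routine to argue that simultaneous failure at \emph{every} candidate join yields a single clean obstruction. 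As it stands the crux of the lemma is asserted rather than proved.

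The paper packages precisely this missing step by a different device. Writing the top level as $(\fograph_1\tightgraphjoin\fograph_2)\graphunion\ldots\graphunion(\fograph_{n-1}\tightgraphjoin\fograph_n)\graphunion\fographaaa$ with $\fographaaa$ a union of literals, it forms the auxiliary graph $\megagraph$ whose vertices are the blocks $\fograph_i$, with an edge between two blocks whenever some edge or leap of $\net$ runs between them. The canonical pairing $Z=\{\fograph_1\fograph_2,\ldots,\fograph_{n-1}\fograph_n\}$ is a 1-factor of $\megagraph$, the no-induced-bimatching condition forces it to be the \emph{unique} 1-factor, and a graph with a unique 1-factor has a bridge belonging to it (Kotzig); that bridge $\fograph_m\fograph_{m+1}$ is the splitting join, and the two components of $\megagraph$ minus the bridge dictate how the remaining blocks and the literals of $\fographaaa$ (routed by their link mates) are distributed into $\fographaa_1$ and $\fographaa_2$. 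Without this unique-1-factor/bridge step, or an equivalent substitute carried out in full, your proof is incomplete. One further small point: you claim only that \emph{links} cannot straddle a join; in fact \emph{no} leap can connect two adjacent vertices of a fonet, since such a pair is already a two-element induced bimatching --- and it is this stronger fact that guarantees the bridge join itself carries no crossing leap, which the hypothesis of \reflem{lem:split-fusion} requires.
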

\begin{proof}
Let $\net$ be a non-universal fonet with an edge, and let $\fograph$ be its underlying uncoloured fograph.
Since $\fograph$ is a (labelled) cograph, it has the form
$\fograph=(\fograph_1\tightgraphjoin \fograph_2)\graphunion (\fograph_3\tightgraphjoin
\fograph_4)\graphunion\!\ldots\!\graphunion(\fograph_{n-1}\tightgraphjoin \fograph_n)\graphunion \fographaaa$ for (labelled) cographs $\fograph_i$
and $\fographaaa$, where $\fographaaa$ is a union of literals,
and $n\tightge 1$ since $\net$ (hence $\fograph$) has an edge.
Let $\megagraph$ be the graph whose vertices are the $\fograph_i$
with $\fograph_i\fograph_j\tightin E(\megagraph)$ if and only if $\net$ has an edge or leap
$\{v,w\}$ with $v\!\in \!V(\fograph_i)$ and $w\!\in\! V(\fograph_j)$.
A \emph{1-factor} is a set of pairwise disjoint edges whose
union contains all vertices.  Since $\net$ is a fonet, $Z=\{
\fograph_1\fograph_2,\fograph_3\fograph_4,\ldots,\fograph_{n-1}\fograph_n\}$ is the only 1-factor of
$\megagraph$.  For if $Z'$ is another 1-factor, then
$Z'\mkern-2mu\setminus\mkern-2mu Z$ determines a set of leaps in $\net$
whose union induces a bimatching in $\net$: for each
$\fograph_i\fograph_j\in Z'\mkern-2mu\setminus\mkern-2mu Z$ pick a leap
$\{v,w\}$ with $v\in V(\fograph_i)$ and $w\in V(\fograph_j)$.  Since $\megagraph$
has a unique 1-factor, some $\fograph_m\fograph_{m+1}\in Z$ is a bridge\todo{Related work: emphasize Retor\'e's use of bridge}
\cite{Kot59,LP86},
\textit{i.e.},
$\graphpairof{\verticesof \megagraph}{\edgesof \megagraph\mkern-1.5mu\setminus\mkern-1mu \fograph_m\fograph_{m+1}}=X\tightgraphunion
Y$ with $\fograph_m\!\in\! \verticesof X$ and $\fograph_{m+1}\!\in\! \verticesof Y$.\footnote{A similar construction of a unique 1-factor with a bridge is used in \cite{Hug06}, and \cite{Ret03} uses a related argument involving the existence of a bridge.}
Without loss of generality assume $G_i\tightin\verticesof X$ for $i\tightle m$ and
$G_j\tightin\verticesof Y$ for $j\tightge m+1$.
Let $\fographaaa_X$ be the restriction of $\fographaaa$ to literals with mate in a vertex of $X$, and let $\fographaaa_Y$ be the
restriction of $\fographaaa$ to literals not in $\fographaaa_X$.
Thus $\fographaaa\tighteq\fographaaa_X\tightgraphunion\fographaaa_Y$ since $\fographaaa$ contains only literals and no binders.
Define $\fographaa_1=\fographaaa_X\tightgraphunion(G_1\tightgraphjoin G_2)\tightgraphunion\ldots\tightgraphunion(G_{m-2}\tightgraphjoin G_{m-1})$
and $\fographaa_2=\fographaaa_Y\tightgraphunion(G_{m+2}\tightgraphjoin G_{m+3})\tightgraphunion\ldots\tightgraphunion(G_{n-2}\tightgraphjoin G_n)$,
so $\fograph=\fographaa_1\graphunion(\fograph_m\tightgraphjoin\fograph_{m+1})\graphunion\fographaa_2$.

Since $\fographaaa$ comprises literals only, each of $\fographaa_1$ and $\fographaa_2$ is either empty or a fograph.
If $\fograph_m$ and $\fograph_{m+1}$ both contain a literal, they are fographs, so we can appeal to Lemma\,\ref{lem:split-fusion} with $\fographa_1=\fograph_m$ and $\fographa_2=\fograph_{m+1}$ to conclude that $\net$ is a fusion of rectified fonets.
Otherwise one of $\fograph_m$ or $\fograph_{m+1}$, say $\fograph_m$, has no literal, thus $\fograph_m=\singletonx$. Then $\fograph_{m+1}$ must contain a literal, since $\fograph$ hence $\fograph_m\tightgraphjoin\fograph_{m+1}$ is a fograph, therefore $\fograph_{m+1}$ is a fograph.
Applying Lemma\,\ref{lem:split-existential}  with
$\fographa\tighteq\fograph_{m+1}$, we conclude that $\net$ is an existential quantification of a rectified fonet.
\end{proof}
\begin{lemma}\label{lem:fonet-constructible}
  Every rectified fonet can be constructed from axioms by fusion and quantification.
\end{lemma}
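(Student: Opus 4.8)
The plan is to prove the statement by strong induction on the number of vertices of the rectified fonet $\net$, dispatching each case to one of the structural decomposition lemmas already established. The key point is that \reflem{lem:split-universal}, \reflem{lem:axiom-union}, and \reflem{lem:split-fusion-existential} together cover every rectified fonet, and in each case the decomposition exhibits $\net$ either as a union of axioms (a base case) or as a fusion or quantification of strictly smaller rectified fonets; feeding the smaller pieces through the induction hypothesis and then applying the corresponding construction step yields $\net$.

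Concretely, given a rectified fonet $\net$ I would split into three exhaustive cases. First, if $\net$ is universal, then by \reflem{lem:split-universal} it is the universal quantification $\singletonx\graphunion\net^-$ of a rectified fonet $\net^-$ with one fewer vertex; by induction $\net^-$ is constructible, and appending one universal-quantification step constructs $\net$. Second, if $\net$ is non-universal and has no edge, then it can have no binder — a binder in an edgeless graph would lie in no edge, forcing $\net$ to be universal — so \reflem{lem:axiom-union} writes $\net$ as a union $\lambda_1\tightgraphunion\cdots\tightgraphunion\lambda_n$ of axioms $\lambda_i$; since fusion at empty portions realises graph union, this union is constructed from the $\lambda_i$ by $n-1$ fusions. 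Third, if $\net$ is non-universal with at least one edge, then \reflem{lem:split-fusion-existential} presents $\net$ as a fusion of two rectified fonets or as an existential quantification of a rectified fonet, and in both subcases the constituents are strictly smaller, so the induction hypothesis together with the relevant construction step finishes the case. The base case (a single-vertex fonet) is subsumed by the edgeless subcase, since the only such fonet is $\singleton 1$, an axiom, constructed from axioms by no operations.

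Verifying exhaustiveness and the strict decrease in vertex count is routine once the three lemmas are in hand, so I do not expect a serious obstacle. The one point needing a moment's care is the edgeless subcase: one must check that non-universality genuinely forces the absence of binders (so that \reflem{lem:axiom-union} applies), and that the resulting union of axioms falls under the allowed operations, i.e.\ that the axioms are pairwise independent rectified fographs and that empty portions are legitimate portions, so that the $n-1$ empty-portion fusions are well-defined. All the substantive work — peeling off a universal binder, locating a fusion via the unique $1$-factor and its bridge, and peeling off an existential binder — has already been carried out in the preceding lemmas, so this lemma amounts to a bookkeeping induction that assembles them.
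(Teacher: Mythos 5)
Your proof is correct and follows essentially the same route as the paper: the same three-way case split (universal, edgeless/binderless, non-universal with an edge), dispatching to \reflem{lem:split-universal}, \reflem{lem:axiom-union} (with empty-portion fusions for the union), and \reflem{lem:split-fusion-existential} respectively. The only cosmetic difference is the induction measure — you use the number of vertices where the paper uses the number of binders and edges — and both measures strictly decrease in every case, so this changes nothing of substance.
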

\begin{proof}
Let $\net$ be a rectified fonet.
We proceed by induction on the number of binders and edges in $\net$.
In the base case with no edge or binder, $\net$ is a union of axioms by Lemma\,\ref{lem:axiom-union}, hence a fusion of axioms since union is a special case of fusion (with empty portions).
If $\net$ is universal, apply Lemma\,\ref{lem:split-universal} then appeal to induction with one less binder.
Thus we may assume $\net$ is non-universal with a binder or edge.
Had $\net$ no edge, it would have no binder (since every existential binder must be in an edge, and
a universal binder would make $\net$ universal),
thus $\net$ has at least one edge.
Apply Lemma\,\ref{lem:split-fusion-existential} then appeal to induction with fewer edges.
\end{proof}

\begin{lemma}[Fonet soundness]\label{lem:fonet-soundness}
  Every fonet is valid.
\end{lemma}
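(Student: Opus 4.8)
The plan is to obtain soundness of fonets purely by assembling results already in place, treating the statement as the capstone of the construction/preservation machinery developed in this subsection. Concretely, I would combine the constructibility theorem (\reflem{lem:fonet-constructible}) with the three validity-preservation lemmas (Lemmas~\ref{lem:fusion-sound}, \ref{lem:universal-sound} and \ref{lem:existential-sound}), inducting on the length of the construction witnessed by \reflem{lem:fonet-constructible}.

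First I would settle the base case by checking that every axiom is valid. A two-literal axiom is, up to colour, the graph $\graphof{\atom\tightvee\dualatom}=\singleton\atom\graphunion\singleton\dualatom$ of the excluded-middle instance $\atom\tightvee\dualatom$, and a single $1$-literal is $\graphof{1}$; both $\atom\tightvee\dualatom$ and $1$ are valid, so $\isvalid\lambda$ for every axiom $\lambda$. Here I use that validity of a coloured fograph is defined through its uncoloured underlying fograph, so the common colour of the two literals plays no role.

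For the inductive step, \reflem{lem:fonet-constructible} presents an arbitrary rectified fonet $\net$ as the result of finitely many fusions and universal/existential quantifications applied to axioms; moreover by \reflem{lem:construct-implies-net} every intermediate object is again a rectified fonet, hence in particular a rectified fograph, so the hypotheses of the three preservation lemmas are met at each stage. Since each such operation sends valid rectified fographs to a valid rectified fograph, a straightforward induction on the number of operations yields $\isvalid\net$. A general (unrectified) fonet is then handled by first rectifying its binders: this preserves being a fonet (exactly the reduction already invoked in Definition~\ref{def:dualizer}) and, because rectifying a binder merely renames a bound variable, it yields a logically equivalent formula and hence the same validity status.

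The point worth emphasising is that essentially all of the genuine difficulty has already been discharged upstream: the real work lives in \reflem{lem:fonet-constructible} and the split lemmas feeding it, in particular the unique-$1$-factor-and-bridge argument of Lemma~\ref{lem:split-fusion-existential} and the leap analysis of Lemma~\ref{lem:no-leap}. For the present lemma the only substantive checks are the validity of the axioms and the legitimacy of reducing to rectified fonets; once these are in place the result is a one-line induction, so I do not anticipate any real obstacle here beyond bookkeeping.
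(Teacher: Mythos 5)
Your proposal matches the paper's own proof: the paper likewise derives Lemma\,\ref{lem:fonet-soundness} directly from Lemma\,\ref{lem:fonet-constructible} together with the validity of axioms and the preservation Lemmas\,\ref{lem:fusion-sound}, \ref{lem:universal-sound} and \ref{lem:existential-sound}. Your additional remarks on checking axiom validity and on reducing to the rectified case are just slightly more explicit bookkeeping of the same argument.
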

\begin{proof}
  By Lemma\,\ref{lem:fonet-constructible} every fonet can be constructed from axioms by fusion and quantification. Since every axiom is valid, and fusion and quantification preserve validity by Lemmas\,\ref{lem:fusion-sound}, \ref{lem:universal-sound}, and \ref{lem:existential-sound}, every fonet is valid.
\end{proof}

\subsection{Soundness of skew bifibrations}\label{sec:soundness-bifibs}

In this section we no longer assume implicitly that every formula is rectified.

An \defn{intrusion} is a formula of the form
$\formula\tightvee\forall x\mkern2mu \formulaa$,
$(\forall x\mkern2mu \formulaa)\tightvee\formula$,
$\formula\tightwedge\exists x\mkern2mu \formulaa$, or
$(\exists x\mkern2mu \formulaa)\tightwedge\formula$.
A formula is \defn{extruded} if no subformula is an intrusion.
For any variable $x$, an \defn{$x$-quantifier} is a quantifier of the form $\forall x$ or $\exists x$.
A formula is \defn{unambiguous} if no $x$-quantifier is in the scope of another $x$-quantifier, for every variable $x$.
A formula is \defn{clear} if it is extruded and unambiguous.
\begin{definition}\label{def:xgraph}
The \defn{graph} $\xgraphof\formula$ of a clear formula
$\formula$ is the logical cograph defined inductively by:
\begin{center}\vspace{-2ex}\begin{math}
\begin{array}{c}
\xgraphof{\atom}
\;=\;
\singleton\atom \hspace{1ex} \text{ for every atom\/ }\atom
\\[2ex]
\def\eqgap{\hspace{3ex}}
\begin{array}{r@{\eqgap=\eqgap}l}
\xgraphof{\,\formula\tightvee\formulaa\,} & \xgraphof\formula\graphunion\xgraphof\formulaa
\\[1.5ex]
\xgraphof{\,\formula\tightwedge\formulaa\,} & \xgraphof\formula\graphjoin\xgraphof\formulaa
\\[1.5ex]
\end{array}
\hspace{16ex}
\begin{array}{r@{\eqgap=\eqgap}l}
\xgraphof{\,\forall x\, \formula\,} & \xgraphall {x\,} {\formula}
\\[1.5ex]
\xgraphof{\,\exists x\, \formula\,} & \xgraphex  {x\,} {\formula}
\\[1.5ex]
\end{array}\\[3ex]\end{array}\end{math}
\end{center}
\end{definition}
Note that $\xgraphofsymbol$ coincides with $\graphofsymbol$ (Def.\,\ref{def:graph}) on extruded rectified formulas.
\begin{lemma}\label{lem:xgraph-surj}
The function $\xgraphofsymbol$ is a surjection from clear formulas onto fographs.
Two clear formulas have the same graph if and only if they are equal modulo
  \begin{align*}
    \formula\tightwedge\formulaa &\fateq \formulaa\tightwedge\formula\;\;\;\;
    &
    \formula\wedge(\formulaa\tightwedge\formulaaa) &\fateq (\formula\tightwedge\formulaa)\wedge\formulaaa\;\;\;\;
    &
    \exists x\mkern2mu\exists y\mkern2mu \formula &\fateq \exists y\mkern2mu\exists x\mkern2mu\formula\;\;\;\;
    \\[1ex]
    \formula\tightvee\formulaa &\fateq \formulaa\tightvee\formula
    &
    \formula\vee(\formulaa\tightvee\formulaaa) &\fateq (\formula\tightvee\formulaa)\vee\formulaaa
    &
    \forall x\mkern2mu\forall y\mkern2mu \formula &\fateq \forall y\mkern2mu\forall x\mkern2mu\formula
\\[-2.5ex]
\end{align*}
\end{lemma}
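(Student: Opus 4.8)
The plan is to prove all three assertions---that $\xgraphof\formula$ is a fograph, that $\xgraphofsymbol$ is surjective, and that its kernel is generated by the six displayed equations---by a single structural induction in the spirit of the routine induction behind \reflem{lem:graph-surj}, but carrying along the two defining features of \emph{clear}. That $\xgraphof\formula$ is always a fograph I would establish exactly as in \reflem{lem:translation}: the four clauses of Definition\,\ref{def:xgraph} preserve cographhood, and each introduced binder is legal. Here the scope of the binder introduced by $\forall x$ (resp.\ $\exists x$) is the whole subgraph $\singletonx\graphunion\xgraphof\formula$ (resp.\ $\singletonx\graphjoin\xgraphof\formula$), since the added vertex is isolated (resp.\ universally adjacent) and hence the only strong modules containing it are $\setof{\singletonx}$ and the entire vertex set; the remaining conditions of legality---a literal in the scope and no second $x$-binder---then follow because every fograph has a literal and \emph{unambiguous} forbids a second $x$-quantifier in the scope.

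For surjectivity together with the kernel I would induct on $\sizeof{\verticesof\fograph}$. A single-vertex fograph is a literal, forced to come from the corresponding atom. For the inductive step I use the cograph dichotomy: a cograph on at least two vertices is either disconnected or has disconnected complement. If $\fograph$ is disconnected, its connected components split into the isolated binder vertices $\singleton{x_1},\dots,\singleton{x_m}$ (each necessarily universal) and the remaining connected pieces $C_1,\dots,C_r$ (with $r\ge1$, since a fograph has a literal). An isolated binder can arise only from a quantifier all of whose formula-tree ancestors are union-producing; since \emph{extruded} forbids a $\forall$ beneath a $\vee$, these vertices correspond precisely to a top block of $\forall$ quantifiers. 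Legality of $\fograph$ forces their variables pairwise distinct and absent as quantifiers inside the $C_j$. Each $C_j$ is a strong module, hence itself a fograph whose internal scopes agree with those in $\fograph$, and by induction corresponds to a clear $\formula_j$ with connected graph---so neither a disjunction nor universally quantified, whence the assembled $\formula\fateq\forall x_1\cdots\forall x_m\,(\formula_1\tightvee\cdots\tightvee\formula_r)$ is again clear. The connected (equivalently coconnected) case is dual, with $\graphjoin,\wedge,\exists$ in place of $\graphunion,\vee,\forall$, using that \emph{extruded} forbids an $\exists$ beneath a $\wedge$.

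The main obstacle is the \textbf{only-if} half of the kernel characterisation, where the decomposition above must be shown \emph{canonical}: surjectivity needs only one preimage, but for the kernel I must argue that the multiset $\setof{C_1,\dots,C_r}$ and the set $\setof{x_1,\dots,x_m}$ are determined by $\fograph$ (this is uniqueness of the modular/cograph decomposition), so that any two clear preimages differ only by reassociating and commuting the disjuncts, permuting the top block of equal-type quantifiers, and the inductively-granted equalities inside each $C_j$---that is, exactly the six equations. The two delicate points are the scope-locality claim that lets each $C_j$ be read as a standalone fograph, and the verification that the two \emph{intrusion} equations of \reflem{lem:graph-surj} never intervene: they cannot, because one side of each is an intrusion and hence not clear, so no derivation between clear formulas requires them. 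I would first dispatch the easy \textbf{if} direction---each of the six equations leaves $\xgraphof{}$ invariant by commutativity and associativity of $\graphunion,\graphjoin$ and by the symmetry of $\singleton{x_1}\graphbin\singleton{x_2}\graphbin\xgraphof\formula$ for $\graphbin\in\graphbinset$. One could instead try to reduce to \reflem{lem:graph-surj} by rectifying and then un-rectifying, but the same confluence question---that a derivation between two extruded rectified formulas can be taken to avoid the intrusion equations---reappears there, so the direct induction seems cleaner.
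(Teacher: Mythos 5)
Your proposal is correct and matches the paper's intent: the paper disposes of this lemma with ``a routine induction, akin to the proof of Lemma\,\ref{lem:graph-surj},'' and your structural induction on the cograph decomposition (isolated binders versus connected/coconnected components, with uniqueness of the modular decomposition driving the kernel characterisation) is exactly the induction being alluded to. Your observation that the two intrusion equations of \reflem{lem:graph-surj} cannot relate clear formulas, because one side of each is itself an intrusion, is precisely the point that justifies their absence from the present equation list.
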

\begin{proof}
  A routine induction, akin to the proof of Lemma\,\ref{lem:graph-surj}.\todo{elaborate in appendix}
\end{proof}
Let $\fograph$ be a fograph. Using the above Lemma,
choose a clear formula $\formula$ such that $\xgraphof\formula\tighteq\fograph$.
Define $\fograph$ as \defn{valid} if $\formula$ is valid.
This is well-defined with respect to choice of $\formula$ since every equality in Lemma\,\ref{lem:xgraph-surj} is a logical equivalence.

Fographs $\fograph$ and $\fographa$ are
\defn{$\wedge$-compatible} if
$\fograph\tightgraphjoin\fographa$ is a well-defined fograph and
$\widebindinggraphof{\fograph\mkern-2mu\tightgraphjoin\mkern-2mu\fographa}{7ex}=\bindinggraphof\fograph\graphunion\bindinggraphof{\fographa}$,
and
\defn{$\vee$-compatible} if
$\fograph\tightgraphunion\fographa$ is a well-defined fograph and
$\widebindinggraphof{\fograph\mkern-2mu\tightgraphunion\mkern-2mu\fographa}{7ex}=\bindinggraphof\fograph\graphunion\bindinggraphof{\fographa}$.
Thus $\vee$- and $\wedge$-compatibility ensure that no new bindings are created during graph union and join.
For any variable $x$, a fograph $\fograph$ is \defn{$x$-compatible} if $\fograph$ does not contain an $x$-binder $\singletonx$.
\begin{definition}\label{def:fograph-ops}
  Let $\fograph$ and $\fographa$ be fographs. Define the \defn{fograph connectives} $\wedge$, $\vee$, $\forall$ and $\exists$ by:
  \begin{itemize}
  \item if $\fograph$ and $\fographa$ are $\wedge$-compatible, define $\fograph\tightwedge\fographa\mkern2mu=\mkern2mu\fograph\tightgraphjoin\fographa$
  \item if $\fograph$ and $\fographa$ are $\vee$-compatible, define $\fograph\tightvee\fographa\mkern2mu=\mkern2mu\fograph\tightgraphunion\fographa$
  \item for any variable $x$, if $\fograph$ is $x$-compatible,
define $\forall x\mkern2mu\fograph\mkern2mu=\mkern2mu\singletonx \graphunion \fograph$
  \item for any variable $x$, if $\fograph$ is $x$-compatible,
define $\exists x\mkern2mu\fograph\mkern2mu=\mkern2mu\singletonx \graphjoin  \fograph$.
  \end{itemize}
\end{definition}
\begin{lemma}\label{lem:fograph-ops}
The fograph connectives $\wedge$, $\vee$, $\forall$ and $\exists$ are well-defined on fographs. In other words, given fographs as input(s),
each connective, when defined, produces a fograph as output.
\end{lemma}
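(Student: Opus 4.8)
The plan is to treat the two binary connectives and the two quantifiers separately, since they require genuinely different amounts of work. For $\wedge$ and $\vee$ there is essentially nothing to prove: the definitions of $\wedge$- and $\vee$-compatibility (stated just before Definition\,\ref{def:fograph-ops}) already demand that $\fograph\graphjoin\fographa$, respectively $\fograph\graphunion\fographa$, be a well-defined fograph, and the connectives are defined to be exactly these graphs. So I would dispose of both cases in a sentence each, observing that $\fograph\tightwedge\fographa\tighteq\fograph\graphjoin\fographa$ and $\fograph\tightvee\fographa\tighteq\fograph\graphunion\fographa$ are fographs by the compatibility hypothesis itself.

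For the quantifiers the real content is that adjoining a fresh binder keeps the graph a fograph, and I would obtain this cheaply by passing through the syntactic surjection of \reflem{lem:xgraph-surj} rather than by analysing strong modules directly. Given an $x$-compatible fograph $\fograph$, I would first invoke \reflem{lem:xgraph-surj} to choose a clear formula $\formula$ with $\xgraphof\formula\tighteq\fograph$. The key observation is that $x$-compatibility of $\fograph$ forces $\formula$ to contain no $x$-quantifier: every occurrence of $\forall x$ or $\exists x$ in $\formula$ would contribute an $x$-labelled vertex, \ie an $x$-binder, to $\xgraphof\formula\tighteq\fograph$, contradicting the absence of an $x$-binder. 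I would then check that $\forall x\mkern2mu\formula$ (resp.\ $\exists x\mkern2mu\formula$) is again clear: it is unambiguous because its only $x$-quantifier is the new outermost one, with no $x$-quantifier inside its scope, and it is extruded because its proper subformulas are precisely the already-extruded subformulas of $\formula$, while $\forall x\mkern2mu\formula$ itself has top connective $\forall$ (resp.\ $\exists$) and so is not of the shape of an intrusion. Finally, Definition\,\ref{def:xgraph} gives $\xgraphof{\forall x\mkern2mu\formula}\tighteq\singletonx\graphunion\xgraphof\formula\tighteq\singletonx\graphunion\fograph\tighteq\forall x\mkern2mu\fograph$ (and symmetrically with $\graphjoin$ for $\exists$), and since \reflem{lem:xgraph-surj} guarantees that the graph of a clear formula is a fograph, each quantifier connective outputs a fograph.

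I expect the only delicate step to be the verification that $\forall x\mkern2mu\formula$ and $\exists x\mkern2mu\formula$ are clear, \ie the interface between the graph-level condition ``$x$-compatible'' and the syntactic conditions ``unambiguous'' and ``extruded''; this is exactly where $x$-compatibility must be converted into ``$\formula$ has no $x$-quantifier.'' A minor point to handle carefully is freshness: the binder $\singletonx$ adjoined by the connective and the binder produced by $\xgraphof$ on the leading quantifier coincide only up to the vertex renaming under which we identify graphs, so the equalities $\singletonx\graphunion\fograph$ above are to be read modulo renaming. I would deliberately avoid the alternative, more self-contained route of arguing directly that the scope of each pre-existing binder is either unchanged or enlarged only by the non-conflicting vertex $\singletonx$ under $\graphunion$/$\graphjoin$; that argument requires a fiddly analysis of how proper strong modules behave when a uniformly (non-)adjacent vertex is adjoined, and the syntactic detour sidesteps it entirely.
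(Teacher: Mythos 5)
Your proposal is correct, but for the quantifier cases it takes a genuinely different route from the paper. The paper's entire proof is a single sentence arguing directly at the graph level: by the compatibility constraints, no $x$-binder of $\fograph\tightwedge\fographa$, $\fograph\tightvee\fographa$, $\forall x\mkern2mu\fograph$ or $\exists x\mkern2mu\fograph$ can be in the scope of another $x$-binder (the logical-cograph and literal-in-scope conditions being left implicit, and the binary cases being subsumed by the compatibility hypothesis exactly as you observe). You instead detour through the syntax: convert $x$-compatibility into ``the chosen clear formula $\formula$ with $\xgraphof\formula\tighteq\fograph$ has no $x$-quantifier,'' verify that $\forall x\mkern2mu\formula$ and $\exists x\mkern2mu\formula$ remain clear, and read off the conclusion from Definition\,\ref{def:xgraph} and \reflem{lem:xgraph-surj}. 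This is legitimate --- \reflem{lem:xgraph-surj} precedes \reflem{lem:fograph-ops} in the development, so there is no circularity, and your clearness check (unambiguity because the new quantifier is the unique $x$-quantifier; extrudedness because a quantified formula is not of intrusion shape and proper subformulas are inherited) is sound. What the paper's direct argument buys is brevity and independence from the syntactic surjection; what yours buys is that it makes explicit \emph{all} the fograph conditions (logical cograph, literal in every scope, legality), which the paper's one-liner only addresses for the no-nested-$x$-binder clause, at the cost of leaning on a lemma whose own proof is deferred to ``a routine induction.'' Your handling of the vertex-renaming identification of the two $\singletonx$'s is an appropriate precaution.
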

\begin{proof}
  By the compatibility constraints, no $x$-binder of
$\fograph\tightwedge\fographa$,
$\fograph\tightvee\fographa$,
$\forall x\mkern2mu\fograph$, or
$\exists x\mkern2mu\fograph$ can be in the scope of another $x$-binder.
\end{proof}
\begin{lemma}\label{lem:xgraph-commute}
The following equalities hold for clear formulas:
\begin{center}\vspace{0ex}\begin{math}
\begin{array}{c}
\def\eqgap{\hspace{3ex}}
\begin{array}{r@{\eqgap=\eqgap}l}
\xgraphof{\,\formula\tightvee\formulaa\,} & \xgraphof\formula\mkern1mu\vee\mkern2mu\xgraphof\formulaa
\\[1.5ex]
\xgraphof{\,\formula\tightwedge\formulaa\,} & \xgraphof\formula\mkern1mu\wedge\mkern2mu\xgraphof\formulaa
\\[1.5ex]
\end{array}
\hspace{16ex}
\begin{array}{r@{\eqgap=\eqgap}l}
\xgraphof{\,\forall x\, \formula\,} & \forall x\mkern6mu \xgraphof\formula
\\[1.5ex]
\xgraphof{\,\exists x\, \formula\,} & \exists x\mkern6mu \xgraphof\formula
\\[1.5ex]
\end{array}\end{array}\end{math}
\end{center}
\end{lemma}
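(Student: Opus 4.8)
The four displayed equalities are, at the level of underlying graphs, immediate: Definition \ref{def:xgraph} unfolds each left-hand side to a $\graphunion$ or a $\graphjoin$ (with a fresh singleton binder in the quantifier cases), and Definition \ref{def:fograph-ops} defines each right-hand fograph connective to be \emph{exactly} that same $\graphunion$ or $\graphjoin$ — provided the connective is defined. So the whole content of the lemma is to verify, for a clear formula, the side-conditions guarding Definition \ref{def:fograph-ops}. I would treat the quantifier cases and the binary cases separately, using unambiguity for the former and extrudedness for the latter. Throughout I use that every subformula of a clear formula is clear, and that by \reflem{lem:xgraph-surj} the graph of a clear formula is a fograph; in particular $\xgraphof\formula$, $\xgraphof\formulaa$ and $\xgraphof{\formula\tightvee\formulaa}$ are all fographs, so the ``well-defined fograph'' half of $\vee$- and $\wedge$-compatibility comes for free: $\xgraphof\formula\graphunion\xgraphof\formulaa=\xgraphof{\formula\tightvee\formulaa}$ is a fograph by \reflem{lem:xgraph-surj}, and dually for the join.

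For $\forall x\,\formula$ (and symmetrically $\exists x\,\formula$) the only remaining side-condition is that $\xgraphof\formula$ be $x$-compatible, i.e.\ contain no $x$-binder. Since binders of $\xgraphof\formula$ correspond exactly to quantifier occurrences of $\formula$, this amounts to saying $\formula$ has no $x$-quantifier. But $\forall x\,\formula$ is unambiguous and the scope of its leading quantifier is all of $\formula$, so an $x$-quantifier inside $\formula$ would lie in the scope of another $x$-quantifier, a contradiction. Hence $\xgraphof\formula$ is $x$-compatible and the equality holds by Definition \ref{def:fograph-ops}.

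For $\formula\tightvee\formulaa$ the outstanding obligation is the binding-graph identity $\bindinggraphof{\xgraphof\formula\graphunion\xgraphof\formulaa}=\bindinggraphof{\xgraphof\formula}\graphunion\bindinggraphof{\xgraphof\formulaa}$. The key geometric observation I would establish is that, in a disjoint union, the scope of a binder (the smallest proper strong module containing it) can differ from its scope inside its own connected component only if that binder is \emph{isolated} there: any proper strong module meeting two components must be the whole graph, so a scope crosses between the summands precisely when the binder forms a singleton component, and otherwise the scope, hence the set of literals bound, is computed entirely within the component and is untouched by adjoining $\xgraphof\formulaa$. It therefore suffices to show $\xgraphof\formula$ and $\xgraphof\formulaa$ have no isolated binder. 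This I would prove by the intrusion argument: an isolated binder comes from a $\forall x$ all of whose syntactic ancestors in $\formula$ are $\vee$ or $\forall$ (a $\wedge$ or $\exists$ ancestor would $\graphjoin$ its subgraph with something and destroy isolation); taking the lowest $\vee$-ancestor would exhibit a $\forall$-headed disjunct, i.e.\ an intrusion, so in fact every ancestor must be $\forall$ and $\formula$ is itself $\forall$-headed, making $\formula\tightvee\formulaa$ an intrusion — contradicting extrudedness. With no isolated binders, no binding crosses between the two summands and every within-summand binding is preserved, which yields the identity and hence $\vee$-compatibility. The $\wedge$ case is exactly dual: in a join one replaces ``isolated'' by ``co-isolated'' (adjacent to every other vertex), ``$\vee$/$\forall$'' by ``$\wedge$/$\exists$'', uses that a proper strong module meeting two co-components of a join is the whole graph, and the forbidden configuration becomes an $\exists$-headed conjunct.

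The main obstacle is precisely this binding-graph commutation for $\vee$ and $\wedge$: it is the only place where the purely structural hypotheses (extruded, unambiguous) must be translated into a statement about scopes, that is, about strong modules of a union or join. The crux is the modular-decomposition fact that a proper strong module meeting two components of a union — or two co-components of a join — must be the entire graph, which confines all scope-growth to (co-)isolated binders and lets the syntactic intrusion argument finish the job. By contrast, the quantifier cases and the ``well-defined fograph'' clauses are routine once \reflem{lem:xgraph-surj} is available.
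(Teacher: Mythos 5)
Your proof is correct and follows the same route as the paper's: reduce each equality to the side-conditions of Definition~\ref{def:fograph-ops} and derive those from clarity (unambiguity for the quantifier cases, extrudedness for the binary cases). The paper's own proof is only a two-sentence assertion that clarity yields $\vee$-, $\wedge$- and $x$-compatibility; your isolated/co-isolated-binder analysis of how scopes (smallest proper strong modules) behave under $\graphunion$ and $\graphjoin$ correctly supplies the justification the paper leaves implicit.
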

\begin{proof}
Since $\formula\tightvee\formulaa$ and $\formula\tightwedge\formulaa$ are clear, $\xgraphof\formula$ and $\xgraphof\formulaa$ are $\vee$- and $\wedge$-compatible, thus $\xgraphof\formula\vee\mkern1mu\xgraphof\formulaa$ and $\xgraphof\formula\wedge\mkern1mu\xgraphof\formulaa$ are well-defined.
Because $\forall x\mkern2mu \formula$ and $\exists x\mkern2mu\formula$ are clear, no $x$-quantifier occurs in $\formula$, so $\xgraphof\formula$ contains no binder $\singletonx$, thus $\forall x\mkern3mu \xgraphof\formula$ and $\exists x\mkern3mu \xgraphof\formula$ are well-defined.
\end{proof}
\begin{lemma}\label{lem:fograph-iff-constructed}
  A labelled graph is a fograph if and only if it can be constructed from literals by the fograph connectives $\wedge$, $\vee$, $\forall$ and $\exists$.
\end{lemma}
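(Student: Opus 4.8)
The plan is to derive the equivalence from three facts already established: that $\xgraphofsymbol$ surjects onto fographs (\reflem{lem:xgraph-surj}), that the fograph connectives send fographs to fographs (\reflem{lem:fograph-ops}), and that $\xgraphofsymbol$ intertwines the syntactic connectives with the fograph connectives (\reflem{lem:xgraph-commute}). Neither direction requires new combinatorial input; each merely repackages one of these lemmas through an induction.

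For the \emph{if} direction (constructible implies fograph), I would argue by induction on the construction. A single literal $\singleton\atom$ is a logical cograph with no binders, hence trivially a fograph (its empty set of binders is vacuously legal), giving the base case. For the inductive step, if $\fograph$ and $\fographa$ are fographs obtained from literals by the connectives, then whenever $\fograph\tightwedge\fographa$, $\fograph\tightvee\fographa$, $\forall x\mkern2mu\fograph$ or $\exists x\mkern2mu\fograph$ is defined, it is again a fograph by \reflem{lem:fograph-ops}. Thus every graph assembled from literals by the four fograph connectives is a fograph.

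For the \emph{only if} direction (fograph implies constructible), given a fograph $\fograph$ I would use \reflem{lem:xgraph-surj} to choose a clear formula $\formula$ with $\xgraphof\formula\tighteq\fograph$, and then induct on the structure of $\formula$, observing first that clarity (extrudedness and unambiguity) is hereditary, so that $\xgraphofsymbol$ is defined on every subformula encountered. If $\formula$ is an atom $\atom$ then $\fograph=\xgraphof\atom=\singleton\atom$ is a literal. If $\formula=\formulaa\tightvee\formulaaa$ (respectively $\formulaa\tightwedge\formulaaa$, $\forall x\mkern2mu\formulaa$, $\exists x\mkern2mu\formulaa$), then \reflem{lem:xgraph-commute} gives $\fograph=\xgraphof\formulaa\vee\xgraphof\formulaaa$ (respectively the $\wedge$, $\forall$, $\exists$ cases), and the induction hypothesis already expresses each of $\xgraphof\formulaa$ and $\xgraphof\formulaaa$ as a construction from literals by the fograph connectives; composing these with the outer connective exhibits $\fograph$ itself as such a construction.

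Since the two directions only recombine surjectivity, preservation, and commutation, there is no substantial obstacle. The one point needing care is to confirm that each application of \reflem{lem:xgraph-commute} is legitimate, i.e.\ that the fograph connective in question is genuinely defined on the subgraphs being combined. This is precisely what \reflem{lem:xgraph-commute} guarantees for clear formulas (it establishes the requisite $\vee$-, $\wedge$- and $x$-compatibility along the way), so once hereditariness of clarity is noted the induction closes with no further work.
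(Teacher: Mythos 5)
Your proof is correct and follows essentially the same route as the paper: the forward direction by choosing a clear formula via \reflem{lem:xgraph-surj} and pushing the construction through \reflem{lem:xgraph-commute}, the converse by iterating \reflem{lem:fograph-ops} from literals. Your explicit remark that clarity is hereditary (so the induction on subformulas is legitimate) is a detail the paper leaves implicit, but it is not a departure in approach.
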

\begin{proof}
Let $\fograph$ be a fograph.
By Lemma\,\ref{lem:xgraph-surj} there exists a clear formula $\formula$
such that $\xgraphof\formula\tighteq\fograph$.
By Lemma\,\ref{lem:xgraph-commute} the $\graphjoin$ and $\graphunion$ operations in the inductive translation $\xgraphofsymbol$ of $\formula$ are well-defined $\wedge$, $\vee$, $\forall$ and $\exists$ operations on fographs. Thus $\fograph$ can be constructed from literals by
fograph connectives.
Conversely, any labelled graph constructed from literals by fograph connectives is a fograph, by repeated application of Lemma~\ref{lem:fograph-ops}, starting from the fact that any literal vertex is a fograph.
\end{proof}
A \defn{map} is a label-preserving graph homomorphism between fographs.
\begin{definition}
Extend the fograph connectives to maps $\map:\fograph\to\fographa$ and $\mapp:\fographp\to\fographap$
as follows:
\begin{itemize}
\item
if $\fograph\tightwedge\fographp$ and $\fographa\tightwedge\fographap$ are well-defined,
define $\map\tightwedge\mkern-1mu\mapp:\fograph\tightwedge\fographp\to\fographa\tightwedge\fographap$
as $\map\cup\mapp$
\item
if $\fograph\tightvee\fographp$ and $\fographa\tightvee\fographap$ are well-defined,
define $\map\tightvee\mkern-1mu\mapp:\fograph\tightvee\fographp\to\fographa\tightvee\fographap$
as $\map\cup\mapp$
\item
if $\forall x\mkern2mu \fograph$ and
if $\forall x\mkern2mu \fographa$ are well-defined,
define $\forall x\mkern2mu\map\mkern2mu:\forall x\mkern2mu\fograph\to\forall x\mkern2mu\fographa$
as $\map\cup\setof{\singletonx\mkern3mu\shortmapsto\mkern3mu\singletonx}$
\item
if $\exists x\mkern2mu \fograph$ and
if $\exists x\mkern2mu \fographa$ are well-defined,
define $\exists x\mkern2mu\map\mkern2mu:\exists x\mkern2mu\fograph\to\exists x\mkern2mu\fographa$
as $\map\cup\setof{\singletonx\mkern3mu\shortmapsto\mkern3mu\singletonx}$.
\end{itemize}
\end{definition}
\begin{lemma}\label{lem:fograph-connectives-bifibs}
The fograph connectives are well-defined on skew bifibrations: if $\map$ and $\mapp$ are skew bifibrations, then, when defined, each of the maps $\map\tightwedge\mapp$, $\map\tightvee\mapp$, $\forall x\mkern2mu\map$ and $\exists x\mkern2mu\map$ is a skew bifibration, where $x$ is any variable.
\end{lemma}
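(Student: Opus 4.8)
The plan is to verify, for each of the four fograph connectives applied to skew bifibrations $\map:\fograph\to\fographa$ and $\mapp:\fographp\to\fographap$, the four defining clauses of a skew bifibration: that the combined map is a label-preserving homomorphism (a map), that it preserves existentials, that it is a skew fibration on the underlying undirected graphs, and that it induces a fibration on binding graphs. Throughout I will use that $\map\tightwedge\mapp$ and $\map\tightvee\mapp$ are the disjoint union of functions $\map\cup\mapp$, while $\forall x\mkern2mu\map$ and $\exists x\mkern2mu\map$ extend $\map$ by the single assignment $\singletonx\shortmapsto\singletonx$. Two of the clauses are light. Label-preservation is immediate from the definitions of $\graphunion$ and $\graphjoin$: interior edges are carried by $\map$ or $\mapp$, cross edges in a join go to cross edges (all of which are present in the target), and $\singletonx\shortmapsto\singletonx$ preserves the label $x$. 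For existential-preservation the key observation is that forming $\fograph\graphunion\fographp$, $\fograph\graphjoin\fographp$, $\singletonx\graphunion\fograph$ or $\singletonx\graphjoin\fograph$ only ever adjoins a node at the very top of the cotree, or merges with a top node of the same type $\graphunion$/$\graphjoin$; it therefore leaves unchanged the cotree-parent of every preexisting binder, hence (by Lemma~\ref{lem:parent-scope}) its universal/existential status. So an existential binder of $\fograph$ stays existential in the combined fograph, $\map$ carries it to an existential binder of $\fographa$, which is again existential in the combined target; and for $\exists x$ the fresh $\singletonx$, being joined to its whole scope, is existential and maps to the existential $\singletonx$.

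For the skew fibration clause I separate the union-type connectives ($\vee$, $\forall$) from the join-type connectives ($\wedge$, $\exists$). For $\vee$ and $\forall$ the two parts are disjointly unioned, so any target-neighbour of $\map(\vertex)$ lies in the same part as $\map(\vertex)$ and the required skew lift is produced directly by the skew fibration $\map$ or $\mapp$; the fresh isolated vertex $\singletonx$ of $\forall x\mkern2mu\fograph$ has no neighbours, so its case is vacuous. The substantive case is $\wedge$ and $\exists$, where all cross edges are present. Given $\vertex\tightin\verticesof\fograph$ and a target-neighbour $w$: if $w$ lies in the same part as $\map(\vertex)$, lift via $\map$; if $w$ lies in the opposite part, invoke non-emptiness of fographs by choosing any vertex $u$ of the opposite source part — then either $w$ is already non-adjacent to the image of $u$ (so $u$, being join-adjacent to $\vertex$, is the desired skew lift) or, applying the skew fibration of the opposite map to $u$ and $w$, one obtains a lift that is join-adjacent to $\vertex$ with image non-adjacent to $w$. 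For $\exists x$ the fresh $\singletonx$ is join-adjacent to everything, so it directly supplies a skew lift whenever $w=\singletonx$, and the preceding analysis handles the remaining $w$.

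For the binding fibration clause I lean on the compatibility hypotheses baked into Definition~\ref{def:fograph-ops}. For $\wedge$ and $\vee$, $\wedge$- and $\vee$-compatibility give $\bindinggraphof{\fograph\graphjoin\fographp}=\bindinggraphof\fograph\graphunion\bindinggraphof{\fographp}$ and likewise for $\graphunion$, so the combined map is the disjoint union of the binding fibrations of $\map$ and $\mapp$; since every incoming binding edge and its lift stay within a single part, a disjoint union of fibrations is a fibration. For $\forall x$ and $\exists x$ I first show, via Lemma~\ref{lem:parent-scope}, that the fresh $\singletonx$ has the whole graph as its scope and therefore binds exactly the formerly free $x$-literals, while the bindings interior to $\fograph$ are untouched; thus $\bindinggraphof{\forall x\mkern2mu\fograph}$ (resp.\ $\exists x$) is $\bindinggraphof\fograph$ together with $\singletonx$ and a directed edge from $\singletonx$ to each $x$-literal. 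The fibration condition then splits: an incoming binding edge to $\map(\vertex)$ from a vertex of $\fographa$ lifts uniquely by the binding fibration of $\map$, its only preimage lying in $\fograph$; and an incoming binding edge from $\singletonx$ forces $\map(\vertex)$, hence $\vertex$ (labels are preserved), to be an $x$-literal, so $\singletonx$ binds $\vertex$ in the source and $\singletonx$, the unique preimage of $\singletonx$, is the unique lift.

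I expect the main obstacle to be the join-type connectives, where two interlocking facts must be marshalled. First, the skew fibration argument for $\wedge$ and $\exists$ genuinely needs the non-emptiness of fographs together with a careful case split on which side the target-neighbour $w$ lands, since no single canonical lift works uniformly. Second, the $\forall$/$\exists$ binding fibration and the existential-preservation claim both rest on controlling how scopes behave under adjoining a top cotree node — precisely the content I would route through Lemma~\ref{lem:parent-scope} (that a binder's scope is read off from its cotree parent), establishing that the fresh binder captures all free $x$-literals and that no interior binder changes its parent, and hence status. If one preferred to avoid the cotree, the same facts could be proved directly from the module definitions, but that route is markedly more tedious, so the cleanest path is to isolate these scope facts as the single load-bearing lemma and keep the rest of the verification routine.
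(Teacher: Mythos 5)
Your proof is correct and takes essentially the same approach as the paper, whose own argument is only a two-sentence sketch observing that the compatibility constraints preserve the skew-fibration and binding-fibration conditions and that ``additional requisite skew liftings are created across the corresponding graph join.'' Your detailed case analysis — the non-emptiness argument producing cross-join skew lifts for $\wedge$ and $\exists$, and the decomposition of the binding graph via $\wedge$-/$\vee$-/$x$-compatibility — is precisely the content the paper leaves implicit.
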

\begin{proof}
Due to the compatibility constraint in the definitions of the fograph connectives,
the skew fibration condition is preserved and the directed graph homomorphisms between binding graphs are fibrations.\todo{elaborate?}
In the $\wedge$ and $\exists$ connectives, additional requisite skew liftings are created across the corresponding graph join.
\end{proof}
\begin{lemma}\label{lem:bifibs-compose}
  Skew bifibrations between fographs compose: if $\bifib:\fograph\to\fographa$ and $\bifibp:\fographa\to\fographaa$ are skew bifibrations between fographs, their composite $\bifibp\mkern-1mu\tightcirc\bifib:\fograph\to\fographaa$ is a skew bifibration.
\end{lemma}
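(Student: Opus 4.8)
The plan is to unpack the definition of skew bifibration into its constituent conditions and verify each of them for the composite $\bifibp\tightcirc\bifib$. Recall that a skew bifibration is (i)~a label-preserving and (ii)~existential-preserving graph homomorphism whose (iii)~underlying undirected-graph map is a skew fibration and whose (iv)~induced map of binding graphs is a fibration. Conditions (i) and (ii) compose trivially: a composite of graph homomorphisms is a graph homomorphism; the label of $v$ in $\fograph$ equals that of $\bifib(v)$ in $\fographa$, which equals that of $\bifibp(\bifib(v))$ in $\fographaa$; and an existential binder of $\fograph$ is carried by $\bifib$ to an existential binder of $\fographa$ and thence by $\bifibp$ to an existential binder of $\fographaa$. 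The substance is therefore in (iii) and (iv).

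For (iv) I would prove directly that directed-graph fibrations compose. Given $v\tightin\verticesof\fograph$ and a binding edge $\diredge{b}{(\bifibp\tightcirc\bifib)(v)}$ of $\bindinggraphof\fographaa$, first lift it through the fibration $\bifibp:\bindinggraphof\fographa\to\bindinggraphof\fographaa$ to the unique binding edge of $\bindinggraphof\fographa$ into $\bifib(v)$ lying over it, then lift that through $\bifib:\bindinggraphof\fograph\to\bindinggraphof\fographa$ to the unique binding edge into $v$; the two uniqueness properties combine to yield the required unique lift. Hence $\bifibp\tightcirc\bifib$ is a fibration on binding graphs.

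The real content, and the main obstacle, is (iii): that a composite of skew fibrations of the underlying graphs is again a skew fibration. This is exactly where the hypothesis that fographs are cographs ($\pfour$-free) is essential, for skew fibrations do \emph{not} compose between arbitrary graphs --- one can exhibit skew fibrations $\bifib,\bifibp$ whose intermediate and target graphs each contain an induced $\pfour$ and whose composite violates the skew-lifting condition. For cographs the composition does hold, and this is precisely the propositional situation of \cite{Hug06}, where every graph is a cograph and skew fibrations are shown closed under composition; the underlying-graph requirement here is an instance of that result. For a self-contained argument one performs the evident double lifting: given $v$ and an edge $z\,(\bifibp\tightcirc\bifib)(v)$ of $\fographaa$, lift through $\bifibp$ to a neighbour $a$ of $\bifib(v)$ in $\fographa$ with $\bifibp(a)$ non-adjacent to $z$, then through $\bifib$ to a neighbour $v'$ of $v$ in $\fograph$ with $\bifib(v')$ non-adjacent to $a$, and one uses $\pfour$-freeness of $\fographa$ and $\fographaa$ to exclude the configurations in which $(\bifibp\tightcirc\bifib)(v')$ could still be adjacent to $z$. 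Note that the inductive machinery of \reflem{lem:fograph-connectives-bifibs} does not apply directly, since $\bifib$ and $\bifibp$ need not respect a common cotree decomposition of $\fographa$. Assembling (i)--(iv), the composite $\bifibp\tightcirc\bifib$ is a skew bifibration.
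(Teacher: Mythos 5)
Your proof follows the paper's route exactly: the paper's entire argument is that skew fibrations between cographs compose (citing Cor.\,3.5 of \cite{Hug06i}), directed graph fibrations compose (citing \cite{Gro60}), and existential preservation is transitive; your direct verification of the binding-fibration composition and of label- and existential-preservation is correct and matches this. One caution, though, about the step you rightly identify as the real content. The ``self-contained'' double-lifting sketch you offer for the composition of skew fibrations does not close the argument as stated. In the putative failure, the four vertices $z$, $(\bifibp\mkern-1mu\tightcirc\bifib)(v)$, $(\bifibp\mkern-1mu\tightcirc\bifib)(v')$ and $\bifibp(a)$ of $\fographaa$ carry the edges $z\,(\bifibp\mkern-1mu\tightcirc\bifib)(v)$, $z\,(\bifibp\mkern-1mu\tightcirc\bifib)(v')$, $(\bifibp\mkern-1mu\tightcirc\bifib)(v)\,(\bifibp\mkern-1mu\tightcirc\bifib)(v')$ and $(\bifibp\mkern-1mu\tightcirc\bifib)(v)\,\bifibp(a)$, with $z\,\bifibp(a)$ a non-edge; whether or not $(\bifibp\mkern-1mu\tightcirc\bifib)(v')\,\bifibp(a)$ is an edge, the induced subgraph on these four vertices is a paw or a diamond, both of which are $\pfour$-free. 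So $\pfour$-freeness of that quadruple excludes nothing, and a single round of lifting does not suffice; the cited result needs a more careful choice of lifts or an induction. Likewise, your assertion that one ``can exhibit'' skew fibrations between non-cographs whose composite fails is stated without a witness and should either be substantiated or dropped. Since you ultimately rest the step on the citation to Hughes's propositional composition result---precisely as the paper does---the lemma is proved, but the sketch should not be presented as an independent justification.
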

\begin{proof}
  Skew fibrations between cographs compose \cite[Cor.\:3.5]{Hug06i}, and directed graph fibrations compose \cite{Gro60}. Existential preservation is transitive.
\end{proof}
\begin{definition}
  If $\fograph$ is a fograph and $\fograph\vee\fograph$ is well-defined, define \defn{pure contraction} $\contractionof\fograph$ as the canonical map $\fograph\vee\fograph\to\fograph$.
  If $\fograph$ and $\fographa$ are fographs and $\fograph\vee\fographa$ is well-defined, define \defn{pure weakening} $\weakeningof\fograph\fographa$ as the canonical map $\fograph\to\fograph\vee\fographa$.
\end{definition}
\begin{lemma}\label{lem:pure-cw-are-bifibs}
  Every pure contraction and pure weakening is a skew bifibration.
\end{lemma}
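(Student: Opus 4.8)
The plan is to verify the four defining clauses of a skew bifibration directly for each of the two canonical maps, exploiting the fact that both are built on the disjoint graph union $\graphunion$ underlying $\vee$. Write $c=\contractionof\fograph:\fograph\vee\fograph\to\fograph$ for the fold of the two copies $\fograph_1,\fograph_2$ of $\fograph$, and $\weakeningof\fograph\fographa:\fograph\to\fograph\vee\fographa$ for the inclusion into $\fograph\graphunion\fographa$. Both are honest graph homomorphisms: within a copy every edge is retained and there are no edges across the union, so $c$ folds edges to edges and $\weakeningof\fograph\fographa$ is an embedding. Label preservation is immediate in both cases, since each map is by definition the identity on the labels of the folded (resp.\ included) vertices. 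This reduces the lemma to the skew-fibration clause, the binding-fibration clause, and existential preservation.

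For the undirected skew-fibration clause I would prove the stronger statement that each map is a genuine undirected fibration, which then yields a skew fibration by the paper's remark that a fibration satisfies the skew condition (the unique lift $\skewlifting\vertexa$ maps exactly onto $\vertexa$, making $\vertexa$ a non-neighbour of itself). Concretely, for $c$ a vertex $\vertex$ lies in one copy, say $\fograph_1$, and its neighbours in $\fograph_1\graphunion\fograph_2$ are exactly the $\fograph_1$-copies of the neighbours of $c(\vertex)$ in $\fograph$; hence any target neighbour $\vertexa$ of $c(\vertex)$ has a unique lift adjacent to $\vertex$, namely its $\fograph_1$-copy, which maps onto $\vertexa$. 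For $\weakeningof\fograph\fographa$, the neighbours of $\vertex\tightin\verticesof\fograph$ inside $\fograph\graphunion\fographa$ are precisely its $\fograph$-neighbours, each lifting uniquely to itself. The binding-fibration clause is handled by the same localisation, using that $\vee$-compatibility (built into the hypothesis that $\fograph\vee\fograph$, resp.\ $\fograph\vee\fographa$, is well defined) gives $\bindinggraphof{\fograph\vee\fograph}=\bindinggraphof\fograph\graphunion\bindinggraphof\fograph$ and $\bindinggraphof{\fograph\vee\fographa}=\bindinggraphof\fograph\graphunion\bindinggraphof\fographa$: the induced directed map is again a fold/inclusion of a disjoint union of directed graphs, and the unique directed lift of any in-edge $\diredge{\vertexa}{c(\vertex)}$ lives in the component of $\vertex$.

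The one genuinely delicate point, and the step I expect to be the main obstacle, is existential preservation, because a binder's existential/universal status is defined through its scope (the smallest proper strong module containing it), and one must check that this scope is not enlarged by forming the union. I would argue via the cotree: an existential binder $\binder$ is exactly one whose parent in the cotree is a $\graphjoin$-node, and its scope is that parent's leaf set (this is the identification of a binder's scope with its cotree parent, \reflem{lem:parent-scope}). Passing to $\fograph\graphunion\fograph$ or $\fograph\graphunion\fographa$ only attaches further subtrees beneath a (possibly new) topmost $\graphunion$-node; it never alters the $\graphjoin$-node sitting immediately above $\binder$, nor its leaf set. Hence the scope of $\binder$ is unchanged, $\binder$ remains adjacent to every other vertex of that scope (edges inside each copy being retained), and so $\binder$ stays existential. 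The only binders whose scope can grow are those whose cotree parent is the root $\graphunion$-node of a disconnected $\fograph$, but such binders are universal and therefore irrelevant to this clause. This establishes all four conditions for both $c$ and $\weakeningof\fograph\fographa$, completing the proof.
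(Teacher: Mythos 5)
Your proof is correct and takes the same route the paper has in mind: the paper dismisses this lemma as ``immediate from the definitions,'' and your argument is exactly the direct verification that fills in that claim, including the observation that both maps are genuine (not merely skew) fibrations on both the undirected and binding graphs. Your treatment of the one non-obvious clause --- existential preservation via the cotree, noting that forming the union only touches $\graphunion$-material at the root and so cannot disturb the $\graphjoin$-parent of an existential binder --- is sound and is precisely the detail the paper leaves unstated.
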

\begin{proof}
  Immediate from the definitions of pure contraction and pure weakening.\todo{elaborate}
\end{proof}
\begin{definition}\label{def:contraction-weakening-map}
A \defn{contraction} is any map generated from a pure contraction by fograph connectives, and a \defn{weakening} is any map generated from a pure weakening by fograph connectives.
\end{definition}
\begin{lemma}\label{lem:cw-are-bifibs}
Every contraction and weakening is a skew bifibration.
\end{lemma}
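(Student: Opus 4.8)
The plan is to prove the statement by a straightforward structural induction on the way a contraction (resp.\ weakening) is generated from a pure contraction (resp.\ pure weakening) via the fograph connectives, as prescribed by Definition~\ref{def:contraction-weakening-map}. Two ingredients do all the work: Lemma~\ref{lem:pure-cw-are-bifibs}, which supplies the base case, and Lemma~\ref{lem:fograph-connectives-bifibs}, which supplies the inductive step.

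For the base case, every pure contraction $\contractionof\fograph$ and every pure weakening $\weakeningof\fograph\fographa$ is a skew bifibration by Lemma~\ref{lem:pure-cw-are-bifibs}. For the inductive step, a compound contraction (resp.\ weakening) has one of the forms $\map\tightwedge\mapp$, $\map\tightvee\mapp$, $\forall x\mkern2mu\map$ or $\exists x\mkern2mu\map$, where by the induction hypothesis the constituent maps $\map$ and $\mapp$ are skew bifibrations. Then Lemma~\ref{lem:fograph-connectives-bifibs} immediately gives that the result is a skew bifibration, since each fograph connective, when defined, sends skew bifibrations to skew bifibrations.

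The one point needing care is the binary connectives $\wedge$ and $\vee$, which take two maps as arguments: when a pure contraction or weakening is embedded beneath such a connective, the ``other'' argument is an identity map on the surrounding context fograph. I would therefore first record the easy observation that every identity map $\mathrm{id}_\fograph:\fograph\to\fograph$ is a skew bifibration: it preserves labels and existentials trivially; it is a skew fibration because for any $\vertex$ and any $\vertexa$ with $\vertexa\mkern2mu\vertex\tightin\edgesof\fograph$ the lift $\skewlifting\vertexa=\vertexa$ satisfies $\skewlifting\vertexa\mkern2mu\vertex\tightin\edgesof\fograph$ and $\vertexa\mkern2mu\vertexa\tightnotin\edgesof\fograph$ (there are no self-loops); and its restriction to the binding graph is a fibration, each lift again being given by the identity. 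With identity maps available as skew bifibrations, every application of a binary connective arising in the generation of the map combines two skew bifibrations, so the induction goes through unchanged.

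I do not expect any genuine obstacle here: all the combinatorial content has been isolated into the two cited lemmas, and the remaining work is bookkeeping over the inductive definition of contraction and weakening together with the triviality that identities are skew bifibrations. If anything, the only subtlety is confirming that each connective application arising in the generation is actually well-defined (the $\wedge$-, $\vee$- and $x$-compatibility side conditions of Definition~\ref{def:fograph-ops}), but this is guaranteed by the fact that the maps were generated as valid contractions and weakenings in the first place.
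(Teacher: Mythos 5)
Your proposal is correct and takes essentially the same route as the paper: the paper's proof is exactly the two-line combination of Lemma~\ref{lem:pure-cw-are-bifibs} for the base case and Lemma~\ref{lem:fograph-connectives-bifibs} for the inductive step. Your extra observation that identity maps are skew bifibrations (needed when a binary connective pairs the generated map with an identity on the surrounding context) is a point the paper leaves implicit, and is a reasonable piece of bookkeeping to make explicit.
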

\begin{proof}
Pure contraction and pure weakening are skew bifibrations by \reflem{lem:pure-cw-are-bifibs}, and
fograph connectives are well-defined on skew bifibrations by Lemma\,\ref{lem:fograph-connectives-bifibs}.
\end{proof}
\begin{definition}
  A \defn{structural map} is any map constructed from isomorphisms, contractions, and weakenings by composition.
\end{definition}
\begin{lemma}\label{lem:structural-map-is-bifib}
  Every structural map is a skew bifibration.
\end{lemma}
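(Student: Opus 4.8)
The plan is to induct on the construction of a structural map. By the definition of structural map, every structural map is built from isomorphisms, contractions, and weakenings by composition, so it suffices to treat these three generators as base cases and composition as the induction step. Two of the generators are already dispatched: every contraction and every weakening is a skew bifibration by \reflem{lem:cw-are-bifibs}. The induction step is likewise immediate, since skew bifibrations between fographs are closed under composition by \reflem{lem:bifibs-compose}. Hence the only case requiring fresh work is the base case of an isomorphism $\bifib\colon\fograph\to\fographa$ of fographs.

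For that case I would verify the defining conditions of a skew bifibration directly. An isomorphism of fographs is a label-preserving bijective graph homomorphism whose inverse $\bifib\inv$ is also a homomorphism; in particular it both preserves and reflects edges. Label-preservation is built in. Existential-preservation holds because the property of a binder being existential is expressed purely in terms of its scope (a smallest proper strong module) and the adjacency relation inside that scope, and every isomorphism carries strong modules to strong modules and preserves adjacency in both directions; thus $\bifib$ sends existential binders to existential binders.

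It then remains to check the two fibration conditions. For the skew fibration condition, fix $\vertex\tightin\verticesof\fograph$ and an edge $\vertexa\,\bifib(\vertex)\tightin\edgesof\fographa$, and take the candidate lift $\skewlifting\vertexa=\bifib\inv(\vertexa)$. Since $\bifib$ reflects edges and $\bifib(\skewlifting\vertexa)\,\bifib(\vertex)=\vertexa\,\bifib(\vertex)\tightin\edgesof\fographa$, we obtain $\skewlifting\vertexa\,\vertex\tightin\edgesof\fograph$; and because $\bifib(\skewlifting\vertexa)=\vertexa$ the remaining skew condition $\bifib(\skewlifting\vertexa)\,\vertexa\tightnotin\edgesof\fographa$ holds vacuously, exactly as it does for ordinary fibrations. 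For the binding fibration, note that $\bifib$ restricts to an isomorphism of directed graphs $\bindinggraphof\fograph\to\bindinggraphof\fographa$, and any directed-graph isomorphism is a fibration, each required lift existing uniquely as the image under $\bifib\inv$. This completes the isomorphism case, and with it the induction.

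I do not expect a genuine obstacle here: once \reflem{lem:cw-are-bifibs} and \reflem{lem:bifibs-compose} are in hand, the argument is a short induction and the isomorphism base case is routine. The one point deserving a moment of care—and the closest thing to a hard part—is the verification of existential-preservation for isomorphisms, since existentiality is a derived notion defined through scopes; but this reduces to the standard fact that graph isomorphisms preserve the strong-module structure and the adjacency relation, so it presents no real difficulty.
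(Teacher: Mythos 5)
Your argument follows the paper's proof exactly: both reduce to the three generators via \reflem{lem:cw-are-bifibs} and closure under composition via \reflem{lem:bifibs-compose}, with the isomorphism case handled as a base case (which the paper simply asserts and you verify in detail). The proposal is correct.
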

\begin{proof}
  Every isomorphism is a skew bifibration, and every contraction and weakening is a skew bifibration by Lemma\,\ref{lem:cw-are-bifibs}.
  Skew bifibrations compose by Lemma\,\ref{lem:bifibs-compose}.
\end{proof}
\begin{lemma}\label{lem:strucmap-sound}
Structural maps are sound: if $\fograph$ is a valid fograph and $\map:\fograph\to\fographa$ is a structural map, then $\fographa$ is valid.
\end{lemma}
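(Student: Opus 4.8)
The plan is to prove a strictly stronger statement before inducting, because plain validity‑preservation is not compositional: knowing that $\isvalid\fograph$ forces $\isvalid\fographa$ tells us nothing about $\fograph\tightvee\fographaa$ versus $\fographa\tightvee\fographaa$, since the validity of a disjunction cannot be decomposed. I would therefore show that every structural map is a \emph{logical implication}: for any structural map $\map:\fograph\to\fographa$, if $\formula$ and $\formulaa$ are clear formulas with $\xgraphof\formula\tighteq\fograph$ and $\xgraphof\formulaa\tighteq\fographa$ (which exist by Lemma\,\ref{lem:xgraph-surj}), then $\isvalid(\formula\implies\formulaa)$. The Lemma follows by modus ponens: if $\isvalid\fograph$, i.e.\ $\isvalid\formula$, and $\isvalid(\formula\implies\formulaa)$, then $\isvalid\formulaa$, i.e.\ $\isvalid\fographa$. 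This is well defined with respect to the choice of $\formula,\formulaa$, since by Lemma\,\ref{lem:xgraph-surj} any two choices differ only by the listed equivalences, each of which is logical.

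The induction would be on the construction of the structural map from pure contractions, pure weakenings, and isomorphisms (the filler maps used to build a contraction or weakening from a pure one being identities, hence isomorphisms) by the fograph connectives and composition. For the base cases I would note that a pure contraction $\fograph\tightvee\fograph\to\fograph$ (Def.\,\ref{def:contraction-weakening-map}) realises the tautology $\isvalid(\formula\tightvee\formula\implies\formula)$, a pure weakening $\fograph\to\fograph\tightvee\fographa$ realises $\isvalid(\formula\implies\formula\tightvee\formulaa)$, and an isomorphism realises $\isvalid(\formula\implies\formulaa)$ because its endpoints are equal modulo the equivalences of Lemma\,\ref{lem:xgraph-surj}; all three are standard facts of first‑order validity \cite{TS96,Joh87}.

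For the inductive steps I would use Lemma\,\ref{lem:xgraph-commute} to align the fograph connectives with the formula connectives: if $\map:\fograph\to\fographa$ carries representatives $\formula,\formulaa$ and $\mapp:\fographp\to\fographap$ carries $\formulap,\formulaa\primed$, then $\map\tightvee\mapp$ carries $\formula\tightvee\formulap$ and $\formulaa\tightvee\formulaa\primed$, and similarly for $\tightwedge$, $\forall x$ and $\exists x$, since each fograph connective equals the corresponding operation on the representing formulas. The implication claim is then closed under the connectives by the monotonicity of positive contexts: $\isvalid(\formula\implies\formulaa)$ together with $\isvalid(\formulap\implies\formulaa\primed)$ give $\isvalid(\formula\tightwedge\formulap\implies\formulaa\tightwedge\formulaa\primed)$ and $\isvalid(\formula\tightvee\formulap\implies\formulaa\tightvee\formulaa\primed)$, while $\isvalid(\formula\implies\formulaa)$ gives $\isvalid(\forall x\mkern2mu\formula\implies\forall x\mkern2mu\formulaa)$ and $\isvalid(\exists x\mkern2mu\formula\implies\exists x\mkern2mu\formulaa)$ — all four connectives occurring positively. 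Composition is handled by transitivity: for $\bifib:\fograph\to\fographa$ and $\bifibp:\fographa\to\fographaa$, choosing a single representative $\formulaa$ of $\fographa$, from $\isvalid(\formula\implies\formulaa)$ and $\isvalid(\formulaa\implies\formulaaa)$ I conclude $\isvalid(\formula\implies\formulaaa)$.

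The main obstacle is exactly the first paragraph's observation: the Lemma cannot be proved by a naive induction on the \emph{validity} predicate, because validity does not propagate through disjunctive, universal, or weakening contexts. Strengthening to logical implication is what makes the induction go through, after which every step is a routine monotonicity or transitivity fact of classical first‑order logic (the validity‑form counterparts of which are collected in Lemma\,\ref{lem:formula-inferences}). The only bookkeeping to verify is that the representing clear formulas can be chosen coherently along the whole construction, which Lemmas\,\ref{lem:xgraph-surj} and \ref{lem:xgraph-commute} guarantee.
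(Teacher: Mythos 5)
Your proof is correct, but it is considerably more explicit than the paper's, which disposes of \reflem{lem:strucmap-sound} in one sentence by asserting that isomorphisms, pure contraction, pure weakening, composition and the fograph connectives ``are sound.'' The substantive point you add --- and it is a real one --- is that the naive induction invariant ``$\map:\fograph\to\fographa$ preserves validity'' is not closed under the fograph connectives (validity of $\fograph$ forcing validity of $\fographa$ says nothing about $\fograph\tightvee\fographaa$ versus $\fographa\tightvee\fographaa$), so the induction must be run on the stronger invariant that $\map$ realises a valid implication $\isvalid(\formula\tightimplies\formulaa)$ between clear representatives chosen via \reflem{lem:xgraph-surj}; the lemma then follows by modus ponens. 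That strengthened invariant is closed under all four connectives by monotonicity of positive contexts, under composition by transitivity, and holds for the three generators, with \reflem{lem:xgraph-commute} guaranteeing that representatives can be chosen coherently along the construction. The paper's one-liner is best read as implicitly appealing to exactly this entailment-level fact; your version makes the needed strengthening explicit and is the argument one would actually have to write down to make the induction go through. The only cosmetic caveat is that an ``isomorphism'' here is between fographs already identified up to vertex renaming, so its source and target have equal graphs and the base case reduces to the logical equivalences listed in \reflem{lem:xgraph-surj}, as you say.
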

\begin{proof}
  Isomorphisms, pure contraction, pure weakening, composition and fograph connectives are sound.
\end{proof}

\subsubsection{The image of a skew bifibration is a fograph}\label{sec:image}

We recall the \emph{modular decomposition} \cite{Gal67} of a cograph, called its \defn{cotree} \cite{CLS81}.

A directed graph $\defaultplustimestree$ is \defn{acyclic} if the transitive closure of ${}\child{}$ (viewed as a binary relation on $\nodeset$) is irreflexive.
A \defn{forest} is an acyclic directed graph $\defaultplustimestree$ such that for every $n\tightin \nodeset$ there exists at most one $m\tightin \nodeset$ with $\diredge nm\in{}\mkern-3mu\child{}$.
We refer to the vertices of a forest as \defn{nodes}.
Write $m\child n$ or $n\parent m$ for $\diredge n m\in{}\mkern-3mu\child{}$, and say that $m$ is a \defn{child} of $n$ and $n$ is the \defn{parent} of $m$.
A \defn{leaf} (resp.\ \defn{root}) is a node with no child (resp.\ parent).
A \defn{tree} is a forest with a unique root.
A \defn{\plustimestree} is a tree in which
a node is labelled $\graphunion$ or $\graphjoin$ if and only if it is not a leaf.
Each node labelled $\graphunion$ or $\graphjoin$ is a
\defn{\plustimesnode}.
An isomorphism $\iota:\defaultplustimestree\to\defaultplustimestreep$ of \plustimestrees is a bijection
$\iota:\nodeset\to\nodesetp$ such that
$m\child n$ if and only if $\iota(m)\mkern1mu\childp\mkern2mu\iota(n)$ and
$\iota(n)$ is a $\graphunion$ (resp.\ $\graphjoin$) node
if and only if $n$ is a $\graphunion$ (resp.\ $\graphjoin$) node.
We identify \plustimestrees up to isomorphism.

Given \plustimestrees $\tree_1,\ldots,\tree_n$ for $n\ge 1$ define $\graphunion\tree_1\ldots\tree_n$ (resp.\ $\graphjoin\tree_1\ldots\tree_n$) as the disjoint union of the $\tree_i$ together with a $\graphunion$ (resp.\ $\graphjoin$) root node $r$ and an edge to $r$ from the root of each $\tree_i$ ($1\tightle i\tightle n$).
Write $\singleton{}$ for the \plustimestree with a unique node.
For example, the \plustimestree
$\graphunion(\graphjoin\singleton{}\singleton{})\singleton{}\mkern-4mu\left(\rule{0ex}{1.7ex}\mkern-4mu\graphjoin\mkern-4mu\singleton{}\singleton{}(\graphunion\singleton{}\singleton{})\right)$
is below-left and
$\graphunion(\graphjoin\singleton{}\singleton{})(\graphunion\singleton{})\mkern-4mu\left(\rule{0ex}{1.7ex}\mkern-4mu\graphjoin\mkern-4mu\singleton{}(\graphjoin\singleton{}(\graphunion\singleton{}\singleton{}))\right)$
is below-right.
\begin{center}\begin{pspicture}(0,\twoedgelen)(0,-\twoedgelen)\begin{math}
\rput(-5.5,\halfedgelen){\defaultcotreeeg}
\rput(-.2,0){
  \vx{-\edgelen,\edgelen}{a}
  \vx{-\edgelen,0}{b}
  \e a b
  \vx{0,\halfedgelen}{c}
  \rput(\edgelen,0){
    \vx{0,\edgelen}{d}
    \vx{0,0}{e}
    \vx{\edgelen,\edgelen}{f}
    \vx{\edgelen,0}{g}
    \e d e
    \e d f
    \e e f
    \e d g
    \e e g
  }}
\rput(5.5,0){
  \plustreesep{.75}{
    \timestreesep{.75}{\lf\lf}
    \plustree{\lf}
    \timestreesep{.6}{
      \lf
      \tspace{-.235}
      \timestreesep{.6}{
        \lf
        \tspace{-.235}
        \plustreesep{.6}{\lf\lf}
      }
    }
  }
}
\end{math}\end{pspicture}\end{center}
\begin{definition}\label{def:cograph-of}
The \defn{cograph} $\cographof\tree$ of a \plustimestree $\tree$ is the cograph defined inductively by
\[
\cographof{\singleton{}}
\hspace{.5ex}=\hspace{.5ex}
\singleton{}
\hspace{6ex}
\cographof{\graphunion\tree_1\ldots\tree_n}
\hspace{1ex}=\hspace{1ex}
\cographof{\tree_1}\graphunion\ldots\graphunion\cographof{\tree_n}
\hspace{6ex}
\cographof{\graphjoin\tree_1\ldots\tree_n}
\hspace{1ex}=\hspace{1ex}
\cographof{\tree_1}\graphjoin\ldots\graphjoin\cographof{\tree_n}
\]
\end{definition}
For example, the cograph of the \plustimestree above-left is shown above-center; this cograph is also the cograph of the \plustimestree above-right.
\begin{lemma}\label{lem:leaf-to-vertex}
  The leaves of a \plustimestree $\tree$ are in bijection with the vertices of its cograph $\cographof\tree$.
\end{lemma}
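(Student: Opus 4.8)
The plan is a straightforward structural induction on the \plustimestree $\tree$, following the inductive definition of $\cographof\tree$ (Def.\,\ref{def:cograph-of}). The key observation underpinning the induction is that both graph operations $\graphunion$ and $\graphjoin$ take the \emph{disjoint union} of vertex sets—they differ only in the edges they add—so for any $\graphbin\tightin\graphbinset$ the vertex set $\verticesof{\cographof{\tree_1}\graphbin\ldots\graphbin\cographof{\tree_n}}$ is the disjoint union of the vertex sets $\verticesof{\cographof{\tree_i}}$. Thus the cograph operation is ``transparent'' to the vertex count, and only the number of leaves on the tree side needs tracking.

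In the base case $\tree=\singleton{}$ is a single node; that node has no child, hence is a leaf, and $\cographof{\singleton{}}=\singleton{}$ has exactly one vertex, giving the evident bijection between the one leaf and the one vertex.

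For the inductive step, suppose $\tree=\graphunion\tree_1\ldots\tree_n$ or $\tree=\graphjoin\tree_1\ldots\tree_n$ with $n\tightge 1$. The root $r$ of $\tree$ is labelled $\graphunion$ or $\graphjoin$, hence is a \plustimesnode and therefore not a leaf; consequently the leaves of $\tree$ are precisely the disjoint union of the leaves of $\tree_1,\ldots,\tree_n$ (each leaf of $\tree$ lies in a unique subtree $\tree_i$, and conversely every leaf of each $\tree_i$ remains a leaf of $\tree$). By the induction hypothesis the leaves of each $\tree_i$ are in bijection with $\verticesof{\cographof{\tree_i}}$. Composing these bijections with the identification of $\verticesof{\cographof\tree}$ as the disjoint union of the $\verticesof{\cographof{\tree_i}}$ (from the observation above, applied to whichever of $\graphunion$, $\graphjoin$ labels $r$) yields the required bijection between the leaves of $\tree$ and the vertices of $\cographof\tree$.

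I do not expect any serious obstacle: the argument is entirely routine once the two bookkeeping points are isolated, namely that a \plustimesnode is never a leaf (so root nodes contribute no leaves) and that both $\graphunion$ and $\graphjoin$ act as disjoint union on vertices. The only mild care needed is to state the disjointness cleanly, which follows from the tree structure on the leaf side and from the convention (used throughout \S\ref{sec:notation}) that $\graph$ and $\graphp$ have disjoint vertex sets in forming $\guniongp$ and $\gjoingp$.
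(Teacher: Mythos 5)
Your proof is correct and follows essentially the same route as the paper, which simply states ``induction\ldots pattern-matching the three cases in Def.\,\ref{def:cograph-of}''. You have filled in the two bookkeeping observations (a \plustimesnode{} is never a leaf, and both $\graphunion$ and $\graphjoin$ act as disjoint union on vertex sets) that the paper leaves implicit.
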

\begin{proof}
  Induction on the number of vertices in $\cograph$, pattern-matching the three cases in
Def.\,\ref{def:cograph-of}.
\end{proof}
A \plustimesnode{} \defn{repeats} if it has a parent with the same label, and is \defn{unary} if it has a unique child.
A \plustimestree{} \defn{alternates} if it has no repeating \plustimesnode and
\defn{branches}
if it has no unary \plustimesnode.
\begin{definition}\label{def:cotree}
  A \defn{cotree} is a branching and alternating \plustimestree.
\end{definition}
For example, the \plustimestree above-left is a cotree, while the \plustimestree above-right is not (since it has a repeating $\graphjoin$ node and a unary and repeating $\graphunion$ node).
We recall the following definition from \cite{CLS81}.
\begin{definition}\label{def:cotree-of}
The \defn{cotree} $\cotreeof\cograph$ of a cograph $\cograph$ is the cotree defined inductively by
\begin{center}\begin{math}
\cotreeof{\singleton{}}
\hspace{1ex}=\hspace{1ex}
\singleton{}
\hspace{5ex}
\begin{array}{r@{\hspace{1ex}=\hspace{1ex}}l@{\hspace{2ex}}l}
\cotreeof{\cograph_1\graphunion\ldots\graphunion\cograph_n}
&
\graphunion\cotreeof{\cograph_1}\ldots\cotreeof{\cograph_n}
&
\text{if $\cograph_i$ is connected for $1\tightle i\tightle n\tightge 2$}
\\[2ex]
\cotreeof{\cograph_1\graphjoin\ldots\graphjoin\cograph_n}
&
\graphjoin\cotreeof{\cograph_1}\ldots\cotreeof{\cograph_n}
&
\text{if $\cograph_i$ is coconnected for $1\tightle i\tightle n\tightge 2\mkern2mu$.\!\!\!}
\end{array}\end{math}\end{center}\end{definition}
The following Lemma articulates a standard property of cotrees.
Recall from \S\ref{sec:proper} that a module in a graph is \emph{proper} if it has two or more vertices.
A module $\module$ of a cograph $\graph$ is \defn{connected} (resp.\ \defn{coconnected}) if the induced subgraph $\induced\graph\module$ is connected (resp.\ coconnected).
\begin{lemma}\label{lem:node-strong-module}
The nodes of the cotree $\cotreeof\graph$ of a cograph $\cograph$ correspond to the strong modules of $\graph$,
and the $\graphjoin$ (resp.\ $\graphunion$) nodes correspond to proper connected (resp.\ coconnected) strong modules.
\end{lemma}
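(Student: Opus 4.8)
The plan is to prove this by structural induction on the cograph $\cograph$ (equivalently, on $\sizeof{\verticesof\cograph}$), driven by the inductive definition of $\cotreeof\cograph$ in Def.~\ref{def:cotree-of}. First I would invoke Lemma~\ref{lem:leaf-to-vertex} to attach to each node $\node$ of $\cotreeof\cograph$ the set $\module_\node\tightsubseteq\verticesof\cograph$ of vertices whose leaves lie in the subtree rooted at $\node$, observing that this subtree is itself $\cotreeof{\induced\cograph{\module_\node}}$. The target is then to show that $\node\mapsto\module_\node$ is a bijection onto the strong modules of $\cograph$, carrying $\graphjoin$-nodes to proper connected strong modules and $\graphunion$-nodes to proper coconnected strong modules.

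The routine part comes first. To see each $\module_\node$ is a module, I would take $\vertexaa\tightnotin\module_\node$ and look at the label of the least common ancestor $\nodea$ of $\node$ and $\vertexaa$'s leaf: if $\nodea$ is a $\graphjoin$ node then $\vertexaa$ is adjacent to all of $\module_\node$, and if it is a $\graphunion$ node then to none, so $\neighbourhoodof\vertexaa\tightsetminus\module_\node$ is uniform over $\module_\node$. The sets $\setof{\module_\node}$ are laminar, since $\module_\node\tightsubseteq\module_\nodea$ exactly when $\node$ descends from $\nodea$ and $\module_\node\tightcap\module_\nodea=\emptyset$ otherwise. For the node-type claim, a non-leaf node has at least two children (the cotree branches, by Def.~\ref{def:cotree}), so $\sizeof{\module_\node}\ge 2$ and $\module_\node$ is proper; and $\induced\cograph{\module_\node}$ is a join of $\ge 2$ parts at a $\graphjoin$ node, hence connected, and a disjoint union of $\ge 2$ parts at a $\graphunion$ node, hence disconnected with connected complement, i.e.\ coconnected. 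These steps use only that a join of nonempty graphs is connected and a union of $\ge 2$ nonempty graphs is disconnected.

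The hard part will be showing that the $\module_\node$ are \emph{exactly} the strong modules — both that each is strong and that every strong module arises this way. The key is to classify \emph{all} modules within the same induction. Without loss of generality I would treat the disconnected case $\cograph=\cograph_1\graphunion\ldots\graphunion\cograph_n$ with $n\ge 2$ connected components $\cograph_i$; the connected case is dual by complementation, which fixes each $\module$, swaps $\graphjoin\leftrightarrow\graphunion$ and connected $\leftrightarrow$ coconnected, and sends a connected cograph on $\ge2$ vertices to a disconnected one (the cograph property). Writing $C_i=\verticesof{\cograph_i}$, I would check that each $C_i$ is a module of $\cograph$, that any module of $\cograph_i$ is again a module of $\cograph$ (a vertex outside $C_i$ lies in another component, non-adjacent to all of $C_i$), and that a module $\module$ with $\sizeof\module>1$ cannot overlap any $C_i$ — so $\module$ is either contained in a single $C_i$, or a union $\bigcup_{i\in S}C_i$ of component-sets. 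A union over $S$ with $2\le\sizeof S<n$ properly overlaps another such union and is therefore not strong, while $S$ a singleton yields some $C_i$ and $S=\setof{1,\ldots,n}$ yields the root set $\verticesof\cograph$ (the proper coconnected $\graphunion$-root). By induction the strong modules contained in a single $C_i$ are precisely the node-sets of $\cotreeof{\cograph_i}$, and this closes the bijection.

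I expect the genuinely delicate point to be verifying that strongness transfers between a component and the whole graph. Upward, a module of $\cograph$ overlapping some strong $\module\tightsubseteq C_i$ would, after intersecting with the module $C_i$, yield a module of $\cograph_i$ overlapping $\module$ (intersections of modules being modules), contradicting strongness in $\cograph_i$. So before the main induction I would isolate the two supporting facts ``the intersection of two modules is a module'' and ``each $C_i$ is nested in or disjoint from every module,'' which make the overlap arguments in the classification step go through cleanly.
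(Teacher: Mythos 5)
Your proof is correct and follows the same route as the paper, which disposes of this standard cotree fact in one line ("Induction on the number of vertices in $\graph$", deferring all details to the cited literature on cotrees and modular decomposition). Your elaboration --- splitting on whether $\graph$ is connected or disconnected, classifying all modules relative to the connected components, and isolating the laminarity and intersection-of-modules facts to transfer strongness between a component and the whole graph --- is exactly the induction the paper has in mind, just written out in full.
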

\begin{proof}
  Induction on the number of vertices in $\graph$ \cite{CLS81}.
\end{proof}
The following Lemma is also a standard cotree property.
\begin{lemma}\label{lem:cotree-cograph}
The function $\cotreeof{-}$
is a bijection from cographs to cotrees.
\end{lemma}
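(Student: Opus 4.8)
The plan is to prove that $\cographof{-}$, restricted to cotrees, is a two-sided inverse of $\cotreeof{-}$; this exhibits both as mutually inverse bijections between cographs and cotrees (recall we identify cographs up to vertex renaming and \plustimestrees up to isomorphism). Concretely, I would establish the two equations $\cographof{\cotreeof\cograph}\tighteq\cograph$ for every cograph $\cograph$ and $\cotreeof{\cographof\tree}\tighteq\tree$ for every cotree $\tree$. The first gives that $\cographof{-}$ is a left inverse, hence $\cotreeof{-}$ is injective; the second gives surjectivity, since any cotree $\tree$ equals $\cotreeof\cograph$ for $\cograph\tighteq\cographof\tree$.

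Before either equation I must check that $\cotreeof{-}$ is well-defined, i.e.\ that the inductive clauses of Def.\,\ref{def:cotree-of} apply unambiguously and yield a branching, alternating \plustimestree. The key input is the standard cograph structure theorem: every cograph with two or more vertices is either disconnected or coconnected but never both; a disconnected cograph decomposes uniquely as $\cograph_1\graphunion\ldots\graphunion\cograph_n$ with each $\cograph_i$ connected, and a coconnected cograph decomposes uniquely as $\cograph_1\graphjoin\ldots\graphjoin\cograph_n$ with each $\cograph_i$ coconnected (here the $\cograph_i$ are the connected components, respectively the co-components). This dichotomy is exactly what makes the two clauses mutually exclusive and exhaustive, and uniqueness of the components makes the output independent of choices. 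The constraint $n\tightge 2$ forces every internal node to have at least two children, so the result branches; and the fact that each connected $\cograph_i$ in a $\graphunion$-decomposition is itself coconnected (so its recursive call uses the $\graphjoin$-clause) forces no $\graphunion$ node to sit above another, giving alternation. Alternatively one can read this off Lemma\,\ref{lem:node-strong-module}, whose correspondence between nodes and strong modules already encodes the modular-decomposition content.

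With well-definedness in hand, both equations follow by induction on the number of vertices, pattern-matching the three cases of Defs.\,\ref{def:cograph-of} and \ref{def:cotree-of}. For $\cographof{\cotreeof\cograph}\tighteq\cograph$ the single-vertex case is immediate, and in the disconnected (resp.\ coconnected) case the inductive hypothesis applied to each component reassembles $\cograph$ under $\graphunion$ (resp.\ $\graphjoin$). For $\cotreeof{\cographof\tree}\tighteq\tree$ on a cotree $\tree$, the branching condition supplies $n\tightge 2$ children and the alternating condition guarantees that $\cographof{-}$ of a $\graphunion$-rooted subtree produces a disconnected cograph whose components are exactly the cographs of the children (and dually for $\graphjoin$), so the matching clause of $\cotreeof{-}$ recovers $\tree$. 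The main obstacle is concentrated entirely in the structure theorem and the uniqueness of the connected/coconnected decomposition: once that dichotomy is secured, the inductions are routine bookkeeping. I would either cite \cite{CLS81} for this decomposition or derive it from $\pfour$-freeness, noting that in a $\pfour$-free graph with at least two vertices the graph and its complement cannot both be connected.
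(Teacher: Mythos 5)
Your proposal is correct and matches the paper's (one-line) proof, which performs the same induction on the number of vertices and simply cites \cite{CLS81} for the underlying decomposition; your write-up supplies exactly the details that citation stands in for, namely that the two clauses of the cotree definition are mutually exclusive and exhaustive with unique components, so that $\cographof{-}$ on cotrees is a two-sided inverse. One caution on wording only: under the paper's definition \emph{coconnected} means the complement is connected, so a disconnected cograph is automatically coconnected and your phrase ``either disconnected or coconnected but never both'' should instead say that for a cograph with at least two vertices exactly one of $\cograph$ and its complement is disconnected --- the substance of your argument (including your correct closing statement that a $\pfour$-free graph and its complement cannot both be connected, and the uniqueness of components and co-components) is unaffected.
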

\begin{proof}
  Induction on the number of vertices in the cograph \cite{CLS81}.
\end{proof}

\begin{lemma}\label{lem:unique-branching-alternating}
  The cotree $\cotreeof\cograph$ of a cograph $\cograph$ is the unique branching and alternating \plustimestree $\tree$ such that $\cographof\tree=\cograph$.
\end{lemma}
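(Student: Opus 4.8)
The plan is to deduce both existence and uniqueness from the single identity $\cographof{\cotreeof\cograph}=\cograph$, together with the fact that $\cotreeofsymbol$ is a bijection onto cotrees (Lemma\,\ref{lem:cotree-cograph}). For existence, recall that by Lemma\,\ref{lem:cotree-cograph} the tree $\cotreeof\cograph$ is a cotree, hence a branching and alternating \plustimestree by Definition\,\ref{def:cotree}; the identity then supplies $\cographof{\cotreeof\cograph}=\cograph$, exhibiting $\cotreeof\cograph$ as a branching and alternating \plustimestree with the prescribed cograph. All the remaining work is therefore to establish the identity and then to convert the bijection into the uniqueness clause.

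First I would prove $\cographof{\cotreeof\cograph}=\cograph$ by induction on $\sizeof{\verticesof\cograph}$. The base case $\cograph=\singleton{}$ is immediate, since $\cotreeof{\singleton{}}=\singleton{}$ and $\cographof{\singleton{}}=\singleton{}$. For the inductive step with $\sizeof{\verticesof\cograph}\ge 2$, I would invoke the cograph dichotomy: a $\pfour$-free graph on at least two vertices is either disconnected or has disconnected complement (the two being automatically mutually exclusive), and, being a cograph, its components or co-components are again cographs. In the disconnected case write $\cograph=\cograph_1\graphunion\ldots\graphunion\cograph_n$ for the connected components ($n\ge 2$); this is precisely the decomposition fired by the $\graphunion$-clause of Definition\,\ref{def:cotree-of}, so $\cotreeof\cograph=\graphunion\cotreeof{\cograph_1}\ldots\cotreeof{\cograph_n}$, and Definition\,\ref{def:cograph-of} together with the induction hypothesis applied to each smaller $\cograph_i$ gives $\cographof{\cotreeof\cograph}=\cographof{\cotreeof{\cograph_1}}\graphunion\ldots\graphunion\cographof{\cotreeof{\cograph_n}}=\cograph_1\graphunion\ldots\graphunion\cograph_n=\cograph$. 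The join case, where $\cograph$ is connected and factors as $\cograph=\cograph_1\graphjoin\ldots\graphjoin\cograph_n$ into coconnected factors, is symmetric, using the $\graphjoin$-clauses of the two definitions.

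With the identity in hand, uniqueness falls out formally. The identity says that the restriction of $\cographofsymbol$ to cotrees is a left inverse of $\cotreeofsymbol$; since $\cotreeofsymbol$ is a bijection from cographs to cotrees (Lemma\,\ref{lem:cotree-cograph}), this left inverse is automatically its two-sided inverse, so $\cotreeof{\cographof\tree}=\tree$ for every cotree $\tree$. Now let $\tree$ be any branching and alternating \plustimestree with $\cographof\tree=\cograph$; by Definition\,\ref{def:cotree} it is a cotree, and applying $\cotreeofsymbol$ to $\cographof\tree=\cograph$ yields $\tree=\cotreeof{\cographof\tree}=\cotreeof\cograph$, as required.

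The only genuine content is the inductive identity of the second paragraph, and within it the sole delicate point is that the connected (resp.\ coconnected) decomposition selected by Definition\,\ref{def:cotree-of} coincides with the canonical one furnished by the cograph dichotomy, so that the clause of $\cotreeofsymbol$ that fires matches the clause of $\cographofsymbol$ being inverted. This is the standard uniqueness of the modular (cotree) decomposition of a cograph \cite{CLS81}, the same fact underlying Lemma\,\ref{lem:node-strong-module}, and so it introduces no real obstacle here.
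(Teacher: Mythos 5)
Your proof is correct and fills in exactly what the paper leaves as ``a routine induction on the number of vertices'': the substance is the inductive identity $\cographof{\cotreeof\cograph}=\cograph$ via the union/join dichotomy for cographs, and your derivation of uniqueness from Lemma~\ref{lem:cotree-cograph} (a left inverse of a bijection is automatically its two-sided inverse, so $\cotreeof{\cographof\tree}=\tree$ for every cotree $\tree$) is a clean and valid way to finish. No gaps.
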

\begin{proof}
  A routine induction on the number of vertices in $\graph$.
\end{proof}
\begin{lemma}\label{lem:vertex-to-leaf}
  The vertices of a cograph $\cograph$ are in bijection with the leaves of its cotree $\cotreeof\cograph$.
\end{lemma}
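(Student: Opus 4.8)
The plan is to combine the two immediately preceding structural lemmas, so that no fresh induction is needed. First I would invoke \reflem{lem:unique-branching-alternating}, which asserts that the cotree $\cotreeof\cograph$ is a \plustimestree $\tree$ satisfying $\cographof\tree\tighteq\cograph$; in particular $\cographof{\cotreeof\cograph}\tighteq\cograph$. This step is what licenses us to treat the cotree as a concrete \plustimestree whose associated cograph is exactly the given $\cograph$, rather than merely an abstract object attached to $\cograph$ by \reflem{lem:cotree-cograph}.

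Next I would specialize \reflem{lem:leaf-to-vertex} to this particular tree. That lemma supplies, for \emph{every} \plustimestree $\tree$, a bijection between the leaves of $\tree$ and the vertices of $\cographof\tree$, obtained by induction pattern-matching the three clauses of Def.\,\ref{def:cograph-of}. Taking $\tree\tighteq\cotreeof\cograph$ and then substituting the identity $\cographof{\cotreeof\cograph}\tighteq\cograph$ furnished by \reflem{lem:unique-branching-alternating} yields a bijection between the leaves of $\cotreeof\cograph$ and the vertices of $\cograph$, which is precisely the assertion of the Lemma.

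I expect no real obstacle here: the substantive work is carried entirely by the earlier results, and the present statement is just the instance of \reflem{lem:leaf-to-vertex} at $\tree\tighteq\cotreeof\cograph$, made meaningful by the fact (\reflem{lem:unique-branching-alternating}) that applying $\cographofsymbol$ to $\cotreeof\cograph$ recovers $\cograph$. The only point warranting a word of care is that the two cited lemmas must be chained in this order — one first needs $\cographof{\cotreeof\cograph}\tighteq\cograph$ before the leaf-to-vertex bijection can be read as a vertex-to-leaf bijection for $\cograph$ itself; a direct re-run of the induction on the number of vertices of $\cograph$ is possible but is strictly redundant given the lemmas already in hand.
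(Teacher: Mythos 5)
Your proposal is correct and matches the paper's own proof, which likewise cites Lemmas\,\ref{lem:leaf-to-vertex} and \ref{lem:unique-branching-alternating}: the latter gives $\cographof{\cotreeof\cograph}=\cograph$, and the former then supplies the bijection at $\tree=\cotreeof\cograph$. Your note on the order in which the two lemmas must be chained is the right observation, and no further argument is needed.
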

\begin{proof}
  Lemmas\,\ref{lem:leaf-to-vertex} and \ref{lem:unique-branching-alternating}.
\end{proof}
Let $\node$ be a node in a tree $\tree=\defaultplustimestree$.
Define the \defn{absorption} $\absorption\tree\node$ of $\node$ in $\tree$ as the result of deleting $\node$ (and incident edges) from $\tree$ and, if $\node$ has a parent $\parentof\node$, adding an edge from each child of $\node$ to $\parentof\node$.
Thus
$\nodesetof{\absorptionsub\tree\node}=
\nodesetof\tree\tightsetminus\mkern1mu\setof{\mkern-2mu\node\mkern-2mu}$ and
$m{}\childin{\absorptionsub\tree\node}{}m\primed$ if and only if $m{}\,\childin\tree{}\,m\primed$
or
$m\,{}\childin\tree{}\,\node\,{}\childin\tree{}\,m\primed$.
\begin{definition}\label{def:cotree-of-tree}
Given a \plustimestree $\tree$ define its \defn{cotree} $\absorbed\tree$ as
the cotree obtained by iteratively and exhaustively absorbing unary \plustimesnodes and repeating \plustimesnodes in $\tree$.
\end{definition}
For example, if $\tree$ is the \plustimestree above-right of Def.\,\ref{def:cograph-of} then its cotree $\absorbed\tree$ is above-left of Def.\,\ref{def:cograph-of}.
\begin{lemma}\label{lem:cograph-absorb}
  $\cographof\tree=\cographof{\absorbed\tree}$ for every \plustimestree $\tree$.
\end{lemma}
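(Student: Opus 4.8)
The plan is to reduce the statement to a single absorption step and then argue by the compositional nature of $\cographof{-}$. Recall from Definition~\ref{def:cotree-of-tree} that $\absorbed\tree$ is obtained from $\tree$ by a finite sequence of absorptions $\absorption{(-)}{\node}$, each applied to a unary or repeating \plustimesnode. So a trivial induction on the number of absorption steps reduces the claim to the assertion that one such step preserves the cograph: for any \plustimestree $\tree$ and any unary or repeating \plustimesnode $\node$ of $\tree$, $\cographof{\absorption\tree\node}=\cographof\tree$. First I would check that $\absorption\tree\node$ is again a well-formed \plustimestree on which $\cographof{-}$ is defined: since $\node$ is an internal ($\graphunion$- or $\graphjoin$-labelled) node with at least one child, absorbing it neither turns a labelled node into a leaf (its parent keeps $\node$'s children) nor deletes any leaf, so the labelling invariant survives and, by \reflem{lem:leaf-to-vertex}, both cographs carry the same vertex set.

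Next I would isolate the one property of $\cographof{-}$ that does all the work, namely \emph{compositionality}: if $\treep$ is obtained from $\tree$ by replacing the subtree rooted at some node $v$ with another \plustimestree having the same cograph, then $\cographof{\treep}=\cographof\tree$. This is immediate by structural induction on the portion of $\tree$ above $v$, since by Definition~\ref{def:cograph-of} the cograph of a tree is the $\graphunion$ (resp.\ $\graphjoin$) of the cographs of its children, hence depends on those children only through their cographs.

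With compositionality in hand, it then suffices to verify the single-step claim \emph{locally}, at the subtree rooted at the parent $p=\parentof\node$. Write the children of $p$ as $m_1,\dots,m_j$ with some $m_i=\node$, so $\cographof{\tree_p}$ is the $\graphunion$ or $\graphjoin$ (per $p$'s label) of the $\cographof{\tree_{m_\ell}}$. In the \textbf{unary} case $\node$ has a single child $c$, whence $\cographof{\tree_\node}=\cographof{\tree_c}$ (a one-term $\graphunion$/$\graphjoin$ is that graph); after absorption $c$ simply replaces $\node$ in $p$'s child list, leaving $\cographof{\tree_p}$ unchanged. In the \textbf{repeating} case $\node$ carries the same label as $p$, say $\graphunion$, and $\cographof{\tree_\node}=\cographof{\tree_{c_1}}\graphunion\dots\graphunion\cographof{\tree_{c_k}}$; after absorption the $c_1,\dots,c_k$ become children of $p$ in place of $\node$, and by associativity of $\graphunion$ (resp.\ $\graphjoin$) the resulting subtree cograph equals $\cographof{\tree_p}$. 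Either way the subtree at $p$ keeps its cograph, and compositionality lifts this to $\cographof{\absorption\tree\node}=\cographof\tree$. The remaining edge case is $\node$ being the root: a repeating node must have a parent, so this can only be a unary root, and then $\absorption\tree\node$ is just the subtree rooted at $\node$'s unique child, again with the same cograph by the unary computation. The main obstacle is nothing deep but rather the careful reparenting bookkeeping at $p$ together with this root case; both are routine once compositionality is stated separately.
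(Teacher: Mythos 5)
Your proof is correct and rests on the same ingredients as the paper's own (stated there as a routine induction on the number of nodes, pattern-matching the three cases of Def.\,\ref{def:cograph-of} and using associativity and commutativity of $\graphunion$ and $\graphjoin$): you merely reorganize the induction over absorption steps, isolating a compositionality lemma for $\cographofsymbol$ and a local check at the parent node. The unary-node collapse, the repeating-node flattening via associativity, and the root edge case are all handled correctly, so this is a fully spelled-out version of the argument the paper leaves as routine.
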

\begin{proof}
  By induction on the number of nodes in $\tree$, pattern-matching the three cases in Def.\,\ref{def:cograph-of}, combined with the associativity and commutativity of the graph union $\graphunion$ and join $\graphjoin$ operations.
\end{proof}
Recall that $\induced\graph U$ is the subgraph of a graph $\graph$ induced by a set of vertices $U$.
Define the \plustimestree $\induced\tree U$ \defn{induced} by a non-empty set of leaves $U$ in a \plustimestree $\tree$ by deleting from $\tree$ every leaf not in $U$, and then iteratively and exhaustively deleting any resulting childless \plustimesnodes.
For example, if $\tree$ is the cotree below-left and $U$ comprises the left-most four leaves of $\tree$, then the \plustimestree $\induced\tree U$ is below-center, and the cotree $\absorbed{\induced\tree U}$ is below-right.
\begin{center}\begin{pspicture}(0,\twoedgelen)(0,-\twoedgelen)\begin{math}
\rput(-5,\halfedgelen){\defaultcotreeeg}
\rput(.5,\edgelen){\plustreesep{.9}{%
  \timestreesep{.75}{\lf\lf}%
    \lf%
    \timestreesep{.6}{%
      \lf%
     {}\hspace*{6.5ex}{}
    }%
  }%
}%
\rput(5.5,\edgelen){\plustreesep{.9}{
  \timestreesep{.75}{\lf\lf}%
    \lf%
    {}\hspace*{1ex}{}
    \lf%
  }%
}
\end{math}\end{pspicture}\end{center}

\begin{lemma}\label{lem:induced-subtree}
  If $U$ is a non-empty set of leaves in a cotree $\tree$, then
  $\cographof{\induced\tree U}\,=\,\induced{\cographof\tree} U$.
\end{lemma}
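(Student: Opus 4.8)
The plan is to argue by induction on the number of nodes of $\tree$ (equivalently, by \reflem{lem:leaf-to-vertex}, on the number of vertices of $\cographof\tree$), pattern-matching the three cases of Def.\,\ref{def:cograph-of}. Throughout I identify each leaf of a subtree with the corresponding vertex of its cograph via \reflem{lem:leaf-to-vertex}, so that a set $U$ of leaves of $\tree$ is simultaneously a set of vertices of $\cographof\tree$. In the base case $\tree=\singleton{}$ the only non-empty $U$ is the singleton containing the unique leaf, and both sides reduce to $\singleton{}$, so the claim holds.

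For the induction step, suppose $\tree=\graphunion\tree_1\ldots\tree_n$; the case $\tree=\graphjoin\tree_1\ldots\tree_n$ is dual. Since the subtrees $\tree_i$ are pairwise disjoint, $U$ splits as $U=U_1\cup\ldots\cup U_n$ with $U_i=U\cap\leavesof{\tree_i}$. Write $I=\setof{i:U_i\tightneq\emptyset}$, which is non-empty because $U$ is. First I would compute $\induced\tree U$ explicitly: deleting the leaves outside $U$ and then exhaustively removing childless \plustimesnodes, every subtree $\tree_i$ with $U_i=\emptyset$ loses all its leaves and hence vanishes entirely, while each $\tree_i$ with $i\in I$ reduces exactly to $\induced{\tree_i}{U_i}$ (its root retains a surviving descendant leaf, so it is never deleted). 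The root $\graphunion$-node itself survives precisely because $I\tightneq\emptyset$. Hence $\induced\tree U=\graphunion_{i\in I}\induced{\tree_i}{U_i}$, and applying $\cographof{-}$ together with the induction hypothesis (valid since each $\tree_i$ is again a cotree and each $U_i$ with $i\in I$ is non-empty) gives $\cographof{\induced\tree U}=\graphunion_{i\in I}\induced{\cographof{\tree_i}}{U_i}$.

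It remains to match this with the right-hand side. Since $\cographof\tree=\cographof{\tree_1}\graphunion\ldots\graphunion\cographof{\tree_n}$ is a disjoint union, there are no edges between vertices of distinct $\cographof{\tree_i}$, so the induced subgraph on $U$ is the disjoint union of the induced subgraphs $\induced{\cographof{\tree_i}}{U_i}$; the empty parts ($i\notin I$) contribute nothing, yielding $\induced{\cographof\tree}U=\graphunion_{i\in I}\induced{\cographof{\tree_i}}{U_i}$, which agrees with the expression above. In the dual $\graphjoin$ case, the join inserts all edges between distinct parts, so $\induced{\cographof\tree}U$ is instead $\graphjoin_{i\in I}\induced{\cographof{\tree_i}}{U_i}$, again matching $\cographof{\induced\tree U}$.

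I expect the main obstacle to be the bookkeeping in the explicit description of $\induced\tree U$: one must justify carefully that subtrees losing all their leaves disappear completely under iterated childless-node deletion, that surviving subtrees reduce only locally to $\induced{\tree_i}{U_i}$, and that the root is retained exactly when $U$ is non-empty. A secondary point is to note that every principal subtree of a cotree is itself a cotree, since the branching and alternating properties are inherited by subtrees, so the induction hypothesis genuinely applies to the $\tree_i$.
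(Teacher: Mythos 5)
Your proof is correct and follows exactly the route the paper takes: the paper's own proof of this lemma is simply ``Induction on the number of nodes in $\tree$,'' and your argument is a faithful, fully worked-out version of that induction, pattern-matching the cases of Def.\,\ref{def:cograph-of}. The bookkeeping details you flag (vanishing subtrees with $U_i=\emptyset$, locality of the reduction, survival of the root, and that principal subtrees of a cotree are cotrees) are precisely the points the paper leaves implicit, and you handle them correctly.
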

\begin{proof}
  Induction on the number of nodes in $\tree$.
\end{proof}
\begin{lemma}\label{lem:cograph-cotree-induced}
If $U$ is a non-empty set of vertices in a cograph $\cograph$, then
$\cographof{\absorbed{\induced{\cotreeof\graph} U}}=\induced\graph U$.
\end{lemma}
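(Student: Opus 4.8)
The plan is to read the statement as a short chain of equalities linking the three facts already established about cotrees and \plustimestrees, namely Lemmas~\ref{lem:cograph-absorb}, \ref{lem:induced-subtree}, and \ref{lem:unique-branching-alternating}. First I would set $\tree=\cotreeof\graph$, which is a cotree by Definition~\ref{def:cotree-of}. Since $U$ is given as a set of \emph{vertices} of $\graph$, whereas the induced-subtree operation acts on a set of \emph{leaves}, the one preliminary bookkeeping step is to transport $U$ along the bijection between the vertices of $\graph$ and the leaves of $\cotreeof\graph$ supplied by \reflem{lem:vertex-to-leaf}; thereafter I treat $U$ as a non-empty set of leaves of $\tree$, which is exactly the hypothesis required by the lemmas below.

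With this identification in place, I would compute as follows. Applying \reflem{lem:cograph-absorb} to the \plustimestree $\induced\tree U$ gives $\cographof{\absorbed{\induced\tree U}}=\cographof{\induced\tree U}$, since passing to the cograph is unaffected by absorbing unary and repeating \plustimesnodes. Next, because $\tree$ is a cotree and $U$ is a non-empty set of its leaves, \reflem{lem:induced-subtree} yields $\cographof{\induced\tree U}=\induced{\cographof\tree} U$. Finally, \reflem{lem:unique-branching-alternating} characterises $\cotreeof\graph$ as the unique branching and alternating \plustimestree whose cograph is $\graph$; in particular $\cographof\tree=\cographof{\cotreeof\graph}=\graph$, so $\induced{\cographof\tree} U=\induced\graph U$. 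Composing the three equalities gives $\cographof{\absorbed{\induced{\cotreeof\graph} U}}=\induced\graph U$, as required.

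There is no genuine hard step here: the result is essentially a commuting-square argument asserting that inducing on a set of leaves and then re-tidying into a cotree agrees with inducing on the corresponding vertices of the cograph. The only point demanding care is the vertex/leaf identification noted above, together with checking that the non-emptiness hypothesis on $U$ is preserved so that \reflem{lem:induced-subtree} genuinely applies, the induced subtree $\induced\tree U$ being defined precisely for non-empty $U$. All remaining content is the routine substitution of the three cited equalities, so I would present the proof as the displayed chain of equalities with a one-line justification attached to each link.
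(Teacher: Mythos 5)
Your proof is correct and follows exactly the same route as the paper: the chain $\cographof{\absorbed{\induced{\cotreeof\graph} U}}=\cographof{\induced{\cotreeof\graph} U}=\induced{\cographof{\cotreeof\graph}} U=\induced\graph U$ justified by Lemmas~\ref{lem:cograph-absorb}, \ref{lem:induced-subtree}, and \ref{lem:unique-branching-alternating} in that order. Your additional remark about transporting $U$ along the vertex/leaf bijection of \reflem{lem:vertex-to-leaf} is a sensible bookkeeping point that the paper leaves implicit.
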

\begin{proof}
By \reflem{lem:cograph-absorb},
$\cographof{\absorbed{\induced{\cotreeof\graph} U}}
=
\cographof{\induced{\cotreeof\graph} U}$, which is
$\induced{\cographof{\cotreeof\graph}} U$
by \reflem{lem:induced-subtree},
hence
$\induced\graph U$
by \reflem{lem:unique-branching-alternating}.
\end{proof}
\begin{lemma}\label{lem:induced-cotree}
  If $U$ is a non-empty set of vertices in a cograph $\cograph$, then
  $\cotreeof{\induced\graph U}\,=\,\absorbed{\induced{\cotreeof\graph} U}$.
\end{lemma}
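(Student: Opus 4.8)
The plan is to deduce the identity from the uniqueness characterization of cotrees rather than by a direct induction. By Lemma\,\ref{lem:unique-branching-alternating}, the cotree $\cotreeof{\induced\graph U}$ is \emph{the} unique branching and alternating \plustimestree $\tree$ satisfying $\cographof\tree=\induced\graph U$. Hence it suffices to verify that the right-hand side $\absorbed{\induced{\cotreeof\graph} U}$ is itself a branching and alternating \plustimestree whose cograph is $\induced\graph U$; the uniqueness clause then forces the two trees to coincide, giving the claim.

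Both of these facts are already packaged as earlier results. First, $\absorbed{\induced{\cotreeof\graph} U}$ is a cotree by Definition\,\ref{def:cotree-of-tree}, since the operation $\absorbed{\cdot}$ is defined to produce a cotree by iteratively absorbing unary and repeating \plustimesnodes; and by Definition\,\ref{def:cotree} a cotree is precisely a branching and alternating \plustimestree. Second, Lemma\,\ref{lem:cograph-cotree-induced} states exactly that $\cographof{\absorbed{\induced{\cotreeof\graph} U}}=\induced\graph U$. Putting these together, the right-hand side is a branching and alternating \plustimestree with cograph $\induced\graph U$, so by Lemma\,\ref{lem:unique-branching-alternating} it must equal $\cotreeof{\induced\graph U}$.

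I expect no genuine obstacle here, as every ingredient is already a proved lemma; the whole argument is a one-line invocation of uniqueness once Lemma\,\ref{lem:cograph-cotree-induced} is in hand. The only point requiring care is the standing hypothesis that $U$ is non-empty: this is needed so that $\induced{\cotreeof\graph} U$ is defined (the induced-subtree construction assumes a non-empty leaf set) and so that $\induced\graph U$ is a (non-empty) cograph to which Lemma\,\ref{lem:unique-branching-alternating} applies. Since non-emptiness of $U$ is exactly the hypothesis in the statement, the reduction goes through directly.
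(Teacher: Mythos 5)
Your proposal is correct and is essentially the paper's own argument: the paper likewise reduces the claim via Lemma\,\ref{lem:unique-branching-alternating} to showing $\cographof{\absorbed{\induced{\cotreeof\graph} U}}=\induced\graph U$, which is Lemma\,\ref{lem:cograph-cotree-induced}. Your version just spells out the (implicit) check that the right-hand side is indeed a branching and alternating \plustimestree, which is a harmless elaboration.
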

\begin{proof}
By \reflem{lem:unique-branching-alternating} it suffices to show that
$\cographof{\absorbed{\induced{\cotreeof\graph} U}}=\induced\graph U$, which is \reflem{lem:cograph-cotree-induced}.
\end{proof}
Write $\nodesof\tree$ for the set of nodes of a tree $\tree$, ${}\childin\tree{}$ for its set of directed edges, ${}\belowin\tree{}$ for the transitive closure of ${}\childin\tree{}$, and ${}\atorbelowin\tree{}$ for the reflexive closure of ${}\belowin\tree{}$.
Define
$\nodea\abovein\tree\node$ as
$\node\belowin\tree\nodea$,
and say that
$\nodea$ is \defn{above} $\node$ or
$\node$ is \defn{below} $\nodea$;
define
$\nodea\atorabovein\tree\node$ as
$\node\atorbelowin\tree\nodea$,
and say that
$\nodea$ is \defn{at or above} $\node$ or
$\node$ is \defn{at or below} $\nodea$.
Define
the \defn{meet} $\nodea\meet\node$ of nodes $\nodea$ and $\node$ in a tree $\tree$ as the $\atorbelowin\tree\mkern2mu$-least node $\nodeaa$ with $\nodea\atorbelowin\tree\nodeaa$ and $\node\atorbelowin\tree\nodeaa$.
\begin{lemma}\label{lem:meet}
  Let $\graph$ be a cograph and $v,w\in\verticesof\graph$. Then $vw\tightin\edgesof\graph$ if and only if $v\meet w$ in the cotree $\cotreeof\graph$ is a $\graphjoin$ node and $vw\tightnotin\edgesof\graph$ if and only if $v\meet w$ is a $\graphunion$ node or $v\tighteq w$.
\end{lemma}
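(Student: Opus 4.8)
The plan is to first dispose of the degenerate case $v\tighteq w$ and then prove the two biconditionals simultaneously by induction on $\sizeof{\verticesof\graph}$, following the recursive definition of $\cotreeof\graph$ (Def.\,\ref{def:cotree-of}).

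When $v\tighteq w$, the meet $v\meet w$ is $v$ itself, which is a leaf and hence neither a $\graphjoin$ nor a $\graphunion$ node (a node of a \plustimestree carries a label exactly when it is not a leaf); moreover $vw=\setof v$ is not an edge, so $vw\tightnotin\edgesof\graph$. Both claims therefore hold vacuously. So assume $v\tightneq w$. Since $v$ and $w$ are leaves of $\cotreeof\graph$, no node lies strictly below either, so the least common ancestor $v\meet w$ must lie strictly above both; in particular it is an internal node, hence labelled either $\graphjoin$ or $\graphunion$ (and not both). Consequently the second biconditional is just the contrapositive of the first combined with this dichotomy, and it suffices to prove: for $v\tightneq w$, $vw\tightin\edgesof\graph$ if and only if $v\meet w$ is a $\graphjoin$ node.

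For the induction, the base case $\sizeof{\verticesof\graph}\tighteq 1$ is vacuous, as there are no two distinct vertices. For the inductive step with $\sizeof{\verticesof\graph}\tightge 2$, consider the root of $\cotreeof\graph$. By Def.\,\ref{def:cotree-of} there are two cases: either $\graph=\graph_1\graphunion\ldots\graphunion\graph_n$ with each $\graph_i$ connected and $\cotreeof\graph=\graphunion\cotreeof{\graph_1}\ldots\cotreeof{\graph_n}$ (union root), or $\graph=\graph_1\graphjoin\ldots\graphjoin\graph_n$ with each $\graph_i$ coconnected and $\cotreeof\graph=\graphjoin\cotreeof{\graph_1}\ldots\cotreeof{\graph_n}$ (join root), where $n\tightge 2$. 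In each case I split on whether $v$ and $w$ lie in a common part $\graph_i$ or in distinct parts. If $v,w\tightin\verticesof{\graph_i}$ for a single $i$, then the subtree $\cotreeof{\graph_i}$ hangs below the child root $r_i$, so the least common ancestor of $v$ and $w$ computed in $\cotreeof\graph$ coincides with the one computed in $\cotreeof{\graph_i}$; since both $\graphunion$ and $\graphjoin$ leave adjacency inside a single part unchanged, $vw\tightin\edgesof\graph$ iff $vw\tightin\edgesof{\graph_i}$, and the claim follows from the induction hypothesis applied to $\graph_i$. If instead $v$ and $w$ lie in distinct parts, their meet is the root: in the union case the root is a $\graphunion$ node and $vw\tightnotin\edgesof\graph$ (no cross edges), while in the join case the root is a $\graphjoin$ node and $vw\tightin\edgesof\graph$ (all cross edges present); both are consistent with the desired equivalence.

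The main thing to get right is the \emph{localisation}: that the meet of two leaves in a common part is computed entirely within that part's subtree, and that the edge relation restricted to a single part is unaffected by the top-level $\graphunion$ or $\graphjoin$. Once these two observations are in place the induction is routine. (Alternatively one could argue directly from \reflem{lem:node-strong-module}, identifying $v\meet w$ with the smallest strong module containing both $v$ and $w$ and reading off adjacency according to whether that module is a connected join of, or a disjoint union of, the two sub-modules separating $v$ from $w$; but the inductive argument above is self-contained.)
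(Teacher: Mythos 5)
Your proof is correct, but it takes a genuinely different route from the paper. The paper's entire proof is a one-line appeal to \reflem{lem:node-strong-module}: since cotree nodes correspond to strong modules, with $\graphjoin$ (resp.\ $\graphunion$) nodes corresponding to proper connected (resp.\ coconnected) strong modules, the meet $v\meet w$ is the smallest strong module containing both vertices and adjacency is read off from whether that module is built by join or union --- precisely the alternative you sketch in your closing parenthesis. Your main argument instead re-proves the fact from scratch by induction on $\sizeof{\verticesof\graph}$, tracking the recursive clauses of Def.\,\ref{def:cotree-of}. Both are sound; your handling of the degenerate case $v\tighteq w$, the reduction of the second biconditional to the first via the leaf/internal-node dichotomy, and the two localisation observations (the meet of two leaves in a common part is computed inside that part's subtree, and the top-level $\graphunion$ or $\graphjoin$ does not disturb adjacency within a part) are exactly the points that need to be made explicit, and you make them. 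What the paper's approach buys is brevity, since \reflem{lem:node-strong-module} has already packaged the modular-decomposition facts; what yours buys is self-containment --- it needs nothing beyond the definitions of $\cotreeof{-}$ and $\meet$, at the cost of duplicating work the surrounding lemmas have already done.
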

\begin{proof}
  This follows directly from \reflem{lem:node-strong-module}.
\end{proof}
Write $v\jmeet w$ (resp.\ $v\umeet w$) for $v\meet w$ if it is a $\graphjoin$ (resp.\ $\graphunion$) node.
For a cograph $\cograph$ write
${}\childin\cograph{}$,
${}\belowin\cograph{}$,
and
${}\atorbelowin\cograph{}$ for
${}\childin{\cotreeof\cograph}{}$,
${}\belowin{\cotreeof\cograph}{}$,
and
${}\atorbelowin{\cotreeof\cograph}{}$, respectively.
\begin{lemma}\label{lem:vee}
If $\graph$ is a cograph with $vw,vu\tightin\edgesof\graph$, $wu\tightnotin\edgesof\graph$, $w\tightneq u$,
then
$v \belowin\cograph v\jmeet w \abovein\cograph w\umeet u \abovein\cograph w,u$.
\end{lemma}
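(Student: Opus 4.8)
The plan is to read the three strict inequalities directly off the cotree $\cotreeof\graph$, using the meet-based characterisation of adjacency from \reflem{lem:meet}. First I would record that the hypotheses force $v$, $w$, $u$ to be pairwise distinct: $v\tightneq w$ and $v\tightneq u$ since $vw,vu\tightin\edgesof\graph$, and $w\tightneq u$ by assumption. Consequently each pairwise meet is a proper ancestor of the two leaves it joins, so $v\belowin\cograph v\meet w$, $w\belowin\cograph w\meet u$ and $u\belowin\cograph w\meet u$, and moreover $v\meet w$, $v\meet u$, $w\meet u$ are non-leaf nodes. By \reflem{lem:meet}, since $vw\tightin\edgesof\graph$ and $vu\tightin\edgesof\graph$ the nodes $v\meet w$ and $v\meet u$ are $\graphjoin$ nodes (so that $v\jmeet w$ is well-defined), while since $wu\tightnotin\edgesof\graph$ and $w\tightneq u$ the node $w\meet u$ is a $\graphunion$ node (so that $w\umeet u$ is well-defined).

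The heart of the argument is to prove $v\meet w = v\meet u$ and that this common node lies strictly above $w\meet u$. For this I would use the standard least-common-ancestor fact for three distinct leaves of a tree: writing $t$ for the meet of all three of $v$, $w$, $u$, at least two of the three pairwise meets equal $t$ and any meet that differs from $t$ lies at or below $t$. I would justify this by a short case analysis on how $v$, $w$, $u$ are distributed among the children of $t$ in $\cotreeof\graph$: if they occupy three distinct child-subtrees then all three pairwise meets equal $t$, whereas if two of them share a child-subtree then the meet of that pair lies strictly below $t$ and the other two pairwise meets equal $t$.

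Applying this to $v$, $w$, $u$: the two meets that equal $t$ are in particular equal to one another, and since $w\meet u$ is a $\graphunion$ node while $v\meet w$ and $v\meet u$ are $\graphjoin$ nodes (and a node carries a single label), $w\meet u$ can coincide with neither $v\meet w$ nor $v\meet u$. Hence the coinciding pair must be $v\meet w = v\meet u = t$, and the remaining meet $w\meet u$ satisfies $w\meet u\atorbelowin\cograph t$; as $w\meet u\tightneq t$ this is strict, giving $v\jmeet w\abovein\cograph w\umeet u$. Combining with $v\belowin\cograph v\jmeet w$ and $w\umeet u\abovein\cograph w,u$ from the first paragraph yields the required chain $v\belowin\cograph v\jmeet w\abovein\cograph w\umeet u\abovein\cograph w,u$.

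I expect the only genuine content to be the three-leaf meet fact; everything else is bookkeeping with \reflem{lem:meet} and the distinctness of $v$, $w$, $u$. The main care will be to phrase that fact internally, in terms of the tree order $\atorbelowin\cograph$ and the meet $\meet$ defined just before the statement, so that the label-clash step separating $\graphjoin$ nodes from $\graphunion$ nodes applies cleanly rather than appealing to an external reference.
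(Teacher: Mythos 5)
Your proof is correct and takes essentially the same route as the paper's: both arguments reduce to \reflem{lem:meet} to type the pairwise meets ($v\meet w$ and $v\meet u$ as $\graphjoin$ nodes, $w\meet u$ as a $\graphunion$ node) and then use elementary cotree ancestor reasoning to place $v\jmeet w$ strictly above $w\umeet u$. The paper packages the tree step as comparability of the two ancestors $v\jmeet w$ and $w\umeet u$ of the leaf $w$, ruling out the wrong order because it would force $v\meet u$ to be a $\graphunion$ node, whereas you package the same content as the three-leaf least-common-ancestor trichotomy; the difference is only bookkeeping.
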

\begin{proof}
By Lemma\,\ref{lem:meet} $v\meet w$ is a $\graphjoin$ node and $w\meet u$ is a $\graphunion$ node.
Since $v\jmeet w\abovein\cograph w\belowin\cograph w\umeet u$ and $\cotreeof\graph$ is a tree, either
$v\jmeet w\belowin\cograph w\umeet u$ or
$v\jmeet w\abovein\cograph w\umeet u$.
If $v\jmeet w\belowin\cograph w\umeet u$ then $v\meet u$ is a $\graphunion$ node, contradicting $vu\tightin\edgesof\fograph$ (by Lemma\,\ref{lem:meet}), so $v\jmeet w\abovein\cograph w\umeet u$.
\end{proof}
\begin{lemma}\label{lem:covee}
If $\graph$ is a cograph with $vw,vu\tightnotin\edgesof\graph$ and $wu\tightin\edgesof\graph$
then $v \belowin\cograph v\umeet w \abovein\cograph w\jmeet u \abovein\cograph w,u$.
\end{lemma}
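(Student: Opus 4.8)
The plan is to run the proof of Lemma~\ref{lem:vee} in dual form, interchanging the roles of $\graphjoin$ and $\graphunion$ and of edges and non-edges throughout; every step transposes cleanly because Lemma~\ref{lem:meet} is itself symmetric under this exchange. First I would record that $v$, $w$ and $u$ are pairwise distinct: $wu\tightin\edgesof\graph$ forces $w\tightneq u$, while $v\tighteq w$ or $v\tighteq u$ would make $vw$ or $vu$ an edge, contradicting $vw,vu\tightnotin\edgesof\graph$. Hence the meets $v\meet w$ and $w\meet u$ are genuine internal nodes of $\cotreeof\graph$, and by Lemma~\ref{lem:meet} the first is a $\graphunion$ node (as $vw\tightnotin\edgesof\graph$) and the second a $\graphjoin$ node (as $wu\tightin\edgesof\graph$); I write them $v\umeet w$ and $w\jmeet u$.

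Since both $v\umeet w$ and $w\jmeet u$ lie at or above the leaf $w$, and the ancestors of a leaf in a tree are totally ordered by $\atorbelowin\cograph$, the two nodes are comparable; they are moreover distinct, carrying different labels, so either $v\umeet w\belowin\cograph w\jmeet u$ or $v\umeet w\abovein\cograph w\jmeet u$. To eliminate the first alternative I would argue exactly as in Lemma~\ref{lem:vee}: if $v\umeet w\belowin\cograph w\jmeet u$ then $v$ and $w$ share the subtree hanging below $w\jmeet u$ at its child towards $w$, whereas $u$ descends through a different child, so $v$ and $u$ first meet at $w\jmeet u$ itself; thus $v\meet u=w\jmeet u$ is a $\graphjoin$ node, which by Lemma~\ref{lem:meet} forces $vu\tightin\edgesof\graph$, contradicting the hypothesis. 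Therefore $v\umeet w\abovein\cograph w\jmeet u$.

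It then remains only to splice the strict inequalities together. From the previous paragraph $v\umeet w\abovein\cograph w\jmeet u$, and since $v\umeet w$ and $w\jmeet u$ are internal nodes strictly above their constituent leaves we have $v\belowin\cograph v\umeet w$ and $w\jmeet u\abovein\cograph w,u$, giving the desired chain $v\belowin\cograph v\umeet w\abovein\cograph w\jmeet u\abovein\cograph w,u$. The only genuinely non-routine point is the identification $v\meet u=w\jmeet u$ in the eliminated case; this is the same subtree-membership fact that Lemma~\ref{lem:vee} invokes implicitly, and I would justify it once here (or factor it out as a small auxiliary observation about three leaves of a tree, namely that the two largest of their three pairwise meets coincide) so that both this lemma and Lemma~\ref{lem:vee} can cite it.
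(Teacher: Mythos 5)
Your argument is correct and is in essence the paper's: the paper also deduces the result from \reflem{lem:vee} by the $\graphunion$/$\graphjoin$ duality, first recording the pairwise distinctness of $v$, $w$, $u$ exactly as you do, but it then finishes in one line by applying \reflem{lem:vee} to the complement of $\graph$, whose cotree is that of $\graph$ with $\graphunion$ and $\graphjoin$ nodes interchanged. Your step-by-step re-derivation of the dualized argument is sound, but it becomes unnecessary once \reflem{lem:vee} is invoked as a black box on the complement graph.
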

\begin{proof}
Necessarily $w\tightneq u$ since $wu\tightin\edgesof\graph$, hence $v\tightneq w$ and $v\tightneq u$.
Thus we can apply Lemma\,\ref{lem:vee} to the complement of $\graph$.
\end{proof}
\begin{lemma}\label{lem:umeet-to-umeet}
  If $\map:\graph\to\grapha$ is a skew fibration between cographs and
  $v \, \childin\graph \, v \umeet w \, \abovein\graph \, w$
  for $v,w\tightin\verticesof\graph$ with $\map(v)\tightneq\map(w)$,
  then
  $\map(v) \, \belowin\grapha \, \map(v)\umeet\map(w) \, \abovein\grapha \, \map(w)$.
\end{lemma}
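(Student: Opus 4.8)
The plan is to prove the contrapositive of the edge part of the conclusion by a single application of the skew fibration lifting property, using \reflem{lem:meet} throughout to translate between edges and the shape of the meet node in the cotree. Observe first that, since $\map(\vertex)\tightneq\map(\vertexa)$, the meet $\map(\vertex)\meet\map(\vertexa)$ is automatically a proper common ancestor of the two leaves, so $\map(\vertex)\belowin\grapha\map(\vertex)\meet\map(\vertexa)\abovein\grapha\map(\vertexa)$ holds for free; the only genuine content is that this meet is a $\graphunion$ node, i.e.\ that $\map(\vertex)\map(\vertexa)\tightnotin\edgesof\grapha$. I would therefore assume $\map(\vertex)\map(\vertexa)\tightin\edgesof\grapha$ and derive a contradiction.

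First I would rewrite the hypotheses in edge form: because $\vertex\umeet\vertexa$ is a $\graphunion$ node, \reflem{lem:meet} gives $\vertex\vertexa\tightnotin\edgesof\graph$ and $\vertex\tightneq\vertexa$. Next, treating $\map(\vertexa)$ as a neighbour of $\map(\vertex)$ (this is exactly the assumption $\map(\vertex)\map(\vertexa)\tightin\edgesof\grapha$), the skew fibration property applied at $\vertex$ yields a vertex $\vertexaa\tightin\verticesof\graph$ with $\vertexaa\vertex\tightin\edgesof\graph$ and $\map(\vertexaa)\map(\vertexa)\tightnotin\edgesof\grapha$. By \reflem{lem:meet} the edge $\vertexaa\vertex\tightin\edgesof\graph$ means $\vertexaa\meet\vertex$ is a $\graphjoin$ node; in particular $\vertexaa\tightneq\vertex$, so $\vertexaa\meet\vertex$ is a proper ancestor of $\vertex$. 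Here the hypothesis $\vertex\childin\graph\vertex\umeet\vertexa$ does its work: since $\vertex\umeet\vertexa$ is the \emph{parent} of $\vertex$, every proper ancestor of $\vertex$ is at or above $\vertex\umeet\vertexa$, so $\vertexaa\meet\vertex\atorabovein\graph\vertex\umeet\vertexa$, and as the two nodes carry different labels ($\graphjoin$ versus $\graphunion$) in fact $\vertexaa\meet\vertex\abovein\graph\vertex\umeet\vertexa$.

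The crux is then to identify $\vertexaa\meet\vertexa$ with $\vertexaa\meet\vertex$. I would argue directly in the cotree: $\vertexaa$ cannot lie below $\vertex\umeet\vertexa$, for otherwise $\vertexaa$ and $\vertex$ would both be below $\vertex\umeet\vertexa$ and their meet would be at or below it, contradicting $\vertexaa\meet\vertex\abovein\graph\vertex\umeet\vertexa$ (this also shows $\vertexaa\tightneq\vertexa$, since $\vertexa$ \emph{is} below $\vertex\umeet\vertexa$). Thus the root-path of $\vertexaa$ avoids $\vertex\umeet\vertexa$, whereas the root-path of $\vertexa$ passes through $\vertex\umeet\vertexa$; comparing the two paths, they first meet at $\vertexaa\meet\vertex$, so $\vertexaa\meet\vertexa=\vertexaa\meet\vertex$, a $\graphjoin$ node. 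By \reflem{lem:meet} again $\vertexaa\vertexa\tightin\edgesof\graph$, hence $\map(\vertexaa)\map(\vertexa)\tightin\edgesof\grapha$ since $\map$ is a graph homomorphism — contradicting $\map(\vertexaa)\map(\vertexa)\tightnotin\edgesof\grapha$. Therefore $\map(\vertex)\map(\vertexa)\tightnotin\edgesof\grapha$, and the conclusion $\map(\vertex)\belowin\grapha\map(\vertex)\umeet\map(\vertexa)\abovein\grapha\map(\vertexa)$ follows.

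I expect the main obstacle to be the cotree bookkeeping of the last two steps: pinning down that $\vertexaa\meet\vertex$ sits strictly above the parent $\vertex\umeet\vertexa$ and then that $\vertexaa\meet\vertexa$ coincides with it. This is exactly the place where the \emph{child} hypothesis (not merely $\vertex\belowin\graph\vertex\umeet\vertexa$) is indispensable, since without it $\vertexaa\meet\vertex$ could fall strictly between $\vertex$ and $\vertex\umeet\vertexa$ and the argument would collapse. The underlying ingredient is the elementary fact that among three leaves of a tree the two highest pairwise meets coincide, applied to $\setof{\vertexaa,\vertex,\vertexa}$; I would present it via the explicit root-path comparison above rather than invoking it abstractly, so that the role of the $\graphunion$/$\graphjoin$ labelling is transparent.
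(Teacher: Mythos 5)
Your proof is correct and follows essentially the same route as the paper's: assume the image meet is a $\graphjoin$ node, apply the skew-lifting property once at $v$ against the edge $\map(v)\map(w)$ to obtain $u$, and then use the cotree structure of the triple $\setof{u,v,w}$ together with the \emph{child} hypothesis to reach a contradiction. The only difference is cosmetic: the paper derives $wu\tightnotin\edgesof\graph$ from $\map(u)\map(w)\tightnotin\edgesof\grapha$ and invokes Lemma\,\ref{lem:covee} to contradict $v\childin\graph v\umeet w$, whereas you run the same three-leaf meet computation in the other direction to force $uw\tightin\edgesof\graph$ and contradict $\map(u)\map(w)\tightnotin\edgesof\grapha$.
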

\begin{proof}
Since $\map(v)\tightneq\map(w)$ the meet $\map(v)\meet\map(w)$ in
$\cotreeof\grapha$ is a $\graphunion$ or $\graphjoin$ node.
If the former,
we have
\mbox{$\map(v) \, \belowin\grapha \, \map(v)\umeet\map(w) \, \abovein\grapha \, \map(w)$} as desired.
Otherwise
\mbox{$\map(v) \, \belowin\grapha \, \map(v)\jmeet\map(w) \, \abovein\grapha \, \map(w)$}.
By Lem.\,\ref{lem:meet}
\mbox{$vw\tightnotin\edgesof\graph$} since
\mbox{$v \, \childin\graph \, v\umeet w\, \abovein\graph \, w$},
and
\mbox{$\map(v)\mkern1mu\map(w)\tightin\edgesof\grapha$} since
\mbox{$\map(v) \, \belowin\grapha \, \map(v)\jmeet\map(w) \, \abovein\grapha \, \map(w)$}.
Because $\map$ is a skew fibration and \mbox{$\map(v)\mkern1mu\map(w)\tightin\edgesof\grapha$},
there exists $u\tightin\verticesof\graph$ with
$vu\tightin\edgesof\graph$ and \mbox{$\map(w)\mkern1mu\map(u)\tightnotin\edgesof\grapha$}.
Since $\map$ is a graph homomorphism,
$\map(v)\mkern1mu\map(u)\tightin\edgesof\grapha$ and
$wu\tightnotin\edgesof\graph$,
and $w\tightneq u$ (otherwise $vw\tightin\edgesof\graph$ since $vu\tightin\edgesof\graph$, contradicting $vw\tightnotin\edgesof\graph$).
Since $wv,wu\tightnotin\edgesof\graph$ and $vu\tightin\edgesof\graph$,
by Lemma\,\ref{lem:covee} we have
$w \belowin\cograph w\umeet v \abovein\cograph v\jmeet u \abovein\cograph v$,
hence
$v\belowin\graph v\jmeet u\belowin\graph v\umeet w$, contradicting $v\childin\graph v\umeet w$.
\end{proof}
The following Lemma refines
$\map(v)\belowin\fographa \map(v)\umeet\map(w)$
in the above Lemma to
$\map(v)\childin\fographa \map(v)\umeet\map(w)$.
\begin{lemma}\label{lem:preserve-parent-plus}
  If $\map:\graph\to\grapha$ is a skew fibration between cographs and
  $v \, \childin\graph \, v \umeet w \, \abovein\graph \, w$
  for $v,w\tightin\verticesof\graph$ with $\map(v)\tightneq\map(w)$,
  then
  $\map(v) \, \childin\grapha \, \map(v)\umeet\map(w) \, \abovein\grapha \, \map(w)$.
\end{lemma}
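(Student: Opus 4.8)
The plan is to upgrade the conclusion of the preceding lemma from ``strictly below'' to ``immediately below'' by a contradiction argument inside the cotree $\cotreeof\grapha$, again lifting a target edge through the skew fibration and then transporting the resulting source edge back through the homomorphism. By \reflem{lem:umeet-to-umeet} we already know that $N\tighteq\map(v)\umeet\map(w)$ is a $\graphunion$ node with $\map(v)\belowin\grapha N\abovein\grapha\map(w)$; it remains only to show that $\map(v)$ is a \emph{child} of $N$. I would suppose not, and let $M$ be the child of $N$ with $\map(v)\belowin\grapha M\childin\grapha N$. Since $\cotreeof\grapha$ alternates and branches and $N$ is a $\graphunion$ node, $M$ is a $\graphjoin$ node with at least two children, and $\map(w)$ lies below a child of $N$ different from $M$.

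First I would manufacture an edge at $\map(v)$ that is witnessed below $M$: choose a leaf $z$ below a child of $M$ other than the one above $\map(v)$, so that $\map(v)\meet z\tighteq M$ is a $\graphjoin$ node and hence $\map(v)z\tightin\edgesof\grapha$ by \reflem{lem:meet}. Applying the skew fibration property to the source vertex $v$ and the target edge $z\,\map(v)\tightin\edgesof\grapha$ (where $\map(v)$ is the image of $v$) yields $u\tightin\verticesof\graph$ with $uv\tightin\edgesof\graph$ and $\map(u)z\tightnotin\edgesof\grapha$; the homomorphism then gives $\map(u)\map(v)\tightin\edgesof\grapha$. The key sub-claim is that $\map(u)\belowin\grapha M$: were $\map(u)$ not below $M$, then $\map(u)\tightneq z$ and the meets $\map(u)\meet\map(v)$ and $\map(u)\meet z$ would both coincide with the single node $\map(u)\meet M$ lying strictly above $M$, which \reflem{lem:meet} forces to be simultaneously a $\graphjoin$ node (because $\map(u)\map(v)\tightin\edgesof\grapha$) and a $\graphunion$ node (because $\map(u)z\tightnotin\edgesof\grapha$), a contradiction.

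Finally I would transport this back to $\graph$. From $uv\tightin\edgesof\graph$, \reflem{lem:meet} makes $v\meet u$ a $\graphjoin$ node; since the parent $v\umeet w$ of $v$ is a $\graphunion$ node, the ancestor $v\meet u$ of $v$ must lie strictly above $v\umeet w$. Hence $w$, being below $v\umeet w$, lies below the child of $v\meet u$ above $v$, whereas $u$ lies below a different child of $v\meet u$; therefore $u\meet w\tighteq v\meet u$ is a $\graphjoin$ node and $uw\tightin\edgesof\graph$, so $\map(u)\map(w)\tightin\edgesof\grapha$ by the homomorphism. But $\map(u)\belowin\grapha M\childin\grapha N$ while $\map(w)$ sits below a different child of $N$, so $\map(u)\meet\map(w)\tighteq N$ is a $\graphunion$ node and $\map(u)\map(w)\tightnotin\edgesof\grapha$ by \reflem{lem:meet} — the desired contradiction.

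I expect the main obstacle to be the least-common-ancestor bookkeeping, in particular the two claims ``$\map(u)$ lies below $M$'' and ``$u\meet w$ coincides with $u\meet v$''. Both hinge on the hypothesis that $v\umeet w$ is the \emph{immediate} parent of $v$, so that no node can lie strictly between $v$ and $v\umeet w$; the entire force of strengthening $\belowin\grapha$ to $\childin\grapha$ is extracted from this single fact. I would verify both claims using only \reflem{lem:meet} and elementary tree reasoning about ancestors and children, taking care that the degenerate possibility $\map(u)\tighteq z$ is automatically excluded under the assumption $\map(u)$ is not below $M$ (since $z$ is below $M$) and that a sibling child of $M$ and a leaf $z$ beneath it exist because $M$ is a branching $\graphjoin$ node.
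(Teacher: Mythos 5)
Your proof is correct and follows essentially the same route as the paper's: assume $\map(v)$ is not a child of the $\graphunion$ node $\map(v)\umeet\map(w)$, interpose a branching $\graphjoin$ node, manufacture a target edge at $\map(v)$ witnessed below that node, lift it through the skew fibration, and derive a contradiction. In fact your write-up carries the argument further than the printed proof, which establishes $uw\tightin\edgesof\graph$ (via \reflem{lem:covee} where you use a direct least-common-ancestor argument, an immaterial difference) and then stops: your sub-claim that $\map(u)$ lies below the interposed $\graphjoin$ node, and the resulting clash between $\map(u)\mkern1mu\map(w)\tightin\edgesof\grapha$ and the $\graphunion$ meet at $\map(v)\umeet\map(w)$, are precisely the steps needed to close the contradiction that the paper leaves implicit.
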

\begin{proof}
By Lemma\,\ref{lem:umeet-to-umeet} we have
$\map(v) \, \belowin\grapha \, \map(v)\umeet\map(w) \, \abovein\grapha \, \map(w)$.
Suppose not $\map(v)\childin\fographa\map(v)\umeet\map(w)$. Then
\mbox{$\map(v) \, \belowin\grapha \, \map(v)\jmeet u \, \childin\grapha \, \map(v)\umeet\map(w) \, \abovein\grapha \, \map(w)$}
for some $u\tightin\verticesof\grapha$.
Since $\map(v) \, \belowin\grapha \, \map(v)\jmeet u \, \abovein\grapha u$
we have $\map(v)\mkern1mu u\tightin\edgesof\grapha$ by \reflem{lem:meet}.
Because $\map$ is a skew fibration and $\map(v)\mkern1mu u\tightin\edgesof\grapha$, there exists $\tilde u\tightin\verticesof\graph$ with $v\tilde u\tightin\edgesof\graph$ and $\map(u)\mkern-1mu\map(\tilde u)\tightnotin\edgesof\grapha$.
Necessarily $\tilde u w\tightin\edgesof\graph$, otherwise \reflem{lem:covee} applied to
$vw,\tilde u w\tightnotin\edgesof\graph$
and
$v\tilde u\tightin\edgesof\graph$
yields
$w \belowin\cograph w\umeet v \abovein\cograph v\jmeet \tilde u \abovein\cograph v$
so
$v \, \belowin\graph \, v\jmeet\tilde u \, \belowin\graph \, w\umeet v$
contradicting $v\,\childin\graph\,v\umeet w$.
\end{proof}
\begin{lemma}\label{lem:balance}
If $\bifib:\fograph\to\fographa$ is a skew fibration of cographs and $m\,\childin\fographa\, m\jmeet n\,\parentin\fographa\, n$, then
$\map(v)\atorbelowin\fographa m$ for some $v\tightin\verticesof\fograph$
if and only if
$\map(w)\atorbelowin\fographa n$ for some $w\tightin\verticesof\fograph$.
\end{lemma}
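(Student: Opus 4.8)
The plan is to prove the non-trivial implication---that $\map(v)\atorbelowin\fographa m$ for some $v$ forces $\map(w)\atorbelowin\fographa n$ for some $w$---and then to obtain the converse by symmetry, since the hypothesis $m\childin\fographa m\jmeet n\parentin\fographa n$ merely asserts that $m$ and $n$ are two \emph{distinct} children of the common $\graphjoin$ node $p\tighteq m\jmeet n$ (distinctness is forced, as $m$ being a child of $m\jmeet n$ puts $m\jmeet n$ strictly above $m$), a condition symmetric in $m$ and $n$. First I would fix notation: write $M$ and $N$ for the sets of vertices of $\fographa$ lying at or below $m$ and at or below $n$ respectively. By \reflem{lem:meet} every vertex of $M$ is adjacent to every vertex of $N$ in $\fographa$, because the meet of such a pair is exactly the $\graphjoin$ node $p$. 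Note $N$ is non-empty, since $n$ is a node of a cotree and hence has at least one leaf at or below it.

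Next I would run the skew-fibration lift. Suppose $v\tightin\verticesof\fograph$ with $\map(v)\tightin M$, and fix any $b\tightin N$. Then $b\,\map(v)\tightin\edgesof\fographa$, so applying the skew fibration property to $v$ and the edge $b\,\map(v)$ yields $\tilde b\tightin\verticesof\fograph$ with $\tilde b\mkern1mu v\tightin\edgesof\fograph$ and $\map(\tilde b)\,b\tightnotin\edgesof\fographa$. I would then extract two facts through \reflem{lem:meet}: since $\map$ is a homomorphism, $\map(\tilde b)\,\map(v)\tightin\edgesof\fographa$, so $\map(\tilde b)\meet\map(v)$ is a $\graphjoin$ node (in particular $\map(\tilde b)\tightneq\map(v)$); and since $\map(\tilde b)\,b\tightnotin\edgesof\fographa$, either $\map(\tilde b)\tighteq b$ or $\map(\tilde b)\meet b$ is a $\graphunion$ node. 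In the first case $\map(\tilde b)\tightin N$ and $\tilde b$ is the required witness, so I may assume $\map(\tilde b)\meet b$ is a $\graphunion$ node.

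The crux is to show $\map(\tilde b)$ lies at or below $p$. Here I would invoke the elementary tree fact that if a node $x$ is \emph{not} at or below $p$, then $x\meet y$ is the same node for every $y$ at or below $p$ (namely the least common ancestor of $x$ and $p$). Applying this with $x\tighteq\map(\tilde b)$ and the two choices $y\tighteq\map(v)$ and $y\tighteq b$, both at or below $p$, would force $\map(\tilde b)\meet\map(v)\tighteq\map(\tilde b)\meet b$---impossible, as the former is a $\graphjoin$ node and the latter a $\graphunion$ node. Hence $\map(\tilde b)$ is at or below $p$, so it lies in some child-module of $p$. If that child were not $n$, then $\map(\tilde b)$ and $b$ would sit in distinct children of the $\graphjoin$ node $p$, making $\map(\tilde b)\meet b\tighteq p$ a $\graphjoin$ node---again a contradiction. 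Therefore $\map(\tilde b)\atorbelowin\fographa n$, i.e. $\map(\tilde b)\tightin N$, completing the implication.

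I expect the main obstacle to be precisely the $\graphjoin$/$\graphunion$ case split of the last paragraph. The skew-fibration lift only guarantees that $\map(\tilde b)$ is adjacent to $\map(v)\tightin M$ and non-adjacent to $b\tightin N$, which a priori permits $\map(\tilde b)$ to escape the subtree rooted at $p$ altogether; nothing local to $b$ and $\map(v)$ pins it down. The observation that makes everything collapse is that outside that subtree the two meets $\map(\tilde b)\meet\map(v)$ and $\map(\tilde b)\meet b$ are forced to be the \emph{same} node of $\cotreeof\fographa$, so their differing $\graphjoin$/$\graphunion$ types cannot coexist. This is the one step where I would state the ancestor argument explicitly rather than rely on a diagram, since it is where the skew-fibration data is genuinely converted into a constraint on where $\map(\tilde b)$ can land.
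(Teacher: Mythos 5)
Your proof is correct and follows essentially the same route as the paper's: lift the edge between $\map(v)$ and a chosen vertex below $n$ through the skew fibration, then use the cotree to force the lifted vertex's image to land at or below $n$. The only difference is cosmetic — where you argue inline via least common ancestors that $\map(\tilde b)$ cannot escape the subtree rooted at $m\jmeet n$, the paper packages the same tree reasoning into its Lemma\,\ref{lem:vee}.
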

\begin{proof}
Assume $m\tightneq n$, otherwise the result is immediate.
Suppose $\map(v)\atorbelowin\fographa m$ for $v\tightin\verticesof\fograph$.
Choose $u\tightin\verticesof\fographa$ with $u\atorbelowin\fographa n$.
Thus $\map(v)\,\atorbelowin\fographa\, m \, \childin\fographa \, m\jmeet n \, \parentin\fographa\, n \,\atorabovein\fographa\, u\,$.
Since $m\jmeet n=\map(v)\jmeet u$ is a $\graphjoin$ node,
we have $\map(v)\mkern1mu u\tightin\edgesof\fographa$ by Lemma\,\ref{lem:meet}.
Because $\bifib$ is a skew fibration and $\map(v)\mkern1mu u\tightin\edgesof\fographa$,
there exists
$\widehat u\tightin\verticesof\fograph$
with
$v\widehat u\tightin\edgesof\fograph$
and
$\map(\widehat u)\mkern1mu u\tightnotin\edgesof\fographa$.
Since $v\widehat u\tightin\edgesof\fograph$ and $\bifib$ is a graph homomorphism, we have
$\map(v)\mkern1mu\map(\widehat u)\tightin\edgesof\fograph$.
If $u\tighteq\map(\widehat u)$ then since
$n \,\atorabovein\fographa\, u\,$
we have $\map(\widehat u)\atorbelowin\fographa n$ as desired.
Otherwise $u\tightneq\map(\widehat u)$, so applying Lemma~\ref{lem:vee} to
$\map(v)\mkern1mu u,\map(v)\mkern1mu\map(\widehat u)\tightin\edgesof\fographa$,
$u\mkern1mu \map(\widehat u)\tightnotin\edgesof\fographa$,
$u\tightneq\map(\widehat u)$
yields
$\map(v) \, \belowin\fographa \, \map(v)\jmeet u \, \abovein\fographa \, u\umeet \map(\widehat u) \, \abovein\fographa \, u,\map(\widehat u)$.
Thus since $\map(v)\jmeet u=m\jmeet n$ is the parent of $n$,
both
\mbox{$\map(v) \jmeet u \, \parentin\fographa \, n \, \atorabovein\fographa \, u$}
and
\mbox{$\map(v) \jmeet u \, \abovein\fographa  \, w\umeet\map(\widehat u) \, \abovein\fographa \, u$},
so because $\cotreeof\fographa$ is a tree,
we have
$n \, \atorabovein\fographa \, w\mkern1mu\umeet\mkern1mu\map(\widehat u)$,
hence
$\map(\widehat u) \, \atorbelowin\fographa \, n$.
\end{proof}
Given a function $\map:V\to W$ write $\map(V)$ for $\setst{\map(v)}{v\tightin V}\subseteq W$.
\begin{definition}
Let $\map:\fograph\to\fographa$ be a graph homomorphism between cographs. Define the \defn{image}
$\im\map$ as the subgraph $\induced\fographa{\map(\verticesof\fograph)}$ of $\fographa$ induced by $\map(\verticesof\fograph)$.
\end{definition}
Define $v\parentabovein\tree w$ if $v$ and $w$ are leaves and $\parentof v\,\abovein\tree\, w$ for the parent $\parentof v$ of $v$.
Recall that a cograph is \emph{logical} if every vertex is a binder or literal (\ie, is labelled by a variable or atom), and at least one vertex is a literal.
Write $\scopeofin \binder \fograph$ for the scope of a binder $\binder$ in a logical cograph $\fograph$.
The following Lemma shows that the scope of $\binder$ is the set of leaves below the parent of $\binder$ in the cotree $\cotreeof\cograph$.
\begin{lemma}\label{lem:parent-scope}\label{lem:scope-parent}
  For any vertex $\vertex$ and binder $\binder$ in a logical cograph $\fograph$,
$\vertex\tightin\scopeofin\binder\fograph$
if and only if
$\binder\parentabovein{\cotreeof\fograph}\vertex$.
\end{lemma}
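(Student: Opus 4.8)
The plan is to read the scope of a binder directly off the cotree by combining the module--node correspondence (Lemma\,\ref{lem:node-strong-module}) with the vertex--leaf correspondence (Lemma\,\ref{lem:vertex-to-leaf}). Under these correspondences a node $\node$ of $\cotreeof\fograph$ is identified with the strong module consisting of the leaves at or below $\node$, and the proper strong modules are exactly those attached to the internal ($\graphunion$ or $\graphjoin$) nodes. So the whole argument reduces to locating, among the nodes above the leaf $\binder$, the one realizing the smallest proper strong module that contains $\binder$.

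First I would fix the binder $\binder$ and identify the strong modules containing it. Since $\fograph$ is a logical cograph that contains both a binder and (by definition of logical) a literal, it has at least two vertices, so $\binder$ is a leaf of $\cotreeof\fograph$ with a well-defined parent $\parentof\binder$. The strong modules containing $\binder$ correspond precisely to the nodes $\node$ with $\binder\atorbelowin{\cotreeof\fograph}\node$, i.e. the nodes on the path from $\binder$ to the root; these are totally ordered by $\atorbelowin{\cotreeof\fograph}$ because $\cotreeof\fograph$ is a tree. The only improper one among them is the singleton $\setof\binder$ at the leaf $\binder$ itself, so the smallest proper strong module containing $\binder$ is the module of the lowest internal node strictly above $\binder$, namely $\parentof\binder$. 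Here I would invoke that a cotree is branching (Definition\,\ref{def:cotree}), so $\parentof\binder$ has at least two children and therefore genuinely corresponds to a proper strong module. Hence $\scopeofin\binder\fograph$ equals the set of leaves at or below $\parentof\binder$.

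It then remains to convert ``leaf at or below $\parentof\binder$'' into the relation $\binder\parentabovein{\cotreeof\fograph}\vertex$. Viewing $\vertex$ as a leaf, if $\vertex\tighteq\binder$ then $\parentof\binder\abovein{\cotreeof\fograph}\binder$ holds since a parent is strictly above its child, and indeed $\binder$ lies in its own scope; if $\vertex\tightneq\binder$ then, because $\parentof\binder$ is internal while $\vertex$ is a leaf, $\vertex\atorbelowin{\cotreeof\fograph}\parentof\binder$ is equivalent to $\vertex\belowin{\cotreeof\fograph}\parentof\binder$, i.e. to $\parentof\binder\abovein{\cotreeof\fograph}\vertex$. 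In both cases $\vertex\tightin\scopeofin\binder\fograph$ is equivalent to $\parentof\binder\abovein{\cotreeof\fograph}\vertex$, which is by definition $\binder\parentabovein{\cotreeof\fograph}\vertex$.

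The only real content is the first step: establishing that the module attached to a node is the leaf-set below it, and that the smallest proper strong module containing $\binder$ is realized exactly at $\parentof\binder$. This is where I expect the main obstacle, since it rests on the laminarity of the strong modules (supplied by the tree structure of $\cotreeof\fograph$) together with the branching property of cotrees, which is what guarantees $\parentof\binder$ is proper rather than a unary node. Everything after that is a routine unwinding of the order relations $\atorbelowin{\cotreeof\fograph}$, $\belowin{\cotreeof\fograph}$ and $\abovein{\cotreeof\fograph}$, so I would keep that part brief.
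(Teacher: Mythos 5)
Your proposal is correct and follows the same route as the paper: the paper's proof simply cites Lemma\,\ref{lem:node-strong-module} to identify the smallest proper strong module containing $\binder$ with the parent of $\binder$ in $\cotreeof\fograph$, exactly as you do. You fill in details the paper leaves implicit (laminarity of the strong modules along the root path, and the branching property guaranteeing the parent node is a \emph{proper} module), but the key lemma and the structure of the argument coincide.
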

\begin{proof}
By definition the scope of $\binder$ is the smallest proper strong module containing $\binder$,
which corresponds to the parent of $\binder$ in the cotree $\cotreeof\fograph$ by \reflem{lem:node-strong-module}.
\end{proof}
\begin{lemma}\label{lem:absorption-parent-below}
  For any \plustimestree $\tree$ and non-root \plustimesnode $\node$ in $\tree$,
  if $v\parentabovein\tree w$ then $v\parentabovein{\absorptionsub\tree\node} w$.
\end{lemma}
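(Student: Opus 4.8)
The plan is to reduce the whole statement to one structural fact about absorption: \emph{absorbing an internal node preserves the strict ancestor relation between any two surviving nodes}. First I would record the easy bookkeeping. Since $\node$ is a \plustimesnode it is not a leaf, so $\node\tightneq v$ and $\node\tightneq w$, and hence $v$ and $w$ both survive in $\absorption\tree\node$. They also remain leaves there: under absorption the only node that can acquire new children is the parent $\parentof\node$, and $\parentof\node$ is itself a \plustimesnode (it has $\node$ as a child), so it is distinct from the leaves $v$ and $w$. Thus the assertion $v\parentabovein{\absorptionsub\tree\node}w$ is well-posed, and it remains to locate the parent of $v$ in $\absorption\tree\node$ and verify it lies strictly above $w$.

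Next I would prove the key claim: for nodes $a,b\tightneq\node$, if $a\belowin\tree b$ then $a\belowin{\absorptionsub\tree\node}b$. Because $\belowin\tree$ is the transitive closure of $\childin\tree$ and $\tree$ is a tree, there is a unique upward chain $a=n_0\childin\tree n_1\childin\tree\cdots\childin\tree n_k=b$, and $\node$ occurs on it at most once. If $\node$ does not occur, then every edge $n_i\childin\tree n_{i+1}$ survives (the first clause of the description of $\childin{\absorptionsub\tree\node}$), so $a\belowin{\absorptionsub\tree\node}b$. If $\node=n_j$ with $0\tightlt j\tightlt k$, then $n_{j-1}\childin\tree\node\childin\tree n_{j+1}$ yields the short-circuited edge $n_{j-1}\childin{\absorptionsub\tree\node}n_{j+1}$ via the second clause, while all other edges persist; concatenating gives $a\belowin{\absorptionsub\tree\node}b$ again.

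Finally I would conclude by a case split, writing $p=\parentof v$ and using the hypothesis $w\belowin\tree p$. If $\node\tightneq p$, then $v$ is not a child of $\node$, so the edge $v\childin\tree p$ survives and $p$ is still the parent of $v$ in $\absorption\tree\node$; since $w,p\tightneq\node$, the key claim gives $w\belowin{\absorptionsub\tree\node}p$, that is, $v\parentabovein{\absorptionsub\tree\node}w$. The case I expect to be the main obstacle is $\node=p$: here $v$ is re-parented, and because $\node$ is non-root the target $\parentof\node$ exists and becomes the unique new parent of $v$ (the chain $v\childin\tree\node\childin\tree\parentof\node$ short-circuits to $v\childin{\absorptionsub\tree\node}\parentof\node$). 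From $w\belowin\tree\node\childin\tree\parentof\node$ we get $w\belowin\tree\parentof\node$, and applying the key claim to the surviving nodes $w,\parentof\node$ gives $w\belowin{\absorptionsub\tree\node}\parentof\node$, which is exactly $v\parentabovein{\absorptionsub\tree\node}w$. Since both cases deliver the conclusion, the lemma follows; the only genuinely delicate point is correctly tracking $v$'s parent across the re-parenting in the second case.
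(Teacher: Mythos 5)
Your proof is correct and follows the same route as the paper, which simply declares the lemma ``immediate from the definition of absorption''; you have just unfolded that definition, checking that $v$ and $w$ survive as leaves and tracking $v$'s (possibly re-routed) parent through the two cases. No discrepancy to report.
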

\begin{proof}
  Immediate from the definition of $\absorption\tree\node$.
\end{proof}
\begin{lemma}\label{lem:absorbed-parent-below}
  For any \plustimestree $\tree$, if $v\parentabovein\tree w$ then $v\parentabovein{\absorbed\tree} w$.
\end{lemma}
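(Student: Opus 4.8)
The plan is to reduce the statement to the single-absorption result already proved in Lemma~\ref{lem:absorption-parent-below} and then run a short induction. By Definition~\ref{def:cotree-of-tree}, $\absorbed\tree$ is obtained from $\tree$ by a finite sequence of absorptions $\tree\tighteq\tree_0,\ \tree_1\tighteq\absorption{\tree_0}{\node_0},\ \ldots,\ \tree_k\tighteq\absorption{\tree_{k-1}}{\node_{k-1}}\tighteq\absorbed\tree$, in which each $\node_i$ is a unary or repeating \plustimesnode of $\tree_i$. First I would record the elementary observation that absorption removes only \plustimesnodes and never leaves, so the leaves of $\tree$ are exactly the leaves of every $\tree_i$ and of $\absorbed\tree$; in particular the relation $v\parentabovein{\tree_i}w$ is considered for the same pair of leaves $v,w$ at each stage, and the statement makes sense throughout the reduction.

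The inductive step is then immediate. Assuming $v\parentabovein{\tree_i}w$, I would apply Lemma~\ref{lem:absorption-parent-below} to the absorbed \plustimesnode $\node_i$ to obtain $v\parentabovein{\tree_{i+1}}w$, and composing these $k$ implications starting from the hypothesis $v\parentabovein\tree w$ yields $v\parentabovein{\absorbed\tree}w$, as required. Thus essentially all of the content is packaged into repeated invocation of the one-step lemma, and no new graph-theoretic work is needed beyond checking that its hypotheses apply at each stage.

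The hard part will be honouring the hypothesis of Lemma~\ref{lem:absorption-parent-below}, which requires the absorbed node to be \emph{non-root}, so I would verify that each $\node_i$ may be taken non-root. A repeating \plustimesnode has a parent by definition and so is never the root. A unary \plustimesnode can be the root, but absorbing it changes the parent of a \emph{leaf} only if that leaf is the unique child of the root; since a leaf's parent is the root exactly when the whole tree is a chain of unary \plustimesnodes above that single leaf, this can happen only when $\cographof\tree$ is a single vertex and $\absorbed\tree$ is a single leaf. In every other case the single child of a unary root is itself a \plustimesnode, so no leaf has the root as its parent and absorbing the root leaves every leaf's parent—and hence $\parentabovein{}$—unchanged, while the genuinely degenerate single-leaf case arises only with $v\tighteq w$ and is excluded in the intended application (Lemma~\ref{lem:parent-scope}), where $\parentabovein{}$ relates a binder to a vertex of its proper, hence at-least-two-element, scope. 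I would therefore either restrict attention to distinct $v,w$ or treat the single-vertex cograph as a trivial base case; with that caveat dispatched, the induction above goes through unobstructed.
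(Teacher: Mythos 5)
Your proof is correct and takes exactly the paper's route: the paper's entire proof is ``iterate Lemma~\ref{lem:absorption-parent-below} over the absorption steps in Definition~\ref{def:cotree-of-tree}'', which is precisely your induction. Your extra care over the non-root hypothesis is a genuine subtlety the paper's one-line proof glosses over --- repeating nodes are automatically non-root, a unary root whose child is a \plustimesnode{} can be absorbed without disturbing any leaf's parent, and the only true failure is the degenerate single-leaf tree with $v\tighteq w$, which is harmless in the application to Lemma~\ref{lem:induced-scope} --- so your resolution of it is sound.
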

\begin{proof}
  Iterate \reflem{lem:absorption-parent-below} for every absorption step in the construction of $\absorbed\tree$ in Def.\,\ref{def:cotree-of-tree}.
\end{proof}
\begin{lemma}\label{lem:induced-parent-below}
  For any \plustimestree $\tree$ and non-empty set $U$ of leaves in $\tree$,
  if $v\parentabovein\tree w$ and $v,w\tightin U$ then $v\parentabovein{\induced\tree U} w$.
\end{lemma}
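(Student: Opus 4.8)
The plan is to track which nodes of $\tree$ survive into $\induced\tree U$ and to show that the descending path witnessing $v\parentabovein\tree w$ survives intact. First I would record the two facts I need from the definition of $\induced\tree U$ (stated just before the lemma): it is built by first deleting every leaf not in $U$, and then iteratively and exhaustively deleting childless \plustimesnodes. Since $v$ and $w$ are leaves lying in $U$, neither is removed in the first phase and (being leaves) neither is touched in the second, so both survive in $\induced\tree U$. Unwinding the definition of $\parentabovein{}$, the hypothesis $v\parentabovein\tree w$ says exactly that $w\belowin\tree\parentof v$, where $\parentof v$ is the $\tree$-parent of $v$; in particular $\parentof v$ is an ancestor of $w$ in $\tree$.

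The heart of the argument is the invariant that \emph{every ancestor in $\tree$ of a surviving leaf also survives in $\induced\tree U$}. I would prove this by upward induction along $\tree$ starting from $w$: the leaf $w$ survives, and if a node $n'$ that is an ancestor of (or equal to) $w$ survives, then its $\tree$-parent $n$ retains $n'$ as a child throughout the exhaustive second phase, so $n$ never becomes childless and is never deleted. Applying the invariant to the ancestor $\parentof v$ of the surviving leaf $w$ shows that $\parentof v$ survives, and indeed every node on the descending path from $\parentof v$ down to $w$ survives, since each such node is again an ancestor of $w$.

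To conclude I would assemble these pieces. Because $\parentof v$ survives and is the immediate $\tree$-parent of the surviving leaf $v$, no node can intervene between them, so $\parentof v$ is also the parent of $v$ in $\induced\tree U$; this is what makes the relation $\parentabovein{\induced\tree U}$ refer to the same node $\parentof v$. Because the entire path from $\parentof v$ to $w$ survives, it remains a descending path in $\induced\tree U$, giving $w\belowin{\induced\tree U}\parentof v$, equivalently $\parentof v\abovein{\induced\tree U} w$, and hence $v\parentabovein{\induced\tree U} w$ as required. The only genuine obstacle is the exhaustive second phase: one must be certain that no ancestor of $w$ is swept away as other subtrees collapse to nothing. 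The invariant above isolates and settles precisely that point, and everything else is bookkeeping on the child/descendant relation.
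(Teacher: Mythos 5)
Your proof is correct and takes essentially the same route as the paper, which disposes of the lemma in one line by observing that $\induced\tree U$ is obtained from $\tree$ purely by deletion, so no new parent--child edges are created; your survival invariant (every ancestor of a surviving leaf survives the exhaustive deletion of childless nodes) just makes explicit the detail the paper leaves implicit.
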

\begin{proof}
  The edge relation $\,\childin{\induced\tree U}\,$ is a subset of $\,\childin\tree\,$.
\end{proof}
\begin{lemma}\label{lem:induced-scope}
Let $\fographa$ be a logical cograph which is an induced subgraph of a logical cograph $\fograph$.
  For every vertex $\vertex$ and binder $\binder$ in $\fographa$,
  if $\vertex\in\scopeofin \binder \fograph$ then $\vertex\in\scopeofin \binder \fographa$.
\end{lemma}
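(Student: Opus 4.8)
The plan is to reduce everything to the cotree characterization of scope supplied by \reflem{lem:scope-parent}, and then to track the single relation $\binder\parentabovein{\,\cdot\,}\vertex$ as we pass from the cotree of the ambient graph $\fograph$ to the cotree of the induced subgraph $\fographa$. Concretely, I would set $U\tighteq\verticesof\fographa$, so that $\fographa\tighteq\induced\fograph U$ by hypothesis, and observe that both $\binder$ and $\vertex$ lie in $U$ and, via \reflem{lem:vertex-to-leaf}, correspond to leaves of $\cotreeof\fograph$. The whole argument is then a chain of four already-proved facts, with no fresh combinatorics needed.

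First I would translate the hypothesis: since $\vertex\tightin\scopeofin\binder\fograph$, \reflem{lem:scope-parent} gives $\binder\parentabovein{\cotreeof\fograph}\vertex$. Next, because $\binder,\vertex\tightin U$ are leaves, \reflem{lem:induced-parent-below} (with $\tree\tighteq\cotreeof\fograph$) yields $\binder\parentabovein{\induced{\cotreeof\fograph}U}\vertex$, and then \reflem{lem:absorbed-parent-below} (with $\tree\tighteq\induced{\cotreeof\fograph}U$) yields $\binder\parentabovein{\absorbed{\induced{\cotreeof\fograph}U}}\vertex$. Finally, \reflem{lem:induced-cotree} identifies $\absorbed{\induced{\cotreeof\fograph}U}$ with $\cotreeof{\induced\fograph U}\tighteq\cotreeof\fographa$, so $\binder\parentabovein{\cotreeof\fographa}\vertex$, and a second application of \reflem{lem:scope-parent} (now to the logical cograph $\fographa$) delivers $\vertex\tightin\scopeofin\binder\fographa$, as required.

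The conceptual subtlety — and the point the two monotonicity lemmas exist to dispatch — is that the parent of $\binder$ in the cotree can be \emph{reparented} twice on the way down: once when we delete the leaves outside $U$ and prune childless nodes to form $\induced{\cotreeof\fograph}U$, and again when we absorb unary and repeating nodes to recover the genuine cotree $\absorbed{\induced{\cotreeof\fograph}U}$. In principle the smallest proper strong module containing $\binder$ could shift, so scope membership is not obviously stable. The main thing to get right is therefore to confirm that the relation ``$\parentof\binder$ is strictly above $\vertex$'' survives both operations; I expect this to be the only real obstacle, but it is fully absorbed by \reflem{lem:induced-parent-below} and \reflem{lem:absorbed-parent-below}, which assert precisely this preservation (the first requiring the hypothesis $\binder,\vertex\tightin U$ that we have). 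Note the lemma is one-directional: scope computed in $\fographa$ may genuinely be smaller than scope computed in $\fograph$ restricted to $\fographa$, and the proof correctly only propagates membership inward, never claiming the converse.
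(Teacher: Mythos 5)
Your proof is correct and follows exactly the paper's own argument: translate scope membership into the cotree relation via Lemma\,\ref{lem:scope-parent}, push it through Lemmas\,\ref{lem:induced-parent-below} and \ref{lem:absorbed-parent-below}, identify the resulting tree with $\cotreeof\fographa$ via Lemma\,\ref{lem:induced-cotree}, and translate back. The same four lemmas are invoked in the same order, so there is nothing to add.
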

\begin{proof}
\hspace{-2pt}Let $\vertex\in\scopeofin \binder \fograph$.
By \reflem{lem:scope-parent}, $\binder\parentabovein{\cotreeof\fograph}\vertex$.
By \reflem{lem:induced-parent-below}, $\binder\parentabovein{\induced{\cotreeof\fograph}{\verticesof\fographa}}\vertex$.
By \reflem{lem:absorbed-parent-below},
$\binder\parentabovein{\absorbed{\induced{\cotreeof\fograph}{\verticesof\fographa}}}\vertex$.
By \reflem{lem:induced-cotree},
$\absorbed{\induced{\cotreeof\fograph}{\verticesof\fographa}}
=
\cotreeof{\induced\fograph{\verticesof\fographa}}$,
and $\cotreeof{\induced\fograph{\verticesof\fographa}}=
\cotreeof{\fographa}$,
thus
$\binder\parentabovein{\cotreeof\fographa}\vertex$.
Therefore $\vertex\in\scopeofin \binder \fographa$ by \reflem{lem:scope-parent}.
\end{proof}
A fograph map $\map:\fograph\to\fographa$ \defn{preserves universals} if every universal binder $\binder$ in $\fograph$ maps to a universal binder $\map(\binder)$ in $\fographa$.
\begin{lemma}\label{lem:pres-universals}
  Every skew fibration between fographs preserves universals.
\end{lemma}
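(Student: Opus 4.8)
The plan is to reduce the adjacency-based definition of ``universal binder'' to a purely cotree-theoretic condition on the parent node, and then to invoke \reflem{lem:preserve-parent-plus}, which already records that skew fibrations carry $\graphunion$-parents to $\graphunion$-parents.

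First I would establish the characterization that a binder $\binder$ in a logical cograph is universal if and only if its parent $\parentof\binder$ in the cotree is a $\graphunion$ node. This is immediate from \reflem{lem:scope-parent} and \reflem{lem:meet}: by \reflem{lem:scope-parent} the scope of $\binder$ consists of exactly the leaves strictly below $\parentof\binder$, and since $\binder$ is itself a leaf, for every other scope vertex $\vertex$ the meet $\binder\meet\vertex$ equals $\parentof\binder$; hence by \reflem{lem:meet} the non-adjacency $\binder\vertex\tightnotin\edgesof\fograph$ defining universality is equivalent to $\parentof\binder$ being a $\graphunion$ node. The same equivalence holds in the target $\fographa$, since it too is a logical cograph.

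Next I would fix a universal binder $\binder$ of $\fograph$. Because $\fograph$ is a fograph, $\binder$ is legal, so its scope contains a literal $\literal$; universality then gives $\binder\umeet\literal=\parentof\binder$ with $\binder\childin\fograph\binder\umeet\literal\abovein\fograph\literal$. As $\map$ is a fograph map it preserves labels, so $\map(\binder)$ is variable-labelled (a binder) while $\map(\literal)$ is atom-labelled (a literal), whence $\map(\binder)\tightneq\map(\literal)$. This supplies exactly the non-degeneracy hypothesis of \reflem{lem:preserve-parent-plus}, which, applied with $v=\binder$ and $w=\literal$, yields $\map(\binder)\childin\fographa\map(\binder)\umeet\map(\literal)\abovein\fographa\map(\literal)$ --- that is, $\parentof{\map(\binder)}$ is a $\graphunion$ node. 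By the characterization above, applied now in $\fographa$ (whose binders are legal), $\map(\binder)$ is a universal binder, as required.

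The cotree characterization and the least-common-ancestor identification $\binder\meet\vertex=\parentof\binder$ are routine bookkeeping on top of \reflem{lem:scope-parent}. The one genuine subtlety --- and the point on which the argument turns --- is securing $\map(\binder)\tightneq\map(\literal)$ so that \reflem{lem:preserve-parent-plus} applies, since a skew fibration on its own may legitimately collapse the non-adjacent pair $\binder,\literal$. This is precisely where label preservation enters; once distinctness is in hand, \reflem{lem:preserve-parent-plus} does the substantive work.
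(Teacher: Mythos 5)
Your proof is correct and takes essentially the same route as the paper's: characterize a universal binder by its cotree parent being a $\graphunion$ node, then invoke \reflem{lem:preserve-parent-plus}. The paper compresses this into two lines (citing \reflem{lem:node-strong-module} for the characterization) and leaves implicit the point you rightly make explicit, namely that label preservation supplies the hypothesis $\map(\binder)\tightneq\map(\literal)$ needed to apply \reflem{lem:preserve-parent-plus}.
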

\begin{proof}
  By  \reflem{lem:node-strong-module}, a binder is universal if and only if its parent in the cotree is a $\graphunion$ node. Thus the result follows from \reflem{lem:preserve-parent-plus}.
\end{proof}
A fograph map \defn{preserves binders} if every universal (resp.\ existential) binder $\binder$ in $\fograph$ maps to a universal (resp.\ existential) binder $\map(\binder)$ in $\fographa$.
\begin{lemma}\label{lem:pres-binders}
  Every skew bifibration between fographs preserves binders.
\end{lemma}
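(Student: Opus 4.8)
The plan is to observe that \emph{preserving binders} splits into two independent halves---preserving universals and preserving existentials---each of which is already available. First I would recall the footnote fact (used repeatedly above) that, since the scope of a binder is a proper strong module, every binder in a fograph is \emph{either} universal \emph{or} existential, and never both. Consequently, to show that a skew bifibration $\bifib:\fograph\to\fographa$ preserves binders it suffices to show separately that it sends universal binders to universal binders and existential binders to existential binders.

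For the universal half, I would use that a skew bifibration is, by definition, in particular a skew fibration $\bifib:\fograph\to\fographa$ of the underlying cographs. Hence \reflem{lem:pres-universals} applies verbatim: every skew fibration between fographs preserves universals, so $\bifib$ carries each universal binder of $\fograph$ to a universal binder of $\fographa$.

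For the existential half, there is nothing to prove: existential preservation is stipulated directly in the definition of skew bifibration (a skew bifibration is an existential-preserving homomorphism), so $\bifib$ carries each existential binder of $\fograph$ to an existential binder of $\fographa$. Combining the two halves with the dichotomy that every binder is exactly one of the two types yields that $\bifib$ preserves binders.

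There is no real obstacle here; the lemma is a one-line assembly of \reflem{lem:pres-universals} with the definitional existential-preservation clause. The only point worth flagging is why the existential case must be read off from the definition rather than derived in parallel with the universal case: as the earlier footnote on skew bifibrations notes, existential preservation does \emph{not} follow from the other conditions (a label-preserving skew fibration whose binding map is a fibration can still send an existential binder to a universal one), which is exactly why it is imposed as a separate requirement and why the two halves are not symmetric.
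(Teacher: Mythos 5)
Your proof is correct and is exactly the paper's argument: existential preservation is read off from the definition of skew bifibration, and universal preservation is supplied by \reflem{lem:pres-universals}. The additional remarks about the universal/existential dichotomy and why the existential clause is definitional rather than derivable are accurate but not needed beyond what the paper states.
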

\begin{proof}
  Skew bifibrations preserve existentials by definition, and universals by \reflem{lem:pres-universals}.
\end{proof}

\begin{lemma}\label{lem:im-logical}
For every skew bifibration $\bifib:\fograph\to\fographa$ of fographs, the image $\im\bifib$ is a logical cograph.
\end{lemma}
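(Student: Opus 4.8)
The plan is to verify directly the three defining conditions of a \emph{logical cograph} for the induced subgraph $\im\bifib=\induced\fographa{\bifib(\verticesof\fograph)}$: that it is a cograph, that every one of its vertices is labelled by a variable or an atom, and that it contains at least one atom-labelled vertex (a literal). The first two conditions are purely hereditary consequences of $\fographa$ being a fograph, and only the third will call on the hypothesis that $\bifib$ is a skew bifibration.

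First I would dispatch the cograph and labelling conditions together. Since $\fographa$ is a fograph it is in particular a cograph, that is, $\pfour$-free; as $\pfour$-freeness is inherited by induced subgraphs, $\im\bifib$ is again $\pfour$-free and hence a cograph. Moreover every vertex of $\im\bifib$ is a vertex of $\fographa$, carrying its label by restriction; because $\fographa$ is a logical cograph each such vertex is labelled by a variable or an atom, and this labelling survives passage to the induced subgraph, so every vertex of $\im\bifib$ is labelled by a variable or an atom.

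The remaining condition—exhibiting a literal in the image—is the only place the skew bifibration structure enters, and even there it needs only label preservation. Since $\fograph$ is a fograph, hence a logical cograph, it contains at least one literal $\vertex$, say labelled by an atom $\atom$. Because skew bifibrations preserve labels by definition, $\bifib(\vertex)$ is also $\atom$-labelled, hence a literal of $\fographa$, and its label is inherited in $\im\bifib$. As $\bifib(\vertex)\tightin\verticesof{\im\bifib}$, the image contains a literal. Combining the three observations, $\im\bifib$ is a logical cograph.

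I do not anticipate a genuine obstacle in this lemma: it is a routine hereditary-property check, with the sole substantive input being that label preservation transports one literal of the source to a literal of the image, guaranteeing the image is not binder-only. The real work—upgrading ``logical cograph'' to a bona fide \emph{fograph}, with legal binders, the correct binding graph, and preserved scopes—is what the surrounding lemmas (using \reflem{lem:induced-scope}, \reflem{lem:pres-binders}, and the cotree machinery) are set up to handle, and this statement is merely the first, easiest step of that program.
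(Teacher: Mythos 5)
Your proof is correct and follows essentially the same route as the paper's: induced subgraphs of cographs are cographs, labels are inherited from $\fographa$, and label preservation sends a literal of $\fograph$ to a literal of $\im\bifib$. Nothing to add.
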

\begin{proof}
Every vertex of $\im\bifib$ is inherited from $\fographa$, and is therefore a binder or literal.
Since $\im\bifib$ is an induced subgraph of a cograph $\fographa$, it is a cograph.
Because $\fograph$ is a logical cograph, it contains a literal $\literal$, thus $\im\bifib$ contains the literal $\bifib(\literal)$ (a literal since $\bifib$ preserves labels).
\end{proof}
\begin{lemma}\label{lem:target-universal-scope}
For every skew bifibration $\bifib:\fograph\to\fographa$ between fographs and universal binder $\binder$ in $\im\bifib$,
the scope $\scopeofin \binder \fographa$ contains a literal in $\im\bifib$.
\end{lemma}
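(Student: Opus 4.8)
The plan is to transport a literal from the scope of a preimage of $\binder$ in $\fograph$ down into $\scopeofin\binder\fographa$, using the cotree characterisation of scopes (\reflem{lem:parent-scope}) together with the parent-preservation property of skew fibrations (\reflem{lem:preserve-parent-plus}). Since $\binder\tightin\im\bifib=\induced\fographa{\bifib(\verticesof\fograph)}$, I fix a preimage $\binder'\tightin\verticesof\fograph$ with $\bifib(\binder')=\binder$; as $\bifib$ preserves labels and $\binder$ is variable-labelled, $\binder'$ is a binder of $\fograph$ and $\binder$ is a binder of $\fographa$ (hence, $\fographa$ being a fograph, universal or existential and legal). Because $\fograph$ is a fograph, $\binder'$ is legal, so its scope $\scopeofin{\binder'}\fograph$ contains a literal $\literal'$; set $\literal=\bifib(\literal')$, which is a literal of $\im\bifib$.

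First I would show that $\binder$ is universal not merely in $\im\bifib$ but already in $\fographa$. Suppose instead $\binder$ is existential in $\fographa$; then by the cotree characterisation behind \reflem{lem:pres-universals} (namely \reflem{lem:node-strong-module}), the parent $\parentof\binder$ of $\binder$ in $\cotreeof\fographa$ is a $\graphjoin$ node. For each other child $n$ of $\parentof\binder$ we have $\binder\childin\fographa\binder\jmeet n\parentin\fographa n$, and since $\bifib(\binder')=\binder$ witnesses an image vertex at-or-below $\binder$, the balance lemma \reflem{lem:balance} yields an image vertex at-or-below $n$. Thus every child subtree of $\parentof\binder$ carries an image leaf, so upon inducing on $\verticesof{\im\bifib}$ and absorbing, $\parentof\binder$ survives as a $\graphjoin$ node that is still the parent of the leaf $\binder$; by \reflem{lem:induced-cotree} this is the parent of $\binder$ in $\cotreeof{\im\bifib}$, making $\binder$ existential in $\im\bifib$ --- contradicting the hypothesis. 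Hence $\binder$ is universal in $\fographa$, and therefore $\binder'$ is universal in $\fograph$ by \reflem{lem:pres-binders}.

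With $\binder'$ universal, $\parentof{\binder'}$ is a $\graphunion$ node, and $\literal'\tightin\scopeofin{\binder'}\fograph$ gives $\binder'\childin\fograph\binder'\umeet\literal'\abovein\fograph\literal'$ via \reflem{lem:parent-scope}. Since $\binder$ is a binder and $\literal$ a literal we have $\bifib(\binder')\tightneq\bifib(\literal')$, so \reflem{lem:preserve-parent-plus} applies and gives $\binder\childin\fographa\binder\umeet\literal\abovein\fographa\literal$; in particular $\binder\parentabovein{\cotreeof\fographa}\literal$, so by \reflem{lem:parent-scope} the literal $\literal\tightin\im\bifib$ lies in $\scopeofin\binder\fographa$, as required. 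The main obstacle is exactly the existential case ruled out in the second paragraph: a direct transport of $\literal'$ when $\binder'$ is existential can land $\literal$ strictly above $\parentof\binder$, outside the scope, and one can check that only the skew-fibration lifting prevents a genuine counterexample. The balance lemma \reflem{lem:balance} is the tool that turns the skew-fibration hypothesis into the statement that an existential $\binder$ in $\fographa$ stays existential in $\im\bifib$, and recognising that this reduction is what is needed is the crux of the argument.
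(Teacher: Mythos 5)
Your proof is correct and its core---choosing a preimage $\binder'$ of $\binder$, using legality of $\binder'$ to obtain a literal $\literal'\tightin\scopeofin{\binder'}\fograph$, and transporting it into $\scopeofin\binder\fographa$ via \reflem{lem:parent-scope} and \reflem{lem:preserve-parent-plus}---is exactly the paper's argument. Your second paragraph, which uses \reflem{lem:balance} to show that a binder universal in $\im\bifib$ is already universal in $\fographa$, handles an ambiguity that the paper resolves silently by reading ``universal binder in $\im\bifib$'' as a universal binder of $\fographa$ lying in the image (so that \reflem{lem:pres-binders} immediately makes $\binder'$ universal); your extra step is sound and simply makes the lemma robust under the other reading.
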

\begin{proof}
Choose $\tilde\binder$ in $\fograph$ with $\bifib(\tilde\binder)=\binder$.
By \reflem{lem:pres-binders}, $\tilde\binder$ is universal.
Since $\fograph$ is a fograph there exists a literal $\literal\in\scopeofin {\tilde\binder} \fograph$,
so
$\tilde\binder\,\childin\fograph\,\tilde b\umeet\literal\,\abovein\fograph\,\literal$ by \reflem{lem:parent-scope}.
Since $\bifib(\tilde\binder)\tightneq\bifib(\literal)$ by label preservation,
by \reflem{lem:preserve-parent-plus}
we have
$\binder\,\childin\fographa\,b\umeet\bifib(\literal)\,\abovein\fographa\,\bifib(\literal)$,
so $\bifib(\literal)\in\scopeofin \binder \fographa$ by \reflem{lem:parent-scope}.
\end{proof}

\begin{lemma}\label{lem:image-universal-scope}
For every skew bifibration $\bifib:\fograph\to\fographa$ between fographs and universal binder $\binder$ in $\im\bifib$,
the scope $\scopeofin \binder {\im\bifib}$ contains a literal.
\end{lemma}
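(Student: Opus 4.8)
The plan is to obtain this as an immediate consequence of the preceding lemma combined with the scope-restriction principle of \reflem{lem:induced-scope}. First I would record the two structural facts needed to invoke that principle: by definition the image $\im\bifib$ is the induced subgraph $\induced\fographa{\bifib(\verticesof\fograph)}$ of the target $\fographa$, and by \reflem{lem:im-logical} the image $\im\bifib$ is a logical cograph. Hence \reflem{lem:induced-scope} applies with the induced subgraph taken to be $\im\bifib$ and the ambient logical cograph taken to be $\fographa$.

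Next I would apply \reflem{lem:target-universal-scope} to the same universal binder $\binder$ in $\im\bifib$: it supplies a literal $\literal$ that lies both in $\scopeofin\binder\fographa$ and in $\im\bifib$. Now $\binder$ is a binder in $\im\bifib$ and $\literal$ is a vertex of $\im\bifib$, so \reflem{lem:induced-scope} (instantiated as above) turns the membership $\literal\in\scopeofin\binder\fographa$ into $\literal\in\scopeofin\binder{\im\bifib}$. Since $\literal$ is a literal, this exhibits a literal inside $\scopeofin\binder{\im\bifib}$, which is exactly the claim.

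The only point requiring care is notational bookkeeping rather than any genuine mathematical difficulty: \reflem{lem:induced-scope} is phrased with its own symbols $\fograph$ (the ambient cograph) and $\fographa$ (its induced subgraph), and these clash with the source and target of the present skew bifibration. I would therefore state the instantiation explicitly, emphasizing that the role of the ambient cograph is played by $\fographa$ (the bifibration target) and the role of the induced subgraph by $\im\bifib$. Beyond this, there is no obstacle: all the work is done by \reflem{lem:target-universal-scope}, which locates the witnessing literal, and by \reflem{lem:induced-scope}, which guarantees that a vertex of $\im\bifib$ witnessed to be in the scope of $\binder$ computed in $\fographa$ remains in the scope of $\binder$ computed inside $\im\bifib$.
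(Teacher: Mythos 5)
Your proposal is correct and follows essentially the same route as the paper: invoke Lemma\,\ref{lem:target-universal-scope} to produce a literal of $\im\bifib$ lying in $\scopeofin\binder\fographa$, then transfer it into $\scopeofin\binder{\im\bifib}$ via Lemma\,\ref{lem:induced-scope} applied to the induced subgraph $\im\bifib$ of $\fographa$. Your extra care in checking (via Lemma\,\ref{lem:im-logical}) that $\im\bifib$ is a logical cograph, so that Lemma\,\ref{lem:induced-scope} genuinely applies, is a welcome bit of diligence that the paper leaves implicit.
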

\begin{proof}
By \reflem{lem:target-universal-scope}, the scope $\scopeofin \binder \fographa$ contains a literal $\bifib(\literal)$.
Since $\im\bifib$ is an induced subgaph of $\fographa$, by \reflem{lem:induced-scope} we have $\bifib(\literal)\in\scopeofin \binder {\im\bifib}$.
\end{proof}
\begin{lemma}\label{lem:image-existential-scope-prep}\label{lem:target-existential-scope}
For every skew bifibration $\bifib:\fograph\to\fographa$ between fographs and existential binder $\binder$ in $\im\bifib$,
the scope $\scopeofin \binder\fographa$ contains a literal in $\im\bifib$.
\end{lemma}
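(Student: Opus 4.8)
The plan is to follow the proof of \reflem{lem:target-universal-scope} as far as possible, but with one essential change forced by duality. That proof converted a literal $\literal\tightin\scopeofin{\tilde\binder}{\fograph}$ into a literal of $\scopeofin\binder\fographa$ by pushing the cotree relation $\tilde\binder\,\childin\fograph\,\tilde\binder\umeet\literal\,\abovein\fograph\,\literal$ forward through $\bifib$ via \reflem{lem:preserve-parent-plus}. For an \emph{existential} binder the relevant meet $\tilde\binder\jmeet\literal$ is a $\graphjoin$ node, and there is no $\graphjoin$-analogue of \reflem{lem:preserve-parent-plus} (skew fibrations need not preserve join-parents), so $\bifib(\literal)$ may sit strictly above $\parentof\binder$ and fall outside $\scopeofin\binder\fographa$. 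The device that replaces \reflem{lem:preserve-parent-plus} here is \reflem{lem:balance}, and the argument will be a strong induction on $|\scopeofin\binder\fographa|$, run uniformly over all skew bifibrations and all existential binders in their images.

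First I would lift $\binder$ to $\tilde\binder\tightin\verticesof\fograph$ with $\bifib(\tilde\binder)\tighteq\binder$; exactly as in the universal case, $\tilde\binder$ is existential (otherwise $\binder$ would be universal by \reflem{lem:pres-binders}), so $\binder$ is existential and its parent $\node_0\tighteq\parentof\binder$ in $\cotreeof\fographa$ is a $\graphjoin$ node by \reflem{lem:node-strong-module}, with $\scopeofin\binder\fographa$ the set of leaves at or below $\node_0$. Since $\fographa$ is a fograph, the legal binder $\binder$ has a literal in its scope, lying in the subtree of some child $\bindera$ of $\node_0$ with $\bindera\tightneq\binder$. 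Because $\binder\tighteq\bifib(\tilde\binder)\tightin\im\bifib$ the left condition of \reflem{lem:balance} holds trivially (take $\vertex\tighteq\tilde\binder$, so $\bifib(\vertex)\atorbelowin\fographa\binder$); applying \reflem{lem:balance} to the $\graphjoin$ node $\node_0\tighteq\binder\jmeet\bindera$ therefore produces $\vertexa\tightin\verticesof\fograph$ with $\vertexaa\tighteq\bifib(\vertexa)\atorbelowin\fographa\bindera\belowin\fographa\node_0$, so $\vertexaa\tightin\im\bifib$ lies in $\scopeofin\binder\fographa$.

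If $\vertexaa$ is a literal the proof is complete. Otherwise $\vertexaa$ is a binder of $\fographa$ (and of $\im\bifib$, a logical cograph by \reflem{lem:im-logical}); here one checks that the literal-bearing child $\bindera$ must be internal, whence $\vertexaa\belowin\fographa\bindera$ and the parent $\node_1\tighteq\parentof\vertexaa$ satisfies $\node_1\atorbelowin\fographa\bindera\belowin\fographa\node_0$, so $\scopeofin\vertexaa\fographa$ has strictly fewer leaves than $\scopeofin\binder\fographa$. If $\vertexaa$ is universal I would invoke \reflem{lem:target-universal-scope}, and if $\vertexaa$ is existential I would invoke the induction hypothesis; either way $\scopeofin\vertexaa\fographa$ contains a literal $\literal\tightin\im\bifib$. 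Since $\literal\atorbelowin\fographa\node_1\belowin\fographa\node_0$, this literal lies in $\scopeofin\binder\fographa$, closing the induction.

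The main obstacle is precisely the gap that \reflem{lem:balance} only half-closes: it guarantees that $\im\bifib$ reaches into the sibling subtree (of $\node_0$) that contains a literal, but the image vertex $\vertexaa$ it produces there need not itself be a literal. Turning $\vertexaa$ into a genuine literal of $\im\bifib$ inside $\scopeofin\binder\fographa$ is what forces the descent, and the delicate points are (i) arranging that the chosen child $\bindera$ carries a literal so that, in the binder case, $\bindera$ is internal and $\node_1\belowin\fographa\node_0$ strictly, guaranteeing the induction measure decreases, and (ii) dispatching the two binder types correctly -- recursing on existentials while delegating universals to the already-established \reflem{lem:target-universal-scope}.
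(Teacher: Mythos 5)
Your proof is correct and follows essentially the same route as the paper's: pick a literal $\literala$ in $\scopeofin\binder\fographa$, apply Lemma\,\ref{lem:balance} at the $\graphjoin$ node $\parentof\binder$ to land an image vertex in the sibling subtree containing $\literala$, and if that vertex is a binder rather than a literal, descend by induction on $\sizeof{\scopeofin\binder\fographa}$, delegating the universal case to Lemma\,\ref{lem:target-universal-scope} and recursing on the existential case. The only cosmetic difference is that the paper identifies the binder's type via the $\graphunion$/$\graphjoin$ alternation of cotree nodes while you case on it directly, and your up-front framing of the induction (and of why Lemma\,\ref{lem:preserve-parent-plus} has no $\graphjoin$ analogue) is if anything cleaner than the paper's mid-proof introduction of it.
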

\begin{proof}
Since $\fographa$ is a fograph there exists a literal $\literala$ in $\fographa$ in the scope of $\binder$.
Thus
$\binder\,\childin\fographa\,b\jmeet\literala\,\abovein\fographa\,\literala$ by \reflem{lem:parent-scope}.
Therefore
$\binder\,\childin\fographa\, \binder\jmeet\literala\,\parentin\fographa\,n\,\atorabovein\fographa\,\literala$ for some child $n$ of $\binder\jmeet\literala$.
Since $\binder$ is in $\im\map$ there exists $\tilde\binder\in\verticesof\fograph$
with
$\bifib(\tilde\binder) \tighteq \binder$, so
we may apply \reflem{lem:balance} with
$m\tighteq \binder$ and $v\tighteq\tilde\binder$ to obtain
$w\in\verticesof\fograph$ with $\bifib(w)\atorbelowin\fographa n$.
If $w$ is a literal, then the literal $\map(w)$ is in $\scopeofin\binder\fographa$, and the Lemma holds.

Otherwise $w$ is a binder, hence $\bifib(w)$ is a binder.
We proceed by induction on the number of vertices in the scope $\scopeofin\binder\fographa$.
Since $\bifib(w) \,\atorbelowin\fographa\, n \,\atorabovein\fographa\, \literala$ for $\bifib(w)$ a binder and $\literala$ a literal,
$n$ must be a $\graphunion$ or $\graphjoin$ node,
and since $\graphunion$ and $\graphjoin$ alternate in a cotree,
$n$ is a $\graphunion$ node because its parent $\binder\jmeet\literala$ is a $\graphjoin$ node.
Let $n\primed$ be the parent of $\bifib(w)$.
Thus
$\binder \,\childin\fographa\, \binder\jmeet\literala \,\parentin\fographa\, n \,\atorabovein\fographa\, n\primed \,\parentin\fographa\, \bifib(w)$.
If $n\primed$ is a $\graphunion$ node, then $\bifib(w)$ is universal so by \reflem{lem:image-universal-scope} the scope of $\bifib(w)$ contains a literal in $\im\bifib$, \ie, a literal $\bifib(\literal)$ for some literal $\literal$ in $\fograph$.
Therefore
$\binder \,\childin\fographa\, \binder\jmeet\literala \,\parentin\fographa\, n \,\atorabovein\fographa\, n\primed \,\parentin\fographa\, \bifib(\literal)$, so $\bifib(\literal)$ is also in $\scopeofin\binder\fographa$.
Otherwise $n\primed$ is a $\graphjoin$ node, so $\bifib(w)$ is existential. Since $\scopeofin{\bifib(w)}\fographa$ is strictly contained in $\scopeofin\binder\fographa$, by induction there exists a literal $\literal$ in $\fograph$ such that $\bifib(\literal)$ is in $\scopeofin{\bifib(w)}\fographa$, thus the literal $\bifib(\literal)$ is in $\scopeofin\binder\fographa$.
\end{proof}
\begin{lemma}\label{lem:target-scope}
For every skew bifibration $\bifib:\fograph\to\fographa$ between fographs and binder $\binder$ in $\im\bifib$,
the scope $\scopeofin \binder \fographa$ contains a literal in $\im\bifib$.
\end{lemma}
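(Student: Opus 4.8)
The plan is to reduce the statement to the two immediately preceding lemmas by a dichotomy on the kind of binder. The only content is to observe that every binder is classified as either universal or existential, after which the universal case is handled verbatim by \reflem{lem:target-universal-scope} and the existential case by \reflem{lem:target-existential-scope}.

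First I would note that the binder $\binder$, since it lies in $\im\bifib$ and the image is an induced subgraph of $\fographa$ (and hence inherits its label from $\fographa$), is in particular a binder of the fograph $\fographa$. By the definition of scope, $\scopeofin\binder\fographa$ is the smallest proper strong module of $\fographa$ containing $\binder$. As recorded in the footnote accompanying the definition of universal and existential binders, the fact that the scope is a proper strong module forces $\binder$ to be exactly one of universal or existential in $\fographa$, and not both.

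Then I would split into these two cases. If $\binder$ is universal, \reflem{lem:target-universal-scope} already yields a literal of $\im\bifib$ lying in $\scopeofin\binder\fographa$. If $\binder$ is existential, \reflem{lem:target-existential-scope} yields precisely the same conclusion. Since universal and existential exhaust all possibilities for a binder, the lemma follows immediately.

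I do not expect any genuine obstacle here: all the difficulty has already been absorbed into \reflem{lem:target-existential-scope}, whose proof runs an induction on the size of the scope and invokes the balancing lemma \reflem{lem:balance} together with \reflem{lem:image-universal-scope}. The present statement is just the clean repackaging of the universal and existential cases into a single assertion about an arbitrary binder in the image, which is the uniform form convenient for subsequent use.
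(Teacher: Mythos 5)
Your proof is correct and matches the paper's own argument exactly: the paper also just splits on whether $\binder$ is universal or existential and applies Lemma~\ref{lem:target-universal-scope} or Lemma~\ref{lem:target-existential-scope} accordingly. Your additional remark that the dichotomy is exhaustive (via the proper-strong-module footnote) is a harmless elaboration of what the paper leaves implicit.
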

\begin{proof}
  If $\binder$ is universal (resp.\ existential) apply \reflem{lem:target-universal-scope} (resp.\ \ref{lem:target-existential-scope}).
\end{proof}
\begin{lemma}\label{lem:image-existential-scope}
For every skew bifibration $\bifib:\fograph\to\fographa$ between fographs and existential binder $\binder$ in $\im\bifib$,
the scope $\scopeofin \binder {\im\bifib}$ contains a literal.
\end{lemma}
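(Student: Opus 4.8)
The plan is to mirror the proof of the universal counterpart, \reflem{lem:image-universal-scope}, now that the essential work has already been carried out in \reflem{lem:target-existential-scope} (the ``target'' version of the present statement). That lemma, established above by invoking the balance lemma \reflem{lem:balance} together with an induction on the number of vertices in the scope, guarantees that the scope $\scopeofin\binder\fographa$ of the existential binder $\binder$ inside the target fograph $\fographa$ contains a literal that lies in the image $\im\bifib$; I would write this literal as $\bifib(\literal)$ for some literal $\literal$ of $\fograph$.

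The only remaining task is to transfer the scope membership from $\fographa$ to the induced subgraph $\im\bifib$. First I would recall that, by definition, $\im\bifib=\induced\fographa{\bifib(\verticesof\fograph)}$ is an induced subgraph of $\fographa$, and that it is a logical cograph by \reflem{lem:im-logical}, so that $\scopeofin\binder{\im\bifib}$ is meaningful and $\binder$ is genuinely a binder of $\im\bifib$. Then I would apply \reflem{lem:induced-scope}, taking the ambient logical cograph to be $\fographa$ and the induced logical cograph to be $\im\bifib$: since the vertex $\bifib(\literal)$ of $\im\bifib$ lies in $\scopeofin\binder\fographa$, that lemma yields $\bifib(\literal)\in\scopeofin\binder{\im\bifib}$, which is exactly the literal in the scope of $\binder$ within $\im\bifib$ that we seek.

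I do not anticipate a serious obstacle: all of the genuinely delicate reasoning—tracking how scopes behave under a skew fibration, and producing a literal rather than merely a binder below the relevant $\graphjoin$ node—has been absorbed into \reflem{lem:target-existential-scope} and into the cotree machinery underlying \reflem{lem:induced-scope}. The one point to verify carefully is that the literal supplied by \reflem{lem:target-existential-scope} is already of the form $\bifib(\literal)$, that is, that it lies in $\im\bifib$ and not merely in $\fographa$; this is precisely the strengthening of the ``target'' statement over a weaker claim, and it is exactly what makes the induced-subgraph transfer via \reflem{lem:induced-scope} applicable.
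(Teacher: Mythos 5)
Your proposal matches the paper's proof: it invokes \reflem{lem:target-existential-scope} to obtain a literal $\literal$ with $\bifib(\literal)\in\scopeofin\binder\fographa$, then transfers this to $\scopeofin\binder{\im\bifib}$ via \reflem{lem:induced-scope} using the fact that $\im\bifib$ is an induced subgraph of $\fographa$. The extra remark that $\im\bifib$ is a logical cograph by \reflem{lem:im-logical} is a harmless addition; the argument is correct and essentially identical to the paper's.
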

\begin{proof}
  By \reflem{lem:image-existential-scope-prep} there exists a literal $\literal$ with $\bifib(\literal)$ in $\scopeofin\binder\fographa$. Since $\im\bifib$ is an induced subgraph of $\fographa$, we have $\bifib(\literal)$ in $\scopeofin\binder{\im\bifib}$ by \reflem{lem:induced-scope}.
\end{proof}

\begin{definition}\label{def:fair}
  A logical cograph $\fograph$ is \defn{fair} if binders $\binder$ and $\binderp$ have the same variable only if $\binder\binderp\tightnotin\edgesof\fograph$.
\end{definition}
Note that every rectified fograph is fair.
\begin{lemma}\label{lem:image-fograph}
  For every skew bifibration $\bifib:\fograph\to\fographa$ between fographs with $\fographa$ fair, $\im\bifib$ is a fair fograph.\todo{\ldots is a fair fograph needed anywhere?}
\end{lemma}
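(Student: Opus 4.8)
The statement bundles two things about $\im\bifib$: that it is a fograph and that it is fair. The plan is to verify the defining conditions one at a time. That $\im\bifib$ is a logical cograph is exactly \reflem{lem:im-logical}. Fairness is immediate: $\im\bifib$ is an induced subgraph of $\fographa$, so its edges and labels are inherited, and two binders of $\im\bifib$ carrying the same variable are like-named binders of $\fographa$, hence non-adjacent in $\fographa$ (as $\fographa$ is fair) and therefore non-adjacent in $\im\bifib$. Next, every binder of $\im\bifib$ is universal or existential (being a binder in a logical cograph), so by \reflem{lem:image-universal-scope} and \reflem{lem:image-existential-scope} its scope in $\im\bifib$ contains a literal. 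The only remaining obligation, and the crux, is legality's second clause: that no scope in $\im\bifib$ contains two binders with the same variable.

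So I would suppose, for contradiction, that $\binder$ and $\binderp$ are distinct $x$-binders of $\im\bifib$ with $\binderp\in\scopeofin{\binder}{\im\bifib}$. By the fairness just established $\binder\binderp\notin\edgesof{\im\bifib}$, so $\binder\meet\binderp$ is a $\graphunion$ node of $\cotreeof{\im\bifib}$ by \reflem{lem:meet}; combined with $\binderp\in\scopeofin{\binder}{\im\bifib}$ and \reflem{lem:scope-parent} this forces the parent of $\binder$ in $\cotreeof{\im\bifib}$ to be exactly $\binder\umeet\binderp$, so $\binder$ is universal in $\im\bifib$ (\reflem{lem:node-strong-module}). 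I would first rule out $\binder$ being existential in $\fographa$: by \reflem{lem:target-universal-scope} the scope $\scopeofin{\binder}{\fographa}$ contains a literal $\bifib(\literal)\in\im\bifib$, which (were $\binder$ existential in $\fographa$) would be adjacent to $\binder$; but \reflem{lem:induced-scope} then places $\bifib(\literal)$ in $\scopeofin{\binder}{\im\bifib}$, contradicting universality of $\binder$ there. Hence $\binder$ is universal in $\fographa$ too, with a $\graphunion$ parent $P$ in $\cotreeof{\fographa}$; since $\fographa$ is itself a fograph, $\binder$ is legal there, so $\binderp\notin\scopeofin{\binder}{\fographa}$, which puts $P$ strictly below $\binder\meet\binderp$ and makes its parent a $\graphjoin$ node $J$.

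The heart of the argument is to show that passing to the induced image does not raise $\binder$'s parent above $P$, so that $\binderp$ cannot enter its scope. I would choose a leaf $r$ below $J$ in a branch other than $P$'s, so $\binder r\in\edgesof{\fographa}$ by \reflem{lem:meet}. Applying the skew-fibration lifting of $\bifib$ to a preimage of $\binder$ and the edge $r\binder$ yields a neighbour of that preimage whose image $s\in\im\bifib$ is adjacent to $\binder$ yet non-adjacent to $r$ in $\fographa$; the non-adjacency to $r$ forces $\binder\meet s=J$, so $s$ is an image vertex lying under $J$ in a branch other than $P$'s. Thus $J$ retains at least two image-bearing branches (those of $P$ and of $s$), and $P$ retains at least two (those of $\binder$ and of the scope-literal $\bifib(\literal)$ below $P$); computing $\cotreeof{\im\bifib}$ as $\absorbed{\induced{\cotreeof{\fographa}}{\verticesof{\im\bifib}}}$ via \reflem{lem:induced-cotree}, neither $J$ nor $P$ is unary or repeating, hence neither is absorbed and $P$ survives as the parent of $\binder$. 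Then $\scopeofin{\binder}{\im\bifib}$ consists of image vertices below $P$, which excludes $\binderp$, contradicting the assumption and completing legality.

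The single genuine obstacle is this last paragraph. Image-scopes can be strictly larger than the corresponding $\fographa$-scopes, because the cotree absorption in \reflem{lem:induced-cotree} can lift a binder's parent and swallow a like-named binder, so fairness and the $\fographa$-fograph property alone do not suffice. What I expect to rescue the statement is the skew-fibration lifting property, which manufactures an image neighbour of $\binder$ at precisely the join node $J$ just above its $\graphunion$ parent and thereby blocks the absorption. The bookkeeping will be complicated by the fact that a binder's existential/universal type can differ between $\fographa$ and $\im\bifib$, which is exactly why I need both the reduction to the $\fographa$-universal case and the image-scope lemmas \reflem{lem:target-universal-scope} and \reflem{lem:image-universal-scope}.
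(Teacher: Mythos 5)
Your proposal is correct and its skeleton matches the paper's: \reflem{lem:im-logical} for the logical-cograph part, the induced-subgraph argument for fairness, and \reflem{lem:image-universal-scope} and \reflem{lem:image-existential-scope} for the literal-in-scope condition are exactly the steps the paper takes. The divergence is in the final legality clause. The paper splits on whether $\binder$ is existential or universal in $\im\bifib$: the existential case is dismissed via fairness of $\fographa$ (as you do, in effect, at the outset), and in the universal case the paper passes in a single step from $\binder\childin{\im\bifib}\binder\umeet\binderp\abovein{\im\bifib}\binderp$ to $\binder\childin{\fographa}\binder\umeet\binderp\abovein{\fographa}\binderp$, i.e.\ it asserts that $\binderp\in\scopeofin\binder{\im\bifib}$ forces $\binderp\in\scopeofin\binder\fographa$ and then contradicts $\fographa$ being a fograph. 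You correctly observe that this implication is the converse of \reflem{lem:induced-scope} and is not a general fact about induced subgraphs (cotree absorption can raise $\binder$'s parent and enlarge its scope), and you supply the ingredient that makes it true here: the skew-fibration lifting applied at the $\graphjoin$ node $J$ immediately above $\binder$'s $\graphunion$ parent $P$ in $\cotreeof\fographa$ yields an image vertex in a sibling branch of $P$, which together with the image literal from \reflem{lem:target-universal-scope} keeps $J$ and $P$ non-unary, so $P$ survives as $\binder$'s parent in $\cotreeof{\im\bifib}$. This is essentially a hand-instantiated case of the balance property (\reflem{lem:balance}, \reflem{lem:im-marking}) that the paper only deploys in the later marking-and-pruning subsection; your version makes the crux step explicit where the paper compresses it to the word ``so''. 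One small inaccuracy: $J$ itself can be absorbed (if its $\graphunion$ parent becomes unary, $J$ becomes repeating), but this is harmless, since all that is needed is that $P$ is never unary and its parent always carries a $\graphjoin$ label, so $P$ is never absorbed and remains the parent of $\binder$; your argument delivers exactly that.
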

\begin{proof}
By \reflem{lem:im-logical} $\im\bifib$ is a logical cograph, and $\im\bifib$ is fair since if $\binder\binderp\tightin\edgesof{\im\bifib}$ for binders $\binder$ and $\binderp$ with the same variable, then $\binder\binderp\tightin\edgesof\fographa$ since $\im\bifib$ is an induced subgraph, contradicting the fairness of $\fographa$. It remains to show that
(1) for every binder $\binder$ in $\im\bifib$ the scope $\scopeofin\binder{\im\bifib}$ contains a literal, and (2) for every variable $x$ and every $x$-binder $\binder$ in $\im\bifib$, the scope $\scopeofin\binder{\im\bifib}$ contains no other $x$-binder.

(1) If $\binder$ is universal (resp.\ existential), then by \reflem{lem:image-universal-scope} (resp.\ \ref{lem:image-existential-scope}), the scope
$\scopeofin\binder{\im\bifib}$ contains a literal.

(2)
Suppose $\binderp$ were another $x$-binder with $\binderp\tightin\scopeofin\binder{\im\bifib}$,
\ie, $\binder\childin{\im\bifib}\binder\meet\binderp\abovein{\im\bifib}\binderp$.
If $\binder$ is universal,
then
$\binder\meet\binderp\tighteq\binder\umeet\binderp$,
so
$\binder\childin{\fographa}\binder\umeet\binderp\abovein\fographa\binderp$,
whence $\binderp\tightin\scopeofin\binder\fographa$, contradicting the fact that $\fographa$ is a fograph.
Otherwise, $\binder$ is existential,
and $\binder\meet\binderp\tighteq\binder\jmeet\binderp$, so $\binder\binderp\tightin\edgesof{\im\bifib}$.
Since $\im\bifib$ is an induced subgraph of $\fographa$, we have $\binder\binderp\tightin\edgesof\fographa$, contradicting the fairness of $\fographa$.
\end{proof}
The following example illustrates why fairness of $\fographa$ is required to ensure no $x$-binder is in the scope of another in \reflem{lem:image-fograph}.
Let $\graph=\xgraphof{(\exists x\mkern2mu\px)\tightvee(\exists x\mkern2mu\px)}
=
\namedsingletonleft{x}{x}\hspace{1ex}\namedsingletonright{px}{px}\e{x}{px}
\hspace{1ex}
\namedsingletonleft{x}{x}\hspace{1ex}\namedsingletonright{px}{px}\e{x}{px}$,
let
$\graphaa=\xgraphof{q\mkern-.5mu\vee\exists x\mkern2mu\px}
=
\singletonq
\hspace{1ex}
\namedsingletonleft{x}{x}\hspace{1ex}\namedsingletonright{px}{px}\e{x}{px}$,
and let $\map$ be the unique label-preserving graph homomorphism $\graph\to\graphaa$, which is a skew bifibration between fographs.
Then
$\map\tightwedge\map:\graph\tightwedge\graph\to\grapha=\graphaa\tightwedge\graphaa$
is a skew bifibration between fographs, with $\grapha$ not fair, and $\im\map$ is
$
(\namedsingletonleft{x}{x}\hspace{1ex}\namedsingletonright{px}{px}\e{x}{px})
\tightwedge
(\namedsingletonleft{x}{x}\hspace{1ex}\namedsingletonright{px}{px}\e{x}{px})
$, which is not well-defined fograph since each $\singletonx$ is in the scope of the other.

\subsubsection{Marking and pruning}
Let $\cograph$ be a cograph and let $U\tightsubseteq\verticesof\cograph$.
A node $\node$ in the cotree $\cotreeof\cograph$ is \defn{over} $U$ if
$n\atorabovein\cograph u$
for some vertex $u\tightin U$.
Define
the \defn{support}
$\markingof U\subseteq\nodesof{\cotreeof\cograph}$
as the set of nodes over $U$,
and say that $U$ is \defn{balanced} for $\cograph$ if, for every $\graphjoin$ node $\node$ in $\cotreeof\cograph$ and child $\nodea$ of $\node$, we have $\nodea\tightin\markingof U$ if $\node\tightin\markingof U$.
\begin{lemma}\label{lem:im-balanced}
\hspace{-.4pt}If $\map:\graph\mkern-2mu\to\mkern-2mu\grapha$ is a skew fibration between cographs
then $\map(\verticesof\graph\mkern-2mu)\tightsubseteq\verticesof\grapha$ is balanced for $\grapha$.
\end{lemma}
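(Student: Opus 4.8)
The plan is to reduce the statement directly to the balance lemma (Lemma~\ref{lem:balance}), which already performs all the structural work through the skew-fibration lifting property; the remaining task is only to match up notation. Write $U=\map(\verticesof\graph)$ for the image vertex set. By the definition of \emph{balanced}, it suffices to fix a $\graphjoin$ node $p$ of $\cotreeof\grapha$ and a child $m$ of $p$, assume $p\in\markingof U$, and deduce $m\in\markingof U$.

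First I would unfold the hypothesis $p\in\markingof U$: this supplies a vertex $v\in\verticesof\graph$ with $p\atorabovein\grapha\map(v)$, where $\map(v)$ is a leaf of $\cotreeof\grapha$ (identifying vertices with leaves, Lemma~\ref{lem:vertex-to-leaf}). Since the $\graphjoin$ node $p$ is not a leaf, $\map(v)\belowin\grapha p$ strictly, so $\map(v)$ lies at or below a \emph{unique} child $n$ of $p$; in particular $n\in\markingof U$. If $n=m$ the conclusion is immediate, so I may assume $m\tightneq n$.

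Next I would verify the meet configuration required by Lemma~\ref{lem:balance}. As $m$ and $n$ are distinct children of $p$, no node strictly below $p$ lies above both (they sit in different child-subtrees), whence $m\meet n=p$; and $p$ being a $\graphjoin$ node gives $m\jmeet n=p$. Thus $m\childin\grapha m\jmeet n\parentin\grapha n$, which is exactly the hypothesis of Lemma~\ref{lem:balance}. Since $\map(v)\atorbelowin\grapha n$ witnesses the right-hand side of that lemma's biconditional, the lemma yields some $w\in\verticesof\graph$ with $\map(w)\atorbelowin\grapha m$, i.e.\ $m\in\markingof U$, as required.

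Because Lemma~\ref{lem:balance} encapsulates the only genuine appeal to the skew-fibration condition, I anticipate no real obstacle here: the whole argument is the translation of \emph{balanced} into that lemma's meet hypothesis. The two points needing a little care are the identification $m\jmeet n=p$ for distinct children of a $\graphjoin$ node and the separate disposal of the degenerate case $n=m$; neither is difficult, and the hard structural reasoning has already been carried out inside Lemma~\ref{lem:balance}.
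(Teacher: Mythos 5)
Your proof is correct and takes essentially the same route as the paper, whose entire proof of this lemma is the single line ``A corollary of Lemma\,\ref{lem:balance}.'' Your write-up simply makes that corollary explicit --- locating the child $n$ of the $\graphjoin$ node that lies over the image, checking $m\jmeet n=p$ for a distinct sibling $m$, and invoking the biconditional of Lemma\,\ref{lem:balance} --- which is exactly the intended reduction.
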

\begin{proof}
A corollary of \reflem{lem:balance}.
\end{proof}
Let $\cograph$ be a cograph and let $U\tightsubseteq\verticesof\cograph$.
A \plustimesnode $\node$ in $\markingof U$ is \defn{literal-supported}
if there exists a literal $\literal\tightin U$ with $\node\atorabovein\cograph\literal$.
We say that $U$ is \defn{binding-closed} if, for every literal $\literal\tightin U$ and binder $\binder$ in $\cograph$
such that $\binder$ binds $\literal$, we have $\binder\tightin U$.
\begin{definition}\label{def:marking}
Let $\fograph$ be a fograph. A set $U\tightsubseteq\verticesof\fograph$ is a \defn{marking} for $\fograph$ if it is balanced, every \plustimesnode of $\markingof U$ is literal-supported, and $U$ is binding-closed.
\end{definition}
\begin{lemma}\label{lem:im-marking}
If $\bifib:\fograph\to\fographa$ is a skew bifibration
between fographs
then
$\bifib(\verticesof\fograph)$ is a marking for $\fographa$.
\end{lemma}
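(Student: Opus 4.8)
The plan is to verify directly the three defining conditions of a marking (Def.\,\ref{def:marking}) for the set $U=\bifib(\verticesof\fograph)\tightsubseteq\verticesof\fographa$: that $U$ is balanced for $\fographa$, that every \plustimesnode of its support $\markingof U$ is literal-supported, and that $U$ is binding-closed. Two of these are essentially immediate. Since a skew bifibration is in particular a skew fibration between the underlying cographs, balancedness of $U$ for $\fographa$ is exactly \reflem{lem:im-balanced}. For binding-closedness, suppose a literal $\literal\tightin U$ is bound by a binder $\binder$ of $\fographa$, and write $\literal=\bifib(\tilde\literal)$ for some $\tilde\literal\tightin\verticesof\fograph$. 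Then $\diedge\binder\literal\tightin\edgesof{\bindinggraphof\fographa}$, so applying the fibration condition of the binding fibration $\bifib:\bindinggraphof\fograph\to\bindinggraphof\fographa$ at $\tilde\literal$ yields a (unique) $\tilde\binder\tightin\verticesof\fograph$ with $\diedge{\tilde\binder}{\tilde\literal}\tightin\edgesof{\bindinggraphof\fograph}$ and $\bifib(\tilde\binder)=\binder$, whence $\binder=\bifib(\tilde\binder)\tightin U$.

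The substantive step is literal-supportedness. I would take an arbitrary \plustimesnode $\node$ of $\cotreeof\fographa$ lying in $\markingof U$, so that $\node\atorabovein\fographa u$ for some $u\tightin U$; since $\node$ is not a leaf while $u$ is, in fact $u\belowin\fographa\node$ strictly, and consequently $\parentof u\atorbelowin\fographa\node$. If $u$ is already a literal, then $u$ itself witnesses that $\node$ is literal-supported. Otherwise $u$ is a binder of $\im\bifib$, and I would invoke \reflem{lem:target-scope} to produce a literal $\literal$ of $\im\bifib$ (hence $\literal\tightin U$) with $\literal\tightin\scopeofin u\fographa$. By \reflem{lem:parent-scope} this membership means $\parentof u$ lies at or above $\literal$ in $\cotreeof\fographa$, i.e. $\literal\atorbelowin\fographa\parentof u$; chaining with $\parentof u\atorbelowin\fographa\node$ gives $\node\atorabovein\fographa\literal$. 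Thus $\node$ is literal-supported by the literal $\literal\tightin U$, completing the third condition.

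The main obstacle is making this literal-supportedness argument clean, and this is precisely why the earlier scope lemmas were stated in the target $\fographa$ rather than in $\im\bifib$: \reflem{lem:target-scope} locates a literal of $U$ inside the scope $\scopeofin u\fographa$ \emph{computed in} $\fographa$, so that both comparisons $\parentof u\atorbelowin\fographa\node$ and $\literal\atorbelowin\fographa\parentof u$ take place in one and the same tree $\cotreeof\fographa$ and chain together. Had the literal only been guaranteed in the scope of $u$ computed within $\im\bifib$, relating its position to $\node$ (a node of $\cotreeof\fographa$) would require transporting between $\cotreeof{\im\bifib}$ and $\cotreeof\fographa$ — exactly the work already packaged into \reflem{lem:target-scope} via Lemmas\,\ref{lem:balance} and \ref{lem:target-existential-scope}. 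The remaining verifications (that being a binder or literal is inherited by the induced subgraph $\im\bifib$, and that a leaf strictly below a \plustimesnode has its parent at or below that node) are routine cotree manipulations.
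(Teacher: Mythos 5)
Your proof is correct and follows essentially the same route as the paper's: balancedness via Lemma\,\ref{lem:balance} (through Lemma\,\ref{lem:im-balanced}), literal-supportedness via Lemma\,\ref{lem:target-scope} together with Lemma\,\ref{lem:parent-scope}, and binding-closedness from the fibration condition on binding graphs. The paper states these three appeals in a single sentence; you have merely filled in the (correct) details.
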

\begin{proof}
Let $U=\bifib(\verticesof\fograph)$.
By \reflem{lem:balance}, $U$ is balanced, by \reflem{lem:target-scope}, every node $\node$ in $\markingof U$ is literal-supported,\todo{more}
and $U$ is binding-closed since $\bifib:\bindinggraphof\fograph\to\bindinggraphof\fographa$ is a directed graph fibration.
\end{proof}
Let $\node$ be the child of a $\graphunion$ node $\nodea$ in a \plustimestree $\tree$.
The node $\node$ is \defn{critical} to $\nodea$ if $\node$ is the only child of $\nodea$ which is at or above a literal.
If $\node$ is an $x$-binder for some variable $x$, then $\node$ is \defn{vacuous} if it is the unique node in the subtree rooted at $\nodea$ whose label contains $x$.\footnote{Thus in the cograph $\cographof\tree$, the binder $\node$ (which is universal since its parent is a $\graphunion$ node) binds no literal.}

\begin{definition}\label{def:pareable}\label{def:pare}
A node $\node$ in a \plustimestree $\tree$ is \defn{pareable} if:
\begin{enumerate}
\item $\node$ has a parent $\graphunion$ node $\nodea$,
\item $\node$ is not critical to $\nodea$, and
\item if $\node$ is a binder (necessarily universal) then it is vacuous.
\end{enumerate}
To \defn{pare} a pareable node $\node$ in a \plustimestree $\tree$ is to delete the subtree rooted at $\node$.
\end{definition}
\begin{definition}
A \defn{pruning} is any result of iteratively paring zero or more pareable $\graphunion$ nodes.
\end{definition}
\begin{lemma}\label{lem:marking-pruning}
  Let $\fograph$ be a fograph with marking $U$,
  and let $\tree$ be a \plustimestree such that $\cographof\tree=\fograph$.
There exists a pruning $\treep$ of $\tree$ with $\cographof\treep=\induced\fograph U$.
\end{lemma}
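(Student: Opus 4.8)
The plan is to pare away from $\tree$ exactly the nodes none of whose descendant leaves lie in $U$, i.e.\ the maximal nodes outside the support $\markingof U$. I assume $U\tightneq\emptyset$ (otherwise $\induced\fograph U$ is empty and there is nothing to construct). Since a node lies over $U$ as soon as one of its descendants does, the nodes over $U$ are upward closed in $\tree$, so the maximal nodes over no element of $U$ form an antichain $P$, and paring every node of $P$ deletes precisely the leaves outside $U$ while keeping every leaf of $U$. The whole task is to show the members of $P$ can be pared in some order, each being pareable (Definition~\ref{def:pare}) when it is removed. The marking hypotheses are phrased with respect to $\cotreeof\fograph=\absorbed\tree$, so I transport ancestor and scope facts between $\tree$ and its cotree using \reflem{lem:absorbed-parent-below} (together with \reflem{lem:absorption-parent-below} and \reflem{lem:induced-parent-below}). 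In particular, if $\node\tightin P$ has parent $\nodea$, then $\nodea$ is over $U$ while $\node$ is not, and balance of $U$ forces $\nodea$ to be a $\graphunion$ node (a $\graphjoin$ node over $U$ has all children over $U$), giving clause~1 of pareability; literal-supportedness of $\nodea$ supplies a sibling $\nodep\tightneq\node$ at or above a $U$-literal, so $\node$ is not critical to $\nodea$, giving clause~2. Both clauses hold for every $\node\tightin P$ regardless of the order of paring.

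The main obstacle is clause~3: a pared binder must be vacuous. A binder leaf $\node\tightin P$ carrying variable $x$ can fail to be vacuous in $\tree$, because $\node$ may bind an $x$-literal that is not in $U$ but sits in a sibling subtree under $\nodea$. The remedy is to pare in two phases. In phase~1 I pare every \emph{non-binder} node of $P$ (internal nodes and literal leaves); for these clause~3 is vacuous, and since every non-$U$ literal has a highest over-$U$-free ancestor in $P$ which is a literal leaf or an internal node, never a binder leaf, phase~1 deletes every non-$U$ literal. In phase~2 I pare the binder leaves of $P$, which survive phase~1 because $P$ is an antichain.

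The key point is that after phase~1 each such $x$-binder $\node$ is \emph{vacuous}, i.e.\ the unique node under $\nodea$ whose label contains $x$. Here I use that every leaf below $\nodea$ in $\tree$ lies in the scope of $\node$: for each leaf $w$ strictly below $\nodea$ one has $\node\parentabovein\tree w$, so \reflem{lem:absorbed-parent-below} gives $\node\parentabovein{\cotreeof\fograph}w$, whence $w\tightin\scopeofin\node\fograph$ by \reflem{lem:parent-scope}. Consequently no other $x$-binder can lie below $\nodea$, since $\node$ is a legal binder of the fograph $\fograph$ and its scope contains no second $x$-binder; and any $x$-literal below $\nodea$ would be bound by $\node$, so if it lay in $U$ then binding-closedness of $U$ would force $\node\tightin U$, contradicting $\node\tightnotin U$, while non-$U$ $x$-literals were already removed in phase~1. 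Hence no $x$-labelled node other than $\node$ survives under $\nodea$, so $\node$ is vacuous and pareable; paring binder leaves only shrinks the relevant subtrees, so vacuity and clauses~1--2 persist throughout phase~2.

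It remains to identify the resulting pruning $\treep$. After both phases the surviving nodes are exactly those over $U$, whose leaves are exactly $U$. For $v,w\tightin U$ the meet $v\meet w$ is over $U$, hence survives, carrying the same $\graphunion$/$\graphjoin$ label as in $\tree$; since, by the inductive definition of $\cographof{-}$ (Definition~\ref{def:cograph-of}), adjacency of two leaves is decided by whether their meet is a $\graphjoin$ node, the edges of $\cographof\treep$ on $U$ coincide with those of $\cographof\tree=\fograph$, i.e.\ with $\induced\fograph U$. Thus $\cographof\treep=\induced\fograph U$, as required. The parts I expect to demand the most care are the two-phase ordering underlying clause~3 and the transport of the cotree-based marking conditions to the arbitrary \plustimestree $\tree$ via the absorption lemmas.
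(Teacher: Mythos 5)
Your overall architecture is sensible, and the hardest part---clause~3, binder vacuity---is handled convincingly by the two-phase ordering together with the transport of scope through \reflem{lem:absorbed-parent-below} and \reflem{lem:parent-scope}. (The paper itself offers no detail here, calling the proof a routine induction, so there is nothing to compare against on that front.) However, your clause~2 step contains a genuine gap. You claim that ``literal-supportedness of $\nodea$ supplies a sibling $\nodep\tightneq\node$ at or above a $U$-literal.'' But literal-supportedness is a hypothesis about \plustimesnodes of the support $\markingof U$, which by Definition~\ref{def:marking} lives in the \emph{cotree} $\cotreeof\fograph=\absorbed\tree$, not in the arbitrary \plustimestree $\tree$. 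If $\nodea$ is a repeating $\graphunion$ node of $\tree$, it is absorbed when forming the cotree, and the $U$-literal guaranteed below its cotree image need not lie below $\nodea$ in $\tree$; the absorption lemmas you cite only push ancestry facts \emph{from} $\tree$ \emph{to} $\absorbed\tree$, which is the wrong direction here.

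This is not a repairable oversight in your write-up: the statement is false for general $\tree$. Take $\fograph=\graphof{\forall x\,(p\tightvee q)}=\singleton{x}\graphunion\singleton{p}\graphunion\singleton{q}$ with $p,q$ nullary, $\tree=\graphunion(\graphunion\,\singleton{x}\,\singleton{p})\,\singleton{q}$, and $U=\setof{\singleton{x},\singleton{q}}$. Then $U$ is a marking (balance is vacuous, the only \plustimesnode of $\markingof U$ is the root of the cotree, supported by the literal $\singleton{q}\tightin U$, and binding-closure is trivial), and $\induced\fograph U=\singleton{x}\graphunion\singleton{q}$. Yet $\singleton{p}$ is critical to the inner $\graphunion$ node (its only sibling is the binder $\singleton{x}$, which is not at or above a literal), and this remains true after any admissible parings, so no pruning of $\tree$ removes $\singleton{p}$ without also removing $\singleton{x}$. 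Restricted to $\tree=\cotreeof\fograph$---the only instance the paper actually uses, in \reflem{lem:image-inclusion-wsmap}---your argument does go through: there $\nodea$ is itself a node of $\markingof U$, literal-supportedness directly yields a $U$-literal below $\nodea$, and since $\node$ is not over $U$ that literal sits under a different child, which survives all parings. You should therefore either restrict the statement to the cotree or strengthen the marking hypotheses; as written, your clause~2 cannot be justified.
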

\begin{proof}
  A routine induction on the number of nodes in $\tree$.\todo{elaborate: see sheet 22 of notepad Apr'19}
\end{proof}

\subsubsection{Decomposition of skew bifibrations}\label{sec:decomp-bifib}

\begin{definition}
  If $\fograph$ is a connected fograph without the variable $x$, define \defn{slackening} $\slackeningof\fograph x$ as the canonical inclusion map $\fograph\to\forall x\mkern2mu\fograph$.
\end{definition}
\begin{lemma}\label{lem:slackening-structural}
  Every slackening is a structural map.
\end{lemma}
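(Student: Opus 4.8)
The plan is to exhibit the slackening $\slackeningof{\fograph}{x}:\fograph\to\forall x\mkern2mu\fograph$ as an explicit composite of a pure weakening, an isomorphism, and a contraction, each of which is structural, and then invoke closure of structural maps under composition. The key observation is that $\forall x\mkern2mu\fograph=\singletonx\graphunion\fograph$ merely adjoins a single \emph{vacuous} universal binder $\singletonx$ (vacuous because $\fograph$ lacks the variable $x$). A lone binder is not itself a fograph, so $\singletonx$ cannot be introduced by a bare weakening; but the whole fograph $\forall x\mkern2mu\fograph$ \emph{is} a legitimate weakening component, and a subsequent contraction can discard the redundant copy of $\fograph$ it drags in.

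Concretely, I would first weaken by $\forall x\mkern2mu\fograph$ itself. Since $\forall x\mkern2mu\fograph$ is a fograph (\reflem{lem:fograph-ops}) and $\singletonx$ binds nothing, $\fograph$ and $\forall x\mkern2mu\fograph$ are $\vee$-compatible, so the pure weakening $\weakeningof{\fograph}{\forall x\mkern2mu\fograph}:\fograph\to\fograph\tightvee\forall x\mkern2mu\fograph$ is defined. Its target $\fograph\graphunion(\singletonx\graphunion\fograph)$ is, by commutativity and associativity of $\graphunion$ and our identification of graphs up to vertex renaming, the same graph as $\singletonx\graphunion(\fograph\graphunion\fograph)=\forall x\mkern2mu(\fograph\tightvee\fograph)$. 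Applying the $\forall x$ fograph-connective to the pure contraction $\contractionof{\fograph}:\fograph\tightvee\fograph\to\fograph$ then produces $\forall x\mkern2mu\contractionof{\fograph}:\forall x\mkern2mu(\fograph\tightvee\fograph)\to\forall x\mkern2mu\fograph$, which folds the two copies of $\fograph$ while fixing $\singletonx$; this is a contraction, being generated from a pure contraction by a fograph connective. I would then check that the composite
\[
\fograph\xrightarrow{\ \weakeningof{\fograph}{\forall x\mkern2mu\fograph}\ }\fograph\tightvee\forall x\mkern2mu\fograph\ \cong\ \forall x\mkern2mu(\fograph\tightvee\fograph)\xrightarrow{\ \forall x\mkern2mu\contractionof{\fograph}\ }\forall x\mkern2mu\fograph
\]
sends each vertex of $\fograph$ to its copy in $\forall x\mkern2mu\fograph$ and misses $\singletonx$, \ie\ is precisely the canonical inclusion, namely the slackening.

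The main obstacle — and the reason the hypothesis that $\fograph$ is \emph{connected} is essential — will be verifying that the pure contraction $\contractionof{\fograph}$, equivalently that $\fograph\tightvee\fograph$, is well-defined. The graph $\fograph\graphunion\fograph$ contains two binders for each variable, and for it to be a fograph neither may lie in the scope of the other. When $\fograph$ is connected its cotree is rooted at a $\graphjoin$ node (or is a single leaf), so in the cotree of $\fograph\graphunion\fograph$ the parent of any binder remains inside its own copy; hence each binder's scope is confined to one copy, and legality (together with $\vee$-compatibility) is preserved. I would make this scope argument precise via \reflem{lem:parent-scope} and \reflem{lem:node-strong-module}. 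The remaining verifications ($\vee$-compatibility of the weakening, that $\singletonx$ creates no new binding, and that the displayed composite is exactly the inclusion) are routine.
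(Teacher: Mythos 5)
Your proposal is correct and follows exactly the paper's argument: weaken $\fograph$ to $\fograph\tightvee\forall x\mkern2mu\fograph$, identify this with $\forall x\mkern1mu(\fograph\tightvee\fograph)$, and contract under $\forall x$, with connectedness of $\fograph$ guaranteeing that $\fograph\tightvee\fograph$ is well-defined. You supply more detail than the paper's two-line proof (in particular the cotree argument for why each binder's scope stays in its own copy), but the decomposition and the key observations are the same.
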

\begin{proof}
  Weaken $\fograph$ to $\fograph\vee\forall x\mkern2mu\fograph$, which is $\forall x(\fograph\vee\fograph)$, then contract under $\forall x$ to $\forall x\mkern2mu\fograph$. (Note that $\fograph\vee\fograph$ is well-defined because $\fograph$ is connected.)
\end{proof}
\begin{definition}
  A \defn{\wsmap} is any map constructed from isomorphisms, weakenings and slackenings by composition and fograph connectives.
\end{definition}
\begin{lemma}\label{lem:ws-structural}
  Every \wsmap is a structural map.
\end{lemma}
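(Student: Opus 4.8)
The plan is to compare the two generating schemes directly. A structural map is built from isomorphisms, contractions and weakenings by composition alone, whereas a \wsmap is built from isomorphisms, weakenings and slackenings by composition \emph{and} the fograph connectives. Since structural maps are closed under composition by their very definition, it suffices to show (a) that every generator of a \wsmap is already a structural map, and (b) that the structural maps are closed under the fograph connectives $\wedge$, $\vee$, $\forall x$ and $\exists x$; the lemma then follows by a routine induction on the construction of a \wsmap. Part (a) is immediate: isomorphisms and weakenings are structural directly from the definition of structural map, and every slackening is a structural map by \reflem{lem:slackening-structural}.

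The heart of the argument is (b), and the key tool is the interchange (functoriality) law relating the fograph connectives to composition. At the level of underlying vertex functions each connective acts as a disjoint union $\map\cup\mapp$ (or as $\map\cup\setof{\singletonx\shortmapsto\singletonx}$ in the quantifier cases), so whenever the relevant graphs are defined one has $(g'\compose g)\wedgeorvee(h'\compose h)=(g'\wedgeorvee h')\compose(g\wedgeorvee h)$ for a binary connective $\wedgeorveeinset$, together with $\forall x\,(g'\compose g)=(\forall x\,g')\compose(\forall x\,g)$ and $\exists x\,(g'\compose g)=(\exists x\,g')\compose(\exists x\,g)$; moreover identities decompose, $\mathrm{id}\wedgeorvee\mathrm{id}=\mathrm{id}$ and $\forall x\,\mathrm{id}=\exists x\,\mathrm{id}=\mathrm{id}$. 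Using these I would first split a binary connective into two one-sided ones, $\map\wedgeorvee\mapp=(\map\wedgeorvee\mathrm{id})\compose(\mathrm{id}\wedgeorvee\mapp)$, reducing to connectives in which one argument is the identity. Writing a structural map as a composite $g_k\compose\cdots\compose g_1$ of generators and threading the identity through by interchange, each such one-sided connective becomes a composite $(g_k\wedgeorvee\mathrm{id})\compose\cdots\compose(g_1\wedgeorvee\mathrm{id})$, and likewise $\forall x$ and $\exists x$ distribute over the factors $g_i$. This arranges that no connective is ever applied to two distinct non-identity generators at once.

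It then remains to check the base case: a single connective applied to a single generator (with the identity filling the other slot, for the binary case) is again a generator. This is clear for isomorphisms, and for a pure contraction or pure weakening it holds by Definition\,\ref{def:contraction-weakening-map}, which defines contractions and weakenings to be \emph{precisely} the maps obtained from the pure operations by the fograph connectives; so each factor $g_i\wedgeorvee\mathrm{id}$, $\forall x\,g_i$, $\exists x\,g_i$ is a structural generator, and the connective of structural maps is a structural map. The final induction then closes the argument, with base cases structural by (a), composition structural by definition, and the connective cases structural by (b). I expect the main obstacle to be the careful bookkeeping around the interchange law: verifying that the connectives genuinely commute with composition as vertex functions (so that disjoint unions of domains are respected), and ensuring that the one-sided splitting really does reduce every case to a single pure operation sitting in a context of identities, so that Definition\,\ref{def:contraction-weakening-map} applies verbatim.
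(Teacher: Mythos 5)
Your proof is correct and follows essentially the same route as the paper, whose entire proof is the single line ``Iterate \reflem{lem:slackening-structural}'': the generators of a \wsmap reduce to structural generators (slackening decomposing as a weakening followed by a contraction), and the fograph connectives are absorbed because contractions and weakenings are by Definition\,\ref{def:contraction-weakening-map} closed under them. Your explicit treatment of the interchange law and the closure of structural maps under the connectives spells out carefully what the paper leaves implicit, and is the right bookkeeping to make the one-line argument rigorous.
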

\begin{proof}
  Iterate \reflem{lem:slackening-structural}.
\end{proof}
\begin{lemma}\label{lem:paring-ws}
  Let $\fograph$ be a fograph,
  let $\tree$ be a \plustimestree such that $\cographof\tree=\fograph$,
  let $\treep$ be the result of paring a pareable node in $\tree$, and let $\fographp=\cographof\treep$.
There exists a \wsmap $\fographp\to\fograph$.
\end{lemma}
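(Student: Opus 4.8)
The plan is to realise the inclusion $\fographp\hookrightarrow\fograph$ as a \wsmap by inserting a single pure weakening or slackening at the pared site and propagating it through the surrounding tree context by fograph connectives. Write $\nodea$ for the parent $\graphunion$ node of $\node$, let $A=\cographof{\tree_\node}$ be the subcograph contributed by the subtree rooted at $\node$, and let $B$ be the subcograph contributed by the remaining children of $\nodea$, so that the subtree at $\nodea$ yields $A\graphunion B$ in $\fograph$ and $B$ in $\fographp$. Since $\cographof{-}$ depends only on the leaves and the shape of the tree, $\fograph$ and $\fographp$ are identical outside the subtree at $\nodea$; by Lemma \ref{lem:fograph-iff-constructed} both are built from literals by the fograph connectives $\wedge,\vee,\forall,\exists$ following their trees. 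So it suffices to produce a \wsmap $\mu\colon B\to A\graphunion B$ and then apply to $\mu$ the same sequence of connectives that sits above $\nodea$ (composed with identities on the untouched branches); since \wsmaps are by definition closed under fograph connectives and composition, the result is the desired \wsmap $\fographp\to\fograph$.

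First I would dispose of the case where $\node$ is not a binder. Here $A$ contains a literal — a subtree of a fograph all of whose leaves were binders would exhibit a proper strong module with no literal, contradicting legality — so $A$ is a fograph. Non-criticality of $\node$ supplies a sibling above a literal, so $B$ is a fograph too, and the absence of cross-bindings between $A$ and $B$ (they lie in a $\graphunion$ inside the fograph $\fograph$, so each binder's scope stays on its own side) makes them $\vee$-compatible. Then $\mu=\weakeningof{B}{A}\colon B\to B\vee A=A\graphunion B$ is a pure weakening, hence a \wsmap.

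The second and harder case is $\node$ a binder, necessarily a vacuous universal binder $\singletonx$, so $A=\singletonx$ and $A\graphunion B=\singletonx\graphunion B=\forall x\,B$ (well-defined since vacuousness gives $x\notin B$). Legality of $\singletonx$ in $\fograph$ forces $B$ to contain a literal. When $B$ is connected, $\mu$ is exactly a slackening $B\to\forall x\,B$. When $B$ is disconnected the obstacle is that slackening demands a connected base while the naive $\graphunion$-components of $B$ need not be fographs (a lone sibling binder is not a fograph). I would resolve this by re-reading $B$, via Lemmas \ref{lem:fograph-iff-constructed} and \ref{lem:xgraph-commute}, as fograph connectives applied to a connected literal-bearing core $B_0$ — absorbing lone universal siblings as $\forall$-connectives and the remaining disjuncts as $\vee$-connectives — slackening $B_0$ to $\forall x\,B_0$, and wrapping that single slackening back in the very same connectives, using the quantifier/union commutativity equalities of Lemma \ref{lem:xgraph-surj} to identify the outcome with $\forall x\,B$. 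This again yields a \wsmap.

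I expect the entire difficulty to live in this binder case: not the existence of the slackening itself, but the bookkeeping that certifies each intermediate graph is a genuine fograph, so that the fograph connectives and the weakening/slackening constructors are even defined. This rests on the pareability hypotheses — non-criticality keeps a literal in the retained part, and vacuousness keeps the removed binder from stranding a scope — together with the scope/cotree correspondence of Lemmas \ref{lem:node-strong-module}, \ref{lem:parent-scope} and \ref{lem:induced-scope}, which guarantee that restricting to the retained leaves never deprives a surviving binder of a scope-literal.
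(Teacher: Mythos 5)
Your proposal is correct and takes essentially the same route as the paper, whose entire proof is the two-line observation that paring a vacuous binder yields a slackening in the context of a fograph connective and paring a non-critical node yields a weakening in the context of a fograph connective. Your write-up simply supplies the bookkeeping (notably the disconnected-$B$ subtlety in the slackening case) that the paper leaves implicit.
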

\begin{proof}
  If the paring is of a vacuous binder (condition 3 in Def.\,\ref{def:pareable}), then we obtain a slackening in the context of a fograph connective, otherwise (condition 2 in Def.\,\ref{def:pareable}) we obtain a weakening in the context of a fograph connective.
\end{proof}
\begin{lemma}\label{lem:pruning-ws}
  Let $\fograph$ be a fograph,
  let $\tree$ be a \plustimestree such that $\cographof\tree=\fograph$,
  let $\treep$ be a pruning of $\tree$, and let $\fographp=\cographof\treep$.
There exists a \wsmap $\fographp\to\fograph$.
\end{lemma}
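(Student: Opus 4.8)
The plan is to reduce to the single-step result \reflem{lem:paring-ws} and induct on the number of paring steps used to build the pruning $\treep$. Since a pruning is by definition obtained by iteratively paring pareable $\graphunion$ nodes, I would first record a chain $\tree=\tree_0,\tree_1,\ldots,\tree_n=\treep$ in which each $\tree_{i+1}$ is the result of paring a single pareable node in $\tree_i$. Writing $\fograph_i=\cographof{\tree_i}$, so that $\fograph_0=\fograph$ and $\fograph_n=\fographp$, the target is a \wsmap $\fographp\to\fograph$.

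For the base case $n=0$ we have $\treep=\tree$ and $\fographp=\fograph$, and the identity map is an isomorphism, hence a \wsmap. For the inductive step I would peel off the final paring: given $\treep$ formed in $n+1$ steps, let $\tree_n$ be the tree reached after the first $n$ parings, so that $\treep$ is the paring of a single pareable node in $\tree_n$. The induction hypothesis, applied to the pruning $\tree_n$ of $\tree$, supplies a \wsmap $\fograph_n\to\fograph$, and \reflem{lem:paring-ws}, applied to the last paring step, supplies a \wsmap $\fographp\to\fograph_n$. I would then compose: because \wsmaps are generated from isomorphisms, weakenings and slackenings under composition and fograph connectives, they are closed under composition, so the composite $\fographp\to\fograph_n\to\fograph$ is a \wsmap, closing the induction.

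The one subtlety — and the nearest thing to an obstacle — is making sure each intermediate cograph $\fograph_i$ is actually a fograph, since \reflem{lem:paring-ws} takes a fograph as input. This comes for free from the induction rather than from any separate argument: the existence of a \wsmap $\fograph_n\to\fograph$ already certifies $\fograph_n$ to be a fograph (every map, and in particular every \wsmap, is a homomorphism between fographs), so the hypotheses of \reflem{lem:paring-ws} are met at each stage and no independent verification that paring preserves fograph-hood is required.
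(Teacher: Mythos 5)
Your proof is correct and follows essentially the same route as the paper, whose entire argument is to apply Lemma\,\ref{lem:paring-ws} to each paring in the pruning; you have simply made the implicit induction and the composition of the resulting \wsmap{}s explicit, and your observation that each intermediate cograph is certified to be a fograph by the induction hypothesis itself is a sound way to discharge that hypothesis of Lemma\,\ref{lem:paring-ws}.
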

\begin{proof}
  Apply \reflem{lem:paring-ws} to each paring in the pruning.
\end{proof}
\begin{lemma}\label{lem:image-inclusion-wsmap}
  Let $\bifib:\fograph\to\fographa$ be a skew bifibration with $\fographa$ fair. The inclusion $\im\bifib\to\fographa$ is a \wsmap.
\end{lemma}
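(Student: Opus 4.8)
The plan is to realize the inclusion concretely as the \wsmap furnished by a pruning of the cotree of $\fographa$. First I would set $U=\bifib(\verticesof\fograph)$ and invoke \reflem{lem:im-marking} to see that $U$ is a marking for $\fographa$. Since $\fographa$ is fair, \reflem{lem:image-fograph} guarantees that $\im\bifib=\induced\fographa U$ is a (fair) fograph, so the inclusion $\im\bifib\to\fographa$ is genuinely a map between fographs and the statement is well-posed.

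Next I would take $\tree=\cotreeof\fographa$ (with labels riding on the leaves via \reflem{lem:vertex-to-leaf}), so that $\cographof\tree=\fographa$ by \reflem{lem:unique-branching-alternating}. Applying \reflem{lem:marking-pruning} to the marking $U$ produces a pruning $\treep$ of $\tree$ with $\cographof\treep=\induced\fographa U=\im\bifib$. Then \reflem{lem:pruning-ws} yields a \wsmap $\im\bifib\to\fographa$. This already delivers a \wsmap with the correct source and target; the remaining work is to identify it with the canonical inclusion.

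To pin down that the map produced is \emph{precisely} the inclusion, I would track its action on vertices. By \reflem{lem:paring-ws}, each paring step in the pruning contributes either a slackening (for a vacuous binder) or a weakening (otherwise), each taken inside a fograph connective. Both pure weakenings and slackenings are, by definition, canonical inclusions on vertex sets, and the fograph connectives $\wedge,\vee,\forall,\exists$ together with composition preserve the property of being identity-on-vertices (each connective extends its constituent maps by $\map\cup\mapp$ or by fixing the added binder $\singletonx\shortmapsto\singletonx$). Hence the composite \wsmap sends every vertex of $\im\bifib\tightsubseteq\verticesof\fographa$ to itself. Since a map between fographs is by definition a label-preserving graph homomorphism, it is completely determined by its underlying vertex function; so the \wsmap, agreeing with the inclusion on vertices, coincides with the inclusion $\im\bifib\to\fographa$.

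The hard part will be exactly this last identification: \reflem{lem:pruning-ws} asserts only the \emph{existence} of a \wsmap, whereas I need the specific canonical inclusion. The obstacle is dissolved by observing that the elementary building blocks (pure weakening and slackening) are themselves vertex-inclusions, forcing the vertex function of the entire \wsmap to be the identity embedding of $U$ into $\verticesof\fographa$; once the vertex functions match, the two homomorphisms are literally equal.
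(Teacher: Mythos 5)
Your proposal is correct and follows essentially the same route as the paper's proof: take $U=\bifib(\verticesof\fograph)$, note it is a marking (Lemma\,\ref{lem:im-marking}), obtain a pruning of $\cotreeof\fographa$ realizing $\induced\fographa U=\im\bifib$ (Lemma\,\ref{lem:marking-pruning}), and conclude via Lemma\,\ref{lem:pruning-ws}. Your final paragraph identifying the resulting \wsmap with the canonical inclusion (by observing that pure weakenings, slackenings, and the fograph connectives all act as vertex-inclusions) makes explicit a point the paper's proof leaves implicit, and is a correct and welcome refinement rather than a deviation.
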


\begin{proof}
By \reflem{lem:image-fograph}, $\im\bifib$ is a fograph.
  Let $U=\bifib(\verticesof\fograph)$, thus $\im\bifib$ is the induced subgraph $\induced\fographa U$.
  By \reflem{lem:im-marking}, $U$ is a marking.
  Let $\tree$ be the cotree $\cotreeof\fographa$.
  By \reflem{lem:im-marking}, there exists a pruning $\treep$ of $\tree$ with $\cographof\treep=\induced\fographa U=\im\bifib$.
  By \reflem{lem:pruning-ws}, there exists a \wsmap $\cographof\treep\to\cographof\tree$, \ie,\todo{G(T(G))=G lemma earlier?}
$\im\bifib\to\fographa$.
\end{proof}
\noindent Let $\bifib:\fograph\to\fographa$ be a skew fibration and let $K$ be a connected component of $\fographa$. The \defn{multiplicity} of $K$ is the number of connected components of $\bifib^{-1}(K)$, and the \defn{weight} of $K$ is one more than its multiplicity.
The \defn{weight} of $\bifib$ is the sum of the weights of the connected components of $\fographa$.
A skew bifibration is \defn{shallow} if the multiplicity of every connected component of $\fographa$ is at most one.
\begin{lemma}\label{lem:bifib-cw}
  Every skew bifibration into a fair fograph is a structural map.
\end{lemma}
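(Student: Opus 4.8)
The plan is to factor $\bifib$ through its image and handle the two factors separately. Given a skew bifibration $\bifib:\fograph\to\fographa$ with $\fographa$ fair, write $\bifib=\iota\circ\bifib_0$, where $\bifib_0:\fograph\to\im\bifib$ is the corestriction of $\bifib$ onto its image and $\iota:\im\bifib\to\fographa$ is the inclusion. By \reflem{lem:image-fograph} the image $\im\bifib$ is a fograph, and being an induced subgraph of the fair fograph $\fographa$ it is itself fair, so both factors are skew bifibrations between fographs; by \reflem{lem:bifibs-compose} and the closure of structural maps under composition it suffices to show each factor is structural. The inclusion $\iota$ is a \wsmap by \reflem{lem:image-inclusion-wsmap}, hence structural by \reflem{lem:ws-structural}. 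The whole problem thus reduces to showing that the \emph{surjective} corestriction $\bifib_0$ is structural; in fact I would prove that every surjective skew bifibration is a contraction.

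For the surjective case I would induct on the number of source vertices $\sizeof{\verticesof\fograph}$, splitting on the connectivity of the target. If $\bifib_0$ is an isomorphism we are done, since isomorphisms are structural. If the target is disconnected, write it as $\fographa_1\tightgraphunion\fographa_2$ for nonempty unions of components; since $\bifib_0$ is a homomorphism and no edge joins $\fographa_1$ to $\fographa_2$, no edge of the source joins $\bifib_0^{-1}(\fographa_1)$ to $\bifib_0^{-1}(\fographa_2)$, so $\bifib_0=\bifib_1\tightvee\bifib_2$ for surjective skew bifibrations $\bifib_i:\bifib_0^{-1}(\fographa_i)\to\fographa_i$, each on strictly fewer vertices; each is structural by induction, and $\bifib_1\tightvee\bifib_2$ is then structural because structural maps are closed under the fograph connectives (the generators of contraction and weakening are defined under connectives in Def.\,\ref{def:contraction-weakening-map}, and the connectives commute with composition). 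If the target is a single vertex $\singleton\tag$, then $\tag$ is an atom and the source can carry no edge (an edge would be sent to a forbidden loop), so the source is a union $\singleton\tag\tightgraphunion\cdots\tightgraphunion\singleton\tag$ of $m\tightge 1$ copies and $\bifib_0$ is a composite of $m-1$ pure contractions $\singleton\tag\tightvee\singleton\tag\to\singleton\tag$, hence a contraction.

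The remaining, principal, case is a connected target with at least two vertices, whose cotree root is a $\graphjoin$ node. Here the strategy is to peel off a single pure contraction and recurse. The difficulty is that a graph homomorphism reflects non-edges but not edges, so the source need \emph{not} literally be the join of the preimages of the $\graphjoin$-factors of $\fographa$: some of the redundancy over a component is realised not by disjoint source components but by repeated material sitting beneath a $\graphunion$ node \emph{inside} a join. For instance the surjection $\singleton q\tightgraphjoin(\singleton p\tightgraphunion\singleton p)\to\singleton q\tightgraphjoin\singleton p$ collapsing the two $p$-vertices is the contraction $\mathsf{id}\tightwedge\contractionof{\singleton p}$, yet it is injective on connected components. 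The crux is therefore to show that whenever $\bifib_0$ is a surjection that is not an isomorphism, the skew-fibration lifting property forces a \emph{pareable} configuration in $\cotreeof\fograph$ over $\fographa$, i.e.\ a $\graphunion$ node carrying two copies of matching material whose collapse is a pure contraction in the context of fograph connectives. This is exactly what the marking/pruning apparatus is designed for: \reflem{lem:balance} (via \reflem{lem:im-marking}) constrains how the $\graphjoin$/$\graphunion$ structure of the source cotree sits over that of the target, and a source-side analogue of the pruning construction of Def.\,\ref{def:pare} should locate the contraction and certify that the residual map remains a surjective skew bifibration, now on strictly fewer source vertices, so the induction closes. (The notions of \emph{multiplicity}, \emph{weight} and \emph{shallow} enter here to quantify this redundancy and to guarantee that a non-isomorphic surjection always admits such a collapse.)

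I expect this connected $\graphjoin$ case to be the main obstacle: making precise, through the cotree and the balance/marking machinery, the claim that a non-isomorphic surjective skew bifibration always exhibits a pure contraction that can be factored out dually to the way a non-surjective one exhibits a weakening or slackening (\reflem{lem:image-inclusion-wsmap}). The image factorisation, the disconnected split, and the single-vertex base case are by contrast routine, relying only on closure of structural maps under composition and the fograph connectives together with \reflem{lem:fograph-connectives-bifibs}.
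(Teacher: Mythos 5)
Your overall architecture matches the paper's: factor $\bifib$ through its image, dispose of the inclusion $\im\bifib\to\fographa$ as a \wsmap via \reflem{lem:image-inclusion-wsmap} and \reflem{lem:ws-structural}, then induct to show the surjective part is structural by decomposing it along the target's connective structure. But the case you yourself flag as ``the main obstacle'' --- a surjective, non-isomorphic skew bifibration onto a connected target whose cotree root is a $\graphjoin$ node --- is exactly the case you do not prove, and your proposed resolution (``a source-side analogue of the pruning construction should locate the contraction'') is not developed and is not what the paper does. The paper's device is the \emph{shallowness} reduction: for each connected component $K$ of the target, the several connected components of $\bifib^{-1}(K)$ are collapsed by \emph{pre-composing with contractions}, so one may assume every multiplicity is at most one; a shallow surjection then genuinely decomposes along the target as $\forall x\mkern2mu\bifibp$, $\bifib_1\tightvee\bifib_2$, $\exists x\mkern2mu\bifibp$ or $\bifib_1\tightwedge\bifib_2$, and the induction (on weight and multiplicity) closes because the component maps may again fail to be shallow. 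Your own example $\singleton q\graphjoin(\singleton p\graphunion\singleton p)\to\singleton q\graphjoin\singleton p$ illustrates this: it is shallow at the top level and only reveals its contraction \emph{after} splitting along the join, as $\mathsf{id}\tightwedge\contractionof{\singleton p}$ with $\contractionof{\singleton p}$ of multiplicity two. So the contraction is located by counting connected components of preimages of target components --- a concrete, checkable criterion --- not by paring the source cotree over a $\graphunion$ node, and without some such mechanism your induction does not close in the principal case.

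Two smaller points. First, your disconnected and connected cases split the target as $\fographa_1\graphunion\fographa_2$ or $\fographa_1\graphjoin\fographa_2$ without separating out lone binders: if a component (resp.\ join factor) is a single binder $\singleton x$, it is not a fograph, so $\bifib_1\tightvee\bifib_2$ (resp.\ $\bifib_1\tightwedge\bifib_2$) does not typecheck and you must instead peel off the $\forall x$ (resp.\ $\exists x$) connective, as the paper does before the generic union and join cases. Second, ``every surjective skew bifibration is a contraction'' is too strong under Def.\,\ref{def:contraction-weakening-map}, where a contraction is generated from a \emph{single} pure contraction by connectives (and an isomorphism is not a contraction at all); the claim you actually need is that every surjective skew bifibration is a composite of isomorphisms and contractions, i.e.\ a structural map.
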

\begin{proof}\todo{elaborate?}
  By induction on the weight of the skew bifibration $\bifib:\fograph\to\fographa$ and its multiplicity.
By \reflem{lem:image-inclusion-wsmap}
(and the fact that every \wsmap is a structural map by \reflem{lem:ws-structural})
we may assume $\bifib$ is a surjection, and by pre-composing with contractions
we may assume $\bifib$ is shallow.
  If $\fographa=\singletonx\graphunion\fographap$ then $\fograph=\singletonx\graphunion\fographp$ since $\bifib$ is a shallow surjection, hence $\bifib=\forall x\mkern2mu\bifibp$, and by induction $\bifibp$ is a structural map.
  Otherwise if $\fographa=\fographa_1\graphunion\fographa_2$ then $\fographa=\fographa_1\tightvee\fographa_2$ (since $\fographa$ is not of the form $\singletonx\graphunion\fographap$).
  Since $\bifib$ is a shallow surjection, $\fograph=\fograph_1\tightvee\fograph_2$, so $\bifib=\bifib_1\tightvee\bifib_2$ for $\bifib_i:\fograph_i\to\fographa_i$. By induction each $\bifib_i$ is a structural map, hence $\bifib$ is structural.
  Otherwise $\fographa$ is connected. If $\fographa$ has no edge then $\bifib$ is an isomorphism from a literal to a literal, hence is a structural map. Thus we may assume $\fographa$ has an edge.
If $\fographa=\singletonx\graphjoin\fographap$ then $\fograph=\singletonx\graphjoin\fographp$ since $\bifib$ is a shallow surjection, hence $\bifib=\exists x\mkern2mu\bifibp$, and by induction $\bifibp$ is a structural map.
Otherwise $\fographa=\fographa_1\graphjoin\fographa_2$ for fographs $\fographa_i$, with $\fographa_i$ not of the form $\singleton x\graphjoin\fographa_i'$. Thus $\fographa=\fograph_1\wedge\fograph_2$, hence $\fograph=\fograph_1\wedge\fograph_2$ with $\bifib(\verticesof{\fograph_i})\subseteq\verticesof{\fographa_i}$. Therefore $\bifib=\bifib_1\vee\bifib_2$ for skew bifibrations $\bifib_i:\fograph_i\to\fographa_i$, and by induction each $\bifib_i$ is a structural map, so $\bifib$ is a structural map.
\end{proof}
\begin{lemma}[Soundness of skew bifibrations]\label{lem:bifib-sound}
  If $\fograph$ is a valid fograph and $\bifib:\fograph\to\fographa$ is a skew bifibration with $\fographa$ fair, then $\fographa$ is valid.
\end{lemma}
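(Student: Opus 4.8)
The plan is to read this statement off directly from the two immediately preceding lemmas, which between them have already done all the work; the present lemma is the payoff of the entire development in \S\ref{sec:soundness-bifibs}, namely the structural decomposition of skew bifibrations together with the soundness of the structural building blocks.

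First I would invoke \reflem{lem:bifib-cw}: since the target $\fographa$ is fair, the skew bifibration $\bifib:\fograph\to\fographa$ is a structural map. Then I would invoke \reflem{lem:strucmap-sound}: structural maps are sound, so because $\fograph$ is valid and $\bifib:\fograph\to\fographa$ is a structural map, $\fographa$ is valid. That completes the argument, so the proof is essentially a one-line corollary.

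The only genuine content therefore lives in the results being assumed, and the main obstacle — already discharged — is \reflem{lem:bifib-cw}. Its proof proceeds by induction on the weight and multiplicity of $\bifib$: first reducing to the surjective case by factoring through the image-inclusion \wsmap of \reflem{lem:image-inclusion-wsmap}, then reducing to a shallow map by pre-composing with contractions, and finally peeling off the outermost fograph connective ($\forall x$, $\vee$, $\exists x$, or $\wedge$) and recursing via \reflem{lem:fograph-iff-constructed}. Fairness of $\fographa$ is exactly the hypothesis that makes this peeling legitimate, since by \reflem{lem:image-fograph} it guarantees that the image $\im\bifib$ is again a well-defined fograph. I would note, too, that fairness is not a real restriction here because every rectified fograph is fair, so the lemma applies to the targets that arise from $\graphofsymbol$.
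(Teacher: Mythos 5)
Your proof is correct and matches the paper's own argument exactly: the paper's proof is precisely the two-step invocation of Lemma\,\ref{lem:bifib-cw} followed by Lemma\,\ref{lem:strucmap-sound}. The additional commentary on how Lemma\,\ref{lem:bifib-cw} is proved is accurate but not needed for this lemma.
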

\begin{proof}
  By Lemma\,\ref{lem:bifib-cw}, $\bifib$ is a structural map, which is sound by \reflem{lem:strucmap-sound}.
\end{proof}

\subsection{Proof of the Soundness Theorem}\label{subsec:proof-of-soundness}

\begin{proof}[Proof of the Soundness Theorem (Theorem~\ref{thm:soundness})]
Let $\bifib:\net\to\graphof\formula$ be a combinatorial proof of a formula $\formula$. By \reflem{lem:fonet-soundness} $\net$ is valid,
thus by \reflem{lem:bifib-sound} $\graphof\formula$ is valid (applicable since $\graphof\formula$ is rectified, hence fair), therefore $\formula$ is valid.
\end{proof}

\section{Proof of the Completeness Theorem}\label{sec:completeness}\label{sec:cp-completeness}

In this section we prove the Completeness Theorem, Theorem~\ref{thm:completeness}.
Our strategy will be to show that
every syntactic proof of a formula $\formula$ in Gentzen's classical sequent calculus \cite{Gen35}
generates a combinatorial proof of $\formula$, so completeness follows from that of Gentzen's system.

A \defn{sequent} is a finite sequence $\sequentsequence$ of formulas, $n\tightge 0$.
We identify a formula $\formula$ with the single-formula sequent containing $\formula$.
Let $\sequent$ be the sequent $\sequentsequence$.
Its \defn{formula} $\formulaof\sequent$ is $\formulaofsequent$, and $\sequent$ is
\defn{valid}
(resp.\ \defn{rectified})
if $\formulaof\sequent$ is
valid
(resp.\ rectified).
As with formulas, we shall assume, without loss of generality, that every sequent is rectified (by renaming bound variables as needed).

The \defn{graph} $\graphof\sequent$ of a sequent $\sequent$ is the graph
$\graphof{\formulaof\sequent}$
of its formula.
For example,
$\graphof{\px\com\exists y\mkern2mu\ppy}\mkern2mu=\mkern4mu\singletonleft\px\hspace{1ex}\namedsingletonleft y y\hspace{1ex}\namedsingletonright{ppy}\ppy\e y{ppy}\mkern2mu$.
A \defn{combinatorial proof} of $\sequent$ is a combinatorial proof of its formula $\formulaof\sequent$.

For technical convenience we will use a right-sided formulation $\R$ of Gentzen's sequent calculus $\LK$ \cite{Gen35},
comprising the following rules.
Here
$\sequent$ and $\sequenta$
are arbitrary sequents,
$\formula$ and $\formulaa$ are arbitrary formulas,
$\atom$ is any atom which is not a logical constant,
and $\formulap\cong\formula$ denotes that $\formulap$ is
equal to $\formula$ up to bound variable renaming (\eg,
$
\forall x\mkern2mu pxy
\cong
\forall z\mkern2mu pzy
$).\footnote{For the $\exists$ rule, recall
(from \S\ref{sec:soundness})
that $\formula\assignment{\assign\variable\term}$ denotes
the result of substituting a term $\term$ for all occurrences of the variable $\variable$ in $\formula$, where,
without loss of generality (by renaming bound variables in $\formula$ as needed),
no variable in $\term$ is a bound variable of $\formula$.}

\begin{center}\v{2.5}\(
\newcommand\gap{\hspace{8ex}}\hh{14}\begin{array}{c}
\Raxiomrule{\atom}
\gap
\Ronerule
\gap
\Rxrule\Rxrulehyp\Rxruleconc
\gap
\Rwrule\Rwrulehyp\Rwruleconc
\gap
\Rcrule\Rcrulehyp\Rcruleconc
\rput[lb](.3,.3){\text{($\formulap\cong\formula$)}}
\\[5ex]
\renewcommand\gap{\hspace{8ex}}
\Randrule\Randrulehypone\Randrulehyptwo\Randruleconc
\gap
\Rorrule\Rorrulehyp\Rorruleconc
\gap
\Rexistsrule\Rexistsrulehyp\Rexistsruleconc
\gap
\Rforallrule\Rforallrulehyp\Rforallruleconc
\rput[lb](.3,.3){\text{($x$ not free in $\Gamma$)}}
\end{array}\)\v{2.5}\end{center}
The rules $\xname$,  $\cname$ and $\wname$ are called \defn{exchange}, \defn{contraction} and \defn{weakening}.
Each sequent above a rule is a \defn{hypothesis} of the rule, and the sequent below a rule is the \defn{conclusion} of the rule.
\begin{lemma}[\R soundness \& completeness]\label{lem:r-sound-complete}
  A sequent is valid if and only if it has an \R proof.
\end{lemma}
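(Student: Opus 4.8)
The plan is to prove the two directions separately, treating \R as a standard right-sided reformulation of classical first-order logic in which $\neg$ and $\implies$ are the definitional abbreviations fixed in Section\,\ref{sec:notation}, so that every formula is effectively in negation normal form.

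For soundness (an \R proof of a sequent implies the sequent is valid) I would induct on the height of the derivation, reading each rule from its premises to its conclusion. The leaves are immediate: an axiom $\vdash\atom\com\dual\atom$ is valid because $\atom\tightvee\dual\atom$ is an instance of excluded middle (as $\dual\atom=\neg\atom$ on atoms), and $\vdash 1$ is valid since $1$ is the constant true. For the structural rules, exchange and contraction preserve $\formulaof\sequent$ up to commutativity and idempotence of $\tightvee$, and weakening is sound because $\isvalid\formulaof\sequent$ gives $\isvalid\formulaof\sequent\tightvee\formula$ by Lemma\,\ref{lem:formula-inferences}.\ref{itm:or-inference}. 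For the logical rules I would appeal directly to Lemma\,\ref{lem:formula-inferences}: the $\vee$, $\forall$ and $\exists$ rules correspond to parts \ref{itm:or-inference}, \ref{itm:universal-inference} and \ref{itm:existential-inference} respectively, with the side condition ``$x$ not free in $\sequent$'' exactly licensing the prenex move in the $\forall$ case, while the two-premise $\wedge$ rule is sound by the standard semantic fact that if $\formulaof\sequent\tightvee\formula$ and $\formulaof\sequenta\tightvee\formulaa$ are valid then so is $\formulaof\sequent\tightvee\formulaof\sequenta\tightvee(\formula\tightwedge\formulaa)$, a mild strengthening of Lemma\,\ref{lem:formula-inferences}.\ref{itm:and-inference}.

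For completeness (a valid sequent has an \R proof) I would not argue from scratch but reduce to the classical completeness theorem for Gentzen's \LK \cite{Gen35}. The key observation is that \R is precisely the right-hand fragment of \LK once formulas are kept in negation normal form: because $\neg$ and $\implies$ are abbreviations that De Morgan duality pushes down to atoms, a right-sided sequent $\vdash\sequentsequence$ of \R is provable exactly when the corresponding two-sided \LK sequent $\vdash\sequentsequence$ is, the \R rules being the \LK right-introduction, structural, and axiom rules. I would therefore (i) map every \R derivation to an \LK derivation of the same endsequent, and (ii) conversely transform an \LK proof of a valid sequent — by driving negations inward to atoms and discarding the now-redundant left rules — into an \R proof. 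Combining (ii) with the completeness of \LK (every valid sequent is \LK-provable) yields the claim.

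The main obstacle is step (ii): verifying that the passage from \LK to the one-sided, duality-based \R is faithful. Concretely, one must check that negation-as-abbreviation commutes with the inference rules (so that the $\neg$-left and $\neg$-right steps of \LK either become vacuous or turn into the dual right rule of \R), and that the bookkeeping of contexts across the two-premise $\wedge$ rule is reconciled using the structural rules $\xname$, $\cname$ and $\wname$. The soundness direction and the purely structural parts are routine; essentially all the care lies in this translation, for which I would lean on the standard treatment of one-sided sequent calculi in \cite{TS96} and \cite{Joh87}.
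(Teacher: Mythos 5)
Your proposal is correct, but it takes a longer and more self-contained route than the paper. The paper's entire proof is a citation: it observes that \R is the system \gsone of \cite[\S3.5.2]{TS96} --- which is already a right-sided, negation-normal-form calculus proven sound and complete there --- except that \R keeps Gentzen's explicit exchange rule $\xname$ where \gsone works with multisets. By targeting a calculus that is already one-sided, the paper avoids both the semantic soundness induction and the \LK-to-one-sided translation that you identify as the delicate step. Your soundness argument (induction on derivation height, discharging each rule via Lemma\,\ref{lem:formula-inferences}, with the two-premise $\wedge$ rule handled by the ``multiplicative'' fact that validity of $\formulaof\sequent\tightvee\formula$ and $\formulaof\sequenta\tightvee\formulaa$ gives validity of $\formulaof\sequent\tightvee\formulaof\sequenta\tightvee(\formula\tightwedge\formulaa)$, which indeed follows from parts \ref{itm:and-inference} and \ref{itm:lindist-inference} of that lemma) is sound and has the virtue of reusing machinery the paper already needs for fograph soundness; your completeness argument via \LK and negation normal form is the standard Sch\"utte-style reduction and would work, at the cost of the bookkeeping you flag. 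What the paper's approach buys is brevity and the avoidance of that translation entirely; what yours buys is independence from the particular formulation chosen in \cite{TS96}. One small correction: the $\vee$ rule of \R is sound not because of Lemma\,\ref{lem:formula-inferences}.\ref{itm:or-inference} but trivially, since $\formulaof{\sequentcom\formula\com\formulaa}$ and $\formulaof{\sequentcom\formula\tightvee\formulaa}$ agree up to associativity of $\vee$; part \ref{itm:or-inference} is what justifies weakening, as you correctly use it there.
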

\begin{proof}
  System \R is equivalent to \gsone, which is sound and complete \cite[\S3.5.2]{TS96}.
  It differs in that \R retains Gentzen's explicit formula-exchange (or \emph{permutation}) rule $\xname$,
  while \cite{TS96} leaves exchange implicit by formulating sequents as multisets rather than sequences.
\end{proof}

\subsection{Interpreting rules as operations on combinatorial proofs}\label{sec:interp-rules}

We interpret each rule of \R with hypothesis sequents $\sequent_1,\ldots,\sequent_n$ and conclusion sequent $\sequenta$ as an operation taking combinatorial proofs $\skewfib_i$ of $\sequent_i$ as input to produce a combinatorial proof $\bifiba$ of $\sequenta$.
\begin{itemize}
\item $\inlineaxiom\atom$ rule. Define $\bifiba$ as the identity on $\singletonleft\atom\hspace{.7ex}\singletonright\dualatom$, with both vertices the same colour in the source.
\item $\dual1$ rule. Define $\bifiba$ as the identity on $\singleton1$, with no colour in the source.
\item $\vee$ rule
with hypothesis $\hyp=\Rorrulehyp$ and conclusion $\conc=\Rorruleconc$.
Let $\bifib$ be the combinatorial proof of $\hyp$.
Note that $\graphof\hyp=\graphof\conc$.
Define $\bifiba=\bifib$.
\item $\xname$ rule
with hypothesis $\hyp=\Rxrulehyp$ and conclusion $\conc=\Rxruleconc$.
Let $\bifib$ be the combinatorial proof of $\hyp$, and let $\textsf{x}$ be the canonical isomorphism $\graphof\hyp\to\graphof\conc$.
Define $\bifiba=\textsf{x}\circ\bifib$.
\item $\wname$ rule with hypothesis $\hyp=\Rwrulehyp$ and conclusion $\conc=\Rwruleconc$.
Let $\bifib$ be the combinatorial proof of $\hyp$, and let $\textsf{w}$ be the canonical injection $\graphof\hyp\to\graphof\conc$.
Define $\bifiba=\textsf{w}\circ\bifib$.
\item $\forall$ rule
with hypothesis $\hyp=\Rforallrulehyp$ and conclusion $\conc=\Rforallruleconc$.
Let $\bifib:\cover\to\graphof{\hyp}$ be the combinatorial proof of $\hyp$ and
let $\textsf{a}$ be the canonical injective graph homomorphism \mbox{$\graphof\hyp\to\graphof\conc$}.
Note $\graphof\hyp=\graphof{\sequent}\graphunion\graphof{\formula}$.
If $\bifib^{-1}(\verticesof{\graphof\formula})$ is empty define $\bifiba=\textsf{a}\circ\bifib:\cover\to\graphof{\conc}$.
Otherwise,
define \mbox{$\bifiba:\singleton x\mkern-1mu\graphunion\mkern-2mu \cover\to\graphof{\conc}$} as
the extension of $\textsf{a}\circ\bifib:\cover\to\graphof{\conc}$
which maps
$\singleton x$ to the (unique) $x$-binder in $\graphof{\conc}$.
\item $\cname$ rule
with hypothesis $\hyp=\Rcrulehyp$ and conclusion $\conc=\Rcruleconc$.
Let $\bifib:\cover\to\graphof{\hyp}$ be the combinatorial proof of $\hyp$.
Let $\skel\cover$ be the result of dropping the labels from $\cover$ and
let \mbox{$\skel\bifib:\skel\cover\to\graphof{\hyp}$} be the skeleton of $\bifib$.
Let $\textsf{c}$ be the canonical surjective graph homomorphism \mbox{$\graphof\hyp\to\graphof\conc$}.
Define $h=\textsf{c}\circ\skel\bifib:\skel\cover\to\graphof\conc$.
A universal binder in $\graphof\conc$ is \defn{outer} if it is in no edge.
A \defn{duplicator} is an outer universal binder $\binder$ in $\graphof\conc$ such that
$h^{-1}(\binder)$ contains two vertices.
To \defn{collapse} a duplicator $\binder$ is to
delete one of the two vertices of $h^{-1}(\binder)$ from $\skel\cover$.
Define $\skel\cover^-$ as the derivative of $\cover$ obtained by collapsing every duplicator.
Define the skeleton $\skel\bifiba:\skel\cover^-\to\graphof{\conc}$ of $\bifiba$ as
the restriction of $h$ to $\skel\cover^-$.
Thus $\bifiba:\cover^-\to\graphof{\conc}$ for $\cover^-$ the result of adding the canonical labels to $\skel\cover^-$
(\ie, the label of $\vertex$ in $\cover^-$ is the label of $\skel\bifiba(\vertex)$ in $\graphof\conc$).\footnote{For example,
let $\hyp=\forall x\mkern1mu 1\com\mkern3mu\forall y\mkern1mu 1$,
let $\conc=\forall x\mkern1mu 1$,
let $\cover=
\graphof{\hyp}=
\singleton x\hspace{.7ex}\singleton 1
\hspace{.8ex}
\singleton y \hspace{.7ex}\singleton 1$,
and let $\bifib$ be the identity $\cover\to\cover$.
\newcommand\coverminus{\singleton x\hspace{.7ex}\singleton 1
\hspace{.8ex}
\singleton 1}%
\newcommand\badcover{\singleton x\hspace{.7ex}\singleton 1
\hspace{.8ex}
\singleton x \hspace{.7ex}
\singleton 1}%
Then $\graphof{\conc}=\singleton x\hspace{.7ex}\singleton 1$
and $\singleton x$ in $\graphof{\conc}$ is a duplicator.
Thus
$\cover^-=\coverminus$ and the combinatorial proof
$\bifiba$ of $\conc$ is the unique label-preserving function from
$\coverminus$ to $\graphof{\conc}=\singleton x\hspace{.7ex}\singleton1$.
We must delete $\singleton y$ from $\cover$ to form $\cover^-$ because otherwise $\bifiba$ will be
from
$\badcover$
to
$\singleton x\hspace{.7ex}\singleton1$, whose source $\badcover$ is not a well-defined fograph since it has two universal $x$-binders in the scope of one another.}
\item \begin{samepage}$\wedge$ rule
with hypotheses $\hyp_1=\Randrulehypindex1$ and $\hyp_2=\Randrulehypindex2$ and conclusion $\conc=\Randruleconcindexed$.
Let $\bifib_i:\cover_i\to\graphof{\hyp_i}$ be the combinatorial proof of $\hyp_i$ and let $\textsf{z}_i$ be the canonical injective graph homomorphism $\graphof{\hyp_i}\to\graphof\conc$.
Note that $\graphof{\hyp_i}=\graphof{\sequent_i}\graphunion\graphof{\formula_i}$.
Let $\portion_i=\bifib_i^{\mkern3mu-1}(\verticesof{\graphof{\formula_i}})$
and define $\bifib_i$ as \defn{weak} if $\portion_i$ is empty, else \defn{strong}.
Define $\bifiba$ according to the following cases.
\vspace{-.6ex}\begin{itemize}
\item $\bifib_1$ and $\bifib_2$ are both strong or both weak.
Let $\cover$ be the fusion of $\cover_1$ and $\cover_2$ at the portions $P_1$ and $P_2$.
Define $\bifiba:\cover\to\graphof{\conc}$ as the union of $\textsf{z}_1\circ\bifib_1$ and $\textsf{z}_2\circ\bifib_2$.
\item $\bifib_i$ is weak and $\bifib_{3-i}$ is strong.
Define $\bifiba:\cover_i\to\graphof{\conc}$ as $\textsf{z}_i\circ\bifib_i$.
\end{itemize}\end{samepage}
\item $\exists$ rule
with hypothesis $\hyp=\Rexistsrulehyp$ and conclusion $\conc=\Rexistsruleconc$.
Let $\bifib:\cover\to\graphof{\hyp}$ be the combinatorial proof of $\hyp$.
Let $\skel\cover$ be the result of dropping the labels from $\cover$ and let $\skel\bifib:\skel\cover\to\graphof{\hyp}$ be the skeleton of $\bifib$.
Let $\textsf{e}$ be the canonical injective graph homomorphism \mbox{$\graphof\hyp\to\graphof\conc$}.
Note $\graphof{\hyp}=\graphof{\sequent}\graphunion\graphof{\formula\assignment{\assign\variable\term}}$.
Let $P=\skel\bifib^{-1}(\verticesof{\graphof{\formula\assignment{\assign\variable\term}}})$.
Define the skeleton $\skel\bifiba$ of $\bifiba$ according to the following cases.
\vspace{-.6ex}\begin{itemize}
\item $P$ is empty. Define $\skel\bifiba$ as $\textsf{e}\circ\bifib:\skel\cover\to\graphof{\conc}$.
\item $P$ is non-empty. Let $\skel\cover^+$ be the
extension of $\skel\cover$ with an additional vertex $v$ and edges from $v$ to every vertex in $P$.
Define $\skel\bifiba:\skel\cover^+\to\graphof{\conc}$ as
the extension of $\textsf{e}\circ\bifib:\skel\cover\to\graphof{\conc}$ to $\skel\cover^+$ which maps $\vertex$ to the $x$-binder of $\graphof{\conc}$.
\end{itemize}
\end{itemize}
\begin{lemma}\label{lem:interp-well-defined}
The interpretation of each rule of $\R$ defined above
produces a well-defined combinatorial proof.
\end{lemma}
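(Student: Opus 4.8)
The plan is to prove the lemma by a case analysis with one case per rule of $\R$, checking in each case two essentially independent facts about the constructed $\bifiba$: that its source is a (rectified) fonet, so that $\bifiba$ is eligible to be a combinatorial proof, and that $\bifiba$ is a skew bifibration onto $\graphof\sequenta$ for the rule's conclusion $\sequenta$. Throughout I would use the reading that each input $\skewfib_i$ is already a combinatorial proof of $\sequent_i$, so its source is a fonet and $\skewfib_i$ is a skew bifibration. The two concerns then decouple cleanly: source-is-a-fonet is discharged by the operation-preservation lemmas of \S\ref{sec:fonet-soundness}, while map-is-a-skew-bifibration is discharged by checking that the canonical comparison map introduced in each case is a skew bifibration and appealing to closure under composition (\reflem{lem:bifibs-compose}).

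First I would dispatch the \emph{structural} rules, where the source is a fixed axiom or is left unchanged. For the $\inlineaxiom\atom$ and $\dual1$ rules the source is an axiom (a fonet, as established in the proof of \reflem{lem:construct-implies-net}) and $\bifiba$ is an identity, hence a skew bifibration. For the $\vee$ rule one observes $\graphof\hyp=\graphof\conc$ (graph union is associative and commutative), so $\bifiba=\bifib$ works verbatim. For $\xname$ and $\wname$ the map $\bifiba$ is the composite of $\bifib$ with, respectively, the canonical isomorphism and the canonical injection $\graphof\hyp\to\graphof\conc$; the former is a skew bifibration as an isomorphism, the latter is a pure weakening, a skew bifibration by \reflem{lem:pure-cw-are-bifibs}, so $\bifiba$ is a skew bifibration by \reflem{lem:bifibs-compose}, the source being unchanged.

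Next I would handle the \emph{active} subcases of the quantifier and $\wedge$ rules via the preservation lemmas. In the nonempty-preimage subcase of $\forall$ the source becomes the universal quantification $\singletonx\graphunion\cover$, a fonet by \reflem{lem:pres-universal}; in the nonempty subcase of $\exists$ it becomes an existential quantification, a fonet by \reflem{lem:pres-existential}; and in the both-strong-or-both-weak subcase of $\wedge$ it is the fusion of $\cover_1$ and $\cover_2$ at $\portion_1,\portion_2$, a fonet by \reflem{lem:pres-fusion}. In each active case I would verify that the extended map is a skew bifibration by checking that the newly added binder ($\singletonx$, or $\vertex$) is sent to the corresponding target binder, which preserves labels and existentials and makes the binding-graph homomorphism a fibration, the skew-fibration condition being inherited from the inputs together with the liftings supplied across the new graph join. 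The empty-preimage subcases of $\forall$ and $\exists$ and the mixed subcase of $\wedge$ leave the source a fonet and reduce $\bifiba$ to the composite of an input map with a canonical inclusion, which is a structural map and hence a skew bifibration by \reflem{lem:structural-map-is-bifib}.

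The main obstacle will be the contraction rule $\cname$. There the canonical surjection $\textsf{c}\colon\graphof\hyp\to\graphof\conc$ identifies the two bound-variable-renamed copies, and $h=\textsf{c}\circ\skel\bifib$ need not have a legal fograph as its source: two universal $x$-binders can be forced into one another's scope, so $\cover$ would fail to be a rectified fograph. This is precisely why duplicators must be collapsed, and I would have to show that after collapsing every duplicator the result $\cover^-$ is still a fonet (the deleted vertices lie in no leap, so no dualizer is lost and no induced bimatching is created) and that the restriction $\skel\bifiba$ of $h$ to $\cover^-$ is a genuine skew bifibration onto $\graphof\conc$. A secondary delicate point is the $\wedge$-fusion case, where one must confirm that the union map $(\textsf{z}_1\circ\bifib_1)\cup(\textsf{z}_2\circ\bifib_2)$ supplies all the required skew-fibration liftings across the join edges newly created between $\portion_1$ and $\portion_2$; these two places are the only ones where the argument goes beyond mechanical appeals to the composition and preservation lemmas.
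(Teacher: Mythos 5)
Your proposal is correct and matches the paper's approach: the paper's own proof of this lemma is a one-line assertion that the fograph and skew bifibration conditions are verified routinely, and your case analysis is a faithful expansion of exactly that verification, reusing the intended ingredients (the fusion/quantification preservation lemmas of \S\ref{sec:fonet-soundness} for the sources, and composition with canonical isomorphisms, weakenings and inclusions for the maps). You also correctly isolate the $\cname$ duplicator-collapse and the $\wedge$-fusion liftings as the only genuinely non-mechanical points, which is consistent with the paper's footnoted example motivating the collapse.
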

\begin{proof}
A routine verification of the fograph and skew bifibration conditions defining a combinatorial proof.
\end{proof}

\subsection{Proof of the Completeness Theorem}\label{subsec:cp-completeness}

\begin{proof}[Proof of the Completeness Theorem, Theorem~\ref{thm:completeness}]
Let $\formula$ be a valid formula.
By \reflem{lem:r-sound-complete} there exists an \R proof $\Pi$ of $\formula$.
By \reflem{lem:interp-well-defined} we obtain a combinatorial proof of $\formula$ from $\Pi$ by interpreting each rule of $\Pi$ as an operation on combinatorial proofs, as defined in \S\ref{sec:interp-rules}.
\end{proof}

\section{Homogeneous soundness and completeness proofs}

\subsection{Propositional homogeneous soundness and completeness proof}\label{sec:proof-of-propositional-homogeneous-soundness-completeness}

In this section we prove the propositional homogeneous soundness and completeness theorem, Theorem\,\ref{thm:prop-soundness-completeness}.
We begin by observing that the function $\dualizinggraphofpropsymbol$ from simple propositions to dualizing graphs (Def.\,\ref{def:dualizing-graph-of-prop}) factorizes through simple propositional fographs.
A fograph is \defn{propositional} if every predicate symbol is nullary, and \defn{simple} if it has no $1$- or $0$-labelled literal.
For example, the middle row of Fig.\,\ref{fig:dualizing-graphs} (p.\,\pageref{fig:dualizing-graphs}) shows four simple propositional fographs.
\begin{definition}\label{def:prop-fograph-to-dualizing-graph}
  The \defn{dualizing graph} $\dualizinggraphof\fograph$ of a simple propositional fograph is the dualizing graph $\dualizinggraph$ with $\verticesof\dualizinggraph=\verticesof\fograph$, $\edgesof\dualizinggraph=\edgesof\fograph$, and $\vertex\vertexa\in\dualitiesof\dualizinggraph$ if and only if
$\vertex$ and $\vertexa$ have dual predicate symbols.\footnote{In \S\ref{sec:surjections} we will show that $\dualizinggraphofsymbol$ is a surjection from simple propositional fographs onto dualizing graphs (\reflem{lem:surj-prop-fographs-to-dualizing-graphs}).}
\end{definition}
For example, for each simple propositional fograph $\fograph$ in the middle row of Fig.\,\ref{fig:dualizing-graphs} (p.\,\pageref{fig:dualizing-graphs}),
the corresponding dualizing graph $\dualizinggraphof\fograph$ is shown below $\fograph$.

\begin{lemma}\label{lem:dualizing-graph-of-prop-fograph-well-defined}
$\dualizinggraphof\fograph$ is a well-defined dualizing graph for every simple propositional fograph $\fograph$.
\end{lemma}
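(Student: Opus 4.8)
The plan is to reduce everything to the duality computations already performed in the proof of \reflem{lem:dualizing-graph-of-prop-well-defined}, exploiting the fact that a fograph is by definition a logical cograph. Write $\dualizinggraph\tighteq\dualizinggraphof\fograph$. By Definition\,\ref{def:prop-fograph-to-dualizing-graph} the underlying graph $\graphpairof{\verticesof\dualizinggraph}{\edgesof\dualizinggraph}$ is literally $\graphpairof{\verticesof\fograph}{\edgesof\fograph}$, so it is already a cograph, and it is non-empty since every logical cograph contains at least one literal. Consulting Definition\,\ref{def:dualizing-graph}, the only remaining obligation is to show that the duality graph $\graphpairof{\verticesof\dualizinggraph}{\dualitiesof\dualizinggraph}$ is a triangle-free cograph, \ie\ both $\pfour$-free and $\cthree$-free. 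Notably, the cograph condition on the \emph{edges} is now free, inherited directly from the fograph structure, rather than needing to be re-derived from the syntax tree of a proposition; this is the one genuine simplification over \reflem{lem:dualizing-graph-of-prop-well-defined}.

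For $\cthree$-freeness I would suppose three vertices $\vertex_1,\vertex_2,\vertex_3$ are pairwise joined by dualities, with $\vertex_i$ carrying the nullary predicate symbol $p_i$. Duality of $\vertex_1\vertex_2$ and $\vertex_2\vertex_3$ forces $p_2\tighteq\pp_1$ and $p_3\tighteq\pp_2$; since $\dual{(\cdot)}$ is an involution this gives $p_3\tighteq p_1$, so $\vertex_3$ and $\vertex_1$ share the symbol $p_1$, which is not dual to itself because $\pp\tightneq p$. This contradicts $\vertex_3\vertex_1\tightin\dualitiesof\dualizinggraph$. For $\pfour$-freeness I would suppose $\vertex_1,\vertex_2,\vertex_3,\vertex_4$ induce a path with duality edges $\vertex_1\vertex_2,\vertex_2\vertex_3,\vertex_3\vertex_4$ and no other dualities among them. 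The same chain of involutions yields $p_2\tighteq\pp_1$, $p_3\tighteq p_1$, and $p_4\tighteq\pp_1$, so $\vertex_1$ and $\vertex_4$ carry dual predicate symbols; hence $\vertex_1\vertex_4\tightin\dualitiesof\dualizinggraph$, contradicting that the path is induced. Both arguments are verbatim the duality cases of \reflem{lem:dualizing-graph-of-prop-well-defined}.

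Two points deserve care, though neither is a real obstacle. First, a duality by definition relates two vertices carrying (dual) predicate symbols, so any binder of $\fograph$ lies in no duality and cannot appear in either forbidden configuration; the case analyses above therefore quantify only over literals. Second, one should record that the duality relation is symmetric, which again follows from $\dual{(\cdot)}$ being an involution with $\ppp\tighteq p$. I expect no serious difficulty: the entire argument is a lighter reprise of \reflem{lem:dualizing-graph-of-prop-well-defined}, with the edge-cograph verification replaced by direct appeal to the fact that $\fograph$ is a (logical) cograph.
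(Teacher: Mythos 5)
Your proposal is correct and follows the paper's own proof essentially verbatim: the underlying cograph is inherited directly from the fograph structure, and the $\pfour$- and $\cthree$-freeness of the duality graph is established by the same involution arguments used in the proof of \reflem{lem:dualizing-graph-of-prop-well-defined}. Your extra remarks about binders lying in no duality and about symmetry of the duality relation are harmless additional care that the paper leaves implicit.
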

\begin{proof}
Let
$\dualizinggraph=\dualizinggraphof\fograph$.
Since
$\verticesof\dualizinggraph=\verticesof\fograph$,
$\edgesof\dualizinggraph=\edgesof\fograph$
and
$\fograph$ is a fograph,
$\dualizinggraph$ is a cograph.
By reasoning as in the proof of \reflem{lem:dualizing-graph-of-prop-well-defined}, $\graphpairof{\verticesof\fograph}{\dualitiesof\dualizinggraph}$ is $P_4$- and $\cthree$-free.
\end{proof}
\begin{lemma}\label{lem:prop-factorization}
The function $\dualizinggraphofpropsymbol$ from simple propositions to dualizing graphs (Def.\,\ref{def:dualizing-graph-of-prop}) factorizes through simple propositional fographs:
$\dualizinggraphofprop\prop=\dualizinggraphof{\graphof\prop}$ for every simple proposition $\prop$.
\end{lemma}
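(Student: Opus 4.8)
The plan is to prove $\dualizinggraphofprop\prop=\dualizinggraphof{\graphof\prop}$ componentwise, checking that the two dualizing graphs agree on their vertices, their dualities, and their edges. Since a simple proposition carries no quantifiers and no logical constants, $\graphof\prop$ is a simple propositional fograph (its vertices are all atom-labelled and none is $1$- or $0$-labelled), so $\dualizinggraphof{\graphof\prop}$ is well-defined by \reflem{lem:dualizing-graph-of-prop-fograph-well-defined}, while $\dualizinggraphofprop\prop$ is well-defined by \reflem{lem:dualizing-graph-of-prop-well-defined}; it remains only to see that they coincide.

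First I would fix the canonical identification of vertices. By the description of $\graphofsymbol$ following Def.\,\ref{def:graph-of-formula}, the vertices of $\graphof\prop$ are exactly the atom occurrences of $\prop$, which—because every atom of a simple proposition is a nullary predicate symbol—are precisely the occurrences of predicate symbols in $\prop$. Hence $\verticesof{\graphof\prop}$ coincides with $\verticesof{\dualizinggraphofprop\prop}$ (Def.\,\ref{def:dualizing-graph-of-prop}), and $\verticesof{\dualizinggraphof{\graphof\prop}}=\verticesof{\graphof\prop}$ by Def.\,\ref{def:prop-fograph-to-dualizing-graph}, so all three share one common vertex set. Under this identification the dualities are immediate: both Def.\,\ref{def:dualizing-graph-of-prop} and Def.\,\ref{def:prop-fograph-to-dualizing-graph} place a duality $\vertex\vertexa$ exactly when $\vertex$ and $\vertexa$ carry dual predicate symbols, so $\dualitiesof{\dualizinggraphofprop\prop}=\dualitiesof{\dualizinggraphof{\graphof\prop}}$.

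The remaining—and only substantive—step is the edge comparison. Because Def.\,\ref{def:prop-fograph-to-dualizing-graph} sets $\edgesof{\dualizinggraphof{\graphof\prop}}=\edgesof{\graphof\prop}$, it suffices to show that for distinct predicate occurrences $\vertex,\vertexa$ one has $\vertex\vertexa\tightin\edgesof{\graphof\prop}$ if and only if the smallest subformula of $\prop$ containing both $\vertex$ and $\vertexa$ is a conjunction. I would prove this by structural induction on $\prop$. The base case, with $\prop$ an atom, is vacuous. For $\prop=\prop_1\tightwedge\prop_2$ we have $\graphof\prop=\graphof{\prop_1}\graphjoin\graphof{\prop_2}$: if $\vertex,\vertexa$ lie in the same factor $\prop_i$ then, since $\prop$ is a syntactic tree, both the edge relation and the least common subformula are unchanged from $\prop_i$, and the induction hypothesis applies; if they lie in different factors then their least common subformula is $\prop$ itself (a conjunction) and the join $\graphjoin$ supplies the edge $\vertex\vertexa$, so both conditions hold. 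The case $\prop=\prop_1\tightvee\prop_2$ is dual, with $\graphof\prop=\graphof{\prop_1}\graphunion\graphof{\prop_2}$: for $\vertex,\vertexa$ in different factors the least common subformula is the disjunction $\prop$ and $\graphunion$ adds no edge between factors, so both conditions fail, while for $\vertex,\vertexa$ in one factor the induction hypothesis applies.

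The main obstacle—indeed essentially the only place where anything happens—is this edge characterisation: recognising that adjacency in the cograph $\graphof\prop$ is governed precisely by whether the least common subformula is a conjunction. This is the defining behaviour of the translation $\graphofsymbol$, and the argument mirrors the syntactic-tree reasoning already used in \reflem{lem:dualizing-graph-of-prop-well-defined}, where a vertex lying outside a factor is forced to attach uniformly to that factor. Once the vertex, duality, and edge components are shown to agree, the two dualizing graphs are equal, establishing the lemma.
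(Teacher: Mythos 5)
Your proof is correct and matches the paper's approach: the paper dispatches this lemma with ``a routine induction on the structure of $\prop$,'' and your componentwise comparison (vertices, dualities, and the inductive edge characterisation of $\graphofsymbol$ via least common subformulas) is exactly the detail that routine induction would contain.
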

\begin{proof}
A routine induction on the structure of $\prop$.\todo{todo}
\end{proof}
\begin{lemma}[Propositional homogeneous soundness]\label{lem:prop-soundness}
A simple proposition is valid if it has a homogeneous combinatorial proof.
\end{lemma}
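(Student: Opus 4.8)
The plan is to convert the given homogeneous combinatorial proof of $\prop$ into a \emph{standard} combinatorial proof of $\prop$ (Def.\,\ref{def:cp}) and then invoke the Soundness Theorem (Theorem\,\ref{thm:soundness}). By definition a homogeneous combinatorial proof of $\prop$ is a skew fibration $\skewfib:\net\to\dualizinggraphofprop\prop$ of dualizing graphs from a dualizing net $\net$. By Lemma\,\ref{lem:prop-factorization} we have $\dualizinggraphofprop\prop=\dualizinggraphof{\graphof\prop}$, so the underlying cograph $\graphpairof{\verticesof{\dualizinggraphofprop\prop}}{\edgesof{\dualizinggraphofprop\prop}}$ is exactly the underlying cograph of the fograph $\graphof\prop$ (Def.\,\ref{def:prop-fograph-to-dualizing-graph}), and its vertices carry the predicate-symbol labels of $\graphof\prop$.

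First I would equip the source $\net$ with labels and a colouring so as to obtain a propositional fonet $\netp$ with the same underlying cograph. The label of a vertex $\vertex$ of $\netp$ is taken to be the label of $\skewfib(\vertex)$ in $\graphof\prop$; since $\prop$ is simple and propositional every such label is a nullary atom, so $\netp$ is a logical cograph with no binders, hence a fograph. The colouring is read off the dualities of $\net$: as $\net$ is a dualizing net, $\graphpairof{\verticesof\net}{\dualitiesof\net}$ is a matching, and no duality edge is also a cograph edge (otherwise its two endpoints would already induce a bimatching in $\net$, contradicting that $\net$ is a dualizing net). Declaring the two endpoints of each duality to be a colour therefore yields a well-defined colouring. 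Because $\skewfib$ preserves dualities, and dualities of $\dualizinggraphof{\graphof\prop}$ join vertices with dual predicate symbols, the two literals of each colour are pre-dual; and since the dualities form a matching, every literal lies in exactly one such link. Thus $\netp$ is a linked fograph.

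Next I would verify that $\netp$ is a fonet. Having no quantifiers, $\netp$ has no existential binder, so the empty substitution is a dualizer: on each link the two pre-dual nullary atoms are already dual. Moreover, in the absence of binders there are no dependencies, so $\leapsof{\netp}$ consists solely of the links, \ie\ the leap graph $\leapgraphof{\netp}$ coincides with $\graphpairof{\verticesof\net}{\dualitiesof\net}$. Consequently a set $W$ induces a bimatching in $\netp$ exactly when it induces a matching in the cograph and in the duality graph, which is precisely the condition for inducing a bimatching in the dualizing net $\net$. Since $\net$ has no induced bimatching, neither does $\netp$, so $\netp$ is a fonet. Finally, the vertex map $\bifib:\netp\to\graphof\prop$ underlying $\skewfib$ is label-preserving by construction, is a skew fibration on the cograph part (being the cograph component of $\skewfib$), and trivially satisfies the binding-fibration and existential-preservation conditions because neither graph has binders; hence $\bifib$ is a skew bifibration, \ie\ a combinatorial proof of $\prop$. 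The Soundness Theorem then gives that $\prop$ is valid.

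The routine parts are the fograph/linked-fograph bookkeeping and the vacuous binding conditions; the one step needing care is the identification of the two bimatching conditions, where the key observation is that the propositional leap graph is exactly the duality graph, so the ``no induced bimatching'' clauses of dualizing net and fonet match up verbatim. I expect this correspondence of bimatchings, together with the remark that dualities avoid cograph edges, to be the crux, everything else being a direct transfer of structure across Lemma\,\ref{lem:prop-factorization}.
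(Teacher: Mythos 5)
Your proposal is correct and follows essentially the same route as the paper's proof: relabel the source via $\bifib$ using the factorization $\dualizinggraphofprop\prop=\dualizinggraphof{\graphof\prop}$, turn dualities into links, observe that the empty assignment is a dualizer and that the leap graph coincides with the duality graph so the bimatching conditions transfer, and conclude via Theorem\,\ref{thm:soundness}. Your extra remark that no duality can coincide with a cograph edge (else it would induce a bimatching) is a small detail the paper leaves implicit, and it is a worthwhile addition for justifying that the colouring is well-defined.
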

\begin{proof}
Suppose
$\bifib:\net\to\dualizinggraphofprop\prop=\dualizinggraph$
is a homogeneous combinatorial proof of the simple proposition $\prop$.
By \reflem{lem:prop-factorization}, $\dualizinggraph=\dualizinggraphof{\graphof\prop}$.
Define $\netp$ as the cograph $\graphpairof{\verticesof\net}{\edgesof\net}$
with a link $\setof{\vertex,\vertexa}$ for each $\vertex\vertexa\in\dualitiesof\net$
and the label of a vertex $\vertex$ in $\netp$ defined as the label of $\bifib(\vertex)$ in $\graphof\prop$,
where $\bifib(\vertex)\tightin\verticesof\dualizinggraph$
can be viewed as a vertex of $\graphof\prop$ since
$\verticesof\dualizinggraph=\verticesof{\graphof\prop}$
by definition of $\dualizinggraphofsymbol$.
Since $\net$ is a dualizing net, $\netp$ is a fonet: (a) every colour is a pre-dual pair of literals, since $\bifib:\graphpairof{\verticesof\net}{\dualitiesof\net}\to\graphpairof{\verticesof\dualizinggraph}{\dualitiesof\dualizinggraph}$ is an undirected graph homomorphism,
(b) $\netp$ trivially has a dualizer, the empty assignment, since it is propositional, with no existential variables,
and
(c) $\netp$ has no induced bimatching, since the leap graphs $\leapgraphof\netp$ and $\leapgraphof\net$ are equal and
$\net$ has no induced bimatching.
We claim that $\bifib:\netp\to\graphof\prop$ is a skew bifibration.
Since $\bifib:\net\to\dualizinggraph$ is a skew fibration,
$\graphpairof{\verticesof\netp}{\edgesof\netp}=\graphpairof{\verticesof\net}{\edgesof\net}$, and
$\graphpairof{\verticesof{\graphof\prop}}{\edgesof{\graphof\prop}}=\graphpairof{\verticesof\dualizinggraph}{\edgesof\dualizinggraph}$,
we know $\bifib:\netp\to\graphof\prop$ is a skew fibration.
Because the label of $\vertex$ in $\netp$ is
that of $\bifib(\vertex)$ in $\graphof\prop$, $\bifib:\netp\to\graphof\prop$ preserves labels. Since there are no binders, existentials are preserved trivially and $\bifib:\bindinggraphof\netp\to\bindinggraphof{\graphof\prop}$ is trivially a directed graph fibration.
Thus $\bifib:\netp\to\graphof\prop$ is a skew bifibration, hence a combinatorial proof (since $\netp$ is a fonet).
By Theorem\,\ref{thm:soundness}, $\prop$ is valid.
\end{proof}
\begin{lemma}[Propositional homogeneous completeness]\label{lem:prop-completeness}
Every valid simple proposition has a homogeneous combinatorial proof.
\end{lemma}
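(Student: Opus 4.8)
The plan is to run the construction of \reflem{lem:prop-soundness} in reverse, feeding it a standard (heterogeneous) combinatorial proof supplied by the completeness half of Theorem~\ref{thm:soundness-completeness}. First I would invoke Theorem~\ref{thm:completeness}: since $\prop$ is valid, there is a combinatorial proof $\bifib:\net\to\graphof\prop$ with $\net$ a fonet. The key preliminary observation is that this fonet is in fact purely propositional. Indeed, because $\prop$ is a simple proposition, $\graphof\prop$ is a simple propositional fograph: it has no binders and every predicate symbol is nullary. Since $\bifib$ preserves binders (\reflem{lem:pres-binders}) and $\graphof\prop$ has none, $\net$ has no binder; and since $\bifib$ preserves labels while $\graphof\prop$ has no $1$-literal, $\net$ has no $1$-literal, so every literal of $\net$ lies in a link. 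Thus $\net$ is a simple propositional fonet whose links are pairs of literals with dual predicate symbols.

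Next I would convert the colouring of $\net$ into dualities, exactly mirroring the passage from a dualizing net to a fonet in \reflem{lem:prop-soundness}. Define the dualizing graph $\dualizinggrapha$ with underlying cograph $\graphpairof{\verticesof\net}{\edgesof\net}$ and a duality $\vertex\vertexa$ for each link $\setof{\vertex,\vertexa}$ of $\net$. Since every vertex of $\net$ is a literal lying in exactly one link, the dualities form a matching, so $\graphpairof{\verticesof{\dualizinggrapha}}{\dualitiesof{\dualizinggrapha}}$ is a $\cthree$-free cograph and $\dualizinggrapha$ is a dualizing graph whose dualities are a matching. Because $\net$ has no binder it has no dependency, so its leaps are exactly its links; hence $\leapgraphof\net$ and $\graphpairof{\verticesof{\dualizinggrapha}}{\dualitiesof{\dualizinggrapha}}$ coincide, and a set induces a bimatching in $\dualizinggrapha$ if and only if it induces one in $\net$. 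As $\net$ is a fonet it has no induced bimatching, so $\dualizinggrapha$ is a dualizing net.

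Finally I would check that $\bifib$, read as a map of dualizing graphs, is a homogeneous combinatorial proof. On underlying cographs $\bifib:\dualizinggrapha\to\dualizinggraphofprop\prop$ is the skew fibration $\bifib:\net\to\graphof\prop$ (using \reflem{lem:prop-factorization} to identify $\dualizinggraphofprop\prop$ with $\dualizinggraphof{\graphof\prop}$, whose vertices and edges are those of $\graphof\prop$). For the duality condition, each duality of $\dualizinggrapha$ is a link $\setof{\vertex,\vertexa}$ of $\net$; by label preservation $\bifib(\vertex)$ and $\bifib(\vertexa)$ carry the dual predicate symbols of $\vertex$ and $\vertexa$, so they are distinct and form a duality of $\dualizinggraphof{\graphof\prop}$. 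Hence $\bifib:\dualizinggrapha\to\dualizinggraphofprop\prop$ is a skew fibration of dualizing graphs from a dualizing net, \ie a homogeneous combinatorial proof of $\prop$.

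The routine parts are the cograph and matching bookkeeping; the one point that needs care --- and where a naive argument fails --- is that the dualities of $\dualizinggrapha$ must be taken to be the \emph{links} of $\net$ rather than all pairs of dual-predicate-symbol vertices, since distinct literals of $\net$ may share dual predicate symbols without being linked, which would break the matching requirement of a dualizing net. Tying the dualities to the colouring of $\net$ is exactly what simultaneously guarantees the matching condition and the coincidence of bimatchings with those of the fonet.
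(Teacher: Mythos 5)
Your proposal is correct and follows essentially the same route as the paper: obtain a standard combinatorial proof $\bifib:\net\to\graphof\prop$ from Theorem~\ref{thm:completeness}, convert each link of $\net$ into a duality to form a dualizing net, identify the target with $\dualizinggraphof{\graphof\prop}$ via Lemma~\ref{lem:prop-factorization}, and verify the skew-fibration and duality-homomorphism conditions using label preservation. The extra care you take in justifying that $\net$ is binder-free and $1$-free (and your closing remark that the source dualities must be the links rather than all dual-symbol pairs) only makes explicit steps the paper leaves implicit.
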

\begin{proof}
Let $\prop$ be a valid simple proposition.
By Theorem\,\ref{thm:completeness} there exists a (standard) combinatorial proof $\bifib:\net\to\graphof\prop$.
Let $\netp$ be the dualizing graph obtained from $\net$ by replacing each link (colour) $\setof{\vertex,\vertexa}$ by a duality $\vertex\vertexa\tightin\dualitiesof\netp$.
Since, by definition of a linked fograph, every literal is in exactly one link,
$\graphpairof{\verticesof\netp}{\dualitiesof\netp}$ is a matching,
and since $\net$ is a simple propositional fonet, $\netp$ has no induced bimatching; thus $\netp$ is a dualizing net.
Let
$\dualizinggraph=\dualizinggraphofprop\prop$.
By \reflem{lem:prop-factorization},
$\dualizinggraphofprop\prop=\dualizinggraphof{\graphof\prop}$, thus
$\graphpairof{\verticesof\dualizinggraph}{\edgesof\dualizinggraph}=\graphpairof{\verticesof{\graphof\prop}}{\edgesof{\graphof\prop}}$.
We claim that $\bifib:\netp\to\dualizinggraph$ is a homogeneous combinatorial proof, \ie, (1) $\bifib:\netp\to\dualizinggraph$ is a skew fibration and (2) \mbox{$\bifib:\graphpairof{\verticesof\netp}{\dualitiesof\netp}\to\graphpairof{\verticesof\dualizinggraph}{\dualitiesof\dualizinggraph}$} is a graph homomorphism.
By definition, (1) holds if \mbox{$\bifib:\graphpairof{\verticesof\netp}{\edgesof\netp}\to\graphpairof{\verticesof\dualizinggraph}{\edgesof\dualizinggraph}$} is a skew fibration, which is true because $\bifib$ is a skew bifibration,
$\graphpairof{\verticesof\netp}{\edgesof\netp}=\graphpairof{\verticesof\net}{\edgesof\net}$, and
$\graphpairof{\verticesof\dualizinggraph}{\edgesof\dualizinggraph}=\graphpairof{\verticesof{\graphof\prop}}{\edgesof{\graphof\prop}}$.
For (2), suppose $\vertex\vertexa\tightin\dualitiesof\netp$. Since $\net$ is a fonet, it has a dualizer, so the labels of $\vertex$ and $\vertexa$ are dual, say, $p$ and $\pp$, respectively.
Because $\bifib$ preserves labels, $\bifib(\vertex)$ and $\bifib(\vertexa)$ are labelled $p$ and $\pp$, thus $\bifib(\vertex)\mkern2mu\bifib(\vertexa)\in\dualitiesof\dualizinggraph$, and (2) holds.
\end{proof}
\begin{proofof}{Theorem\,\ref{thm:prop-soundness-completeness} (Propositional homogeneous soundness and completeness)}\\
Lemmas\,\ref{lem:prop-soundness} and \ref{lem:prop-completeness}.
\end{proofof}

\subsection{Monadic homogeneous soundness and completeness proof}\label{sec:proof-of-monadic-soundness-completeness}

In this section we prove the monadic homogeneous soundness and completeness theorem, Theorem\,\ref{thm:monadic-soundness-completeness}.
The proof of completeness is similar to that of the propositional case,
\reflem{lem:prop-completeness}:
transform a standard first-order combinatorial proof of a monadic formula into a homogeneous combinatorial proof.
The proof of soundness is more subtle. In the propositional case,
\reflem{lem:prop-soundness},
we transformed a homogeneous combinatorial proof directly into a standard one, with the same vertices in both source and target.
The monadic case involves quotienting indistinguishable vertices in the source monet.

\subsubsection{Factorization through closed monadic fographs}

A fograph is \defn{closed} if it contains no free variables,
and \defn{monadic} if its predicate symbols are unary and it has no function symbols or logical constants ($1$ or $0$).
\begin{samepage}\begin{definition}\label{def:mograph-of-fograph}
The \defn{mograph} $\mographof\fograph$ of a closed monadic fograph $\fograph$ is the mograph $\mograph$ with
\begin{itemize}
\item $\verticesof\mograph=\verticesof\fograph$,
\item $\edgesof\mograph=\edgesof\fograph$,
\item $\vertex\vertexa\tightin\dualitiesof\mograph$ if and only if
$\vertex$ and $\vertexa$ are literals whose predicate symbols are dual, and
\item $\diredge\vertex\vertexa\tightin\bindingsof\mograph$ if and only if
$\vertex$ binds $\vertexa$.
\end{itemize}
\end{definition}\end{samepage}
For example, the closed monadic fograph $\fograph$ in Fig.\,\ref{fig:mograph} (centre)
has the mograph $\mographof\fograph$ to its right.
\begin{lemma}\label{lem:mograph-of-fograph-well-defined}
$\mographof\fograph$ is a well-defined mograph for every closed monadic fograph $\fograph$.
\end{lemma}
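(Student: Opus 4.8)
The plan is to verify, layer by layer, the conditions of Definitions~\ref{def:dualizing-graph}, \ref{def:pre-mograph} and \ref{def:mograph}, exploiting the fact that $\mograph=\mographof\fograph$ reuses the vertex set and edge set of $\fograph$ verbatim. In particular, the underlying undirected graph $\graphpairof{\verticesof\mograph}{\edgesof\mograph}$ \emph{is} the underlying graph of $\fograph$, so the smallest proper strong module containing a binder is the same object whether computed in $\mograph$ or in $\fograph$; hence the scope of any binder in $\mograph$ coincides with its scope in $\fograph$. I shall use this identification freely.

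First I would check that $\mograph$ is a dualizing graph. The underlying graph $\graphpairof{\verticesof\mograph}{\edgesof\mograph}=\graphpairof{\verticesof\fograph}{\edgesof\fograph}$ is a non-empty cograph, since $\fograph$ is a fograph and every logical cograph contains at least one literal. For the duality graph $\graphpairof{\verticesof\mograph}{\dualitiesof\mograph}$, each monadic literal carries a single predicate symbol and duality is the fixed-point-free involution with $\ppp=p$ and $\pp\tightneq p$, so $\pfour$-freeness and $\cthree$-freeness follow exactly as in the proof of \reflem{lem:dualizing-graph-of-prop-well-defined}: an induced duality-path $\vertex_1\vertex_2\vertex_3\vertex_4$ would force $\dual{p_1}=p_2$, $\dual{p_2}=p_3$, $\dual{p_3}=p_4$, whence $p_4=p_2=\dual{p_1}$ and $\vertex_1\vertex_4$ would itself be a duality, a contradiction; and an induced duality-triangle would force $\dual{p_1}=p_1$, contradicting $\pp\tightneq p$.

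Next I would establish the pre-mograph conditions of Def.~\ref{def:pre-mograph}. The binding set is non-empty: as $\fograph$ is logical it contains a literal $\vertex$, which (being monadic) has the form $p x$ for a variable $x$; since $\fograph$ is closed, $x$ is bound, so the unique rectified $x$-binder binds $\vertex$, yielding a binding. The target condition also holds, by the same observation as in \reflem{lem:mograph-of-formula-well-defined}: a target of a binding is a literal whose single variable is bound by exactly one binder, and no literal is the source of a binding, so a target vertex lies in no other binding.

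Finally I would verify the three clauses of Def.~\ref{def:mograph}. No binder lies in a duality, since dualities join only atom-labelled vertices whereas binders are variable-labelled. Every binder has non-empty scope, because its scope in $\mograph$ equals its scope in $\fograph$, which contains a literal as $\fograph$'s binders are legal. And $\diredge\binder\literal\tightin\bindingsof\mograph$ forces $\literal$ into the scope of $\binder$ directly from the definition of \emph{binds} (an $x$-binder binds precisely the $x$-literals in its scope). I expect no serious obstacle here: the entire argument is a routine re-packaging of the fograph data into the mograph format, and the only genuinely combinatorial point, triangle-freeness of the duality graph, is already dispatched by the cited proof of \reflem{lem:dualizing-graph-of-prop-well-defined}.
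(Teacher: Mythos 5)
Your proposal follows the paper's proof essentially step for step: the underlying cograph and the $\pfour$- and $\cthree$-freeness of the duality graph are inherited or handled exactly as in \reflem{lem:dualizing-graph-of-prop-well-defined}, the pre-mograph target condition follows from monadicity (one variable per literal, literals bind nothing), and the remaining clauses reduce to the legality of binders in $\fograph$. The one place where your justification is off target is the clause ``no binder is in a duality'': in a mograph a \emph{binder} is by definition a vertex that is not the target of any binding --- mographs carry no labels --- so observing that dualities join only atom-labelled vertices while binders of $\fograph$ are variable-labelled does not settle the mograph condition. An atom-labelled vertex of $\fograph$ that were unbound would be a mograph binder sitting in a duality, and this is precisely where closedness of $\fograph$ must be invoked: because $\fograph$ is closed, every literal of $\fograph$ is bound, hence is the target of a binding and so a mograph literal, which is how the paper argues this clause. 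You do use closedness elsewhere (to get a non-empty binding set), so the ingredient is already in your proof; it just needs to be applied at this step as well. Everything else matches the paper's argument.
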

\begin{proof}
The underlying cograph $\graphpairof{\verticesof\mograph}{\edgesof\mograph}$ is inherited directly from $\fograph$.
By reasoning as in the proof of \reflem{lem:dualizing-graph-of-prop-well-defined}, $\graphpairof{\verticesof\fograph}{\dualitiesof\dualizinggraph}$ is $P_4$- and $\cthree$-free.
It remains to show
(a) every target of a binding in $\bindingsof\mograph$ is in no other binding,
(b) no binder is in a duality,
(c) the scope of every binder $\binder$ is non-empty, and
(d) $\diredge\binder\literal\tightin\bindingedgesof\mograph$ only if $\literal$ is in the scope of $\binder$.

(a) Since $\fograph$ is monadic, every literal label contains exactly one variable, hence is bound by at most one binder in $\fograph$. By definition of $\bindinggraphof\fograph$, no literal binds any other vertex, thus every literal target of a binding is in no other binding.

(b) Dualities are defined as pairs of literals in $\fograph$, which become literals in $\mograph$ since $\fograph$ is closed.
(Every literal in $\fograph$ is bound by a binder in $\fograph$, so becomes a literal in $\mograph$.)

(d) By definition of fograph binding, $\literal$ is bound by a binder $\binder$ only if $\literal$ is in the scope of $\binder$.
\end{proof}
\begin{lemma}\label{lem:monadic-factorization}
The function $\mographofformulasymbol$ from closed monadic formulas to mographs (Def.\,\ref{def:mograph-of-formula}) factorizes through closed monadic fographs:
$\mographofformula\monadicformula=\mographof{\graphof\monadicformula}$ for every closed monadic formula $\monadicformula$.
\end{lemma}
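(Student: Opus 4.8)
The plan is to prove the identity by structural induction on $\monadicformula$, paralleling the propositional factorization \reflem{lem:prop-factorization} but with the extra bookkeeping demanded by quantifiers. Since both $\mographofformula\monadicformula$ (Def.\,\ref{def:mograph-of-formula}) and $\mographof{\graphof\monadicformula}$ (Def.\,\ref{def:mograph-of-fograph}) are determined by four data --- their vertex set, cograph edges, dualities, and bindings --- it suffices to check that these four components agree. Three of them are essentially immediate. The vertices coincide because, as noted after Def.\,\ref{def:graph-of-formula}, the vertices of $\graphof\monadicformula$ are exactly the occurrences of atoms and quantifiers in $\monadicformula$, with atom-occurrences becoming literals and quantifier-occurrences becoming binders; this is literally the vertex set of $\mographofformula\monadicformula$. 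The dualities coincide because $\graphof{-}$ preserves the predicate symbol of each atom, and, since $\monadicformula$ is closed, every atom-occurrence is bound and hence becomes a genuine literal, so ``dual-predicate atom-occurrences'' and ``dual-predicate literals'' name the same pairs. The bindings coincide by rectification (\reflem{lem:translation}): in the fograph the $x$-binder binds precisely the $x$-literals in its scope, and since $\monadicformula$ is rectified these are exactly the atom-occurrences of $x$ governed by the corresponding quantifier $\forall x$ or $\exists x$.

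The substantive step is the edge set, where I would show by induction that $\edgesof{\graphof\monadicformula}$ coincides with the two-clause relation of Def.\,\ref{def:mograph-of-formula}. The first clause --- $\vertex$ and $\vertexa$ are joined whenever the smallest subformula containing both is a conjunction --- is handled exactly as in \reflem{lem:dualizing-graph-of-prop-well-defined}: each $\tightwedge$ in $\monadicformula$ produces a graph join in the clause $\graphof{\formula\tightwedge\formulaa}=\graphof\formula\graphjoin\graphof\formulaa$, and a join is precisely what puts an edge between every vertex of one conjunct and every vertex of the other. The genuinely new ingredient is the second clause, the existential-scope edges. Here I would use that $\graphof{\exists x\,\formula}=\singleton x\graphjoin\graphof\formula$ adds an edge from the new binder $\singleton x$ to every vertex of $\graphof\formula$, and that the occurrences in $\graphof\formula$ are exactly those in the syntactic scope of $\exists x$; dually, $\graphof{\forall x\,\formula}=\singleton x\graphunion\graphof\formula$ is a union and so introduces no edges, matching the fact that universal quantifiers contribute nothing to the edge relation. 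One checks that the two kinds of edge never conflict: if $\vertex$ is an existential binder and $\vertexa$ lies in its scope, the smallest subformula containing both is the quantified formula itself, which is not a conjunction, so only the second clause applies.

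The inductive carcass is then routine: the base case $\monadicformula=\px$ gives a single literal vertex with no edges, dualities, or bindings on either side; the cases $\monadicformula=\monadicformula_1\wedgeorvee\monadicformula_2$ and $\monadicformula=\forallorexists x\,\monadicformulap$ each reduce the four-component comparison to the inductive hypotheses on the immediate subformulas together with the local analysis above, using the corresponding clause of $\graphof{-}$. The main obstacle I anticipate is not a single hard argument but the care needed to reconcile two superficially different notions of scope: the syntactic scope used in Def.\,\ref{def:mograph-of-formula} and the strong-module scope used for fograph binding in Def.\,\ref{def:mograph-of-fograph}. As the scope computation for $\drinkergraph$ illustrates, these need not be equal as vertex sets --- the fograph scope of a universal binder can be strictly larger than its syntactic scope --- so the binding clause must be justified through rectification (each variable is introduced by a unique binder and occurs only within the region it governs) rather than by a naive identification of the two scopes. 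Once this is pinned down, via \reflem{lem:parent-scope} identifying the fograph scope with the leaves below the binder's parent in the cotree, the remaining verifications are mechanical.
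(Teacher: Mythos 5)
Your proposal is correct and matches the paper's approach: the paper dispatches this lemma with the single line ``a routine induction on the structure of $\monadicformula$,'' and your component-by-component comparison of vertices, edges, dualities and bindings is exactly the routine induction being alluded to. You also correctly isolate the one point that genuinely needs care --- reconciling the syntactic scope used in Def.\,\ref{def:mograph-of-formula} with the strong-module scope underlying fograph binding in Def.\,\ref{def:mograph-of-fograph}, which rectification resolves --- so your write-up supplies precisely the detail the paper omits.
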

\begin{proof}
A routine induction on the structure of $\monadicformula$.\todo{todo}
\end{proof}

\subsubsection{Collapsing indistinguishable vacuous universal binders}

Given an equivalence relation $\sim$ on a set $\vertices$ write $\equivclassof\vertex\sim$
for the $\sim$-equivalence class $\setst{\vertexa\tightin\vertices}{\vertexa\sim\vertex}$ and
$\quotientof\vertices\sim$ for the set of $\sim$-equivalence classes $\setst{\equivclassof\vertex\sim}{\vertex\tightin\vertices}$.
For a set $\edges$ of edges on $\vertices$ define $\quotientof\edges\sim$ as the set $\setst{\equivclassof\vertex\sim\equivclassof\vertexa\sim}{\vertex,\vertexa\in\edges}$ of edges on $\quotientof\vertices\sim$.
Given a mograph $\mograph$ and an equivalence relation $\sim$ on $\verticesof\mograph$ define the \defn{quotient} mograph $\quotientgraphof\mograph\sim$ by
$\verticesof{\quotientgraphof\mograph{\mkern3mu\sim}}=\quotientof{\verticesof\mograph}\sim$,
$\edgesof{\quotientgraphof\mograph{\mkern3mu\sim}}=\quotientof{\edgesof\mograph}\sim$,
$\dualitiesof{\quotientgraphof\mograph{\mkern3mu\sim}}=\quotientof{\dualitiesof\mograph}\sim$,
and
$\bindingsof{\quotientgraphof\mograph{\mkern3mu\sim}}=\quotientof{\bindingsof\mograph}\sim$.

A binder in a mograph is \defn{vacuous} if it binds no literal.
Let $\bifib:\net\to\mograph$ be a skew bifibration of mographs.
Vacuous universal binders $\binder$ and $\bindera$ in $\net$ are \defn{indistinguishable}
if their images and neighbourhoods are equal, \ie,
$\bifib(\binder)=\fib(\bindera)$ and $\neighbourhoodof\binder=\neighbourhoodof\bindera$.
Define $\indist$ as the equivalence relation on $\verticesof\net$ generated by indistinguishability,
and the \defn{collapse} $\collapseof\bifib:\quotientgraphof\net\indist\to\mograph$
as the canonical function on the quotient,
\ie, $\collapseof\bifib(\equivclassof\binder\indist)=\fib(\vertex)$, a well-defined function since $\binder\indist\bindera$ implies $\bifib(\binder)\tighteq\bifib(\bindera)$.
\begin{lemma}\label{lem:collapse}
Let $\mograph$ be a mograph and $\net$ a monet.
If
$\bifib:\net\to\mograph$ is a homogeneous combinatorial proof then its collapse
$\collapseof\fib:\quotientgraphof\monet\indist\to\grapha$ is a homogeneous combinatorial proof.
\end{lemma}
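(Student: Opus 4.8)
The plan is to verify the two conditions defining a homogeneous combinatorial proof for $\collapseof\bifib$: that its source $\quotientgraphof\net\indist$ is a monet, and that $\collapseof\bifib$ is itself a skew bifibration of mographs. Everything rests on three observations about the identified vertices. Since indistinguishable vacuous universal binders $\binder$ and $\bindera$ satisfy $\neighbourhoodof\binder=\neighbourhoodof\bindera$, they are non-adjacent (no vertex lies in its own neighbourhood), so $\indist$ identifies only false twins; hence the quotient edge set is well-defined and $\quotientgraphof\net\indist$ is isomorphic to the sub-mograph of $\net$ induced by one chosen representative per $\indist$-class. Moreover a vacuous universal binder lies in no duality (it is a binder) and in no dependency (binding nothing, it is related to no other binder under $\brel\net$), so it is isolated in $\leapgraphof\net$. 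Finally every duality and every binding of $\net$ has both endpoints in singleton $\indist$-classes — dualities join literals, and bindings join non-vacuous binders to literals — so dualities and bindings descend to the quotient unchanged.

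First I would show $\quotientgraphof\net\indist$ is a monet. Via the induced-subgraph description its underlying graph is a cograph (cographs are hereditary), its duality graph is again a triangle-free cograph, and the pre-mograph condition on bindings is inherited verbatim, while the scope conditions of Definition~\ref{def:mograph} transfer by \reflem{lem:induced-scope}. Linkedness and consistency are inherited, since literals, dualities, and $\brel\net$ restricted to the (singleton-class) non-vacuous binders are untouched, and the merged classes create no conflict. For the absence of an induced bimatching I would argue contrapositively: any $W$ inducing a matching in the quotient leap graph must avoid every merged class, because those classes are isolated in the leap graph and a matching leaves no vertex unpaired; so $W$ consists of singleton classes, whose corresponding vertices induce a bimatching in $\net$, contradicting that $\net$ is a monet.

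Next I would check that $\collapseof\bifib$ is a skew bifibration. The skew-fibration lifting property transfers directly: given an edge $\vertexa\,\collapseof\bifib([\vertex])\tightin\edgesof\mograph$, which equals $\vertexa\,\bifib(\vertex)\tightin\edgesof\mograph$ since $\collapseof\bifib([\vertex])=\bifib(\vertex)$, a skew lift $\skewlifting\vertexa$ for $\bifib$ at $\vertex$ descends to the lift $[\skewlifting\vertexa]$ for $\collapseof\bifib$ at $[\vertex]$ (and $[\skewlifting\vertexa]\tightneq[\vertex]$, as $\skewlifting\vertexa$ is adjacent to $\vertex$ while indistinguishable binders are not). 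The duality map is a homomorphism and the binding map a fibration because both are carried over unchanged from $\bifib$, and existential binders of $\net$ lie in singleton classes and map to existential binders of $\mograph$ since $\bifib$ preserves existentials.

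The hard part will be ruling out that the quotient silently flips a binder from universal to existential — precisely what existential-preservation of $\collapseof\bifib$, and consistency of the quotient, forbid. By \reflem{lem:node-strong-module} a binder is universal exactly when its cotree parent is a $\graphunion$ node, and merging a class of false-twin vacuous universal binders deletes all but one leaf beneath a common $\graphunion$ node $\nodea$; the danger is that $\nodea$ becomes unary, is absorbed, and reparents the survivor under a $\graphjoin$ node, making it existential. This can occur only if the whole module of $\nodea$ — the common scope of the merged binders — consists solely of those binders and so contains no literal. I therefore expect the crux to be a lemma that every universal binder of a monet has a literal in its scope, so that $\nodea$ always retains a literal-bearing descendant and stays a $\graphunion$ node; I would prove it from the monet conditions together with \reflem{lem:pres-binders} and the image-scope arguments of \reflem{lem:image-universal-scope}, adapted from fographs to mographs.
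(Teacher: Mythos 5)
Your verification of everything the paper's proof actually writes down is the paper's own argument, made more explicit: the quotient has no induced bimatching because merged classes are isolated in the leap graph, and the skew-fibration, duality-homomorphism and binding-fibration conditions descend because merged vertices share image and neighbourhood and occur in no duality or binding. Where you diverge is your ``hard part'': the paper's proof never checks that $\collapseof\bifib$ preserves existentials, i.e.\ that collapsing cannot flip a binder's polarity, and you are right that this is the one step that is not purely formal. Your diagnosis is also correct --- the danger arises exactly when the $\graphunion$ cotree node above a merged class has no other surviving child, i.e.\ when the common scope of the merged binders contains no literal --- and your proposed crux statement (every universal binder of the source monet has a literal in its scope) is the right thing to establish. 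However, your proposed derivation of it is misdirected: \reflem{lem:pres-binders} and \reflem{lem:image-universal-scope} concern the \emph{image} of a skew bifibration inside the \emph{target} fograph and say nothing about scopes in the source monet. The fact you need is a property of mographs themselves, namely the clause ``every binder has non-empty scope'' in Def.\,\ref{def:mograph}, read (as the legality condition for fographs and the proof of \reflem{lem:mograph-of-formula-well-defined} suggest) as ``the scope contains a literal''; under a strictly literal reading of ``non-empty'' the monet axioms do not force this, and one can construct a monet containing two indistinguishable vacuous universal binders whose common scope is exactly the pair of them, for which the collapse really does turn the surviving binder existential and existential-preservation fails. So: same route as the paper on all the points the paper checks, plus a legitimate extra worry that the paper silently omits; to close it, appeal to the scope clause of the mograph definition applied to the source monet rather than to the image-scope lemmas.
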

\begin{proof}
$\quotientgraphof\monet\indist$ is a monet because if
$W\tightsubseteq\verticesof{\quotientgraphof\monet{\mkern2mu\indist}}$ induces a bimatching in
$\quotientgraphof\monet\indist$ then it induces a bimatching in $\monet$:
since indistinguishable vertices are vacuous binders, they cannot be in both a leap and an edge of
$\quotientgraphof{\edgesof\monet}\indist$, so cannot occur in $W$.
The function
$\collapseof\bifib:\graphpairof{\quotientof{\verticesof\monet}\indist}{\quotientof{\edgesof\monet}\indist}\to\graphpairofgraph\mograph$
is a skew fibration because
$\bifib:\graphpairofgraph\monet\to\graphpairofgraph\mograph$ is a skew fibration and indistinguishable vertices have the same image and neighbourhood, and
\mbox{$\collapseof\bifib:\graphpairof{\quotientof{\verticesof\monet}\indist}{\quotientof{\dualities\monet}\indist}\to\dualizinggraphofgraph\mograph$}
is a homomorphism because
\mbox{$\bifib:\graphpairofgraph\monet\to\dualizinggraphofgraph\mograph$}
is a homomorphism and no binder is in a duality edge.
Finally,
$\collapseof\bifib:\graphpairof{\quotientof{\verticesof\monet}\indist}{\quotientof{\bindingsof\monet}\indist}\to\bindinggraphofgraph\mograph$
is a fibration because
\mbox{$\bifib:\graphpairofgraph\monet\to\graphpairofgraph\mograph$} is a
fibration and indistinguishable binders are vacuous, therefore absent from bindings.
\end{proof}

\subsubsection{Monadic fonets without dualizers}

Monets were defined (\S\ref{sec:monets}) without need for dualizers, in terms of the binder equivalence relation $\brel\mograph{\mkern-4mu}$.
In this section we take an analogous approach with monadic fonets (\S\ref{sec:fonets}).

Let \defn{rmf} abbreviate \emph{rectified monadic fograph}.
\begin{definition}
Let $\cover$ be a linked rmf.
\defn{Variable equivalence}
$\vrel\cover$ is the equivalence relation on binders generated by
$x\vrel\cover y$
for each link $\setof{\singletonpx,\singletonppy}$ in $\cover$.
\end{definition}
In the above definition $p$ is any predicate symbol (necessarily unary, since $\cover$ is monadic).

A \defn{conflict} in $\cover$ is a pair $\setof{x,y}$ of distinct non-existential variables $x$ and $y$ such that $x\vrel\cover y$.
\begin{definition}\label{def:linked-rmf-consistent}
A linked rmf is \defn{consistent} if
it has no conflict.
\end{definition}
\begin{lemma}\label{lem:consistent-dualizable}
A linked rmf has a dualizer if and only if it is consistent.
\end{lemma}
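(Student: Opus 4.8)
The plan is to reduce the existence of a dualizer to the simultaneous satisfiability of a system of variable equalities, and then to identify unsatisfiability with the presence of a conflict. First I would record the simplification afforded by monadicity: since $\cover$ has no function symbols, every term is a single variable, and a candidate $\dualizer$ fixes every universal variable while sending each existential variable to some variable. For a link $\{\singletonpx,\singletonppy\}$ the predicate symbols $p$ and $\pp$ are already dual (pre-duality), so the substituted atoms $\px\dualizer$ and $\ppy\dualizer$ are dual precisely when their single arguments agree, that is, when $x\dualizer = y\dualizer$. Hence $\dualizer$ is a dualizer (Def.\,\ref{def:dualizer}) if and only if $x\dualizer = y\dualizer$ for every link of $\cover$.

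For the forward direction I would argue the contrapositive. Given any function $\dualizer$ on existential variables, the binary relation $x \approx y \iff x\dualizer = y\dualizer$ is an equivalence relation, and by the previous paragraph it contains every generating pair of $\vrel\cover$; therefore $\vrel\cover$ is contained in $\approx$. Now suppose $\cover$ is inconsistent, with a conflict $\{x,y\}$ consisting of distinct universal variables satisfying $x\vrel\cover y$ (Def.\,\ref{def:linked-rmf-consistent}). Any dualizer $\dualizer$ would then satisfy $x = x\dualizer = y\dualizer = y$, using rigidity of universal variables, contradicting $x\neq y$. So a dualizable $\cover$ has no conflict, i.e. is consistent.

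For the converse I would construct a dualizer directly from consistency. If $\cover$ is consistent then, by definition, no $\vrel\cover$-class contains two distinct universal variables, so each class contains at most one universal variable. Define $\dualizer$ class by class: on a class containing a (necessarily unique) universal variable $u$, send every existential variable of the class to $u$; on a class with no universal variable, fix one variable $z$ of the class and send every existential variable of the class to $z$. This is a well-defined function on existential variables, and by construction $x\dualizer = y\dualizer$ whenever $x\vrel\cover y$ --- in particular for the two variables of every link --- so $\dualizer$ is a dualizer.

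The argument is short precisely because monadicity collapses the general unification problem underlying dualizers (compare \reflem{lem:mgd}) to a plain partition-of-variables problem. The only care-points, which I expect to be routine, are the substitution bookkeeping in the first paragraph (that universal variables are rigid while existentials are flexible, legitimate here since the ambient section assumes closed formulas, so every literal variable is bound and is either existential or universal) and checking that the class-wise definition of $\dualizer$ in the converse is well-defined and independent of the chosen representatives. The conceptual crux --- recognising $\vrel\cover$ as exactly the forced-equality equivalence, with rigidity of universal variables as the sole obstruction --- is what carries the whole proof.
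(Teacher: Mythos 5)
Your proof is correct and follows essentially the same route as the paper's: monadicity reduces the dualizer condition to the per-link equality $x\dualizer=y\dualizer$, the forward direction propagates these equalities along a $\vrel\cover$-chain to collide two rigid variables, and the converse builds the dualizer class-by-class on the $\vrel\cover$-classes (the paper phrases the forward direction via a most general dualizer from \reflem{lem:mgd}, but, as in your version, most generality is not actually needed there). The one caveat is that the paper's notion of conflict involves \emph{non-existential} rather than merely universal variables, precisely to cover free variables in a non-closed rmf; accordingly your class-wise construction should send each existential variable to the unique non-existential variable of its class whenever one exists (the paper's $y_i$), rather than splitting on the presence of a universal variable --- a one-word repair that you have in effect already flagged with your closedness remark.
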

\begin{proof}
Let $\cover$ be the linked rmf.

Suppose $\cover$ has a dualizer. By \reflem{lem:mgd} $\cover$ has a most general dualizer $\dualizer$.
Thus for every colour $\monadiclink$ we have $(\px)\dualizer$ dual to $(\ppy)\dualizer$.
(Recall that $\atom\dualizer$ denotes the result of substituting $\dualizer(x)$ for $x$ in $\atom$.)
For a contradiction, suppose $\setof{z_1,z_2}$ were a conflict in $\cover$, \ie,
$z_1\mkern-2mu\vrel\cover\mkern-2mu z_2$ for
non-existential variables $z_1\tightneq z_2$.
Since
$z_1\vrel\cover z_2$
we have variables
$x_1,\ldots, x_n$ for $n\tightge 1$ with $x_1=z_1$, $x_n=z_2$, and for $1\tightle i\tightlt n$ there exists a link $\setof{\singleton{p_ix_i}, \singleton{\pp_ix_{i+1}}}$.
Since $\dualizer$ is a dualizer we have $(p_ix_i)\dualizer$ dual to $(\pp_ix_{i+1})\dualizer$, so $x_i\dualizer=x_{i+1}\dualizer$.
Thus $x_1\dualizer=x_n\dualizer$ so $z_1\dualizer=z_2\dualizer$.
Since $z_1$ and $z_2$ are non-existential, we have $z_1\dualizer=z_1$ and $z_2\dualizer=z_2$, hence $z_1=z_2$, contradicting $z_1\tightneq z_2$.

Conversely, suppose $\cover$ is consistent.
Let $e_1,\ldots,e_n$ be the equivalence classes of $\vrel\cover$.
Define $y_i$ as the unique non-existential variable in $e_i$, if it exists (unique since $\cover$ is consistent), and otherwise define $y_i$ as a fresh variable, where \emph{fresh} means not in $\cover$ and distinct from $y_j$ for $1\tightle j\tightlt i$.
Given an existential variable $x$, define $\dualizer(x)=y_i$ if $e_i$ is the equivalence class containing $x_i$.

We must show that for every link $\monadiclink$ in $\cover$ we have $(px)\dualizer$ dual to $(\ppy)\dualizer$.
Thus it remains to show that $x\dualizer=y\dualizer$.
Since $x$ and $y$ are in the same link, they are in the same equivalence class $e_i$ (for some $i$).
We consider three cases.
\begin{enumerate}
\item Both $x$ and $y$ are existential. Since $x$ and $y$ are in $e_i$, we have $\dualizer(x)=\dualizer(y)=y_i$.
\item Both $x$ and $y$ are non-existential. Therefore $x\dualizer=x$ and $y\dualizer=y$, so we require $x\tighteq y$.
This holds because $x\tightneq y$ would imply that $\setof{x,y}$ is a conflict, contradicting the consistency of $\cover$.
\item Exactly one of $x$ and $y$ is existential, say $x$. Since $y$ is non-existential, $y\dualizer=y$, and $y$ is the unique $y_i$ non-existential variable in $e_i$. Since $x$ is also in $e_i$, we have $\dualizer(x)=y_i$.
\end{enumerate}\vspace{-3.3ex}\end{proof}
\begin{lemma}\label{lem:monadic-deps}
  Let $\singletonx$ and $\singletony$ be binders in a consistent linked rmf $\cover$, with $\singletonx$ existential and $\singletony$ universal.
  The pair $\setof{\singletonx,\singletony}$ is a dependency of $\cover$ if and only if $x\vrel\cover y$.
\end{lemma}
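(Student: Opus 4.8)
The plan is to reduce the statement to the characterisation of dependencies via a most general dualizer, \reflem{lem:mgd-deps}, and then exploit the especially rigid shape that most general dualizers take in the monadic setting. Since $\cover$ is consistent, \reflem{lem:consistent-dualizable} guarantees that $\cover$ is dualizable, so by \reflem{lem:mgd} it has a most general dualizer $\dualizer$; concretely I would take the dualizer built in the proof of \reflem{lem:consistent-dualizable}, for which $\dualizer(x)$ is the chosen representative $y_i$ of the $\vrel\cover$-class $e_i$ containing $x$ (the unique non-existential variable of $e_i$ if one exists, otherwise a fresh variable). A preliminary step is to confirm this $\dualizer$ is indeed most general: any dualizer $\dualizerp$ is constant on each $\vrel\cover$-class, since each link $\setof{\singleton{pz},\singleton{\pp z'}}$ forces $z\dualizerp=z'\dualizerp$; it is pinned to $y_i$ on classes carrying a non-existential variable; and it is otherwise recovered from $\dualizer$ by substituting for the fresh stems $y_i$. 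Because $\cover$ is monadic it has no function symbols, so every term is a single variable; hence $\dualizer(x)$ is a variable and the phrase ``$\dualizer(x)$ contains $y$'' is simply ``$\dualizer(x)=y$''. By \reflem{lem:mgd-deps}, $\setof{\singletonx,\singletony}$ is a dependency of $\cover$ if and only if $\dualizer(x)=y$, so it remains to show that $\dualizer(x)=y$ holds exactly when $x\vrel\cover y$.

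For the direction $x\vrel\cover y \Rightarrow \dualizer(x)=y$ I would argue directly, using only that $\dualizer$ is a dualizer (so this half in fact holds for every dualizer). From $x\vrel\cover y$ there is a chain of links $\setof{\singleton{p_iz_i},\singleton{\pp_iz_{i+1}}}$ with $z_1\tighteq x$ and terminal variable $y$; as in the proof of \reflem{lem:consistent-dualizable}, the dualizer condition on each link gives $z_i\dualizer\tighteq z_{i+1}\dualizer$, whence $x\dualizer\tighteq y\dualizer$. Since $\singletony$ is universal, $y$ is non-existential, so $y\dualizer\tighteq y$; and since $\singletonx$ is existential, $x\dualizer\tighteq\dualizer(x)$. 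Therefore $\dualizer(x)=y$.

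For the converse $\dualizer(x)=y\Rightarrow x\vrel\cover y$ I would use the explicit description of the most general $\dualizer$. Here $y$ is a genuine (universal) variable of $\cover$, hence not one of the fresh stems; so if $\dualizer(x)=y$ then the representative $y_i$ of the class $e_i$ containing $x$ equals $y$, which can only occur in the non-existential-variable case of the construction, forcing $y\tightin e_i$. As $x\tightin e_i$ as well, $x\vrel\cover y$, completing the equivalence. The main obstacle, and the only place requiring genuine care, is this converse: it truly needs most generality, since a non-most-general dualizer could set $\dualizer(x)=y$ by accident (for instance by collapsing an otherwise-free class of $x$ onto the universal variable $y$, which is still a valid dualizer). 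I must therefore either invoke the representative-valued dualizer above, or equivalently argue that any most general dualizer $\dualizer$ generalises the representative dualizer $\dualizerp$ of \reflem{lem:consistent-dualizable} via some substitution $\subst$, and then observe that $y\subst\tighteq y$ because $y$ is not a stem, so that $\dualizerp(x)=\dualizer(x)\subst=y$ forces $y\tightin e_i$ and hence $x\vrel\cover y$.
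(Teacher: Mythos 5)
Your proposal is correct and follows essentially the same route as the paper: reduce to \reflem{lem:mgd-deps}, observe that in the monadic setting every term is a variable so ``contains $y$'' becomes ``equals $y$'', and use the representative-valued dualizer constructed in the proof of \reflem{lem:consistent-dualizable}, which is most general because $\vrel\cover$ is exactly the closure of the unification problem. The paper's proof merely asserts most generality and the resulting equivalence ``by construction''; your write-up supplies the details (constancy of any dualizer on $\vrel\cover$-classes, freshness of the stems in the converse direction) that the paper leaves implicit.
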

\begin{proof}
  By \reflem{lem:mgd-deps}, the dependencies of $\cover$ are those of a most general dualizer $\dualizer$, so it suffices
  to show that $x\vrel\cover y$ if and only if $\dualizer(x)=y$.
  Since every predicate symbol in $\cover$ is unary, $\vrel\cover$ is the transitive closure of the unification problem $\unirelof\cover$ (see the proof of \reflem{lem:mgd}).
Thus the dualizer $\dualizer$ defined in the proof of \reflem{lem:consistent-dualizable} is most general, and by construction $x\vrel\cover y$ if and only if $\dualizer(x)=y$.
\end{proof}
Note that the above lemmas simplify the definition of (standard, non-homogeneous) monadic combinatorial proof $\bifib:\cover\to\base$:
\begin{itemize}
\item Instead of checking for the existence of a dualizer for (the rectified form of) $\cover$, we merely check that $\cover$ is consistent, via the variable relation $\vrel\cover$, using \reflem{lem:consistent-dualizable}.
\item Instead of building the leap graph $\leapgraphof\cover$ with dependencies via a dualizer,
we read dependencies directly from $\vrel\cover$, using \reflem{lem:monadic-deps}.
\end{itemize}

\subsubsection{The linked mograph of a linked closed monadic fograph}

\begin{definition}\label{def:linked-mograph-of-linked-fograph}
The \defn{linked mograph} $\linkedmographof\cover$ of a linked closed monadic fograph $\cover$ is the linked mograph $\mograph$ with
\begin{itemize}
\item $\verticesof\mograph=\verticesof\cover$,
\item $\edgesof\mograph=\edgesof\cover$,
\item $\vertex\vertexa\tightin\dualitiesof\mograph$
if and only if
$\setof{\vertex,\vertexa}$ is a link
\item $\diredge\vertex\vertexa\tightin\bindingsof\mograph$ if and only if
$\vertex$ binds $\vertexa$.
\end{itemize}
\end{definition}
\begin{lemma}
$\linkedmographof\cover$ is a well-defined linked mograph for every linked closed monadic fograph $\cover$.
\end{lemma}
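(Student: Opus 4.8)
The plan is to mirror the proof of \reflem{lem:mograph-of-fograph-well-defined}, since $\mograph=\linkedmographof\cover$ differs from $\mographof\cover$ only in that its dualities are taken to be exactly the links of $\cover$ rather than all pairs of dual-predicate literals. Accordingly, I would first verify that $\mograph$ is a well-defined mograph, and then check the additional \emph{linked} condition that every literal lies in a unique duality. Everything is either inherited from $\cover$ or follows in one line from the structure of the links, so the work is largely a matter of matching each clause of Defs.\,\ref{def:pre-mograph}, \ref{def:mograph} and \ref{def:linked-mograph-of-linked-fograph} against a property of the linked closed monadic fograph $\cover$.

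For the mograph conditions, the underlying graph $\graphpairof{\verticesof\mograph}{\edgesof\mograph}$ is inherited verbatim from $\cover$, hence is a non-empty cograph. Because $\cover$ is a \emph{linked} fograph and is monadic (so has no $1$-labelled literals), every literal lies in exactly one link and no binder lies in any link; since the dualities of $\mograph$ are precisely the links, $\graphpairof{\verticesof\mograph}{\dualitiesof\mograph}$ is a matching, and any matching is trivially $\pfour$-free and $\cthree$-free, so $\mograph$ is a dualizing graph. For the pre-mograph binding conditions (Def.\,\ref{def:pre-mograph}), monadicity forces each literal label to contain a single variable and no literal to bind another vertex, so every literal is the target of at most one binding and no binding-target occurs in a further binding; non-emptiness of $\bindingedgesof\mograph$ follows from $\cover$ being closed, as the literal guaranteed by logicality is an $x$-literal which, being closed, is bound. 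The remaining mograph conditions (Def.\,\ref{def:mograph}) transfer directly: no binder is in a duality because every duality joins two literals, and legality of the binders of $\cover$ yields both that every binder has non-empty scope and that $\diredge\binder\literal\tightin\bindingsof\mograph$ only when $\literal$ lies in $\scopeofin\binder\cover$, which equals $\scopeofin\binder\mograph$ since the two underlying cographs coincide.

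Finally, the \emph{linked} condition is immediate: since $\cover$ is monadic it has no $1$-literals, so by Def.\,\ref{def:linked} every literal of $\cover$ belongs to a link, and as colours are pairwise disjoint this link is unique; translating links to dualities, every literal of $\mograph$ lies in a unique duality. I expect the only mildly delicate step to be the non-emptiness of the binding set, where one must combine logicality (existence of a literal) with closedness (that literal is bound) to exhibit at least one binding; the rest reduces to the matching structure of the dualities and to inheritance from $\cover$, exactly as in \reflem{lem:mograph-of-fograph-well-defined} and \reflem{lem:dualizing-graph-of-prop-well-defined}.
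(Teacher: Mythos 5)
Your proof is correct, but it takes a genuinely different route from the paper's. The paper does not re-verify the mograph axioms at all: it relabels the predicate symbols so that link $i$ gets a fresh dual pair $p_i,\pp_i$, observes that for the relabelled fograph $\coverp$ pre-duality coincides with being linked, and concludes $\linkedmographof\cover=\linkedmographof\coverp=\mographof\coverp$, which is well defined by \reflem{lem:mograph-of-fograph-well-defined}; uniqueness of dualities then falls out of the distinctness of the $p_i$. Your clause-by-clause verification is longer but more self-contained and makes explicit exactly which hypotheses are used where (monadicity rules out $1$-literals, closedness makes every fograph-literal a binding target and hence a mograph-literal, legality gives the scope conditions); it also handles the dualizing-graph condition more cheaply than the general argument, since the duality graph here has maximum degree one and so is trivially $\pfour$- and $\cthree$-free. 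Two small points of care: the graph $\graphpairof{\verticesof\mograph}{\dualitiesof\mograph}$ is not a \emph{matching} in the paper's sense (binders are unmatched), though your actual use of it -- degree at most one -- is what matters; and your claim that no binder of $\mograph$ lies in a duality tacitly uses that every link-member, being a bound fograph-literal, is a mograph-literal, which is the same closedness step you invoke for non-emptiness of the binding set. Neither affects correctness.
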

\begin{proof}
\hspace{-1.48pt}Let $\lambda_1,\ldots,\lambda_n$ be the links of $\cover$ for $\lambda_i=\setof{\literal_i,\literala_i}$.
Choose distinct predicate symbols $p_1,\ldots,p_n$, and define $\coverp$ by replacing the predicate symbols in the labels of $\literal_i$ and $\literala_i$ by $p_i$ and $\pp_i$, respectively.
By construction, $\linkedmographof\coverp=\linkedmographof\cover$, and
since two literals in $\coverp$ are pre-dual in $\coverp$ if and only if they constitute a link,
we have $\linkedmographof\coverp=\mographof\coverp$.
Thus $\linkedmographof\cover=\mographof\coverp$ which is a well-defined mograph by \reflem{lem:mograph-of-fograph-well-defined}.
Since the $p_i$ are distinct, every literal of $\coverp$ is in a unique duality, so $\coverp$ is linked.
\end{proof}
\begin{lemma}\label{lem:fograph-mograph-net}
A linked closed monadic fograph $\cover$ is a fonet if and only if its linked mograph $\linkedmographof\cover$ is a monet.
\end{lemma}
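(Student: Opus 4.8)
The plan is to observe that $\cover$ and its linked mograph, which I write $\mograph$ for $\linkedmographof\cover$, share the same underlying cograph, and then to translate each clause in the definition of \emph{fonet} into the corresponding clause in the definition of \emph{monet}. Throughout I assume without loss of generality that $\cover$ is rectified; this changes neither $\mograph$ (mographs carry no variables, so rectification leaves the vertices, edges, links and bindings untouched) nor whether $\cover$ is a fonet. By Def.\,\ref{def:linked-mograph-of-linked-fograph} we have $\verticesof\mograph=\verticesof\cover$, $\edgesof\mograph=\edgesof\cover$, $\vertex\vertexa\tightin\dualitiesof\mograph$ iff $\setof{\vertex,\vertexa}$ is a link of $\cover$, and $\diredge\vertex\vertexa\tightin\bindingsof\mograph$ iff $\vertex$ binds $\vertexa$ in $\cover$. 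In particular the scope of a binder, being the smallest proper strong module of the common cograph $\graphpairof{\verticesof\cover}{\edgesof\cover}$, is the same computed in $\cover$ or in $\mograph$, so the classification of binders as existential or universal agrees across the two structures.

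First I would check that $\vrel\cover$ and $\brel\mograph$ coincide as equivalence relations on binders. The generator of $\vrel\cover$ relates the $x$- and $y$-binders for each link $\setof{\singletonpx,\singletonppy}$; since $\cover$ is rectified, these two literals are bound respectively by the unique $x$-binder and the unique $y$-binder, which is exactly the generating condition for $\brel\mograph$ (a duality $\literal_1\literal_2$ between literals bound by $\binder_1$ and $\binder_2$). With both the relation and the existential/universal labelling matched, a conflict of $\cover$ (a pair of distinct universal binders related by $\vrel\cover$) is precisely a conflict of $\mograph$; hence $\cover$ is consistent iff $\mograph$ is consistent, and by \reflem{lem:consistent-dualizable} this is the same as saying $\cover$ has a dualizer iff $\mograph$ is consistent.

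Next I would show the two leap graphs are literally equal, $\leapgraphof\cover=\leapgraphof\mograph$. The duality-leaps of $\mograph$ are by construction exactly the links of $\cover$, i.e.\ the link-leaps of $\cover$. For the dependency-leaps, assuming $\cover$ is consistent (equivalently, dualizable), \reflem{lem:monadic-deps} identifies the dependencies of $\cover$ with the pairs $\setof{\singletonx,\singletony}$, with $\singletonx$ existential and $\singletony$ universal, satisfying $x\vrel\cover y$; by the previous paragraph these are exactly the dependencies of $\mograph$. Since moreover $\graphpairof{\verticesof\cover}{\edgesof\cover}=\graphpairof{\verticesof\mograph}{\edgesof\mograph}$, a set $W$ induces a bimatching in $\cover$ iff it induces one in $\mograph$.

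Finally I would assemble the equivalence. If $\cover$ is a fonet, then it is dualizable, so $\mograph$ is consistent; being consistent, the leap-graph equality applies and transfers ``no induced bimatching'' from $\cover$ to $\mograph$, making $\mograph$ a monet. Conversely, if $\mograph$ is a monet, its consistency yields a dualizer for $\cover$ via \reflem{lem:consistent-dualizable}, and $\cover$ being then consistent the leap-graph equality again applies, transferring ``no induced bimatching'' back to $\cover$, so $\cover$ is a fonet. The one delicate point is sequencing: the dependency correspondence of \reflem{lem:monadic-deps} presupposes consistency, so in each direction I must first deduce consistency (equivalently dualizability) and only then compare leap graphs. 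This is not a genuine obstacle but the place where care is needed, so that \reflem{lem:monadic-deps} is never invoked on an inconsistent fograph.
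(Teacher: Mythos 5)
Your proof is correct and follows essentially the same route as the paper's: reduce dualizability to consistency via Lemma\,\ref{lem:consistent-dualizable}, match the dependency relations via Lemma\,\ref{lem:monadic-deps}, conclude the leap graphs coincide, and transfer the no-induced-bimatching condition across the shared underlying cograph. You spell out two points the paper leaves implicit (the coincidence of $\vrel\cover$ with $\brel{\linkedmographof\cover}$, and the need to establish consistency before invoking Lemma\,\ref{lem:monadic-deps}), but these are elaborations of the same argument, not a different one.
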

\begin{proof}
Without loss of generality we may assume $\cover$ is rectified.
By \reflem{lem:consistent-dualizable}, $\cover$ has a dualizer if and only if it is consistent in the sense of Def.\,\ref{def:linked-rmf-consistent}, and consistency of $\cover$ coincides with consistency of $\linkedmographof\cover$ (Def.\,\ref{def:mograph-consistent}).
By \reflem{lem:monadic-deps} the dependencies of $\cover$ are those pairs $\setof{x,y}$ of variables with $x$ existential, $y$ universal and $x\brel\cover y$,
which, by definition of $\linkedmographofsymbol$, correspond to pairs $\setof{\binder_x,\binder_y}$ of binders in $\linkedmographof\cover$ with $\binder_x$ and $\binder_y$ the unique binders corresponding to the variables $x$ and $y$, and $\binder_x\brel{\linkedmographof\cover}\binder_y$.
Thus the leap graphs of $\cover$ and $\linkedmographof\cover$ are the same, so $\cover$ has an induced bimatching if and only if $\linkedmographof\cover$ has an induced bimatching.
\end{proof}

\subsubsection{Proof of monadic homogeneous combinatorial soundness}

Recall that, by definition of $\mographofsymbol$, $\verticesof{\mographof\fograph}=\verticesof\fograph$ for every closed monadic fograph $\fograph$.
\begin{lemma}\label{lem:mograph-of-fograph-preserves-literals}
Let $\fograph$ be a closed monadic fograph.
A vertex is a literal in $\fograph$ if and only if it is a literal in the mograph $\mographof\fograph$.
\end{lemma}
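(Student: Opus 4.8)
The plan is to unwind the two relevant notions of ``literal'' and to observe that they are carried into one another by the single binding clause of Def.\,\ref{def:mograph-of-fograph}. Recall that a vertex of $\fograph$ is a literal precisely when it is atom-labelled, whereas a vertex of the mograph $\mograph=\mographof\fograph$ is a literal precisely when it is the target of a binding (Def.\,\ref{def:pre-mograph}); since $\verticesof\mograph=\verticesof\fograph$, both notions concern the same underlying vertices. By the binding clause of Def.\,\ref{def:mograph-of-fograph} we have $\diredge\binder\vertex\tightin\bindingsof\mograph$ if and only if $\binder$ binds $\vertex$ in $\fograph$, so the whole statement reduces to: a vertex $\vertex$ is atom-labelled in $\fograph$ if and only if some vertex of $\fograph$ binds it.

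First I would dispatch the direction from $\mograph$ to $\fograph$, which is immediate. If $\vertex$ is a literal in $\mograph$ then some $\binder$ binds $\vertex$ in $\fograph$, and by the very definition of binding in a fograph only literals are bound (an $x$-binder binds $x$-\emph{literals} in its scope); hence $\vertex$ is atom-labelled, i.e.\ a literal in $\fograph$. No use of the monadic or closed hypotheses is needed here.

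The converse direction is where the hypotheses on $\fograph$ enter, and is the step I expect to be the main (if modest) obstacle. Suppose $\vertex$ is a literal in $\fograph$. Because $\fograph$ is monadic its atom carries a unique variable $x$, and because $\fograph$ is closed it has no free variable, so $\fograph$ contains an $x$-binder. The delicate point is to guarantee that $\vertex$ actually lies in the scope of such a binder, so that it is genuinely bound: this is exactly the fact already invoked in establishing clause (b) of \reflem{lem:mograph-of-fograph-well-defined} (every literal of a closed fograph is bound), which holds under rectification, since then the unique $x$-binder has scope containing every $x$-literal. Granting this, $\vertex$ is bound by an $x$-binder $\binder$, whence $\diredge\binder\vertex\tightin\bindingsof\mograph$ and $\vertex$ is the target of a binding, i.e.\ a literal in $\mograph$. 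Combining the two directions yields the stated biconditional with no further calculation required.
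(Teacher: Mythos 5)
Your proof is correct and takes essentially the same route as the paper's, which simply observes that the claim is immediate from the definition of $\bindingsof{\mographof\fograph}$ together with the fact that a vertex of a mograph is a literal precisely when it is the target of a binding. You are in fact slightly more explicit than the paper in isolating where the closed and monadic hypotheses enter, namely in the forward direction's appeal to the fact that every literal of a closed monadic fograph is bound --- the same fact the paper invokes only parenthetically in the proof of Lemma\,\ref{lem:mograph-of-fograph-well-defined}.
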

\begin{proof}
Immediate from the definition of the binding set $\bindingsof{\mographof\fograph}$ (Def.\,\ref{def:mograph-of-fograph}) and that, by definition,
a vertex is a literal in a mograph if and only if it is the target of a binding.
\end{proof}
\begin{lemma}\label{lem:mograph-of-fograph-preserves-existentials}
Let $\fograph$ be a closed monadic fograph.
A binder is universal in $\fograph$ if and only if it is universal in the mograph $\mographof\fograph$.
\end{lemma}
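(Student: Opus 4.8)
The plan is to exploit the fact that $\mographof\fograph$ and $\fograph$ have literally the same underlying cograph, so that the notions of scope and of universal binder transfer verbatim. First I would record, straight from Definition~\ref{def:mograph-of-fograph}, that $\verticesof{\mographof\fograph}=\verticesof\fograph$ and $\edgesof{\mographof\fograph}=\edgesof\fograph$, whence $\graphpairof{\verticesof{\mographof\fograph}}{\edgesof{\mographof\fograph}}=\graphpairof{\verticesof\fograph}{\edgesof\fograph}$ as graphs. Modules, strong modules, and therefore the scope of a binder (the smallest proper strong module containing it) are determined by this underlying graph alone; hence for any binder $\binder$ the scope $\scopeofin\binder\fograph$ coincides with the scope $\scopeofin\binder{\mographof\fograph}$.

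Next I would check that it is legitimate to speak of the same binder in both structures. In $\fograph$ every vertex is a literal or a binder, and likewise in the mograph $\mographof\fograph$ every vertex is a literal (a target of a binding) or a binder (not a target of a binding). By \reflem{lem:mograph-of-fograph-preserves-literals} a vertex is a literal in $\fograph$ if and only if it is a literal in $\mographof\fograph$, so the complementary sets of binders also coincide. Thus a vertex $\binder$ is a binder in $\fograph$ exactly when it is a binder in $\mographof\fograph$.

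Finally I would compare the two definitions of \emph{universal}: the binder $\binder$ is universal in the fograph $\fograph$ precisely when $\binder\vertex\tightnotin\edgesof\fograph$ for every other vertex $\vertex\in\scopeofin\binder\fograph$, and universal in the mograph $\mographof\fograph$ precisely when $\binder\vertex\tightnotin\edgesof{\mographof\fograph}$ for every other vertex $\vertex\in\scopeofin\binder{\mographof\fograph}$. Since $\edgesof\fograph=\edgesof{\mographof\fograph}$ and $\scopeofin\binder\fograph=\scopeofin\binder{\mographof\fograph}$ by the two observations above, these conditions are word-for-word the same, which gives the claim. There is no genuine obstacle here; the only points requiring care are that the class of binders agrees across the two structures, supplied by \reflem{lem:mograph-of-fograph-preserves-literals}, and that scope is a function of the underlying cograph and nothing else, both of which are immediate.
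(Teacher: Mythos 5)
Your proposal is correct and follows essentially the same route as the paper, which simply observes that $\graphpairofgraph{\fograph}=\graphpairofgraph{\mographof\fograph}$ by definition and that universality of a binder is determined entirely by that underlying cograph (via scope and edges). Your extra step checking that the two structures agree on which vertices are binders, via \reflem{lem:mograph-of-fograph-preserves-literals}, is a sensible explicit justification of something the paper leaves implicit.
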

\begin{proof}
By definition $\graphpairofgraph{\fograph}=\graphpairofgraph{\mographof\fograph}$, and in both cases,
a binder is universal if and only if its scope contains no edge.
\end{proof}
Define the \defn{type}
$\typeof\vertex\graph\in\vertextypes$ of a vertex $\vertex$
in a mograph or fograph $\graph$ as $\literaltype$ if $\vertex$ is a literal, $\universaltype$ if $\vertex$ is a universal binder, and $\existentialtype$ if $\vertex$ is an existential binder.
\begin{lemma}\label{lem:types}
For every closed monadic fograph $\fograph$, $\;\typeof\vertex\fograph=\typeof\vertex{\mographof\fograph}$ for every vertex $\vertex$.
\end{lemma}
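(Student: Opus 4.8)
The plan is to obtain this as an immediate corollary of the two preceding lemmas by a three-way case split on the type of $\vertex$. First I would record the standing observation that $\verticesof{\mographof\fograph}=\verticesof\fograph$ by definition of $\mographofsymbol$, so the two structures share a common vertex set, and that in each of $\fograph$ and $\mographof\fograph$ every vertex carries exactly one value in $\vertextypes$: the literals are precisely the non-binders, and each binder is either universal or existential and not both, since its scope is a proper strong module.

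Next I would dispatch the literal case using \reflem{lem:mograph-of-fograph-preserves-literals}: we have $\typeof\vertex\fograph=\literaltype$ if and only if $\vertex$ is a literal in $\fograph$, if and only if $\vertex$ is a literal in $\mographof\fograph$, if and only if $\typeof\vertex{\mographof\fograph}=\literaltype$. Taking complements, $\vertex$ is a binder in $\fograph$ if and only if it is a binder in $\mographof\fograph$, so the binder vertices coincide as well. Among these binders I would apply \reflem{lem:mograph-of-fograph-preserves-existentials} to the universal case: $\typeof\vertex\fograph=\universaltype$ if and only if $\vertex$ is a universal binder in $\fograph$, if and only if it is a universal binder in $\mographof\fograph$, if and only if $\typeof\vertex{\mographof\fograph}=\universaltype$. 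Since the three values of $\vertextypes$ are mutually exclusive and exhaustive on both sides and the literal and universal cases already agree, the remaining existential case, namely $\typeof\vertex\fograph=\existentialtype$ precisely when $\typeof\vertex{\mographof\fograph}=\existentialtype$, follows by elimination.

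There is essentially no obstacle here: all of the real work is carried by \reflem{lem:mograph-of-fograph-preserves-literals} and \reflem{lem:mograph-of-fograph-preserves-existentials}. The only point requiring care is the exhaustiveness and mutual exclusivity of the three types on each side, which rests on the fact that a binder's scope is a proper strong module, guaranteeing that every binder is exactly one of universal or existential (and hence that ``existential'' is definable as ``binder and not universal'' in both $\fograph$ and $\mographof\fograph$).
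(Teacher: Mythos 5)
Your proof is correct and takes the same route as the paper, which simply cites Lemmas\,\ref{lem:mograph-of-fograph-preserves-literals} and \ref{lem:mograph-of-fograph-preserves-existentials}; you have merely spelled out the three-way case split (literal, universal, existential-by-elimination) that the paper leaves implicit. Your remark that exhaustiveness and mutual exclusivity of the types rests on binder scopes being proper strong modules is a fair, if minor, point of added care.
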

\begin{proof}
Lemmas\,\ref{lem:mograph-of-fograph-preserves-literals} and \ref{lem:mograph-of-fograph-preserves-existentials}.
\end{proof}
\begin{lemma}\label{lem:mograph-bifib-preserves-literals}
Every mograph skew bifibration $\bifib:\net\to\mograph$ preserves vertex type, \ie, $\typeof\vertex\net=\typeof{\bifib(\vertex)}\mograph$ for every vertex $\vertex$ in $\net$.
\end{lemma}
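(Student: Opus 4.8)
The plan is to verify the three possible type-values separately, exploiting that every vertex of a mograph is either a literal, a universal binder, or an existential binder, and that these three cases are mutually exclusive. Writing $\bifib:\net\to\mograph$ for the given skew bifibration, it suffices to show (i) $\vertex$ is a literal in $\net$ if and only if $\bifib(\vertex)$ is a literal in $\mograph$, and (ii) among binders, universals are sent to universals and existentials to existentials. Once (i) and (ii) are in place, $\typeof\vertex\net=\typeof{\bifib(\vertex)}\mograph$ is immediate.

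First I would dispatch the literal/binder dichotomy using the binding component $\bifib:\graphpairof{\verticesof\net}{\bindingedgesof\net}\to\graphpairof{\verticesof\mograph}{\bindingedgesof\mograph}$, which by definition is a fibration. Recall that a vertex is a literal exactly when it is the target of a binding. Since a fibration is in particular a directed-graph homomorphism, $\diredge\binder\vertex\in\bindingedgesof\net$ forces $\diredge{\bifib(\binder)}{\bifib(\vertex)}\in\bindingedgesof\mograph$, so literals map to literals; conversely, if $\bifib(\vertex)$ is the target of a binding $\diredge a{\bifib(\vertex)}$ in $\mograph$, the fibration lifting property supplies $\tilde a$ with $\diredge{\tilde a}\vertex\in\bindingedgesof\net$, so $\vertex$ is a literal. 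Hence literal-ness — and therefore binder-ness — is preserved in both directions.

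Next, existential binders map to existential binders directly by the definition of a mograph skew bifibration (Def.\,\ref{def:monadic-skew-bifib}). It remains only to treat a universal binder $\vertex$ of $\net$; by the previous step $\bifib(\vertex)$ is already a binder, so I need only rule out its being existential. Here I would transport the argument of \reflem{lem:pres-universals} to the underlying cographs $\graphpairofgraph\net$ and $\graphpairofgraph\mograph$, on which $\bifib$ is a skew fibration. By \reflem{lem:node-strong-module} a binder is universal precisely when its parent node in the cotree is a $\graphunion$ node, i.e.\ $\vertex\childin\net\vertex\umeet w\abovein\net w$ for the vertices $w$ in its scope; choosing such a $w$ and invoking \reflem{lem:preserve-parent-plus} gives $\bifib(\vertex)\childin\mograph\bifib(\vertex)\umeet\bifib(w)$, so the parent of $\bifib(\vertex)$ is a $\graphunion$ node and $\bifib(\vertex)$ is universal.

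The hard part is exactly this last step, because \reflem{lem:preserve-parent-plus} requires a witness $w$ in the scope of $\vertex$ with $\bifib(\vertex)\ne\bifib(w)$, and unlike the fograph setting there are no labels to separate $\bifib(\vertex)$ from the images of its scope-mates. I would secure the witness by taking $w$ to be a \emph{literal} in the scope of $\vertex$: then $\bifib(w)$ is a literal (by the first step) while $\bifib(\vertex)$ is a binder, forcing $\bifib(\vertex)\ne\bifib(w)$. This reduces the whole difficulty to a preliminary fact — that every binder of a mograph scopes over at least one literal, the mograph counterpart of the legality of fograph binders — which I would isolate and prove first (it holds for the source monets arising here because their binders' scopes correspond to subformulas, and every subformula contains an atom). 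With that observation the universal case closes, and combining it with the literal and existential cases yields type preservation for every vertex.
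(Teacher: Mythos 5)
Your decomposition is the same as the paper's: literal/binder preservation via the binding fibration, existential binders by definition, and universal binders via the cotree argument of \reflem{lem:pres-universals}. The first two steps are correct and match the paper essentially verbatim, and you have put your finger on exactly the point the paper's one-line appeal to \reflem{lem:pres-universals} glosses over: \reflem{lem:preserve-parent-plus} needs a witness $w$ in the scope of $v$ with $\bifib(v)\tightneq\bifib(w)$, and in the fograph setting that witness is a literal in the scope, available because fograph binders are legal and labels are preserved.

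The gap is in your patch. The preliminary fact you rely on --- every binder of a mograph has a literal in its scope --- is not among the mograph axioms (Def.\,\ref{def:mograph} demands only a non-empty scope) and is in fact false. Consider the monet $\net$ with underlying cograph $((b_1\graphunion b_2)\graphjoin b_3\graphjoin l_1)\graphunion(b_4\graphjoin l_2)$, bindings $\diredge{b_3}{l_1}$ and $\diredge{b_4}{l_2}$, and the single duality $l_1l_2$: the universal binder $b_1$ has scope $\{b_1,b_2\}$, which contains no literal. Your parenthetical justification (``scopes correspond to subformulas'') holds only for mographs of the form $\mographofformula\monadicformula$, whereas the witness is needed in the \emph{source} of the bifibration, which is an arbitrary monet. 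Indeed the example extends to a counterexample to the unqualified statement: sending $b_1$ and $b_2$ to the vacuous existential binder $\exists z$ of $\mographofformula{(\exists z\,\exists x\,px)\vee(\exists y\,\pp y)}$, and the remaining vertices to the obvious targets, satisfies every clause of Def.\,\ref{def:monadic-skew-bifib} yet maps a universal binder to an existential one. What actually rescues the universal case is the hypothesis --- present where the paper applies this lemma, via \reflem{lem:collapse} --- that $\bifib$ is collapsed: if every vertex in the scope of $v$ had image $\bifib(v)$, then (since adjacent vertices have distinct images) they would all be pairwise non-adjacent children of the same $\graphunion$ node and all vacuous universal binders, hence pairwise indistinguishable, contradicting collapsedness; otherwise the scope supplies the witness you need and your argument goes through. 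As written, your proof (like the paper's) does not close this case.
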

\begin{proof}
A vertex is a literal if and only if it is the target of a binding, and
since $\bifib:\bindinggraphofgraph\net\to\bindinggraphofgraph\mograph$ is a fibration,
a vertex $\vertex$ in $\verticesof\net$ is the target of a binding if and only if $\bifib(\vertex)$ is the target of a binding.
Thus $\bifib$ maps literals to literals and binders to binders.
By definition (Def.\,\ref{def:monadic-skew-bifib}) a skew bifibration maps existential binders to existential binders,
so it remains to show that universal binders map to universal binders.
This follows from the proof of \reflem{lem:pres-universals}, which applies in the homogeneous setting because it does not depend on labels.
\end{proof}
\begin{lemma}[Monadic homogeneous soundness]\label{lem:monadic-soundness}
A closed monadic formula is valid if it has a homogeneous combinatorial proof.
\end{lemma}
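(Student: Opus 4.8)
The plan is to deduce monadic homogeneous soundness from the ordinary Soundness Theorem (Theorem~\ref{thm:soundness}) by converting a homogeneous combinatorial proof into a labelled one. Suppose $\bifib:\net\to\mograph$ is a homogeneous combinatorial proof of a closed monadic formula $\monadicformula$, so that $\net$ is a monet and, by \reflem{lem:monadic-factorization}, the target is $\mograph=\mographofformula\monadicformula=\mographof{\graphof\monadicformula}$. I would build from this a linked closed monadic fograph $\cover$ together with an ordinary skew bifibration $\cover\to\graphof\monadicformula$, i.e.\ a standard combinatorial proof of $\monadicformula$; Theorem~\ref{thm:soundness} then gives validity. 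The two main tools are the collapse operation (\reflem{lem:collapse}) and the fograph/mograph correspondence (\reflem{lem:fograph-mograph-net}).

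First I would collapse. By \reflem{lem:collapse} the collapse $\collapseof\bifib:\quotientgraphof\net\indist\to\mograph$ is again a homogeneous combinatorial proof of $\monadicformula$, and after quotienting, the distinct vacuous universal binders of the source no longer share both image and neighbourhood. Replacing $\bifib$ by its collapse, I may therefore assume $\net$ already has this property. As the preamble to this section indicates, this step is the monadic analogue of the duplicator-collapsing used in the contraction rule of \S\ref{sec:interp-rules}: it is precisely what prevents two universal binders carrying the same variable from ending up in one another's scope once labels are lifted.

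Next I would lift labels. Define $\cover$ to have the same underlying graph as $\net$, assigning to each vertex $\vertex$ the label of $\bifib(\vertex)$ in $\graphof\monadicformula$, and declaring a link $\setof{\vertex,\vertexa}$ for each duality $\vertex\vertexa\tightin\dualitiesof\net$. Since $\bifib$ preserves vertex type (\reflem{lem:mograph-bifib-preserves-literals}) and $\graphof\monadicformula$ is rectified, the lifted labels turn each literal of $\net$ into an atom-labelled vertex and each binder into a variable-labelled vertex of the matching kind; the dual predicate symbols underlying each duality of $\mograph$ make the two endpoints of every link pre-dual, so $\cover$ is linked. The crux, and the step I expect to be the main obstacle, is to verify that $\cover$ is a genuine fograph, i.e.\ that every binder is \emph{legal}: its scope must contain a literal and no second binder with the same variable. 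Two binders of $\cover$ sharing a variable $x$ both map to the unique $x$-binder of $\graphof\monadicformula$ and so, by type preservation, are of the same kind; the potentially illegal universal nesting is exactly what the collapse of the previous paragraph rules out, while the non-problematic cases and the requirement that each scope contain a literal I expect to follow from the binding-graph fibration condition together with the scope machinery of \S\ref{sec:image} (notably Lemmas~\ref{lem:parent-scope} and \ref{lem:target-scope}).

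Finally I would assemble the result. By construction $\linkedmographof\cover=\net$, so \reflem{lem:fograph-mograph-net} shows that $\cover$ is a fonet. The map $\bifib:\cover\to\graphof\monadicformula$ is then an ordinary skew bifibration: its underlying graph map is the skew fibration $\bifib$, because the underlying cographs of $\cover$ and $\net$, and of $\graphof\monadicformula$ and $\mograph$, coincide; it preserves labels by the very definition of the labelling of $\cover$; it preserves existentials by \reflem{lem:mograph-bifib-preserves-literals}; and the induced map on binding graphs is a fibration since the bindings of $\cover$ are exactly those of $\net$ and $\bifib$ restricts to a fibration on the binding component by Definition~\ref{def:monadic-skew-bifib}. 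Hence $\bifib:\cover\to\graphof\monadicformula$ is a standard combinatorial proof of $\monadicformula$, and Theorem~\ref{thm:soundness} yields that $\monadicformula$ is valid.
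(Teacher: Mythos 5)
Your proposal follows the paper's proof essentially step for step: collapse via \reflem{lem:collapse}, lift labels along $\bifib$ from $\graphof\monadicformula$ to turn the monet into a linked closed monadic fograph, check that the result really is a fograph, recognise it as a fonet via \reflem{lem:fograph-mograph-net}, and conclude with Theorem~\ref{thm:soundness}. The assembly of the standard skew bifibration at the end is also exactly as in the paper.

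The one place your justification would not survive being written out in full is the claim that the collapse is ``exactly'' what rules out two universal $x$-binders lying in one another's scope. Collapse only identifies \emph{indistinguishable} vacuous universal binders, i.e.\ those with the same image \emph{and} the same neighbourhood; in the cotree of the source this excludes only the case where the two binders $\binder$ and $\bindera$ are siblings under the same $\graphunion$ node. If $\bindera$ instead sits strictly below a $\graphjoin$-child of the parent of $\binder$, the two binders are not indistinguishable, so they survive the collapse, yet $\bindera$ is still in the scope of $\binder$. The paper closes precisely this case by a separate argument: from $\bindera$ lying below a $\graphjoin$ node one extracts an edge $\bindera\vertex$ in the source, pushes it forward to an edge at $\bifib(\binder)=\bifib(\bindera)$ in the target, and then invokes the skew-fibration property to obtain a neighbour $\vertexa$ of $\binder$ with $\bifib(\vertexa)\mkern1mu\bifib(\vertex)$ a non-edge, which contradicts the cotree position of $\parentof\binder$ via the meet lemmas of \S\ref{sec:image}. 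So collapse is necessary but not sufficient here; the skew-fibration condition does the remaining work, and you would need to supply that argument (rather than the scope machinery you reserve for the literal-in-scope condition) to complete the legality check.
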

\begin{proof}
Suppose
$\bifib:\net\to\mographofformula\monadicformula=\mograph$
is a homogeneous combinatorial proof of the monadic formula $\monadicformula$.
Without loss of generality, we may assume $\bifib$ is collapsed, by \reflem{lem:collapse}.
Define $\netp$ as the
coloured labelled cograph with
$\verticesof\netp=\verticesof\net$,
$\edgesof\netp=\edgesof\net$,
a colour $\setof{\vertex,\vertexa}$ for each $\vertex\vertexa\in\dualitiesof\net$,
and
the label of $\vertex$ in $\netp$ defined as
the label of $\bifib(\vertex)$ in $\graphof\monadicformula$,
where $\bifib(\vertex)\in\verticesof{\mographofformula\monadicformula}$ can be viewed as a vertex in
$\verticesof{\graphof\monadicformula}$ since
$\mographofformula\monadicformula=\mographof{\graphof\monadicformula}$ by \reflem{lem:prop-factorization} and, by definition of $\mographofsymbol$ (Def.\,\ref{def:mograph-of-fograph}),
$\verticesof{\mographof\fograph}=\verticesof{\fograph}$ for any closed monadic fograph $\fograph$.

We claim that $\netp$ is a well-defined fograph (Def.\,\ref{def:fograph}, p.\,\pageref{def:fograph}).
By \reflem{lem:types},
$\vertextypein{\mographof\monadicformula}=\vertextypein{\graphof\monadicformula}$
so by \reflem{lem:mograph-bifib-preserves-literals},
$\vertextypein{\netp}=\vertextypein{\net}$ (since the label of $\vertex$ in $\netp$ is that of $\bifib(\vertex)$ in $\graphof\monadicformula$).
Thus $\netp$ has a literal since $\net$ has one (because it is a mograph), so $\netp$ is a logical cograph.
We must show, for all variables $x$, that every $x$-binder $\binder$ is legal,
\ie, the scope of $\binder$ contains
(a) at least one literal and (b) no other $x$-binder.
For (a), the scope $\scopeofin\binder\graph$ of $\binder$ in a fograph or mograph $\graph$ depends only on the underlying cograph $\graphpairofgraph\graph$, so
$\scopeofin\binder\netp=\scopeofin\binder\net$.
Thus $\scopeofin\binder\netp$ has a literal because $\scopeofin\binder\net$ does (since $\net$ is a mograph).
For a contradiction to (b), Suppose $\bindera\tightneq\binder$ were an $x$-binder in $\scopeofin\binder\netp$.
Let $\treep$ be the cotree $\cotreeof\netp$ and let $\parentof\binder$ be the parent of $\binder$ in $\treep$.
If $\binder$ is existential, then $\binder\bindera\tightin\edgesof\netp$ (since, by \reflem{lem:scope-parent}, all distinct vertices in the scope of an existential binder are in an edge, since $\parentof\binder$ is a $\graphjoin$ node in $\treep$),
contradicting $\bifib(\binder)=\bifib(\bindera)$ (which holds because, without loss of generality, $\monadicformula$ is rectified, so there is a unique $x$-binder in $\graphof\monadicformula$).
Otherwise $\binder$ is universal.
Since $\bindera\in\scopeofin\binder\netp$, by \reflem{lem:scope-parent}
we have
$\binder\parentabovein\treep\bindera$,
\ie,
$\binder\childin\treep\parentof\binder\abovein\treep\bindera$, with $\parentof\binder$ a $\graphunion$-node, since $\binder$ is universal.
Because $\bifib$ is collapsed and $\bifib(\binder)=\bifib(\bindera)$,
we cannot have
$\parentof\binder\parentin\treep\bindera$
(otherwise $\binder$ and $\bindera$ would be indistinguishable, contradicting $\bifib$ being collapsed),
thus
$\binder\childin\treep\parentof\binder\parentin\treep\bindera\jmeet\vertex$ for some vertex $\vertex$.
Since $\bindera\vertex\tightin\edgesof\netp$ and $\bifib$ is a graph homomorphism we have
$\bifib(\bindera)\mkern1mu\bifib(\vertex)\tightin\edgesof{\graphof\monadicformula}$,
so
$\bifib(\binder)\mkern1mu\bifib(\vertex)\tightin\edgesof{\graphof\monadicformula}$
(because $\bifib(\binder)=\bifib(\bindera)$).
Since $\bifib$ is a skew fibration, there exists $\vertexa\tightin\verticesof\netp$ such that
$\vertexa\binder\in\edgesof\netp$ and $\bifib(\vertexa)\mkern1mu\bifib(\vertex)\notin\edgesof{\graphof\monadicformula}$.
Because $\vertexa\binder\in\edgesof\netp$, the meet $\binder\meet\vertexa$ is a \mbox{$\graphjoin$-node},
\ie, $\binder\meet\vertexa=\binder\jmeet\vertexa$,
and since the parent $\parentof\binder$ of $\binder$ is a \mbox{$\graphunion$-node},
we have $\binder\jmeet\vertexa\abovein\treep\parentof\binder$,
hence
$\vertexa\abovein\treep\binder\jmeet\vertexa\abovein\treep\vertex$.
Therefore $\vertexa\vertex\tightin\edgesof\netp$, so $\bifib(\vertexa)\mkern1mu\bifib(\vertex)\in\edgesof{\graphof\monadicformula}$ a contradiction.
Thus we have proved that $\netp$ is a well-defined fograph.
Since every literal label in $\netp$ comes from $\graphof\monadicformula$, $\netp$ is monadic, and since $\bifib$ is a directed graph fibration
$\bindinggraphofgraph\net\to\bindinggraphofgraph\mograph$, $\netp$ is closed.

By construction, $\netp=\linkedmographof\net$ (Def.\,\ref{def:linked-mograph-of-linked-fograph}),
so by \reflem{lem:fograph-mograph-net}, $\netp$ is a fonet.
Since $\bifib:\net\to\mograph$ is a skew bifibration of mographs,
$\bifib:\netp\to\graphof\monadicformula$ is a skew bifibration of fographs,
hence a (standard) combinatorial proof,
so $\formula$ is valid by Theorem\,\ref{thm:soundness}.
\end{proof}\todo{Give non-collapsed example}
The crux of the soundness proof above is to transform a collapsed monadic homogeneous combinatorial proof into a standard combinatorial proof.
The following example shows why collapse occurs before this transformation.
A monadic homogeneous combinatorial proof of the closed monadic formula $\collapsedegformula$ is shown below-left.
\begin{pic}{-.3}{2.4}
\rput(-4,0){\uncollapsedeg}
\rput(0,0){\collapsedeg}
\rput(4,0){\labelledcollapsedeg}
\end{pic}
Its collapse, also a monadic homogeneous combinatorial proof (by \reflem{lem:collapse}), is shown above-centre.
Above-right is the standard combinatorial proof constructed from the collapse in the soundness proof above.
Observe that, were we to attempt to construct a standard combinatorial proof directly from the uncollapsed form,
it would have two source vertices above $\singletonx$ in the target,
each implicitly labelled $x$ (implicit since we are drawing the skeleton),
so the source would have a (universal) $x$-binder in the scope of another $x$-binder and therefore fail to be a well-defined fograph.
Collapse is directly related to the deletion of select universal binders in the interpretation of the $\cname$ rule as an operation in \S\ref{sec:interp-rules}.
\begin{lemma}[Monadic homogeneous completeness]\label{lem:monadic-completeness}
Every valid closed monadic formula has a homogeneous combinatorial proof.
\end{lemma}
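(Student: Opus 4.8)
The plan is to follow the propositional completeness argument (\reflem{lem:prop-completeness}) closely: take a standard first-order combinatorial proof supplied by Theorem\,\ref{thm:completeness} and reinterpret it, vertex for vertex, as a homogeneous one. Concretely, since $\monadicformula$ is valid, Theorem\,\ref{thm:completeness} yields a standard combinatorial proof $\bifib:\net\to\graphof\monadicformula$ from a fonet $\net$. The first task is to check that $\net$ is a \emph{linked closed monadic fograph}, so that its linked mograph $\linkedmographof\net$ (Def.\,\ref{def:linked-mograph-of-linked-fograph}) is defined. Linkedness is immediate, since $\net$ is a fonet. Monadicity follows because $\bifib$ preserves labels and $\graphof\monadicformula$ is monadic, so every label of $\net$ is inherited from the monadic target. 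For closedness I would argue that every literal of $\net$ is bound: a literal $\literal$ maps to a literal $\bifib(\literal)$ of the closed fograph $\graphof\monadicformula$, which is bound there, and lifting that binding edge through the binding fibration $\bifib:\bindinggraphof\net\to\bindinggraphof{\graphof\monadicformula}$ produces a binder of $\net$ binding $\literal$.

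Next I would assemble the two mographs in play. On the source side, \reflem{lem:fograph-mograph-net} turns the fonet $\net$ into a monet $\linkedmographof\net$. On the target side, \reflem{lem:monadic-factorization} gives $\mographof{\graphof\monadicformula}=\mographofformula\monadicformula$, so the target mograph is exactly the one whose homogeneous combinatorial proofs are, by definition, homogeneous combinatorial proofs of $\monadicformula$. Both $\linkedmographofsymbol$ and $\mographofsymbol$ leave the underlying cograph (vertices and edges) untouched, record links respectively dual-predicate pairs as dualities, and record the binding graph as the binding set. Thus $\bifib$ itself, as a function on vertices, is a candidate map $\linkedmographof\net\to\mographof{\graphof\monadicformula}$.

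It then remains to verify that this candidate is a skew bifibration of mographs (Def.\,\ref{def:monadic-skew-bifib}). The underlying-cograph condition, that $\bifib$ is an existential-preserving skew fibration, is inherited verbatim from the fact that $\bifib:\net\to\graphof\monadicformula$ is a fograph skew bifibration, since the underlying cographs and the notions of existential and universal binder are unchanged by $\linkedmographofsymbol$ and $\mographofsymbol$ (scope being a purely structural invariant of the cograph). The duality condition holds because each duality of $\linkedmographof\net$ is a link $\setof{\vertex,\vertexa}$ of $\net$, whose literals carry dual predicate symbols; label preservation sends these to literals $\bifib(\vertex),\bifib(\vertexa)$ with dual predicate symbols, which is exactly the defining condition for a duality in $\mographof{\graphof\monadicformula}$ (dualities there depend only on predicate symbols, not on variables). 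The binding condition is precisely the binding fibration of the original skew bifibration. Hence $\bifib:\linkedmographof\net\to\mographofformula\monadicformula$ is a skew bifibration from a monet, i.e.\ a homogeneous combinatorial proof of $\monadicformula$.

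The step I expect to require the most care is the bookkeeping of the first paragraph: verifying that $\net$ is closed, and that the binder-type (literal, universal, or existential) and scope notions transported along $\linkedmographofsymbol$ genuinely agree on both source and target, so that existential preservation and the duality and binding conditions can be read off directly. Everything downstream is then a routine matching of the fograph skew-bifibration conditions against Def.\,\ref{def:monadic-skew-bifib}, with \reflem{lem:fograph-mograph-net} doing the essential work of certifying that the source is a monet.
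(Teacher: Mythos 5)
Your proposal is correct and follows essentially the same route as the paper: both reinterpret the fonet $\net$ as a linked mograph (links becoming dualities, the binding graph becoming the binding set) and verify that the same vertex map is a mograph skew bifibration onto $\mographofformula\monadicformula=\mographof{\graphof\monadicformula}$. The only difference is that you certify the source is a monet by first checking $\net$ is a linked closed monadic fograph and then invoking \reflem{lem:fograph-mograph-net}, whereas the paper argues the monet conditions directly; your detour has the minor advantage of making the consistency requirement on the monet explicit.
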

\begin{proof}
Let $\monadicformula$ be a valid closed monadic formula.
By Theorem\,\ref{thm:completeness} there exists a (standard) combinatorial proof $\bifib:\net\to\graphof\monadicformula$.
Let $\netp$ be the linked mograph obtained from $\net$ with $\verticesof\netp=\verticesof\net$, $\edgesof\netp=\edgesof\net$,
$\vertex\vertexa\tightin\dualitiesof\netp$ if and only if and $\setof{\vertex,\vertexa}$ is a link (colour) in $\net$,
and
$\bindingsof\netp=\edgesof{\bindinggraphof\net}$ (\ie, $\vertex\vertexa\in\bindingsof\netp$ if and only if $\vertex$ binds $\vertexa$ in $\net$).
Since there are no logical constants in $\monadicformula$ there are none in $\net$ (by label-preservation of $\bifib$),
so every literal in $\net$ is in exactly one link.
Thus every literal of $\netp$ is in a unique duality, no binder of $\netp$ is in a duality,
and $\netp$ has no induced bimatching because $\net$ is a fonet;
thus $\netp$ is a monet.
Let $\mograph=\mographof{\graphof\monadicformula}=\mographofformula\monadicformula$.

We claim that $\bifib:\netp\to\mograph$ is a homogeneous combinatorial proof, \ie,
(1) $\bifib$ preserves existential binders,
(2) $\bifib:\netp\to\mograph$ is a skew fibration,
(3) $\bifib:\graphpairof{\verticesof\netp}{\dualitiesof\netp}\to\graphpairof{\verticesof\mograph}{\dualitiesof\mograph}$ is an undirected graph homomorphism,
and
(4) $\bifib:\graphpairof{\verticesof\netp}{\bindingsof\netp}\to\graphpairof{\verticesof\mograph}{\bindingsof\mograph}$ is a directed graph fibration.
(1) holds because $\bifib:\net\to\graphof\monadicformula$ preserves existential binders, and by construction the existential binders of $\netp$ and $\net$ coincide, as do those of $\graphof\monadicformula$ and $\mograph$.
By definition (2) holds if $\bifib:\graphpairof{\verticesof\netp}{\edgesof\netp}\to\graphpairof{\verticesof\mograph}{\edgesof\mograph}$ is a skew fibration, which is true because $\bifib$ is a skew bifibration,
$\graphpairof{\verticesof\netp}{\edgesof\netp}=\graphpairof{\verticesof\net}{\edgesof\net}$, and
$\graphpairof{\verticesof\mograph}{\edgesof\mograph}=\graphpairof{\verticesof{\graphof\monadicformula}}{\edgesof{\graphof\monadicformula}}$.
For (3), suppose $\vertex\vertexa\tightin\dualitiesof\netp$. Since $\net$ is a fonet, it has a dualizer, so the labels of $\vertex$ and $\vertexa$ are dual, say, $p$ and $\pp$, respectively.
Because $\bifib$ preserves labels, $\bifib(\vertex)$ and $\bifib(\vertexa)$ are labelled $p$ and $\pp$, thus $\bifib(\vertex)\mkern2mu\bifib(\vertexa)\in\dualitiesof\mograph$, and (3) holds.
(4) holds because $\bifib:\net\to\graphof\monadicformula$ is a skew bifibration, thus $\bifib:\bindinggraphof\net\to\bindinggraphof{\graphof\monadicformula}$ is a directed graph fibration, and by construction $\graphpairof{\verticesof\netp}{\bindingsof\netp}=\bindinggraphof\net$ and
$\graphpairof{\verticesof\mograph}{\bindingsof{\graphof\monadicformula}}=\bindinggraphof{\graphof\monadicformula}$.
\end{proof}

\begin{proof}[Proof of Theorem\,\ref{thm:monadic-soundness-completeness} (Monadic homogeneous soundness and completeness)]
\mbox{}\\
Lemmas~\ref{lem:monadic-soundness} and \ref{lem:monadic-completeness}.
\end{proof}

\section{Homogeneous surjections}\label{sec:surjections}

Recall from \S\ref{sec:soundness} that $\graphofsymbol$ is a surjection from rectified formulas onto rectified fographs (Lem.\,\ref{lem:graph-surj}),
and that $\xgraphofsymbol$ is a surjection from clear formulas onto fographs (Lem.\,\ref{lem:xgraph-surj}).
This section exhibits similar surjections onto duality graphs and mographs.
\begin{lemma}\label{lem:surj-prop-fographs-to-dualizing-graphs}
$\dualizinggraphofsymbol$ is a surjection from simple propositional fographs onto dualizing graphs.
\end{lemma}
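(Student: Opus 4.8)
The plan is to realize an arbitrary dualizing graph $\dualizinggraph$ as $\dualizinggraphof\fograph$ for a suitably labelled simple propositional fograph $\fograph$. Well-definedness of the map is already \reflem{lem:dualizing-graph-of-prop-fograph-well-defined}, so it remains only to construct a preimage. I keep the underlying cograph fixed, taking $\verticesof\fograph=\verticesof\dualizinggraph$ and $\edgesof\fograph=\edgesof\dualizinggraph$, so that the entire problem reduces to labelling each vertex with a nullary predicate symbol in such a way that two vertices carry dual predicate symbols exactly when they are joined by a duality of $\dualizinggraph$.

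The key structural observation is that the duality graph $\graphpairof{\verticesof\dualizinggraph}{\dualitiesof\dualizinggraph}$, which is a $\cthree$-free cograph by Def.\,\ref{def:dualizing-graph}, is a disjoint union of complete bipartite graphs. Indeed, each connected component $C$ is itself a connected $\cthree$-free cograph, and if $C$ has at least two vertices then its cotree root is a $\graphjoin$ node by \reflem{lem:node-strong-module}, so $C=C_1\graphjoin\ldots\graphjoin C_k$ for non-empty cographs $C_i$ with $k\tightge 2$. Triangle-freeness forces $k=2$ (three parts would yield a triangle), and it forces each $C_i$ to be edgeless (an edge in $C_1$ together with any vertex of $C_2$ would be a triangle). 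Hence $C$ is complete bipartite with parts $\verticesof{C_1}$ and $\verticesof{C_2}$, while the remaining components are single vertices.

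With this in hand I fix, for each component $C$, a fresh dual pair $\setof{p_C,\dual{p_C}}$ of nullary predicate symbols, choosing distinct pairs for distinct components. I then label every vertex of a single-vertex component $C$ by $p_C$, and, for a complete bipartite component with parts $A$ and $B$, label all of $A$ by $p_C$ and all of $B$ by $\dual{p_C}$. I next verify that the resulting labelled graph $\fograph$ is a simple propositional fograph: it is a cograph with the same edges as $\dualizinggraph$; every vertex is atom-labelled, so $\fograph$ is logical and, having no binders, is vacuously a fograph; all predicate symbols are nullary; and no label is $1$ or $0$. Finally I check $\dualizinggraphof\fograph=\dualizinggraph$: the vertex and edge sets agree by construction, and two vertices have dual predicate symbols precisely when they lie in opposite parts of a common complete-bipartite component — which is exactly the duality relation of $\dualizinggraph$. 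Freshness of the $p_C$ rules out dualities across distinct components, and equality of labels within a single part rules them out inside a component.

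The only real content is the structural claim that a triangle-free cograph is a disjoint union of complete bipartite graphs; everything else is the bookkeeping of reading off the labelling and confirming the fograph axioms. That claim is an immediate consequence of the join/union dichotomy for cographs recorded in \reflem{lem:node-strong-module}, so I expect no serious obstacle, only care in handling the degenerate single-vertex components and in choosing the predicate symbols freshly enough to avoid accidental dualities.
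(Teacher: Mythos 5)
Your proposal is correct and follows essentially the same route as the paper: keep the vertex and edge sets of $\dualizinggraph$, decompose the duality graph into complete bipartite components (using $\pfour$- and $\cthree$-freeness), and label each component with a fresh dual pair of nullary predicate symbols, one symbol per part. The only difference is that you spell out, via the cotree, why a triangle-free cograph splits into complete bipartite components, a step the paper simply asserts.
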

\begin{proof}
  Let $\dualizinggraph$ be a dualizing graph.
We construct a fograph $\fograph$ such that $\dualizinggraphof\fograph=\dualizinggraph$.
Define $\verticesof\fograph=\verticesof\dualizinggraph$ and $\edgesof\fograph=\edgesof\dualizinggraph$, with a nullary predicate symbol label on each vertex defined as follows.
Since $\graphpairof{\verticesof\dualizinggraph}{\dualitiesof\dualizinggraph}$ is $P_4$-free and $\cthree$-free, it is a disjoint union of complete bipartite graphs\footnote{Recall that a complete bipartite graph is one of the form $\graph\graphjoin\grapha$ for edgeless graphs $\graph$ and $\grapha$.} $\cbg_1,\ldots,\cbg_n$.
Choose distinct nullary predicate symbols $p_1,\ldots,p_n$ such that $\pp_i\tightneq p_j$ ($1\tightle i,j\tightle n$).
If $\cbg_i$ has no edges, it has a single vertex $v_i$; assign $p_i$ as the label of $v_i$.
Otherwise, $\cbg_i=\cbg_i'\graphjoin\cbg_i''$ for $\cbg_i'$ and $\cbg_i''$ without edges.
Assign the label $p_i$ to every vertex in $\cbg_i'$ and the label $\pp_i$ to every vertex in $\cbg_i''$.
The graph $\fograph$ is a non-empty cograph with vertices labelled by nullary predicate symbols, hence $\fograph$ is a simple propositional fograph.
By construction, $\dualizinggraphof\fograph=\dualizinggraph$.
\end{proof}
\begin{lemma}\label{lem:prop-surj}
The function $\dualizinggraphofpropsymbol$ from simple propositions to dualizing graphs (Def.\,\ref{def:dualizing-graph-of-prop}) is a surjection.
\end{lemma}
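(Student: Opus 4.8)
The plan is to chain together three results already in hand: the surjectivity of $\dualizinggraphofsymbol$ from simple propositional fographs onto dualizing graphs (Lemma~\ref{lem:surj-prop-fographs-to-dualizing-graphs}), the surjectivity of $\graphofsymbol$ from formulas onto rectified fographs (Lemma~\ref{lem:graph-surj}), and the factorization $\dualizinggraphofprop\prop=\dualizinggraphof{\graphof\prop}$ of Lemma~\ref{lem:prop-factorization}. The idea is simply to lift a given dualizing graph first to a simple propositional fograph, then to a simple proposition whose graph is that fograph, and finally to read off the original dualizing graph through the factorization.

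Concretely, I would fix an arbitrary dualizing graph $\dualizinggraph$ and apply Lemma~\ref{lem:surj-prop-fographs-to-dualizing-graphs} to obtain a simple propositional fograph $\fograph$ with $\dualizinggraphof\fograph=\dualizinggraph$. Because $\fograph$ is propositional it contains no variable-labelled vertices, hence no binders, so it is vacuously a rectified fograph; therefore Lemma~\ref{lem:graph-surj} supplies a formula $\prop$ with $\graphof\prop=\fograph$. The factorization then yields
\[
\dualizinggraphofprop\prop \;=\; \dualizinggraphof{\graphof\prop} \;=\; \dualizinggraphof\fograph \;=\; \dualizinggraph,
\]
so $\prop$ is a $\dualizinggraphofpropsymbol$-preimage of $\dualizinggraph$, establishing surjectivity.

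The one point that warrants a short argument rather than a bare citation is that the formula $\prop$ produced by $\graphofsymbol$ is genuinely a \emph{simple proposition}, as required for $\dualizinggraphofpropsymbol$ to be applied. This is read directly off $\fograph$: vertices of $\graphof\prop$ correspond to occurrences of atoms and quantifiers in $\prop$, and since $\fograph$ has no binder (variable-labelled) vertices, $\prop$ has no quantifier occurrences and is thus a proposition; since every label of $\fograph$ is a nullary predicate symbol, $\prop$ contains no terms and no logical constant $1$ or $0$, so it is moreover simple. I expect no real obstacle here — the work was already done in proving the two cited surjection lemmas and the factorization lemma, and this statement is essentially their composite.
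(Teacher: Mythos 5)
Your proof is correct and follows essentially the same route as the paper: both arguments compose the surjection $\dualizinggraphofsymbol$ of Lemma~\ref{lem:surj-prop-fographs-to-dualizing-graphs} with the surjection $\graphofsymbol$ of Lemma~\ref{lem:graph-surj}, glued by the factorization $\dualizinggraphofprop\prop=\dualizinggraphof{\graphof\prop}$ of Lemma~\ref{lem:prop-factorization}. The only difference is cosmetic — you chase an element through the composite where the paper states it as a composition of surjections — and your explicit check that the preimage formula is a simple proposition makes precise a point the paper leaves implicit in the phrase ``the restriction of $\graphofsymbol$ to simple propositions is a surjection onto simple propositional fographs.''
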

\begin{proof}
By \reflem{lem:graph-surj} $\graphofsymbol$ is a surjection from (rectified) formulas onto fographs.
The restriction of $\graphofsymbol$ to simple propositions is a surjection onto simple propositional fographs.
Since $\dualizinggraphofpropsymbol=\dualizinggraphofsymbol\circ\graphofsymbol$ by \reflem{lem:prop-factorization}, and $\dualizinggraphofsymbol$ is a surjection by \reflem{lem:surj-prop-fographs-to-dualizing-graphs}, $\dualizinggraphofpropsymbol$ is a surjection.
\end{proof}

\begin{lemma}\label{lem:surj-closed-monadic-fographs-to-mographs}
$\mographofsymbol$ is a surjection from closed monadic fographs onto mographs.
\end{lemma}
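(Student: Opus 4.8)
The plan is to mirror the proof of \reflem{lem:surj-prop-fographs-to-dualizing-graphs}. Given a mograph $\mograph$, I will construct a closed monadic fograph $\fograph$ with $\mographof\fograph=\mograph$ whose underlying graph is literally that of $\mograph$: set $\verticesof\fograph=\verticesof\mograph$ and $\edgesof\fograph=\edgesof\mograph$, so that a vertex is a binder (resp.\ literal) of $\fograph$ exactly when it is a binder (resp.\ literal) of $\mograph$, and the scope of any binder is the same in both, since scopes depend only on the shared underlying cograph $\graphpairof{\verticesof\mograph}{\edgesof\mograph}$. It then remains to choose a variable for each binder and a monadic atom for each literal so that the dualities and bindings of $\mographof\fograph$ come out to be exactly $\dualitiesof\mograph$ and $\bindingsof\mograph$.

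For the labels I would proceed as follows. Assign to each binder of $\mograph$ its own fresh variable. By the pre-mograph condition (Def.\,\ref{def:pre-mograph}) each literal $\literal$ is the target of exactly one binding $\diredge\binder\literal\tightin\bindingsof\mograph$; give $\literal$ the variable of that unique binder $\binder$. For the predicate symbols, note that no binder lies in a duality (Def.\,\ref{def:mograph}), so $\dualitiesof\mograph$ is carried entirely by literals, and the literal-induced subgraph of $\graphpairof{\verticesof\mograph}{\dualitiesof\mograph}$ is $\pfour$-free and $\cthree$-free, hence (exactly as in \reflem{lem:surj-prop-fographs-to-dualizing-graphs}) a disjoint union of complete bipartite graphs $\cbg_1,\ldots,\cbg_n$. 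Choosing distinct unary predicate symbols $p_1,\ldots,p_n$ with $\dual{p_i}\tightneq p_j$, I label one side of $\cbg_i=\cbg_i'\graphjoin\cbg_i''$ by $p_i$ and the other by $\dual{p_i}$ (a one-vertex component just gets $p_i$), so the atom on a literal with variable $x$ is $p_i x$ or $\dual{p_i}x$. Two literals then carry dual predicate symbols precisely when they lie on opposite sides of a common $\cbg_i$, i.e.\ precisely when they are joined by a duality of $\mograph$.

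With these labels $\fograph$ is a closed monadic fograph: it uses only unary predicates, no function symbols and no constants, its underlying graph is a cograph, and it has a literal since $\bindingsof\mograph\tightneq\emptyset$. Each binder gets a distinct variable, so no binder lies in the scope of another binder with the same variable; and each binder's scope contains a literal (a mograph requirement), so every binder is legal (Def.\,\ref{def:fograph}). Finally $\fograph$ is closed: every variable occurring in a literal $\literal$ is the variable of the binder $\binder$ binding $\literal$ in $\mograph$, and since $\literal\tightin\scopeofin\binder\fograph$ (by the mograph condition that bindings respect scope, together with the equality of scopes in $\mograph$ and $\fograph$), the binder $\binder$ indeed binds $\literal$ in $\fograph$. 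Unwinding Def.\,\ref{def:mograph-of-fograph} then gives $\mographof\fograph=\mograph$: vertices and edges agree by construction, dualities agree by the predicate-symbol assignment, and bindings agree because the variable of each binder occurs exactly on that binder and on the literals it binds in $\mograph$.

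The main obstacle is this last verification — that the binding relation of $\fograph$ reproduces $\bindingsof\mograph$ on the nose — since this is where the global geometry of $\mograph$ enters rather than just the local labelling. Both directions hinge on the scopes of $\fograph$ and $\mograph$ coinciding: a binder binds exactly those same-variable literals in its scope, so I must check that the mograph axiom $\diredge\binder\literal\tightin\bindingsof\mograph \implies \literal\tightin\scopeofin\binder\mograph$ guarantees each intended binding is realised, while the freshness of the variables prevents any spurious binding from appearing. A secondary point worth care is that each binder's scope really does contain a literal (needed for legality); this is the force of the non-empty-scope requirement in Def.\,\ref{def:mograph}, inherited here because scopes are computed from the shared underlying cograph.
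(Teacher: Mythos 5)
Your construction is exactly the paper's: take the underlying cograph of $\mograph$ verbatim, assign a fresh variable to each binder and to the literals it binds, and assign dual unary predicate symbols across the complete bipartite components of the duality graph as in Lemma~\ref{lem:surj-prop-fographs-to-dualizing-graphs}. Your verification (including the point that freshness of binder variables rules out spurious bindings, and that the scope condition on $\bindingsof\mograph$ realises the intended ones) is the same argument, just spelled out in more detail than the paper's ``by construction''.
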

\begin{proof}
Let $\mograph$ be a mograph.
We will construct a closed monadic fograph $\fograph$ with $\mographof\fograph=\mograph$.
Define $\verticesof\fograph=\verticesof\mograph$ and $\edgesof\fograph=\edgesof\mograph$, and define the predicate symbol in the label of each vertex of $\verticesof\fograph$ exactly as in the proof of \reflem{lem:surj-prop-fographs-to-dualizing-graphs}, only this time we shall make each such predicate symbol $p$ unary rather than nullary by adding a variable after $p$. For each binder $\binder$ in $\mograph$, choose a distinct variable $x_\binder$, set the label of $\binder$ to $x_\binder$, and for every literal $\literal$ with $\diredge{\binder}{\literal}\in\bindingsof\mograph$, add the variable $x_\binder$ to the label of $\literal$ as the argument of the predicate symbol already assigned to $\literal$.
Since every binder in $\mograph$ has non-empty scope, every binder in $\fograph$ has non-empty scope.
By construction every literal label is a unary predicate symbol followed by a variable, so $\fograph$ is monadic.
Because every variable $x_\binder$ is distinct for each binder $\binder$, no literal in $\fograph$ can be bound by two binders in $\fograph$.
Thus $\fograph$ is a rectified monadic fograph.
Since, by definition of a literal in a mograph, every literal in $\mograph$ is the target of a binding in $\bindingsof\mograph$, every literal in $\fograph$ is bound, so $\fograph$ is closed.
By construction, $\mographof\fograph=\mograph$.
\end{proof}

\begin{lemma}\label{lem:surj-closed-monadic-formulas-to-mographs}
The function $\mographofformulasymbol$ from closed monadic formulas to mographs (Def.\,\ref{def:mograph-of-formula}) is a surjection.
\end{lemma}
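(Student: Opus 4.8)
The plan is to follow the proof of \reflem{lem:prop-surj} essentially verbatim, now in the monadic setting, by exploiting the factorization already established in \reflem{lem:monadic-factorization}. That lemma gives $\mographofformula\monadicformula=\mographof{\graphof\monadicformula}$ for every closed monadic formula $\monadicformula$, i.e.\ on closed monadic formulas the map $\mographofformulasymbol$ factors as $\mographofsymbol\circ\graphofsymbol$. It therefore suffices to exhibit each composand as a surjection onto the appropriate intermediate class and then compose surjections.

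First I would observe that $\graphofsymbol$ restricts to a surjection from closed monadic formulas onto closed monadic fographs. Surjectivity of $\graphofsymbol$ onto all rectified fographs is \reflem{lem:graph-surj}; the only extra point is that a witnessing formula for a closed monadic fograph can be chosen closed and monadic. Given a closed monadic fograph $\fograph$, \reflem{lem:graph-surj} supplies a formula $\formula$ with $\graphof\formula\tighteq\fograph$. Since every literal of $\fograph$ is labelled by a unary atom with no function symbol or logical constant, the atoms of $\formula$ are all unary and function-free, so $\formula$ is monadic; and since $\fograph$ has no free variable, $\formula$ has no free variable, hence is closed.

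Then \reflem{lem:surj-closed-monadic-fographs-to-mographs} provides the second surjection: $\mographofsymbol$ maps closed monadic fographs onto mographs. Composing the two surjections and invoking the factorization $\mographofformulasymbol=\mographofsymbol\circ\graphofsymbol$ shows that $\mographofformulasymbol$ is a surjection from closed monadic formulas onto mographs, which is the claim.

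The only genuine verification, and hence the main obstacle such as it is, is the restriction claim for $\graphofsymbol$: that the formula produced by \reflem{lem:graph-surj} for a closed monadic fograph lands in the closed monadic fragment. This reduces to tracking labels (unary, function-free atoms force monadicity) and the free/bound-variable correspondence between a fograph and a witnessing formula (absence of free variables in the fograph forces closedness). Everything else is a purely formal composition of surjections, exactly parallel to the propositional argument of \reflem{lem:prop-surj}.
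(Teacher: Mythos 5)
Your proposal is correct and follows essentially the same route as the paper: factor $\mographofformulasymbol$ as $\mographofsymbol\circ\graphofsymbol$ via Lemma\,\ref{lem:monadic-factorization}, note that $\graphofsymbol$ restricts to a surjection from closed monadic formulas onto closed monadic fographs, and compose with the surjection of Lemma\,\ref{lem:surj-closed-monadic-fographs-to-mographs}. Your extra care in checking that the witnessing formula from Lemma\,\ref{lem:graph-surj} is itself closed and monadic is a point the paper leaves implicit, but the argument is the same.
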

\begin{proof}
By \reflem{lem:graph-surj} $\graphofsymbol$ is a surjection from (rectified) formulas onto fographs.
The restriction of $\graphofsymbol$ to closed monadic formulas is a surjection onto closed monadic fographs.
Since $\mographofformulasymbol=\mographofsymbol\circ\graphofsymbol$ by \reflem{lem:monadic-factorization}, and
$\mographofsymbol$ is a surjection by \reflem{lem:surj-closed-monadic-fographs-to-mographs}, $\mographofformulasymbol$ is a surjection.
\end{proof}

\section{Polynomial-time verification}\label{sec:ptime}

In this section we show that a combinatorial proof can be verified in polynomial time.
Thus combinatorial proofs constitute a formal \emph{proof system} \cite{CR79}.

The \defn{size} of a graph $\graph$ is the sum of the number of vertices in $\graph$ and the number of edges in $\graph$.
\begin{lemma}\label{lem:deps-ptime}
The dependencies of a linked rectified fograph $\cover$ can be constructed in time polynomial in the size of $\cover$.
\end{lemma}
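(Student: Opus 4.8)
The plan is to give a constructive algorithm that computes the dependency set directly, bypassing the definitional quantification over \emph{every} dualizer. The key enabling fact is \reflem{lem:mgd-deps}: since $\cover$ is linked and rectified, it is dualizable (or we detect non-dualizability) and has a most general dualizer $\dualizer$ by \reflem{lem:mgd}, and the dependencies of $\cover$ are exactly the dependencies of $\dualizer$, \ie, the pairs $\dep$ with $\singletonx$ existential, $\singletony$ universal, and $y$ occurring in the term $\dualizer(x)$. So the problem reduces to computing a most general dualizer (or equivalently a most general unifier of the unification problem $\unirelof\cover$ from the proof of \reflem{lem:mgd}) and then reading off variable occurrences.

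First I would identify the binders of $\cover$ as existential or universal; by the footnote characterization (and \reflem{lem:node-strong-module} via the cotree) each binder is one or the other, and this classification is computable in polynomial time from the scope structure. Next I would assemble the unification problem $\unirelof\cover$: for every link $\setof{\singleton{p t_1\ldots t_n},\singleton{\pp t'_1\ldots t'_n}}$ add the equations $t_i\unirelof\cover t'_i$, solving for the existential variables (treating universal and free variables as constants). The number of equations is bounded by the number of links, hence by the size of $\cover$, and each term has size bounded by $\cover$. I would then invoke a polynomial-time most-general-unifier algorithm \cite{TS96}; the standard linear- or near-linear-time union-find-based unification algorithms produce a most general unifier whose substitution can be represented compactly (as a DAG) in time polynomial in the total input size. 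Restricting this most general unifier to the existential variables yields a most general dualizer $\dualizer$, as in the proof of \reflem{lem:mgd}. Finally, for each existential $x$ and each universal $y$, I would test whether $y$ occurs in $\dualizer(x)$; collecting all such pairs gives exactly the dependencies of $\cover$ by \reflem{lem:mgd-deps}.

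The main obstacle I anticipate is the occurrence check under a compact (DAG) representation of the most general dualizer. Naively, terms $\dualizer(x)$ can be exponentially large when written out as trees, so one must either use the shared DAG representation output by the fast unification algorithm or argue that the relevant occurrence relation can be computed without full expansion. I would handle this by working with the DAG: a variable $y$ occurs in $\dualizer(x)$ if and only if $y$ is reachable from the root node representing $\dualizer(x)$ in the substitution DAG, which is a linear-time reachability query per pair and hence polynomial overall. Everything else — classifying binders, building $\unirelof\cover$, and running unification — is routine and polynomial, so the occurrence-analysis step via reachability is the crux that keeps the whole procedure within polynomial time.
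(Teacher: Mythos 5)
Your proposal is correct and follows essentially the same route as the paper: both reduce the problem via \reflem{lem:mgd} and \reflem{lem:mgd-deps} to computing a most general unifier of $\unirelof\cover$ in polynomial time and then reading off which universal variables occur in each $\dualizer(x)$, with the crux in both cases being that the fully expanded mgu terms can be exponentially large. The only divergence is in how that blowup is sidestepped: you query reachability in the shared DAG representation of the substitution, while the paper flattens each intermediate term $u_i$ to $f_iy_{i1}\ldots y_{im_i}$ for a fresh function symbol $f_i$ recording only its variable set --- two equivalent ways of computing the same occurrence relation in polynomial time.
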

\begin{proof}
Let $x_1,\ldots,x_n$ be the existential variables in $\cover$.
The main unification algorithm of \cite{MM76} provides in linear time an assignment $\assignopen\assign {x_1} {u_1},\ldots,\assign {x_n} {u_n}\assignclose$ with $x_i$ not in $u_j$ for $i\tightle j$, such that the most general unifier $\sigma$
is $\assignopen\assign{x_1}{t_1},\ldots,\assign{x_n}{t_n}\assignclose$ for
$t_i=u_i
\assignopen
\assign{x_{i+1}}{u_{i+1}}
\assignclose
\ldots
\assignopen
\assign{x_n}{u_n}
\assignclose$
(the sequential composition of $n\tightminus i$ one-variable substitutions applied to $u_i$).
Let $\setof{y_{i1},\ldots,y_{im_i}}$ be the set of variables occurring in $u_i$,
and define $u'_i$ as $f_iy_{i1}\ldots y_{im_i}$ for a fresh $m_i$-ary function symbol $f_i$.
The assignment $\sigma'=\assignopen\assign{x_1}{t'_1},\ldots,\assign{x_n}{t'_n}\assignclose$ for
$t'_i=
u'_i
\assignopen
\assign{x_{i+1}}{u'_{i+1}}
\assignclose
\ldots
\assignopen
\assign{x_n}{u'_n}
\assignclose$
has the same dependencies as $\sigma$ but can be constructed in polynomial time since each $x_j$ appears at most once in each $u'_i$.
\end{proof}
The above proof is essentially the first part of the proof of Theorem~3 in \cite{Hug18}.
\begin{lemma}\label{lem:fonet-ptime}
  The correctness of fonet can be verified in time polynomial in its size.
\end{lemma}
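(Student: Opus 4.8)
The plan is to test each clause in the definition of fonet separately and to bound the cost of each test by a polynomial in the size $s$ of $\cover$. Recall that $\cover$ is a fonet exactly when it is (i)~a linked fograph which (ii)~has a dualizer and (iii)~has no induced bimatching.

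First, the static conditions (i). Cograph recognition is polynomial (indeed linear) \cite{CLS81}, and the logical and linked conditions---every vertex labelled by a variable or atom, at least one literal, every colour a pair of pre-dual literals, and every literal either $1$-labelled or in a colour---are decided by a single scan of the vertices and colours. The only subtle static requirement is legality of the binders, which refers to binder scopes; but by \reflem{lem:parent-scope} the scope of a binder $\binder$ is precisely the set of leaves below the parent of $\binder$ in the cotree $\cotreeof\cover$, and the cotree (modular decomposition) is itself computable in polynomial time \cite{CLS81}. Hence every scope is obtained in polynomial time, after which checking that it contains a literal and no second binder of the same variable is immediate.

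Next, condition (ii) and the construction needed for (iii). By \reflem{lem:mgd} and its proof a dualizer exists if and only if the unification problem $\unirelof\cover$ is solvable, and solvability is decided in (near-)linear time by a standard unification algorithm \cite{MM76}. The leap graph $\leapgraphof\cover$ is then built in polynomial time: its link edges are read directly off the colouring, and its dependency edges are produced in polynomial time by \reflem{lem:deps-ptime}.

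Finally, condition (iii), which I expect to be the crux. A set $W$ induces a bimatching exactly when $\induced\cover W$ and $\induced{\leapgraphof\cover}W$ are both $1$-regular, so that the union of their edge sets is a disjoint union of cycles alternating between $\cover$-edges and leaps and chordless with respect to each. Thus $\cover$ admits an induced bimatching if and only if the combined graph $\graphpairof{\verticesof\cover}{\edgesof\cover\cup\leapsof\cover}$ contains such a chordless alternating cycle. I would reduce the detection of these cycles to the matching-theoretic criterion already exploited in \reflem{lem:split-fusion-existential}: following \cite{Ret03} and \cite{Hug06}, absence of an alternating elementary cycle is equivalent to uniqueness of a $1$-factor in an auxiliary collapse of $\cover$, and uniqueness of a $1$-factor is decidable in polynomial time---compute one $1$-factor by a matching algorithm, then test for an alternating (hence directed) cycle relative to it \cite{Kot59,LP86}. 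Summing the polynomial costs of the three stages gives the claimed polynomial-time verification. The main obstacle is this last stage: formalising the reduction from the \emph{induced} (chordless) bimatching condition to a plain alternating-cycle question, and accommodating the non-matching dependency edges of $\leapgraphof\cover$, so that the standard polynomial matching machinery applies.
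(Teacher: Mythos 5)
Your stages (i) and (ii) are fine and match what the paper needs (the static linked-fograph conditions are routine scans plus a cotree computation, and dualizer existence plus the dependency edges come from unification via \reflem{lem:deps-ptime}). The gap is exactly where you place it: stage (iii). As stated, your plan---compute one $1$-factor of ``an auxiliary collapse'' of $\cover$ and test for an alternating cycle relative to it---does not work as a single global test. The $1$-factor criterion of \reflem{lem:split-fusion-existential} lives on the quotient graph $\megagraph$ whose vertices are the join-factors $\fograph_i$ at the \emph{top level} of the cotree only; uniqueness of its $1$-factor detects induced bimatchings that straddle distinct top-level components, but says nothing about induced bimatchings nested entirely inside a single $\fograph_i$ deeper in the cotree. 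Moreover $\leapgraphof\cover$ is not a matching (a binder can lie in many dependency leaps), and the ``induced'' (chordless) requirement means you are asking for chordless alternating structures, which standard alternating-cycle machinery does not detect in general graphs; it is precisely the cograph recursion that makes the problem tractable.

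The paper sidesteps a direct test of the bimatching condition altogether: it verifies the fonet by \emph{running the decomposition} of \reflem{lem:fonet-constructible}. At each step one either confirms a union of axioms, strips an isolated universal binder, or builds $\megagraph$ from the top level of the cotree, finds the bridge of its (unique) $1$-factor, and splits $\net$ as a fusion or an existential quantification, recursing into the pieces; the object is a fonet iff this terminates at axioms, and there are at most linearly many steps, each polynomial. So your instinct to invoke the bridge/$1$-factor argument is the right one, but it must be applied recursively down the cotree rather than once globally; to close the gap you would either adopt that recursion (at which point you have reproduced the paper's proof) or supply a genuinely new argument showing that a single matching computation certifies the absence of \emph{all} induced bimatchings, which the proposal does not do.
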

\begin{proof}
Let $\net$ be a fonet of size $n$.
By \reflem{lem:deps-ptime} we can construct all dependencies of $\net$ in polynomial time, hence the leap graph $\leapgraphof\net$ in polynomial time.
By \reflem{lem:fonet-constructible} every fonet is constructible from axioms by fusion and quantification.
Since there can be at most $n$ fusions and/or quantifications, it suffices to show that each step in the inductive decomposition of a fonet in the proof of \reflem{lem:fonet-constructible} can be performed in polynomial time.
In the first case the proof of \reflem{lem:fonet-constructible}, $\net$ has no edges (which can be determined in polynomial time), and to confirm that $\net$ is a union of axioms takes polynomial time.
In the second case, $\net$ is universal, and the universal binder can be found and deleted in polynomial time, by inspecting each vertex of $\net$ in succession.

In the final case, $\net$ is not universal and has at least one edge, and we seek to decompose $\net$ as a fusion or existential quantification via \reflem{lem:split-fusion-existential}.
Henceforth we follow the proof of \reflem{lem:split-fusion-existential} closely.
The graph $\megagraph$ in the proof of \reflem{lem:split-fusion-existential} can be constructed in polynomial time from the cotree,
which can built in polynomial time \cite{CLS81}.
The bridge $\graph_m\graph_{m+1}$ can be located in polynomial time (by iterating through the edges of $\megagraph$),
and $\cover_1$ and $\cover_2$ can be determined in polynomial time by traversing edges.
The underlying fograph $\fograph$ of
$\net$
is
$\cover_1\graphunion(\fograph_m\graphjoin\fograph_{m+1})\graphunion\cover_2$.
Depending on whether both $\fograph_m$ and $\fograph_{m+1}$ both contain literals, the proof of \reflem{lem:split-fusion-existential} now provides either $\net$ as a fusion of $\cover_1\graphunion\fograph_m$ and $\fograph_{m+1}\graphunion\cover_2$, and we recurse with each half of the fusion, or $\net=\singletonx\graphunion\netp$, and we delete the existential binder $\singletonx$ and recurse with $\netp$.
\end{proof}
Define the \defn{size} of a combinatorial proof $\bifib:\net\to\fograph$ as the sum of the size of $\net$ and the size of $\fograph$.
\begin{theorem}
  The correctness of a combinatorial proof can be verified in time polynomial in its size.
\end{theorem}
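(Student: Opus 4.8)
The plan is to decompose the verification into three independent polynomial-time checks: that the target $\fograph$ is a genuine fograph, that the source $\net$ is a genuine fonet, and that the map $\bifib$ is a skew bifibration. Since a combinatorial proof (Def.\,\ref{def:cp}) is by definition precisely a skew bifibration $\bifib:\net\to\fograph$ from a fonet $\net$, establishing each of these three conditions in time polynomial in the combined size of $\net$ and $\fograph$ suffices.

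First I would verify that $\fograph$ is a fograph. Using the linear-time cotree construction of \cite{CLS81} I would build $\cotreeof\fograph$, which simultaneously certifies that $\fograph$ is a cograph (\pfour-freeness is exactly what the recognition algorithm detects); logicality is then a direct inspection of labels. To check that every binder is legal I would use \reflem{lem:parent-scope}: the scope of a binder $\binder$ is the set of leaves below the parent of $\binder$ in $\cotreeof\fograph$, readable from the cotree in polynomial time, and by \reflem{lem:node-strong-module} the binder is existential (resp.\ universal) according as that parent is a $\graphjoin$ (resp.\ $\graphunion$) node. Scanning each scope for at least one literal and for the absence of a second binder with the same variable is then polynomial, and the same computation yields the existential/universal classification of every binder, which I reuse below.

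Second, I would invoke \reflem{lem:fonet-ptime} directly to confirm in polynomial time that $\net$ is a fonet. This is where the genuinely delicate content lives, since the fonet condition must certify both the existence of a dualizer and the absence of an induced bimatching; the former and the attendant leap graph are handled by the polynomial-time dependency computation of \reflem{lem:deps-ptime}, and the latter by the constructive inductive decomposition of \reflem{lem:fonet-constructible}.

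Third, I would check that $\bifib$ is a skew bifibration. Label preservation is a single linear pass comparing the label of each $\vertex$ against that of $\bifib(\vertex)$, and existential preservation uses the binder classifications already extracted from the two cotrees. The skew fibration condition is local: for each $\vertex\tightin\verticesof\net$ and each edge $\vertexa\,\bifib(\vertex)\tightin\edgesof\fograph$ I would search for a witness $\skewlifting\vertexa$ with $\skewlifting\vertexa\,\vertex\tightin\edgesof\net$ and $\bifib(\skewlifting\vertexa)\,\vertex\tightnotin\edgesof\fograph$, a polynomial enumeration over vertices and edges; the binding fibration $\bindinggraphof\net\to\bindinggraphof\fograph$ is checked analogously on the directed binding graphs, whose edges are themselves read off from each fograph in polynomial time. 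Since all three stages run in polynomial time, so does the whole verification. The main obstacle is concentrated entirely in the fonet check, \reflem{lem:fonet-ptime}; the remaining fograph recognition and skew bifibration conditions reduce to standard polynomial-time cograph and local-lifting tests.
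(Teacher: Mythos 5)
Your proposal is correct and follows essentially the same route as the paper: the paper's proof likewise delegates the hard part to \reflem{lem:fonet-ptime} and observes that the skew fibration and binding fibration conditions are local existence checks over pairs of vertices, decidable by iterating through $\verticesof\net$. Your additional explicit verification that the target is a fograph (via the cotree) is a harmless elaboration the paper leaves implicit, since the target $\gformula$ is produced by $\graphofsymbol$ and is a fograph by construction.
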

\begin{proof}
  Let $\bifib:\net\to\fograph$ be a combinatorial proof.
  By \reflem{lem:fonet-ptime} the fonet $\net$ can be verified in polynomial time.
  Verifying that $\bifib$ is a skew bifibration is polynomial time because the skew fibration and directed graph fibration conditions apply to pairs of vertices, one in $\net$ and one in $\fograph$, seeking the existence of a vertex in $\net$, which can be found be iterating through each vertex of $\net$ in turn.
\end{proof}

\section{Cut combinatorial proofs}

Just as sequent calculus proofs may include cuts \cite{Gen35}, combinatorial proofs can be extended with cuts.
Define an \defn{$n$-cut combinatorial proof} of a formula $\formula$ as a combinatorial proof of
\mbox{$\formula\vee(\formulaa_1\tightwedge\neg{\formulaa_1})\vee\ldots\vee(\formulaa_n\tightwedge\neg{\formulaa_n})$} for (arbitrary) formulas $\formulaa_1,\ldots,\formulaa_n$.
Each formula $\formulaa_i\tightwedge\neg\formulaa_i$ is a \defn{cut}.
A \defn{cut combinatorial proof} is an $n$-cut combinatorial proof for some $n\tightge 0$; if $n=0$ the combinatorial proof is \defn{cut-free}.\footnote{We can define a cut combinatorial proof of a sequent similarly.}
\begin{theorem}
A formula is valid if and only if it has a cut combinatorial proof.
\end{theorem}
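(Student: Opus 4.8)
The plan is to reduce the statement directly to the Soundness and Completeness Theorem (Theorem~\ref{thm:soundness-completeness}), exploiting the fact that each cut $\formulaa_i\tightwedge\neg\formulaa_i$ is semantically vacuous. For the forward direction, suppose $\formula$ is valid. Then by the Completeness Theorem (Theorem~\ref{thm:completeness}) it has a combinatorial proof, which is by definition a $0$-cut combinatorial proof and hence a cut combinatorial proof (taking $n=0$). This direction requires no further work.

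For the converse, suppose $\formula$ has a cut combinatorial proof, \ie\ a combinatorial proof of $\formula\tightvee(\formulaa_1\tightwedge\neg\formulaa_1)\tightvee\ldots\tightvee(\formulaa_n\tightwedge\neg\formulaa_n)$ for some formulas $\formulaa_1,\ldots,\formulaa_n$. By the Soundness Theorem (Theorem~\ref{thm:soundness}) this disjunction is valid. First I would observe that each cut $\formulaa_i\tightwedge\neg\formulaa_i$ is unsatisfiable: since $\neg$ is introduced in \S\ref{sec:notation} as an abbreviation denoting classical negation (via the De~Morgan clauses on connectives and quantifiers, bottoming out at $\neg\alpha=\dual\alpha$ on atoms), under any model and variable assignment exactly one of $\formulaa_i$ and $\neg\formulaa_i$ holds, so their conjunction is false. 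Consequently the disjunction $(\formulaa_1\tightwedge\neg\formulaa_1)\tightvee\ldots\tightvee(\formulaa_n\tightwedge\neg\formulaa_n)$ is false in every model, and adjoining disjuncts that are false in every model does not change validity. Hence $\formula$ itself is valid, completing the converse.

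The argument is essentially immediate once Theorem~\ref{thm:soundness-completeness} is available; the only point needing care is the semantic observation that cuts may be discarded from a disjunction without affecting validity. This is a routine consequence of the definition of $\neg$ as an abbreviation and the classical two-valued semantics, and I expect it to be the single step worth spelling out, since everything else is a direct appeal to soundness in one direction and completeness in the other. No new combinatorial machinery is required, as cuts enter only through the syntactic shape of the formula whose combinatorial proof is exhibited.
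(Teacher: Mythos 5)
Your proposal is correct and matches the paper's proof, which likewise observes that $\formula\vee(\formulaa_1\tightwedge\neg\formulaa_1)\vee\ldots\vee(\formulaa_n\tightwedge\neg\formulaa_n)$ is valid if and only if $\formula$ is valid and then invokes Theorem~\ref{thm:soundness-completeness}. You simply spell out the semantic justification (each cut is unsatisfiable) that the paper leaves implicit.
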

\begin{proof}
Since
$\formula\vee(\formulaa_1\tightwedge\neg{\formulaa_1})\vee\ldots\vee(\formulaa_n\tightwedge\neg{\formulaa_n})$ is valid if and only if $\formula$ is valid,
the result follows from Theorem\,\ref{thm:soundness-completeness}.
\end{proof}

\section{Semi-combinatorial proofs}\label{sec:semicps}

Using the surjections
$\graphofsymbol$ (Def.\,\ref{def:graph}) from (implicitly rectified) formulas onto rectified fographs
and
$\xgraphofsymbol$ (Def.\,\ref{def:xgraph}) from clear formulas onto fographs,
given a combinatorial proof $\bifib:\cover\to\fograph$, such as the one whose skeleton is drawn below-left (copied from the Introduction),
by choosing
a rectified formula $\formula$ with $\graphof\formula=\fograph$
and
a clear formula $\formulaa$ with $\xgraphof\formulaa$ equal to the underlying uncoloured fograph of $\cover$,
we can render $\bifib$ in the form below-centre.
\begin{center}\begin{pic}{-.9}{2.9}
\rput(-5,0){\drinkerfibcoloured}
\rput(0,-.25){
\rput(0,0){\drinkerSemicombinatorialVerbose}
\rput(5,0){\drinkerSemicombinatorial}
}
\end{pic}\end{center}
We have drawn the bifibration between the quantifier variables and predicate symbols of the formulas $\formulaa=\drinkerSemicombinatorialSourceVerboseInline$ and
$\formula=\veedrinkerformula$ corresponding to the vertices of $\cover$ and $\fograph$,
and replaced the link (coloured pair of vertices) on $\cover$ with a three-segment edge between the dual predicate symbols $\pp$ and $p$ in $\formulaa$, in the style of proof nets for linear logic \cite{Gir87}.
Above-right we have simplified the presentation further by removing redundant bifibration edges between quantifier variables (since they can be left implicit due to label-preservation, \eg, both occurrences of the existential quantifier variable $x$ in the source map to the (unique) existential quantifier variable $x$ in the target), and we have drawn non-dotted edges.
We have also replaced the formula $\drinkerSemicombinatorialSourceVerboseInline$ with the corresponding sequent
$\drinkerSemicombinatorialSourceInline$, and suppressed the comma of the sequent.
We call this presentation of a combinatorial proof a \defn{semi-combinatorial proof}, a first-order generalization of the propositional case in \cite{Hug06i}.\footnote{The source sequent, with links, can be viewed as generalization of a unification net \cite{Hug18}. See \S\ref{sec:related} for details.}

\section{Conclusion and related work}\label{sec:conclusion}\label{sec:related}

This paper reformulated classical first-order logic with combinatorial rather than syntactic proofs (\S\ref{sec:fographs}--\S\ref{sec:cps}),
extending the propositional case of \cite{Hug06} to quantifiers.
The proof of soundness (\S\ref{sec:soundness}) was more intricate than
that of the propositional case \cite[\S5]{Hug06}.
In the logical-constant-free propositional, monadic and S5-modal special cases, labels
can be removed from a combinatorial proof, and colouring from the source,
for a homogeneous form (\S\ref{sec:prop-cps}--\S\ref{sec:modal-cps}).

Propositional combinatorial proofs are related to sequent calculus \cite{Gen35} in \cite{Hug06i} and \cite{Car10},
and to other syntactic systems (including resolution and analytic tableaux) in \cite{Str17} and \cite{AS18}.
Skew fibrations are decomposed as propositional structural maps (composites of contraction and weakening maps) in \cite{Hug06i} and \cite{Str07}.
Combinatorial proofs may provide an avenue towards tackling Hilbert's 24th problem \cite{TW02,Thi03,Hug06i,Str19h24}.

Combinatorial proofs for non-classical logics are being pursued actively. For example,
combinatorial proofs for propositional intuitionistic logic are presented in \cite{HHS19ipws}.
A potential topic of future research is first-order intuitionistic combinatorial proofs.
Cut elimination procedures for propositional cut combinatorial proofs are presented in \cite{Hug06i} and \cite{Str17}.
Natural open questions include the extension of propositional intuitionistic combinatorial proofs to first-order,
and cut elimination procedures for first-order combinatorial proofs (classical and intuitionistic).

The function $\graphofsymbol$ from first-order formulas to fographs (Def.\,\ref{def:graph})
is a first-order extension of the propositional translation \textsl{G} of \cite[\S3]{Hug06}.
The latter is well-known in graph theory, as the function from a (prime-free) modular decomposition tree \cite{Gal67} or cotree \cite{Ler71,CLS81} to a cograph, and is employed in logic and category theory. For example, \cite{Gir87} uses \textsl{G}\/ with
$\wedge\tighteq\with$ and $\vee\tighteq\oplus$ in linear logic, \cite{Hu99} uses \textsl{G} with $\wedge$ and $\vee$ as product and coproduct for free bicompletion (and emphasizes the $\pfour$-freeness of the image), and \cite{Ret03} uses \textsl{G} with $\wedge\tighteq\otimes$ and $\vee\tighteq\parr$ in linear logic.
That cographs are exactly the $P_4$-free graphs is proved in \cite{Sum73}.

Links between pre-dual literals in fonets, which become dual only after applying a dualizer or unifier,
are akin to the
first-order \emph{connections} or \emph{matings} employed in automated theorem proving \cite{Bib81, And81}.
Bibel in \cite[p.\,4]{Bib81} proposed \emph{link} as an alternative name for a connection,
and we have adopted that terminology.
Since combinatorial proofs can be verified in polynomial time (\S\ref{sec:ptime}), they constitute a formal \emph{proof system} \cite{CR79},
in contrast to the connection and mating methods.
The roots of first-order connections/matings
go back to Herbrand \cite{Her30}, Quine \cite{Qui55}, Robinson \cite{Rob65} and Prawitz \cite{Pra70}, amongst others.
Propositional links between dual literals can be found in
\cite{Dav71,Bib74,And76},
and sets of such propositional links form a category \cite{LS05} via path composition.
The pairing of dual propositional occurrences can be found in the study of other forms of syntax, such as
closed categories \cite{KM71} (see also \cite{EK66}),
contraction-free predicate calculus \cite{KW84} and linear logic \cite{Gir87}.

A fonet can be viewed as a graph-theoretic abstraction and generalization of
a unification net \cite{Hug18},
which in turn abstracts proof nets for first-order multiplicative linear logic \cite{Gir87,Gir91}.
The sense of generalization is that fographs admit the multiplicative mix rule $\frac{\sequent\hspace{1ex}\sequenta}{\sequentcom\sequenta}$, interpreted as the fusion operation with empty portions (\ie, disjoint union).
The relationship with a unification net is made clearer when rendering a combinatorial proof in semi-combinatorial form,  as in \S\ref{sec:semicps}. (For example, in the right-most example at the start of \S\ref{sec:semicps}, the source is exactly a unification net in the sense of \cite{Hug18}.)
Unification nets are also available for first-order additive linear logic \cite{HHS19ALL}.
Upon forgetting vertex labels, propositional fonets correspond to the \emph{nicely coloured cographs} of \cite{Hug06},
and nicely coloured cographs without singleton colours are in bijection with the \emph{even-length alternating elementary acyclic R\&B cographs} of \cite{Ret03}.
An additional constraint on fonets can be applied to reject the mix rule, and retain soundness and completeness (\cf\ the alternating elementary connectedness condition of \cite{Ret03}).

Abstract representations of first-order quantifiers with explicit witnesses are in \cite{Hei10} (extending expansion
trees \cite{Mil84}) and \cite{McK10} (for classical logic) and \cite{HHS19ALL} (for additive linear logic).
Composition of witnesses is analysed in \cite{Mim11} and \cite{ACHW18}.

Proof nets \cite{Gir87} were extended to propositional classical
logic in \cite{Gir91} (developed in detail in \cite{Rob03}).
The paper \cite{McK13} fixes issues of redundancy due to contraction and weakening nodes
and relates classical propositional proof nets to propositional combinatorial proofs \cite{Hug06,Hug06i}.

Peirce \cite[vol.\,4:2]{Pei} provides an early graphical representation of propositional formulas.

\appendix

\section{Fograph examples}\label{app:fograph-examples}

\figGraphExamples\figMoreGraphExamples{}Figures~\ref{fig:graph-examples} and \ref{fig:more-graph-examples}
show a progression of examples of fographs $\graphof{\formula}$ for various formulas $\formula$.\clearpage

\small
\bibliographystyle{alpha}
\bibliography{../bib/main}

\end{document}